\documentclass[12pt]{amsart}

\usepackage{geometry}
\usepackage{amsmath,amssymb}
\usepackage[hidelinks]{hyperref}
\usepackage[capitalise,noabbrev,nameinlink]{cleveref}
\usepackage{bm,bbm}
\usepackage{xcolor}
\usepackage{mathtools}
\usepackage[noend]{algpseudocode}
\usepackage{algorithm}
\usepackage{subcaption}
\usepackage{graphicx}
\usepackage{enumitem}
\usepackage{tikz}
\usepackage{booktabs}

\newcommand{\OT}{\mathsf{OT}}

\newcommand{\RR}{\mathbb R}

\newcommand{\rA}{\mathrm{A}}
\newcommand{\rB}{\mathrm{B}}
\newcommand{\rC}{\mathrm{C}}

\newcommand{\rM}{\mathrm{M}}
\newcommand{\rN}{\mathrm{N}}
\newcommand{\rP}{\mathrm{P}}
\newcommand{\rQ}{\mathrm{Q}}
\newcommand{\rT}{\mathrm{T}}
\newcommand{\rU}{\mathrm{U}}

\newcommand{\cK}{\mathcal{K}}

\newcommand{\sH}{\mathsf{H}}
\renewcommand{\vec}{\mathrm{vec}}

\DeclareMathOperator{\supp}{spt}

\DeclareMathOperator{\Id}{Id}

\DeclareMathOperator{\argmin}{argmin}
\DeclareMathOperator{\argmax}{argmax}

\newtheorem{proposition}{Proposition}
\newtheorem{theorem}{Theorem}
\newtheorem{remark}{Remark}
\newtheorem{lemma}{Lemma}
\newtheorem{corollary}{Corollary}
\newtheorem{assumption}{Assumption}

\usepackage{bm}
\usepackage{enumitem}
\usepackage{tikz}
 \title{Discrete Gromov--Wasserstein Duality: Algorithms and Isomorphism Testing}

\author[G. Rioux]{Gabriel Rioux}
\address[G. Rioux]{
Department of Mathematics, Imperial College London}
\email{g.rioux@imperial.ac.uk}
\author[J. Marks]{Joanna Marks}
\address[J. Marks]{
Department of Mathematics, Imperial College London}
\email{joanna.marks23@imperial.ac.uk}
\author[R. Passeggeri]{Riccardo Passeggeri}
\address[R. Passeggeri]{
Department of Mathematics, Imperial College London}
\email{riccardo.passeggeri@imperial.ac.uk}
\author[Z. Goldfeld]{Ziv Goldfeld}
\address[Z. Goldfeld]{
School of Electrical and Computer Engineering, Cornell University
}
\email{goldfeld@cornell.edu}
\thanks{
Z. Goldfeld is partially supported by NSF grants CCF-1947801,  CCF-2046018, and DMS-2210368, and the 2020 IBM Academic Award. J. Marks is supported by EPSRC through the StatML CDT programme, grant no. EP/Y034813/1. R. Passeggeri is partially supported by EPSRC grant UKRI2396.
}

\begin{document}
\begin{abstract}
The Gromov--Wasserstein (GW) distance provides a principled framework for aligning metric measure (mm) spaces based solely on their intrinsic structure. Its ability to identify isomorphic representations of distributions across spaces renders it valuable for comparing data where equality up to isomorphism occurs naturally such as in graphs or, more generally, distributions on graphs. Recently, a type of dual form for the GW distance between Euclidean distributions with the squared Euclidean or inner product costs was derived, spurring the development of new statistical and algorithmic results for this setting. This work furnishes a novel duality result for GW distances with and without entropic regularization that is applicable to all finitely supported mm spaces. Leveraging this result, we derive the sample complexity of empirical GW distances between finite mm spaces, as well as limit distributions under proper centering and scaling.   Furthermore, we propose new algorithms for solving the regularized GW problem which are subject to formal convergence guarantees. These statistical and algorithmic advancements give rise to a principled and efficient framework for testing whether two distributions on the set of graphs with a fixed number of nodes are isomorphic based on samples.
\end{abstract}
\maketitle

\section{Introduction}

Gromov--Wasserstein (GW) distances serve as an optimal transport (OT) based framework for aligning distributions supported on distinct spaces.  The $(p,q)$-GW distance between two metric measure (mm) spaces $(\mathcal X_0,\mathsf d_0,\mu_0)$ and $(\mathcal X_1,\mathsf d_1,\mu_1)$ is given by \cite{memoli2011gromov} 
\begin{equation}
\label{eq:GWIntro}
\mathsf{GW}_{p,q}(\mu_0,\mu_1)\coloneqq \inf_{\pi\in\Pi(\mu_0,\mu_1)}\left(\int|\mathsf d_0^q(x,x')-\mathsf d_1^q(y,y')|^pd\pi\otimes \pi(x,y,x',y')\right)^{\frac 1 p},
\end{equation}
where $\Pi(\mu_0,\mu_1)$ is the set of all couplings of $\mu_0$ and $\mu_1$. Thus, $\mathsf{GW}_{p,q}$ can be interpreted as an $L^p$ relaxation of the Gromov--Hausdorff distance and, notably, serves as a metric on the space of all compact mm spaces modulo the equivalence class of isomorphic mm spaces.\footnote{\label{foot:isomorphic}$(\mathcal X_0,\mathsf d_0,\mu_0)$ and  $(\mathcal X_1,\mathsf d_1,\mu_1)$ are isomorphic if $\mu_0\circ T^{-1}=\mu_1$ for some isometry $T:\supp(\mu_0)\to \supp(\mu_1)$.}  %
These properties make GW a natural tool for comparing heterogeneous datasets and identifying isomorphic representations of objects, spurring its use in diverse applications including single-cell genomics \cite{cao2022manifold,demetci2022scot}, cross-domain learning \cite{alvarez-melis-jaakkola-2018-gromov,yuan2025optimal}, and graph comparison \cite{chen2020graph,xu2019gromov} among many others.

An important open problem in the study of GW distances concerns the existence of a dual form for \eqref{eq:GWIntro}. The primary obstacle is that \eqref{eq:GWIntro}  is generally a nonconvex quadratic program so that standard convex duality arguments do not apply. By contrast, the OT problem is a linear program and hence  benefits from a well-established duality theory which serves as a central tool for analysis. To date, a type of duality theory for GW problems has been provided in only a limited number of cases. The first such result was provided in \cite{zhang2024gromov} for $\mathsf{GW}_{2,2}$ between  distributions $\mu_0\in\mathcal P(\mathbb R^{d_0}),\mu_1\in\mathcal P(\mathbb R^{d_1})$ on Euclidean spaces where the metric is the Euclidean distance; a similar representation was provided in \cite{rioux2024entropic} for inner products in place of squared Euclidean distances. This result connects $\mathsf{GW}_{2,2}$ to a class of parameterized OT problems and has  served as the catalyst for recent developments in the computational \cite{rioux2024entropic}, statistical \cite{groppe2024lower,rioux2024limit,zhang2024gromov}, and geometric \cite{zhang2026gradient} study of Euclidean GW distances.
\cite{karumanchi2025approximation} derived a similar formulation for finite-dimensional concave quadratic programs, see the literature review for details.

This work develops a novel dual representation which holds for any finitely discrete mm spaces. This result enables us to propose new algorithms for solving the regularized GW problem which admit nonasymptotic convergence guarantees and to establish a statistical theory for empirical GW between discrete distributions encompassing parametric finite-sample rates, limit laws, and a framework  for testing for equality of discrete mm spaces up to isomorphism based on samples. This hypothesis testing framework is applied to the problem of discerning if two distributions on the set of all graphs with a fixed number of nodes are isomorphic.

\subsection{Literature review}
A dual formulation for $\mathsf{GW}_{2,2}$ and its entropic variant was first provided in \cite{zhang2024gromov} for distributions on Euclidean spaces with finite fourth moments when the metric is the Euclidean distance. If $\mu_0\in\mathcal P(\mathbb R^{d_0}),\mu_1\in\mathcal P(\mathbb R^{d_1})$ are, without loss of generality, centered   
$      \mathsf {GW}_{2,2}^2(\mu_0,\mu_1)= \mathsf S(\mu_0,\mu_1)+\inf_{\mathrm U\in\mathbb R^{d_0\times d_1}}\left\{32\|\mathrm U\|_{\mathrm F}^2+\mathsf{OT}_{\mathrm U}(\mu_0,\mu_1) \right\},$
where $\|\cdot\|_{\mathrm{F}}$ is the Frobenius norm, $\mathsf S(\mu_0,\mu_1)$ is a constant depending only on the moments of the marginals, and $\mathsf{OT}_{\mathrm U}(\mu_0,\mu_1)$ is an OT problem with cost function $c_{\mathrm U}:(x,y)\in\mathbb R^{d_0}\times \mathbb R^{d_1}\mapsto -4\|x\|^2\|y\|^2-32x^{\intercal}\mathrm U y$. Leveraging this representation, that work derived sharp sample complexity results for empirical GW between compactly supported distributions.  \cite{groppe2024lower} further demonstrated that the derived rates adapt to a notion of intrinsic dimensionality. Namely, these works illustrate that empirical GW suffers from the same curse of dimensionality inherent to empirical OT in that $\mathbb E[\mathsf{GW}_{2,2}(\hat{\mu}_{n},\mu)]\lesssim n^{-\frac{2}{\max\{4,d\}}}$ up to a log factor when $d=4$, where $\hat \mu_n$ is the empirical measure from $n$ independent and identically distributed (i.i.d.) samples from a compactly supported measure $\mu$, see \cite{kato2025convergence} for an extension to unbounded marginals. With entropic regularization, the rate is known to be parametric under certain tail conditions, see \cite{zhang2024gromov}.
Later,  \cite{rioux2024limit} built on these sample complexity results by deriving limit laws for the empirical GW distance between Euclidean distributions under different assumptions on the populations,  again using this variational form as a central tool for analysis. The same work proposed a framework for testing if two distributions on graphs are isomorphic under the assumption that the graphs have independently sampled edges.    

On the computational side, \cite{rioux2024entropic} proposed algorithms for computing the entropically regularized $\mathsf{GW}_{2,2}$ distance with the Euclidean metric based on optimizing a regularized variational form. The corresponding objective was shown to be smooth with $L_{\varepsilon}$-Lipschitz continuous gradient where $L_{\varepsilon}$ depends on the regularization strength $\varepsilon>0$. %
Using these properties, that work proposed accelerated gradient methods with approximate gradients which are subject to 
 nonasymptotic convergence rates.     

Finally, \cite{karumanchi2025approximation} showed that if $\rM =- \rB^{\intercal}\rB$ for a rank $r$ matrix, $\rB$,
\begin{equation}
\label{eq:varformQPintro}
    \min_{\substack{\mathrm Ax=b\\x\geq 0}}\left\{\frac 12 x^{\intercal} \rM x-c^{\intercal} x-\varepsilon\mathsf H(x)\right\}=\inf_{u\in\mathbb R^r}\left\{\frac 12\|u\|^2+\inf_{\substack{\mathrm{A}x=b\\x\geq 0}}\left\{-u^{\intercal}\rB x-c^{\intercal}x-\varepsilon\mathsf H(x)\right\}\right\}  
\end{equation}
where $\varepsilon\geq 0$  and $\mathsf H$ is Shannon's entropy. Though the main focus of that work was to show that solutions of this problem converge, as $\varepsilon\downarrow 0$, to those of the unregularized problem ($\varepsilon=0$) exponentially fast and only linearly if $\rM$ is not negative semidefinite, sufficient conditions for GW problems to be concave quadratic programs were also derived.

\section{Notation and Background}

For $N\in\mathbb N$, we let $[N]\coloneqq \{1,\dots, N\}$.
 For a function $f:S\to \mathbb R$, $\|f\|_{\infty, S} = \sup_{s\in S}|f(s)|$. The closure of a set $A\subset \mathbb R^d$ is denoted by $\bar A$.  For  matrices $\mathrm{A},\mathrm{B}\in\mathbb R^{d\times d'}$, $\langle\mathrm A,\mathrm B\rangle_{\mathrm{F}}=\sum_{(i,j)\in[d]\times[d']} \mathrm{A}_{ij}\mathrm{B}_{ij}$ and $\|\cdot\|_{\mathrm{F}}$ is the induced Frobenius norm. $\|\cdot\|_{\mathrm{op}}$ is the operator norm.

The collection of probability measures on a finite set $S$ is denoted $\mathcal P(S)$. The pushforward of a measure $\mu\in\mathcal P(S)$ through a map $T:S\to R$, where $R$ is another set of finite cardinality, is the probability measure $T_{\sharp}\mu\in\mathcal P(R)$ defined by the property that $T_{\sharp}\mu(A)=\mu\left(T^{-1}(A)\right)$ for every measurable set $A$. 
The multivariate normal distribution on $\mathbb R^d$ with mean $\mu\in\mathbb R^d$ and covariance $\Sigma\in\mathbb R^{d\times d}$ is denoted by $N(\mu,\Sigma)$. Equality in distribution is denoted by $\stackrel d = $ whereas convergence in distribution is denoted by $\stackrel d\to$.

\subsection{Optimal Transport}  For finite sets $\mathcal X_0=\left(x_0^{(i)}\right)_{i=1}^{N_0},\mathcal X_1=\left(x_1^{(j)}\right)_{j=1}^{N_1}$, a cost function $c:\mathcal X_0\times \mathcal X_1\to \mathbb R$, and probability measures $\mu_0\in\mathcal P(\mathcal X_0),\mu_1\in\mathcal P(\mathcal X_1)$, the optimal transport (OT) problem is 
$    \mathsf{OT}_c(\mu_0,\mu_1)=\inf_{\pi\in\Pi(\mu_0,\mu_1)}\int cd\pi,
$
where $\Pi(\mu_0,\mu_1)$ is the set of all couplings of $(\mu_0,\mu_1)$. By linear programming duality, we have the dual form  
\begin{equation}
    \label{eq:OTDuality}
\OT_c(\mu_0,\mu_1)=\sup_{\substack{(\varphi_0,\varphi_1)\in\mathcal F(\mathcal X_0)\times \mathcal F(\mathcal X_1)\\\varphi_0\oplus \varphi_1\leq c}} \left\{\int \varphi_0 d\mu_0+\int \varphi_1 d\mu_1\right\},
\end{equation}
where  $\varphi_0\oplus \varphi_1\leq c$ indicates that $\varphi_0\left( x_0^{(i)}\right)+\varphi_1\left( x_1^{(j)}\right)\leq c\left(x_0^{(i)}, x_1^{(j)}\right)$  for every $(i,j)\in[N_0]\times[N_1]$ and $\mathcal F(\mathcal X_0)$ is the set of all functions on $\mathcal X_0$ with $\mathcal F(\mathcal X_1)$ defined similarly. Both the primal (minimization) problem and the dual (maximization) problem admit solutions, which we refer to as  \textit{OT plans} and \textit{OT potentials} for $\OT_c(\mu_0,\mu_1)$. OT potentials are not unique: if $(\varphi_0^{\star},\varphi_1^{\star})$ is a pair of OT potentials for $\OT_c(\mu_0,\mu_1)$, then so is $(\varphi_0^{\star}+a,\varphi_1^{\star}-a)$ for any $a\in\mathbb R$. OT potentials related in this fashion are referred to as \textit{versions} of one another. 

As stated next, there always exists a version of the potentials which is bounded in terms of $\|c\|_{\infty,\mathcal X_0\times \mathcal X_1}$. The result is standard, but we include its proof in \cref{proof:lem:potentialBounds} for completeness. 

\begin{proposition}[Potential estimates]
\label{lem:potentialBounds}
            Fix $(\mu_0,\mu_1)\in\mathcal P(\mathcal X_0)\times\mathcal P(\mathcal X_1)$ with full support  and $c:\mathcal X_0\times \mathcal X_1\to\mathbb R$. Then, any pair of OT potentials $(\varphi_0,\varphi_1)$ for $\mathsf{OT}_c(\mu_0,\mu_1)$ admits a version $(\tilde \varphi_0,\tilde \varphi_1)$ satisfying $\max\left\{\|\tilde \varphi_0\|_{\infty,\mathcal X_0},\|\tilde \varphi_1\|_{\infty,\mathcal X_1}\right\}\leq \frac 32 \|c\|_{\infty,\mathcal X_0\times \mathcal X_1}$.  
\end{proposition}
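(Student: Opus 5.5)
Write $\|c\|_\infty$ for $\|c\|_{\infty,\mathcal X_0\times\mathcal X_1}$ and $\mu_0^{(i)}=\mu_0(\{x_0^{(i)}\})$, and similarly for $\mu_1$. The plan is to use complementary slackness together with full support to show that any pair of OT potentials satisfies the $c$-transform identities. A suitable normalization then gives the bound.

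\textbf{Step 1: $c$-transform identities.} Let $(\varphi_0,\varphi_1)$ be OT potentials and $\pi$ an OT plan. Since $\mu_0^{(i)}>0$ for every $i$, each row of $\pi$ has some entry $\pi_{ij}>0$. Complementary slackness (from linear programming duality \eqref{eq:OTDuality}) forces $\varphi_0(x_0^{(i)})+\varphi_1(x_1^{(j)})=c(x_0^{(i)},x_1^{(j)})$ at that entry. Combined with the constraint $\varphi_0\oplus\varphi_1\le c$, this gives
\[
\varphi_0(x_0^{(i)})=\min_j\bigl\{c(x_0^{(i)},x_1^{(j)})-\varphi_1(x_1^{(j)})\bigr\}.
\]
The symmetric argument, using that $\mu_1$ has full support, gives
\[
\varphi_1(x_1^{(j)})=\min_i\bigl\{c(x_0^{(i)},x_1^{(j)})-\varphi_0(x_0^{(i)})\bigr\}.
\]
This step uses only the existence of primal and dual solutions, which the paper has already stated.

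\textbf{Step 2: oscillation bounds.} For any $i,i'$, the first identity gives $\varphi_0(x_0^{(i)})-\varphi_0(x_0^{(i')})\le\max_j|c(x_0^{(i)},x_1^{(j)})-c(x_0^{(i')},x_1^{(j)})|\le 2\|c\|_\infty$. Hence $\operatorname{osc}\varphi_0\le 2\|c\|_\infty$, and likewise $\operatorname{osc}\varphi_1\le 2\|c\|_\infty$.

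\textbf{Step 3: normalize.} Pass to the version $\tilde\varphi_0=\varphi_0+a$, $\tilde\varphi_1=\varphi_1-a$, choosing $a$ so that $\max\tilde\varphi_0=\|c\|_\infty$. By Step 2, $\tilde\varphi_0$ takes values in $[-\|c\|_\infty,\|c\|_\infty]$, so $\|\tilde\varphi_0\|_\infty\le\|c\|_\infty$. The $c$-transform identities are preserved under this shift.

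For $\tilde\varphi_1$ we use the second identity. The upper bound is $\tilde\varphi_1\le c-\tilde\varphi_0(x_0^{(i^*)})\le \|c\|_\infty-\|c\|_\infty=0$, where $i^*$ attains the maximum of $\tilde\varphi_0$. The lower bound is $\tilde\varphi_1\ge -\|c\|_\infty-\max\tilde\varphi_0=-2\|c\|_\infty$.

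This lower bound overshoots the target $\tfrac32\|c\|_\infty$. The fix is to split the range symmetrically: shift instead so that $\tilde\varphi_0$ has range centered at $\tfrac12\|c\|_\infty$ (for example $\max\tilde\varphi_0=\tfrac32\|c\|_\infty$ suitably adjusted). One can also notice that the two bounds $\tilde\varphi_1\in[-\|c\|_\infty-M_0,\ \|c\|_\infty-M_0]$ and $\tilde\varphi_0\in[M_0-2\|c\|_\infty,\ M_0]$, with $M_0=\max\tilde\varphi_0$, can be balanced.

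\textbf{Main obstacle: choosing $M_0$.} We need $M_0\le\tfrac32\|c\|_\infty$, $M_0-2\|c\|_\infty\ge-\tfrac32\|c\|_\infty$, and $|\|c\|_\infty\pm\|c\|_\infty-M_0|$ terms bounded by $\tfrac32\|c\|_\infty$. These conditions reduce to $M_0\in[\tfrac12\|c\|_\infty,\tfrac32\|c\|_\infty]$ together with $-\|c\|_\infty-M_0\ge-\tfrac32\|c\|_\infty$, that is, $M_0\le\tfrac12\|c\|_\infty$. So the choice $M_0=\tfrac12\|c\|_\infty$ satisfies all constraints simultaneously. With it, $\tilde\varphi_0\in[-\tfrac32\|c\|_\infty,\tfrac12\|c\|_\infty]$ and $\tilde\varphi_1\in[-\tfrac32\|c\|_\infty,\tfrac12\|c\|_\infty]$, which proves the claim.
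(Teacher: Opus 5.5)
Your proof is correct and follows essentially the paper's argument: complementary slackness plus full support yields the $c$-transform identities, and the normalization $\max\tilde\varphi_0=\tfrac12\|c\|_{\infty,\mathcal X_0\times\mathcal X_1}$ places both potentials in $[-\tfrac32\|c\|_{\infty,\mathcal X_0\times\mathcal X_1},\tfrac12\|c\|_{\infty,\mathcal X_0\times\mathcal X_1}]$. The only difference is that the paper bounds $\tilde\varphi_0$ from below using the bound on $\sup\tilde\varphi_1$ rather than your oscillation estimate, which is inessential; in a write-up, drop the abandoned first normalization and the garbled remark about $\tfrac32\|c\|_\infty$, and state the choice $M_0=\tfrac12\|c\|_\infty$ directly.
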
     

\subsection{Entropic Optimal Transport}
\label{sec:EOT}
The entropic optimal transport (EOT) problem, introduced in \cite{cuturi2013lightspeed}, is defined by regularizing the OT problem with the Kullback-Leibler (KL) divergence:  
\begin{equation}
   \begin{aligned} 
\label{eq:EOTPrimal_intro}
\mathsf{EOT}_c^{\varepsilon}(\mu_0,\mu_1)\coloneqq    \inf_{\pi\in\Pi(\mu_0,\mu_1)} \int cd\pi+\varepsilon{\mathsf{KL}}(\pi\|\mu_0\otimes \mu_1),
    \end{aligned}
\end{equation} 
where ${\mathsf{KL}}(\pi\|\mu_0\otimes \mu_1)\coloneqq\int\log(d\pi/d(\mu_0\otimes \mu_1))d\pi$ if $\pi\ll \mu_0\otimes \mu_1$ and takes the value $+\infty$ otherwise.
Akin to standard OT, \eqref{eq:EOTPrimal_intro} admits a dual problem, given as follows 
\begin{equation}
\label{eq:EOTDual}
    \sup_{(\varphi_0,\varphi_1)\in \mathcal F(\mathcal X_0)\times \mathcal F(\mathcal X_1)}\left\{ \int \varphi_0d\mu_0+\int\varphi_1d\mu_1-\varepsilon \int e^{\frac{\varphi_0\oplus \varphi_1-c}\varepsilon}d\mu_0\otimes \mu_1+\varepsilon\right\}.
\end{equation}
A solution, $(\varphi_0^{\star},\varphi_1^{\star})$, to \eqref{eq:EOTDual} is called a pair of \emph{EOT potentials} and is known to be a.s. unique up to additive constants (i.e. any other solution $(\psi_0^{\star},\psi_1^{\star})$ satisfies $(\psi_0^{\star},\psi_1^{\star})=(\varphi_0^{\star}+a,\varphi_1^{\star}-a)$ $\mu_0\otimes \mu_1$-a.s. for some $a\in\mathbb R$). Moreover, \eqref{eq:EOTPrimal_intro} has a unique solution $\pi^{\star}\in\Pi(\mu_0,\mu_1)$, called the \emph{EOT plan}, which can be expressed in terms of the marginal measures and the EOT potentials as
    $\frac{d\pi^{\star}}{d\mu_0\otimes \mu_1}=e^{\frac{\varphi_0^{\star}\oplus \varphi_1^{\star}-c}{\varepsilon}},\quad \mu_0\otimes \mu_1\text{-a.s.}$
Further,  $(\varphi_0,\varphi_1)\in L^1(\mu_0)\times L^1(\mu_1)$ solves \eqref{eq:EOTDual} if and only if $(\varphi_0,\varphi_1)$ solves  the  Schr\"odinger system. 
\begin{equation}
\label{eq:SchrodingerSystem}
    \int e^{\frac{\varphi_0(x)+\varphi_1-c(x,\cdot)}\varepsilon}d\mu_1=1,
    \int e^{\frac{\varphi_0+\varphi_1(y)-c(\cdot,y)}\varepsilon}d\mu_0=1, \text{for } \text{$\mu_0$-a.e. $x\in\mathcal X_0$, } \text{$\mu_1$-a.e. $y\in\mathcal X_1$}. 
\end{equation}  
See \cite{nutz2021introduction} for a comprehensive introduction to EOT. 

EOT potentials admitting  \emph{a priori} bounds depending only on $\|c\|_{\infty,\mathcal X_0\times \mathcal X_1}$ always exist. This result follows from Theorem 2.1 and Remark 2.2 in \cite{groppe2024lower} which is based on   
the work of \cite{marino2020optimal}. 
\begin{proposition}[EOT potential estimates]
\label{lem:EOTPotentialBounds}
           Fix $(\mu_0,\mu_1)\in\mathcal P(\mathcal X_{0})\times \mathcal P(\mathcal X_1)$ and $c:\mathcal X_0\times \mathcal X_1\to\mathbb R$. Then,  there exist EOT potentials $(\varphi_0,\varphi_1)$ for \eqref{eq:EOTDual} defined on $\mathcal X_0$ and $\mathcal X_1$  which satisfy  \eqref{eq:SchrodingerSystem} on $\mathcal X_0\times \mathcal X_1$ and for which $\max\left\{\| \varphi_0\|_{\infty,\mathcal X_0},\| \varphi_1\|_{\infty,\mathcal X_1}\right\}\leq \frac 32 \|c\|_{\infty,\mathcal X_0\times \mathcal X_1}$. As such, the EOT plan $\pi^{\star}$ satisfies
           $
                \frac{d\pi^{\star}}{d\mu_0\otimes \mu_1}(x,y)\geq e^{\frac{-4\|c\|_{\infty,\mathcal X_0\times \mathcal X_1}}{\varepsilon}}>0\text{  for each $(x,y)\in\supp(\mu_0)\times \supp(\mu_1)$.}           $
\end{proposition}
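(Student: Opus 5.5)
We prove \cref{lem:EOTPotentialBounds} by building the potentials explicitly from the Schr\"odinger system \eqref{eq:SchrodingerSystem}, using the soft $c$-transform. Write $\|c\|_\infty$ for $\|c\|_{\infty,\mathcal X_0\times\mathcal X_1}$.

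\emph{Step 1: existence and uniqueness of the EOT potentials.} The set $\Pi(\mu_0,\mu_1)$ is compact, and the objective in \eqref{eq:EOTPrimal_intro} is strictly convex and lower semicontinuous. Hence the EOT plan $\pi^\star$ exists and is unique. Standard finite-dimensional EOT duality (Sinkhorn scaling on the supports, see \cite{nutz2021introduction}) gives potentials $(\varphi_0,\varphi_1)$ on $\supp(\mu_0)\times\supp(\mu_1)$ solving \eqref{eq:SchrodingerSystem}.

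\emph{Step 2: extend to all of $\mathcal X_0\times\mathcal X_1$.} Define
\[
\varphi_1(y)=-\varepsilon\log\int e^{(\varphi_0-c(\cdot,y))/\varepsilon}d\mu_0 \quad\text{for every } y\in\mathcal X_1,
\]
and then
\[
\varphi_0(x)=-\varepsilon\log\int e^{(\varphi_1-c(x,\cdot))/\varepsilon}d\mu_1 \quad\text{for every } x\in\mathcal X_0.
\]
On the supports these formulas reproduce the original functions, because the system already holds there. Integration against $\mu_0$ or $\mu_1$ only involves values on the supports. So both equations of \eqref{eq:SchrodingerSystem} hold at every point of $\mathcal X_0\times\mathcal X_1$.

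\emph{Step 3: oscillation bound.} The soft transform is monotone and $1$-Lipschitz in $c$. From $\varphi_1(y)=-\varepsilon\log\int e^{(\varphi_0-c(\cdot,y))/\varepsilon}d\mu_0$ we get, for any $y,y'$,
\[
|\varphi_1(y)-\varphi_1(y')|\le \sup_x|c(x,y)-c(x,y')|\le 2\|c\|_\infty,
\]
and similarly $\operatorname{osc}\varphi_0\le 2\|c\|_\infty$.

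\emph{Step 4: normalize the version.} Shift to a version $(\varphi_0+a,\varphi_1-a)$ so that $\max\varphi_0=-\min\varphi_0$, i.e.\ $\varphi_0$ is centered. Together with Step 3 this gives $\|\varphi_0\|_\infty\le\|c\|_\infty$. The Schr\"odinger equation in $y$ then gives
\[
\varphi_1(y)\in\Bigl[\min_x\bigl(c(x,y)-\varphi_0(x)\bigr),\ \max_x\bigl(c(x,y)-\varphi_0(x)\bigr)\Bigr],
\]
since a log-mean-exp lies between the min and the max. Hence $\|\varphi_1\|_\infty\le 2\|c\|_\infty$, which is too weak. This is the main obstacle: we must balance the two potentials rather than center only one.

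The fix is to choose the constant $a$ so that $\max\varphi_0+\min\varphi_0=\max\varphi_1+\min\varphi_1$ (after the shift). Such an $a$ exists: the shift moves the left side by $2a$ and the right side by $-2a$, so one value of $a$ equates them. Set $m=\max_x\varphi_0+\max_y\varphi_1$. Choose $x^*,y^*$ attaining the maxima of $\varphi_0,\varphi_1$. By the min/max sandwich of Step 4,
\[
\varphi_1(y^*)\le \max_x\bigl(c(x,y^*)-\varphi_0(x)\bigr)\le \|c\|_\infty-\min\varphi_0 .
\]
Adding $\max\varphi_0$ and using $\operatorname{osc}\varphi_0\le2\|c\|_\infty$ gives $m\le 3\|c\|_\infty$. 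Symmetrically, the sum of the minima is at least $-3\|c\|_\infty$. With the balanced normalization and oscillation at most $2\|c\|_\infty$, we get
\[
\max\varphi_0=\tfrac12\bigl(\operatorname{osc}\varphi_0+\text{midpoint}\bigr)\le \|c\|_\infty+\tfrac12\|c\|_\infty=\tfrac32\|c\|_\infty .
\]
The lower bounds follow in the same way, as do the bounds for $\varphi_1$.

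\emph{Step 5: density lower bound.} On the supports,
\[
\frac{d\pi^\star}{d(\mu_0\otimes\mu_1)}=e^{(\varphi_0\oplus\varphi_1-c)/\varepsilon},
\]
and the exponent is at least $-\tfrac32\|c\|_\infty-\tfrac32\|c\|_\infty-\|c\|_\infty=-4\|c\|_\infty$. This yields the stated bound $e^{-4\|c\|_\infty/\varepsilon}>0$.
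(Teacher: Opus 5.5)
Steps 1--3 and 5 are fine, and balancing the two potentials is the right idea. The gap is in the final deduction of Step 4, which does not follow from what you derived. Write $M_i=\max\varphi_i$, $m_i=\min\varphi_i$, and $s=M_0+m_0=M_1+m_1$ for the balanced version. Since $M_0=\tfrac12(\mathrm{osc}\,\varphi_0+s)$ and $m_0=\tfrac12(s-\mathrm{osc}\,\varphi_0)$, the $\tfrac32$ bound needs $|s|\le\|c\|_\infty$, and you never establish this.

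The bounds you do prove, $M_0+M_1\le 3\|c\|_\infty$ and $m_0+m_1\ge -3\|c\|_\infty$, only give $s\le 3\|c\|_\infty-\tfrac12(\mathrm{osc}\,\varphi_0+\mathrm{osc}\,\varphi_1)$. Hence $M_0\le \tfrac32\|c\|_\infty+\tfrac14(\mathrm{osc}\,\varphi_0-\mathrm{osc}\,\varphi_1)$, which can be as large as $2\|c\|_\infty$. Concretely, take $\|c\|_\infty=1$ and $(M_0,m_0,M_1,m_1)=(2,0,1,1)$. These numbers satisfy every constraint you list: balance, oscillations at most $2$, sum of maxima at most $3$, sum of minima at least $-3$. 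Yet $M_0=2>\tfrac32$. So the obstacle you yourself flagged, getting $\tfrac32$ instead of $2$, is not actually overcome.

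The missing ingredient appears in your own computation, just before you weaken it. The min/max sandwich holds at every point, giving
\[
-\|c\|_\infty-M_0\le\varphi_1(y)\le\|c\|_\infty-m_0
\quad\text{and}\quad
-\|c\|_\infty-M_1\le\varphi_0(x)\le\|c\|_\infty-m_1 .
\]
These are the cross bounds $|M_1+m_0|\le\|c\|_\infty$ and $|M_0+m_1|\le\|c\|_\infty$. When you add $\max\varphi_0$ to $\varphi_1(y^*)\le\|c\|_\infty-\min\varphi_0$ and then invoke the oscillation bound, you discard exactly this information.

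Instead, combine the cross bounds with the balance. Since $2s=(M_0+m_1)+(M_1+m_0)\in[-2\|c\|_\infty,2\|c\|_\infty]$, you get $|s|\le\|c\|_\infty$. Then $M_i\le\tfrac32\|c\|_\infty$ and $m_i\ge-\tfrac32\|c\|_\infty$ for $i=0,1$.

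With this repair your argument becomes a complete, self-contained proof. The paper itself gives no argument for this statement; it invokes Theorem 2.1 and Remark 2.2 of \cite{groppe2024lower}, which build on \cite{marino2020optimal}.
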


\subsection{Gromov--Wasserstein Problems} The Gromov--Wasserstein (GW) problem extends the OT setting to incompatible spaces by aligning inter-space relational structure. For $(\mu_0,\mu_1)\in\mathcal P(\mathcal X_0)\times \mathcal P(\mathcal X_1)$ and similarity measures $\kappa_0:\mathcal X_0\times \mathcal X_0\to\mathbb R$ and $\kappa_1:\mathcal X_1\times \mathcal X_1\to\mathbb R$, the $p$-GW problem with $p>0$ is given by \cite{chowdhury2023distances,memoli2011gromov} 
\begin{equation}
    \label{eq:GW}
    \mathsf{GW}_{p}(\mu_0,\mu_1)\coloneqq \inf_{\pi\in\Pi(\mu_0,\mu_1)}\left(\int \left|\kappa_0(x,x')-\kappa_1(y,y')\right|^pd\pi\otimes \pi(x,y,x',y')\right)^{1/p}.
\end{equation}
If we enforce that $\kappa_0=\mathsf d_0^q$ and $\kappa_1=\mathsf d_1^q$ for some $q>0$, the resulting functional is denoted by $\mathsf{GW}_{p,q}$ and, when $p\geq 1$, defines a metric on the quotient space of metric measure spaces obtained by identifying isomorphic metric measure spaces, see Footnote \ref{foot:isomorphic}.   
  
By complete analogy with EOT, \cite{peyre2016gromov,solomon2016entropic} proposed the following 
entropic regularization of $p$-GW,
\begin{equation}
    \label{eq:EGW}
    \mathsf{EGW}^{\varepsilon}_{p}(\mu_0,\mu_1)\coloneqq \inf_{\pi\in\Pi(\mu_0,\mu_1)}\int \left|\kappa_0(x,x')-\kappa_1(y,y')\right|^pd\pi\otimes \pi(x,y,x',y')+\varepsilon\mathsf{KL}(\pi\|\mu_0\otimes \mu_1).
\end{equation}
With our notation, $\mathsf{EGW}^0_{p}(\mu_0,\mu_1)=\mathsf{GW}_p(\mu_0,\mu_1)^p$. Consequently, we often state results for ``any $\varepsilon\geq 0$'' to indicate that it holds for both the regularized and unregularized problems. 

For each $\varepsilon\geq 0$, \eqref{eq:EGW} can be expressed as a regularized QP in $k=N_0N_1$ variables, 
\begin{equation}
\label{eq:regularizedQP}
    \mathsf{EGW}_{p}^{\varepsilon}(\mu_0,\mu_1) = \inf_{\substack{\rA x = b\\x\geq 0}} \left\{\frac{1}{2}x^{\intercal}\rC x-\varepsilon\big(\mathsf{H}(x)-\mathsf{H}_{\mu_0}-\mathsf{H}_{\mu_1}\big)\right\},  
\end{equation}
where 
 the constraint set $\Pi(\mu_0,\mu_1)$ is recast as $\{x\in\mathbb R^{k}:\mathrm {A}x=b,x\geq 0\}$ for suitable matrices $\mathrm A\in\mathbb R^{(N_0+N_1)\times k}$ and $b \in \mathbb R^{N_0+N_1}$, $\mathrm C\in\mathbb R^{k\times k}$ is the resulting cost matrix, and we have rewritten the KL divergence in terms of the Shannon entropies of $x,$ $\mu_0$, and $\mu_1$: 
$
\mathsf H(x)=-\sum_{i=1}^{k} x_i\log(x_i),\text{ and } \mathsf H_{\mu_i} = - \sum_{j=1}^{N_i}\mu_i(\{x_i^{(j)}\})\log\left(\mu_i(\{x_i^{(j)}\})\right)$  for $i\in\{0,1\},
$
  see \cref{sec:Ab} for exact expressions and details.  

\section{The Variational Form}

Our first main contribution is to establish, for any $\varepsilon\geq 0$, a variational formulation for the EGW problem \eqref{eq:regularizedQP} which trades the constrained quadratic minimization for an unconstrained $\min$-$\max$ problem whose objective involves the optimal value of a linear program. For notational convenience, we set $
    \cK\coloneqq \{x\in\RR^{k}:\rA x = b, x\geq 0\},$
where $\rA,b$ are as above and write \eqref{eq:regularizedQP} as  
$
\mathsf{EGW}_{p}^{\varepsilon}(\mu_0,\mu_1) =\inf_{x\in\mathcal K} \left\{\frac{1}{2}x^{\intercal}\rM x-\varepsilon\mathsf{H}(x)\right\} + \varepsilon\left(\mathsf{H}_{\mu_0}+\mathsf{H}_{\mu_1}\right),$
where $\rM=\frac{1}{2}\left(\rC+\rC^{\intercal}\right)$ which is symmetric. There is no loss in generality as $z^{\intercal}\rM z = z^{\intercal}\mathrm{C}z$ for each $z\in\mathbb R^k$. Dropping the constant entropy terms, the effective problem is thus 
\begin{equation}
    \label{eq:entropicQP}
    \inf_{x\in\mathcal K} \left\{\frac 12x^{\intercal} \rM x-\varepsilon\sH(x)\right\},
\end{equation} 
so that \eqref{eq:entropicQP} and $\mathsf{EGW}_{p}^{\varepsilon}(\mu_0,\mu_1)$ are equivalent problems. The main result is as follows.

\begin{theorem}[Variational form]\label{thm:variationalForm} 
    Any symmetric matrix $\rM\in\mathbb R^{k\times k}$ can be decomposed as $\rM=\rB_1^{\intercal}\rB_1-\rB_0^{\intercal}\rB_0$ for some $\rB_0\in\mathbb R^{r_0\times k},\rB_1\in\mathbb R^{r_1\times k}$ with $r_0,r_1\leq k$. With this, 
\begin{equation}
    \label{eq:variationalForm}
    \inf_{u\in\mathbb R^{r_0}}\sup_{v\in\mathbb R^{r_1}}\left\{ \frac12\|u\|^2-\frac 12 \|v\|^2+\inf_{x\in\mathcal K}\left\{\left( \rB_1^{\intercal}v-\rB_0^{\intercal} u \right)^{\intercal}x-\varepsilon\sH(x)  \right\} \right\}
\end{equation} is equivalent to \eqref{eq:entropicQP}  
in the sense that the optimal values of both problems coincide and  
\begin{enumerate}[leftmargin=*]
    \item If $x^{\star}$ solves \eqref{eq:entropicQP}, then $(u^{\star},v^{\star})=(\rB_0 x^{\star},\rB_1 x^{\star})$ solves the outer $\inf$-$\sup$ problem in \eqref{eq:variationalForm} and $x^{\star}\in\argmin_{x\in\mathcal K}\left\{\left( \rB_1^{\intercal}v^{\star}-\rB_0^{\intercal}u^{\star} \right)^{\intercal}x-\varepsilon\sH(x)  \right\}$. 
    \item If, conversely, $(u^{\star},v^{\star})$ solves the outer $\inf$-$\sup$ problem in \eqref{eq:variationalForm}, then there exists  
    $
    \bar x \in \argmin_{x\in\mathcal K}\left\{\left( \rB^{\intercal}_1v^{\star}-\rB_0^{\intercal} u^{\star} \right)^{\intercal}x-\varepsilon\sH(x)  \right\}$ which
    solves \eqref{eq:entropicQP} and $(u^{\star},v^{\star})=(\rB_0 \bar x, \rB_1\bar x)$.  
\end{enumerate} 
Moreover, $\argmax_{v\in\mathbb R^{r_1}}\left\{ \frac12\|u\|^2-\frac 12 \|v\|^2+\inf_{x\in\mathcal K}\left\{\left( \rB_1^{\intercal}v-\rB_0^{\intercal} u \right)^{\intercal}x-\varepsilon\sH(x)  \right\} \right\}\subset \rB_1\mathcal K$ for any $u\in \mathbb R^{r_0}$. Thus, the outer $\inf$ and $\sup$ in \eqref{eq:variationalForm} can be restricted to $\mathrm{B_0}\mathcal K$ and $\mathrm{B_1}\mathcal K$.
\end{theorem}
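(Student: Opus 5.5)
We first obtain the decomposition from the spectral theorem. Write $\rM=\rQ\Lambda\rQ^{\intercal}$ with $\rQ$ orthogonal and split $\Lambda=\Lambda_+-\Lambda_-$ into its positive and negative parts. Then $\rB_1=\Lambda_+^{1/2}\rQ^{\intercal}$ and $\rB_0=\Lambda_-^{1/2}\rQ^{\intercal}$, restricted to the nonzero rows, give $r_0,r_1\le k$ and $\rM=\rB_1^{\intercal}\rB_1-\rB_0^{\intercal}\rB_0$.

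The analytic core consists of two elementary scalar identities. For any $x$,
\[
-\tfrac12\|\rB_0x\|^2=\inf_{u}\left\{\tfrac12\|u\|^2-u^{\intercal}\rB_0x\right\},
\qquad
\tfrac12\|\rB_1x\|^2=\sup_{v}\left\{v^{\intercal}\rB_1x-\tfrac12\|v\|^2\right\},
\]
with unique optimizers $u=\rB_0x$ and $v=\rB_1x$. Substituting these gives
\[
\tfrac12x^{\intercal}\rM x-\varepsilon\sH(x)=\inf_u\sup_v \Phi(u,v,x),
\qquad
\Phi(u,v,x):=\tfrac12\|u\|^2-\tfrac12\|v\|^2+(\rB_1^{\intercal}v-\rB_0^{\intercal}u)^{\intercal}x-\varepsilon\sH(x).
\]
Thus \eqref{eq:entropicQP} equals $\inf_{x\in\cK}\inf_u\sup_v\Phi$. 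The inf over $u$ commutes freely with the inf over $x$, so the problem is $\inf_u\inf_{x\in\cK}\sup_v\Phi$. The key step, and the main obstacle, is exchanging $\inf_{x\in\cK}$ with $\sup_v$ for fixed $u$. Here $\Phi(u,\cdot,\cdot)$ is convex in $x$ (linear plus $-\varepsilon\sH$, which is convex) and concave in $v$, and $\cK$ is compact and convex. Sion's minimax theorem therefore gives
\[
\inf_{x\in\cK}\sup_v\Phi=\sup_v\inf_{x\in\cK}\Phi,
\]
and this yields equality of optimal values with \eqref{eq:variationalForm}. Continuity of $\sH$ on $\cK$, including at the boundary, supplies the needed lower semicontinuity for every $\varepsilon\ge0$.

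For the structural claims, fix $u$ and set $g_u(v)=\inf_{x\in\cK}\Phi(u,v,x)$. This function is strongly concave in $v$, since it is $-\tfrac12\|v\|^2$ plus an infimum of affine functions, so its maximizer $v_u$ is unique. By Danskin's theorem, the superdifferential of $g_u$ at $v$ is $-v+\rB_1\,\mathrm{conv}(X(u,v))$, where $X(u,v)$ is the argmin set in $x$. Optimality of $v_u$ therefore forces $v_u\in\rB_1\,\mathrm{conv}(X)\subset\rB_1\cK$, which is the final claim. Alternatively, compactness of $\cK$ gives a saddle point $(\bar x_u,v_u)$ of the convex–concave problem, with $v_u=\rB_1\bar x_u$ read off from the $v$-optimality of the saddle.

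\textbf{Item (2).} Let $(u^\star,v^\star)$ solve the outer problem and take a saddle point $\bar x$ for $u^\star$, so that $v^\star=\rB_1\bar x$ and $\bar x$ minimizes $\Phi(u^\star,v^\star,\cdot)$ over $\cK$. The value at $u$ equals $F(u):=\min_{x\in\cK}\{\tfrac12\|u\|^2-u^{\intercal}\rB_0x+\tfrac12\|\rB_1x\|^2-\varepsilon\sH(x)\}$. Minimizing $F$ jointly over $(u,x)$, the minimizing $u$ for any fixed $x$ is $\rB_0x$, so optimality of $u^\star$ together with $\bar x$ optimal at $u^\star$ forces $u^\star=\rB_0\bar x$. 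The joint objective at $(\rB_0\bar x,\bar x)$ equals the QP objective at $\bar x$, so $\bar x$ solves \eqref{eq:entropicQP}.

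\textbf{Item (1).} Given a solution $x^\star$ of \eqref{eq:entropicQP}, set $u^\star=\rB_0x^\star$. Then $F(u^\star)\le\text{QP}(x^\star)=\text{opt}$, so $u^\star$ is optimal. Next, $(x^\star,\rB_1x^\star)$ is a saddle point for $u^\star$: the $v$-optimality is immediate, and $x$-optimality follows from the first-order condition of the QP at $x^\star$, since the gradient of $\Phi$ in $x$ equals the gradient of the QP objective at $x^\star$ when $u=\rB_0x^\star$ and $v=\rB_1x^\star$, and $\Phi$ is convex in $x$. Hence $v^\star=\rB_1x^\star$ is optimal and $x^\star$ lies in the inner argmin. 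For $\varepsilon>0$ the first-order condition needs care because the entropy gradient blows up on the boundary; we handle this with a directional-derivative variational inequality on $\cK$.
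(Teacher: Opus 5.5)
Your proof is correct, but it reaches items (1) and (2) by a different route from the paper's.

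\textbf{What is the same.} The value identity matches the paper's argument: the spectral split, the sup/inf representations of the two psd quadratics, and Sion's theorem to exchange $\inf_{x\in\cK}$ with $\sup_v$. Your completing-the-square is exactly where the paper's Fenchel--Moreau computation ends up. Your Danskin argument for $v_u\in\rB_1\cK$ is also the paper's final step.

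\textbf{How the paper does items (1)--(2).} It works with the outer function $\ell_\varepsilon(u)=\tfrac12\|u\|^2-V_\varepsilon(u)$ and proceeds in three steps.
\begin{enumerate}
\item It computes the Clarke subdifferential $\partial\ell_\varepsilon(u)=u-\rB_0\argmin_{x\in\cK}\{\tfrac12x^{\intercal}\rB_1^{\intercal}\rB_1x-u^{\intercal}\rB_0x-\varepsilon\sH(x)\}$ in \cref{lem:derivativeObjective}, via Danskin and Clarke calculus.
\item It invokes nonsmooth stationarity $0\in\partial\ell_\varepsilon(u^\star)$ to produce $\bar x$ with $u^\star=\rB_0\bar x$.
\item It uses \cref{lem:QPvsLPsoln}, a convex subdifferential sum rule: a minimizer of that convex problem also minimizes the version with $\tfrac12x^{\intercal}\rB_1^{\intercal}\rB_1x$ linearized. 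This identifies $\rB_1\bar x$ as the maximizer in $v$.
\end{enumerate}

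\textbf{How you do them.} You bypass nonsmooth analysis entirely.
\begin{itemize}
\item In item (2), set $J(u,x)=\tfrac12\|u\|^2-u^{\intercal}\rB_0x+\tfrac12\|\rB_1x\|^2-\varepsilon\sH(x)$. The identity $\inf_uF(u)=\inf_{(u,x)}J(u,x)$, together with strong convexity of $J(\cdot,\bar x)$, forces $u^\star=\rB_0\bar x$ at once.
\item In item (1), you linearize the whole nonconvex objective at $x^\star$ and use convexity of $\Phi(u^\star,\rB_1x^\star,\cdot)$ for sufficiency. This makes $(x^\star,\rB_1x^\star)$ a saddle point.
\end{itemize}

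\textbf{The boundary issue you flagged.} The price of your route is the boundary behaviour of $\sH$ in item (1), and it does go through. Along every feasible direction $d=x-x^\star$, the one-sided directional derivatives at $x^\star$ of the QP objective and of $\Phi(u^\star,\rB_1x^\star,\cdot)$ coincide as elements of $[-\infty,\infty)$. The term $d_i\log(sd_i)\to-\infty$ for $x^\star_i=0<d_i$ appears in both. Minimality makes this common value nonnegative. A convex $\phi$ satisfies $\phi(x)-\phi(x^\star)\ge\phi'(x^\star;d)$, which closes the argument. The paper avoids the issue by linearizing only the everywhere-finite psd part inside an already convex problem.

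\textbf{Trade-off.} The paper's detour produces reusable tools. The gradient formula $\nabla\ell_\varepsilon(u)=u-\rB_0x^\star_u$ drives the algorithms, and \cref{lem:QPvsLPsoln} reappears in the proof of \cref{prop:LipschitzContinuityGradient}. Your argument is shorter and more elementary.

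\textbf{One thing to state explicitly.} The restriction of the outer infimum to $\rB_0\cK$ needs a line of justification. A QP minimizer exists because the objective is continuous on the compact set $\cK$. So $\ell_\varepsilon$ attains its infimum by item (1), and by item (2) every minimizer has the form $\rB_0\bar x$.
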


The proof of \cref{thm:variationalForm} leverages the fact that the positive semidefinite (psd) quadratic form $f_1(x)=\frac 12 x^{\intercal}\rB_1^{\intercal}\rB_1 x$ can be expressed as the convex conjugate of its convex conjugate via the Fenchel-Moreau theorem \cite[Theorem 11.1]{rockafellar1998variational}.  This enables expressing $f_1(x)$ as $\sup_{v\in\mathbb R^{r_1}}\left\{ x^{\intercal}\rB_1^{\intercal}v -\frac 12 \|v\|^2\right\}$, with a similar representation holding for $f_0(x)=\frac 12 x^{\intercal}\rB_0^{\intercal}\rB_0 x$. With this, it remains to interchange the supremum over the auxiliary variable $v$ with the infimum over $x\in\mathcal K$, leading to \eqref{eq:variationalForm}. See \cref{proof:thm:variationalForm} for the derivation.

\begin{remark}[Comparison with previous results]
\label{rmk:variationalComparison} 
As noted in the literature review, similar representations for GW distances have appeared previously. The first results in this direction, Theorem 3.4 and Corollary 4.1 in \cite{zhang2024gromov}, hold for  Euclidean distributions with finite fourth moments where $\kappa_0,\kappa_1$ are squared Euclidean distances and $p=2$ with and without entropy. \cite{karumanchi2025approximation} provided a similar formulation for \eqref{eq:entropicQP} in the case where $\rM$ is negative semidefinite (nsd) which coincides with \cref{thm:variationalForm} with $\rB_1=0$, recall \eqref{eq:varformQPintro}. Section 6 in that work shows that if $\kappa_0$ and $\kappa_1$ are both psd or both nsd kernels, then their result applies for the GW problem between finitely discrete measures with $p=2$. 

 By comparison,  \cref{thm:variationalForm} is applicable for any finitely discrete marginals without assumptions on $\kappa_0$, $\kappa_1$, or $p>0$. This generality comes at the cost of an additional maximization operation, but still yields a clear connection between GW and OT with a parameterized~cost.   
\end{remark}

Though \cref{thm:variationalForm} is written using Shannon's entropy, the result naturally yields a correspondence between the EGW problem and a parameterized family of EOT problems. %

\begin{corollary}[From EGW to EOT] 
\label{eq:EGWtoEOT}
For any $\varepsilon \geq 0$,   
$   \mathsf{EGW}_p^{\varepsilon}(\mu_0,\mu_1)$ can be expressed as $ \inf_{u\in\RR^{r_0}}\sup_{v\in\RR^{r_1}}\left\{\frac 12 \|u\|^2-\frac 12 \|v\|^2+\mathsf{EOT}^{\varepsilon}_{\rB_1^{\intercal}v-\rB_0^{\intercal}u}(\mu_0,\mu_1)\right\},
$
where $\mathsf{EOT}^{\varepsilon}_{\rB_1^{\intercal}v-\rB_0^{\intercal}u}(\mu_0,\mu_1)$ is the EOT problem $\min_{x\in\mathcal K}\left\{\left( \rB^{\intercal}_1v-\rB_0^{\intercal} u \right)^{\intercal}x-\varepsilon(\sH(x)-\sH_{\mu_0}-\sH_{\mu_1})  \right\}$.
\end{corollary}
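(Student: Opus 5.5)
The corollary follows from \cref{thm:variationalForm} by identifying the inner linear-entropic problem in \eqref{eq:variationalForm} with an EOT problem. I would carry this out in three steps.

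\textbf{Step 1: Reduce to \eqref{eq:entropicQP}.} Start from the identity
\[
\mathsf{EGW}_p^{\varepsilon}(\mu_0,\mu_1)=\inf_{x\in\mathcal K}\left\{\tfrac12 x^{\intercal}\rM x-\varepsilon\sH(x)\right\}+\varepsilon(\sH_{\mu_0}+\sH_{\mu_1}),
\]
recorded just before \eqref{eq:entropicQP}. Fix a decomposition $\rM=\rB_1^{\intercal}\rB_1-\rB_0^{\intercal}\rB_0$ as in \cref{thm:variationalForm}. The theorem says the optimal value of \eqref{eq:entropicQP} equals that of \eqref{eq:variationalForm}. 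The constant $\varepsilon(\sH_{\mu_0}+\sH_{\mu_1})$ does not depend on $u$, $v$ or $x$, so it can be moved inside the inner infimum, and then inside the outer $\inf$-$\sup$, without changing any value. This gives
\[
\mathsf{EGW}_p^{\varepsilon}(\mu_0,\mu_1)=\inf_{u}\sup_{v}\left\{\tfrac12\|u\|^2-\tfrac12\|v\|^2+\inf_{x\in\mathcal K}\left\{(\rB_1^{\intercal}v-\rB_0^{\intercal}u)^{\intercal}x-\varepsilon(\sH(x)-\sH_{\mu_0}-\sH_{\mu_1})\right\}\right\}.
\]

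\textbf{Step 2: Identify the inner problem as EOT.} View the vector $c_{u,v}\coloneqq \rB_1^{\intercal}v-\rB_0^{\intercal}u\in\RR^{k}$, with $k=N_0N_1$, as a cost function on $\mathcal X_0\times\mathcal X_1$ via the indexing of \cref{sec:Ab}. With this identification, $x\in\mathcal K$ corresponds to a coupling $\pi\in\Pi(\mu_0,\mu_1)$, and $c_{u,v}^{\intercal}x=\int c_{u,v}\,d\pi$. The remaining check is that
\[
-\sH(x)+\sH_{\mu_0}+\sH_{\mu_1}=\mathsf{KL}(\pi\|\mu_0\otimes\mu_1)
\]
on $\mathcal K$. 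This follows by expanding $\sum_{ij}\pi_{ij}\log\bigl(\pi_{ij}/(\mu_0^i\mu_1^j)\bigr)$ and using the marginal constraints to rewrite $\sum_{ij}\pi_{ij}\log\mu_0^i=-\sH_{\mu_0}$, and likewise for $\mu_1$. With the convention $0\log 0=0$, this remains valid when $\pi$ has zero entries.

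\textbf{Step 3: Conclude, and handle $\varepsilon=0$.} Steps 1 and 2 give the stated formula. For $\varepsilon=0$ the entropy terms vanish, and the inner problem is the linear program $\mathsf{OT}_{c_{u,v}}(\mu_0,\mu_1)$, consistent with the convention $\mathsf{EGW}^0_p=\mathsf{GW}_p^p$. The infimum over the compact set $\mathcal K$ is attained, because the objective is continuous there (entropy is continuous on the simplex), which justifies writing $\min$.

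The only step needing care is the entropy–KL identification when $\mu_0$ or $\mu_1$ lacks full support, since the KL convention and the restriction to $\mathcal K$ must agree. Every $\pi\in\Pi(\mu_0,\mu_1)$ is automatically absolutely continuous with respect to $\mu_0\otimes\mu_1$, so the two conventions coincide and no obstacle arises.
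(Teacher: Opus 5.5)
Your proposal is correct and follows the paper's route: the paper obtains the corollary directly from \cref{thm:variationalForm}, by absorbing the constant $\varepsilon(\sH_{\mu_0}+\sH_{\mu_1})$ into the inner problem and reading that problem as EOT. The identity $\mathsf{KL}(\pi\|\mu_0\otimes\mu_1)=-\sH(x)+\sH_{\mu_0}+\sH_{\mu_1}$ on $\mathcal K$ that you check is the one recorded in \cref{sec:Ab}, and your extra care about zero-mass atoms and the case $\varepsilon=0$ is sound but not needed beyond that.
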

This result is a direct consequence of \cref{thm:variationalForm}. %
           This type of connection has served as the catalyst for previous statistical and computational advancements for the quadratic GW distances between Euclidean distributions  \cite{rioux2024entropic,rioux2024limit,zhang2024gromov}. The remainder of this paper concerns the development of an analogous theory for distributions supported on \emph{arbitrary} finite metric spaces using \cref{thm:variationalForm}.    %

        \section{Statistical Analysis}
        \label{sec:statistics}
    
        In this section we consider the problem of statistical estimation of the GW distance with finitely discrete marginals. 
        Supposing that we have access to independent collections of i.i.d. samples $X_{0,1},\dots, X_{0,n}$ and $X_{1,1},\dots, X_{1,n}$ from $\mu_0$ and $\mu_1$ it is natural to estimate $\mathsf{EGW}^{\varepsilon}_p(\mu_{0}, \mu_{1})$ for a choice of kernels via the plug-in estimator $\mathsf{EGW}^{\varepsilon}_p(\hat \mu_{0,n},\hat \mu_{1,n})$, where $\hat \mu_{0,n}\coloneqq \frac 1n \sum_{i=1}^n\delta_{X_{0,i}}$ and $\hat \mu_{1,n}\coloneqq \frac 1n \sum_{i=1}^n\delta_{X_{1,i}}$ are the empirical measures from $n$ samples. In what follows, we establish that the plug-in estimator converges to $\mathsf{EGW}_p^{\varepsilon}(\mu_0,\mu_1)$ in expectation at the parametric rate $O(n^{-1/2})$ as $n\to \infty$ in \cref{sec:parametricRates}. This result hints that the asymptotic fluctuations of $\mathsf{EGW}_p^{\varepsilon}(\hat \mu_{0,n},\hat \mu_{1,n})$ about $\mathsf{EGW}_p^{\varepsilon}(\mu_{0},\mu_{1})$ with the $\sqrt n$ scaling should be stable. We formalize this heuristic in \cref{thm:limitTheorems} of \cref{sec:limitlaws} by deriving the limit distribution for empirical EGW. The derived limit laws enable us to formulate a framework for testing if two discrete mm spaces are isomorphic by using the quantiles of the limit law to set critical values for the test statistic, see \cref{sec:hypothesisTesting}.

        \subsection{Sample Complexity}
        \label{sec:parametricRates}
        We begin by characterizing the expected rate of convergence of the empirical cost. To this end, it will be convenient to identify a probability measure on a finite space with its vector representation. Namely, for a probability measure $\mu_i\in\mathcal P(\mathcal X_i)$, we let $w_{\mu_i}\coloneqq\left(\mu_i(\{x_i^{(1)}\}),\dots,\mu_i(\{x_i^{(N_i)}\})\right)$ for $i=0,1$ denote the vector of weights of $\mu_i$. %

       We first establish a uniform Lipschitz continuity property for the OT and EOT costs. This will enable us to reduce the problem of expected convergence analysis for empirical GW to the problem of controlling the expected difference between the empirical and population weights for multinomial distributions by leveraging the variational form.  This result will also prove useful when establishing limit laws for the empirical GW distance in \cref{sec:limitlaws}.   
        \begin{proposition}[Lipschitz stability]
        \label{prop:LipschitzStability}
 Fix $\varepsilon\geq 0$ and let  $\mathcal B$ be the unit ball in $\mathbb R^{k}$ with the $1$-norm. Then, for any choice of $(\nu_0,\nu_1),(\nu_0',\nu_1')\in\mathcal P(\mathcal X_0)\times \mathcal P(\mathcal X_1)$ and $(u,v)\in \rB_0\mathcal B\times \rB_1\mathcal B$, 
$
    \left|\mathsf{EOT}^{\varepsilon}_{\rB_1^{\intercal}v-\rB_0^{\intercal}u}(\nu_0,\nu_1)-\mathsf{EOT}^{\varepsilon}_{\rB_1^{\intercal}v-\rB_0^{\intercal}u}(\nu_0',\nu_1')\right|\leq C_{\rB_0,\rB_1}\left(\|w_{\nu_0}-w_{\nu_0'}\|_1+\|w_{\nu_1}-w_{\nu_1'}\|_1\right),
$
where $C_{\rB_0,\rB_1} =\frac3 2\sup_{\substack{\|x_0\|_1= 1\\\|x_1\|_1= 1}}\|\rB_1^{\intercal}\rB_1x_1-\rB_0^{\intercal}\rB_0x_0\|_{\infty}$. 
       \end{proposition}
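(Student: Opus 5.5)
The plan is to use the dual forms \eqref{eq:OTDuality} and \eqref{eq:EOTDual}, with the potential bounds of \cref{lem:potentialBounds} and \cref{lem:EOTPotentialBounds}, in the standard way. First, fix $(u,v)\in\rB_0\mathcal B\times\rB_1\mathcal B$ and write $u=\rB_0x_0$, $v=\rB_1x_1$ with $\|x_0\|_1,\|x_1\|_1\le 1$. The cost vector is then
\[
c=\rB_1^{\intercal}v-\rB_0^{\intercal}u=\rB_1^{\intercal}\rB_1x_1-\rB_0^{\intercal}\rB_0x_0 .
\]
Viewed as a function on $\mathcal X_0\times\mathcal X_1$, it satisfies $\|c\|_{\infty}\le \sup_{\|x_0\|_1=\|x_1\|_1=1}\|\rB_1^{\intercal}\rB_1x_1-\rB_0^{\intercal}\rB_0x_0\|_\infty$. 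The reduction to the unit sphere holds because the expression is linear in each of $x_0$ and $x_1$, so the supremum over the $\ell^1$ balls is attained on the spheres. (If one of the $x_i$ vanishes, a sign or scaling argument, or simply enlarging the constant, handles it.) Hence $\frac32\|c\|_\infty\le C_{\rB_0,\rB_1}$. Note also that the entropy constants $\varepsilon(\sH_{\mu_0}+\sH_{\mu_1})$ in \cref{eq:EGWtoEOT} are exactly what turns the $x$-formulation into the KL-regularized EOT \eqref{eq:EOTPrimal_intro}. So I will work directly with $\mathsf{EOT}^\varepsilon_c$ in the measure formulation.

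Next, for a single cost $c$, I bound $\mathsf{EOT}^\varepsilon_c(\nu_0,\nu_1)-\mathsf{EOT}^\varepsilon_c(\nu_0',\nu_1')$ from above; the reverse inequality follows by symmetry.

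For $\varepsilon>0$, take the potentials $(\varphi_0,\varphi_1)$ for $(\nu_0,\nu_1)$ given by \cref{lem:EOTPotentialBounds}. These satisfy the Schr\"odinger system on all of $\mathcal X_0\times\mathcal X_1$ and are bounded by $\frac32\|c\|_\infty$. They are therefore admissible in the dual for $(\nu_0',\nu_1')$, which gives
\[
\mathsf{EOT}^\varepsilon_c(\nu_0',\nu_1')\ge \int\varphi_0d\nu_0'+\int\varphi_1d\nu_1'-\varepsilon\int e^{(\varphi_0\oplus\varphi_1-c)/\varepsilon}d\nu_0'\otimes\nu_1'+\varepsilon .
\]
Because the Schr\"odinger equations hold pointwise everywhere, the exponential integral equals $1$ for every product measure. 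The $\varepsilon$ terms then cancel, and the same identity gives $\mathsf{EOT}^\varepsilon_c(\nu_0,\nu_1)=\int\varphi_0d\nu_0+\int\varphi_1d\nu_1$. Subtracting, I get
\[
\mathsf{EOT}^\varepsilon_c(\nu_0,\nu_1)-\mathsf{EOT}^\varepsilon_c(\nu_0',\nu_1')\le \|\varphi_0\|_\infty\|w_{\nu_0}-w_{\nu_0'}\|_1+\|\varphi_1\|_\infty\|w_{\nu_1}-w_{\nu_1'}\|_1 .
\]
This is at most $C_{\rB_0,\rB_1}$ times the sum of the weight differences.

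For $\varepsilon=0$, I use \eqref{eq:OTDuality} instead. Any feasible pair with $\varphi_0\oplus\varphi_1\le c$ everywhere on $\mathcal X_0\times\mathcal X_1$ is feasible for both pairs of measures, so only the bound on the potentials is needed.

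The main obstacle is that \cref{lem:potentialBounds} assumes full support, whereas $\nu_0,\nu_1$ (for example empirical measures) may not have it. My first attempt is to build the potentials directly. Take optimal potentials on the supports and extend them off the supports by $c$-transforms, $\varphi_1(y)=\min_x\{c(x,y)-\varphi_0(x)\}$ and then $\varphi_0(x)=\min_y\{c(x,y)-\varphi_1(y)\}$. This preserves feasibility everywhere and does not decrease the dual value. Normalizing so that $\max\varphi_0=\|c\|_\infty/2$, say, the usual oscillation bound $\mathrm{osc}(\varphi_i)\le 2\|c\|_\infty$ for $c$-concave functions then gives the $\frac32\|c\|_\infty$ bound. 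If that route fails, the fallback is to perturb: apply the result to the full-support mixtures $(1-\delta)\nu_i+\delta\,\mathrm{Unif}$, use the uniform bounds to extract a limit of potentials by compactness, and let $\delta\downarrow0$.
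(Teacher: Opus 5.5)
\textbf{Gap in the case $\varepsilon>0$.} Your $\varepsilon=0$ argument is correct and is essentially the paper's proof of \cref{lem:OTLipschitz}: the constraint $\varphi_0\oplus\varphi_1\le c$ on all of $\mathcal X_0\times\mathcal X_1$ does not depend on the measures, and you take a $c$-conjugate optimal pair normalized by $\max\varphi_0=\frac12\|c\|_\infty$. The reduction $\frac32\|c\|_\infty\le C_{\rB_0,\rB_1}$ is also fine. The map $(x_0,x_1)\mapsto\|\rB_1^{\intercal}\rB_1x_1-\rB_0^{\intercal}\rB_0x_0\|_\infty$ is convex, so its maximum over the product of $\ell^1$ balls is attained at vertices $(\pm e_i,\pm e_j)$, which have unit norm.

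The gap is the sentence ``the exponential integral equals $1$ for every product measure.'' Set $p\coloneqq e^{(\varphi_0\oplus\varphi_1-c)/\varepsilon}$. The Schr\"odinger system \eqref{eq:SchrodingerSystem} for $(\nu_0,\nu_1)$ only gives $\int p(x,\cdot)\,d\nu_1=1$ for every $x$ and $\int p(\cdot,y)\,d\nu_0=1$ for every $y$. Hence $\int p\,d\nu_0'\otimes\nu_1=\int p\,d\nu_0\otimes\nu_1'=1$. But when both marginals move,
\[
\int p\,d\nu_0'\otimes\nu_1'-1=\int p\,d(\nu_0'-\nu_0)\otimes(\nu_1'-\nu_1),
\]
and this is generally nonzero. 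For example, let $\nu_0,\nu_1$ be uniform on two points and let $c$ be $0$ on matching indices and $1$ otherwise. Then $p=1+a$ with $a>0$ at $(x_0^{(1)},x_1^{(1)})$, so $\nu_0'=\delta_{x_0^{(1)}}$, $\nu_1'=\delta_{x_1^{(1)}}$ give $\int p\,d\nu_0'\otimes\nu_1'=1+a$.

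Your upper bound therefore carries the extra term $\varepsilon\int p\,d(\nu_0'-\nu_0)\otimes(\nu_1'-\nu_1)$. This term can be positive. Its size is governed by $\varepsilon\sup p$ rather than by $\|c\|_\infty$, and it can dwarf the right-hand side, e.g.\ when $\nu_0,\nu_1$ have small atoms and $\varepsilon$ is small. So it cannot be absorbed into $C_{\rB_0,\rB_1}\bigl(\|w_{\nu_0}-w_{\nu_0'}\|_1+\|w_{\nu_1}-w_{\nu_1'}\|_1\bigr)$.

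\textbf{Fix.} This is exactly the paper's proof of \cref{lem:EOTLipschitz}: pass through the intermediate pair $(\nu_0',\nu_1)$ by the triangle inequality and move one marginal at a time. When only $\nu_0$ changes, the exponential term integrates to exactly $1$. Using the potentials of $(\nu_0,\nu_1)$, integrate against $\nu_0'\otimes\nu_1$; using those of $(\nu_0',\nu_1)$, integrate against $\nu_0\otimes\nu_1$. This works precisely because these potentials satisfy the Schr\"odinger system at every point, as you correctly insisted. It yields
\[
\bigl|\mathsf{EOT}^{\varepsilon}_c(\nu_0,\nu_1)-\mathsf{EOT}^{\varepsilon}_c(\nu_0',\nu_1)\bigr|\le\tfrac32\|c\|_\infty\|w_{\nu_0}-w_{\nu_0'}\|_1 .
\]
The symmetric bound holds for the second marginal, and summing the two gives the claimed constant.
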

We emphasize that $\mathsf{EOT}^0_c(\nu_0,\nu_1)=\mathsf{OT}_c(\nu_0,\nu_1)$ in the same way that $\mathsf{EGW}_p^{0}(\nu_0,\nu_1)=\mathsf{GW}_p(\nu_0,\nu_1)^p$. The result follows  from the dual forms of OT and EOT and by leveraging the estimates on the potentials from \cref{lem:potentialBounds,lem:EOTPotentialBounds}, see \cref{proof:prop:LipschitzStability} for the proof. 

With \cref{prop:LipschitzStability} in hand, we readily obtain parametric rates for the empirical EGW and GW distances by applying the strategy discussed above, see \cref{proof:thm:parametricRates} for details. 

        \begin{theorem}[Parametric convergence rates]
        \label{thm:parametricRates} In the setting of \cref{prop:LipschitzStability}, we have that
           \[
                \mathbb E\left[\left|\mathsf{EGW}^{\varepsilon}_p(\hat \mu_{0,n},\hat \mu_{1,n})-\mathsf{EGW}^{\varepsilon}_p( \mu_{0}, \mu_{1})\right|\right]\leq C_{\rB_0,\rB_1}
                \left(\sqrt{{N_0-1}{}}+\sqrt{N_1-1}\right)n^{-1/2}.
            \]
        \end{theorem}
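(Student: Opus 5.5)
The plan is to combine the variational form with the Lipschitz stability of \cref{prop:LipschitzStability}, so that the GW estimation error reduces to multinomial weight estimation. By \cref{eq:EGWtoEOT} and the restriction statement at the end of \cref{thm:variationalForm},
\[
\mathsf{EGW}^{\varepsilon}_p(\nu_0,\nu_1)=\inf_{u\in \rB_0\mathcal K}\sup_{v\in \rB_1\mathcal K}\Phi_{\nu_0,\nu_1}(u,v),
\]
where $\Phi_{\nu_0,\nu_1}(u,v)\coloneqq \frac12\|u\|^2-\frac12\|v\|^2+\mathsf{EOT}^{\varepsilon}_{\rB_1^{\intercal}v-\rB_0^{\intercal}u}(\nu_0,\nu_1)$.

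First, I check that the restriction is legitimate for every pair of marginals $(\nu_0,\nu_1)$. Here $\mathcal K=\mathcal K(\nu_0,\nu_1)$ depends on the marginals, so I apply \cref{thm:variationalForm} with $\rB_0,\rB_1$ fixed (they depend only on $\rM$, which does not depend on the weights) for each pair separately. Every coupling $x\in\mathcal K$ is a probability vector, so $\|x\|_1=1$ and $\mathcal K\subset\mathcal B$. Hence the inf and sup may be taken over the common sets $\rB_0\mathcal B$ and $\rB_1\mathcal B$, uniformly in the marginals. This uses the full restriction statement: the argmax in $v$ lies in $\rB_1\mathcal K$, and an optimal $u$ lies in $\rB_0\mathcal K$. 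I expect this step to need the most care. Enlarging the feasible sets to the balls does not change the value, since the restricted sets contain optimizers and are contained in the full spaces.

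Second, for $(u,v)\in\rB_0\mathcal B\times\rB_1\mathcal B$, \cref{prop:LipschitzStability} gives
\[
\left|\Phi_{\hat\mu_{0,n},\hat\mu_{1,n}}(u,v)-\Phi_{\mu_0,\mu_1}(u,v)\right|\le C_{\rB_0,\rB_1}\left(\|w_{\hat\mu_{0,n}}-w_{\mu_0}\|_1+\|w_{\hat\mu_{1,n}}-w_{\mu_1}\|_1\right),
\]
because the quadratic terms cancel. Inf-sup values of two functions on the same sets differ by at most the sup-norm of their difference, so the same bound holds for $|\mathsf{EGW}^{\varepsilon}_p(\hat\mu_{0,n},\hat\mu_{1,n})-\mathsf{EGW}^{\varepsilon}_p(\mu_0,\mu_1)|$. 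The empirical measures need not have full support, but \cref{prop:LipschitzStability} holds for arbitrary marginals, so this causes no problem.

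Third, I take expectations. Each coordinate of $w_{\hat\mu_{i,n}}$ is a rescaled binomial, so by Jensen's inequality $\mathbb E|w_j-\hat w_j|\le\sqrt{w_j(1-w_j)/n}$. Cauchy--Schwarz then gives
\[
\mathbb E\|w_{\hat\mu_{i,n}}-w_{\mu_i}\|_1\le n^{-1/2}\sqrt{N_i\sum_j w_j(1-w_j)}\le n^{-1/2}\sqrt{N_i\left(1-\tfrac1{N_i}\right)}=\sqrt{(N_i-1)/n},
\]
using $\sum_j w_j^2\ge 1/N_i$. Summing over $i=0,1$ yields the stated bound.
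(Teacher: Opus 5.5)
Your proposal is correct and follows the paper's proof essentially step for step. You restrict the inf--sup to $\rB_0\mathcal B\times\rB_1\mathcal B$ via the final assertion of \cref{thm:variationalForm}, bound the difference of inf--sup values by the uniform EOT discrepancy using \cref{prop:LipschitzStability}, and then control the multinomial $\ell^1$ deviations. The only difference is cosmetic: you apply Jensen coordinatewise before Cauchy--Schwarz, whereas the paper uses $\|\cdot\|_1\le\sqrt{N_i}\,\|\cdot\|_2$ followed by Jensen on the squared norm; both yield exactly $\sqrt{(N_i-1)/n}$.
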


\subsection{Directional Differentiability and Limit Distributions}
\label{sec:limitlaws}

We complement the derived sample complexity result by furnishing limit theorems for the empirical GW and EGW distance between finitely supported distributions with the same $\sqrt n$ scaling. In both cases, we leverage the extended delta method (cf. e.g., Theorem 1 in \cite{romisch2014delta}) which we describe in \cref{sec:deltaMethod} for clarity.
 
As the underlying spaces $\mathcal X_0,\mathcal X_1$ are finite, it  will be convenient to write the integral of $f:\mathcal X_0 \to \RR$ with respect to $\eta\in\mathcal P(\mathcal X_0)$ as
$
\int f d\eta = \sum_{i=1}^{N_0} f\left(x_0^{(i)}\right)\eta\left(\{x_0^{(i)}\}\right) = \langle v_f,w_{\eta}\rangle,  
$
where $v_f \coloneqq  \left(f\left(x^{(1)}_0\right),\dots,f\left(x^{(N_0)}_0\right) \right)\in\mathbb R^{N_0}$ and $w_{\eta}$ is the vector whose entries are the weights $\eta$ assigns to the points $\mathcal X_0= \left(x^{(i)}_0\right)_{i=1}^{N_0}$.  We will use the same notation for functions and measures on $\mathcal X_1$ or on the joint space $\mathcal X_0\times \mathcal X_1$.

We assume that 
$
\Delta(x,y,x',y')= |\kappa_0(x,x')-\kappa_1(y,y')|^p
$
satisfies $\Delta(x,y,x',y')=\Delta(x',y',x,y)$. There is no loss of generality, as $\tilde \Delta(x,y,x',y')=\frac 12 \left(\Delta(x,y,x',y')+\Delta(x',y',x,y)\right)$ satisfies this symmetry condition and
$
\int \tilde \Delta d\eta\otimes \eta = \int \Delta d\eta \otimes \eta,
$
for each $\eta \in\mathcal P(\mathcal X_0\times \mathcal X_1)$.

With these preliminaries in hand, we now state the main result.
\begin{theorem}[Limit theorems] 
    \label{thm:limitTheorems} Fix $(\mu_0,\mu_1)\in\mathcal P(\mathcal X_0)\times\mathcal P(\mathcal X_1)$ with full support. If the sequence $(\mu_{0,n},\mu_{1,n})_{n\in\mathbb N}\subset \mathcal P(\mathcal X_0)\times  \mathcal P(\mathcal X_1)$ is such that   
    $\sqrt n ( (w_{\mu_{0,n}},w_{\mu_{1,n}})-(w_{ \mu_{0}},w_{ \mu_{1}}))\stackrel{d}{\to}(G_{\mu_0},G_{\mu_1})$, 
    \begin{enumerate}[leftmargin=*]
        \item   
        $\sqrt n\left({\mathsf{GW}_p(\mu_{0,n},\mu_{1,n})^p- \mathsf{GW}_p(\mu_{0},\mu_{1})^p}\right)
     \stackrel{d}{\to}  \inf_{\pi^{\star}\in\Pi^{\star}(\mu_0,\mu_1)}\sup_{(\varphi_0,\varphi_1)\in\mathcal D_{\pi^{\star}}}\left\{v_{ \varphi_0}^{\intercal}G_{\mu_0}+v_{ \varphi_1}^{\intercal}G_{\mu_1}\right\}$, where $\Pi^{\star}(\mu_0,\mu_1)$ is the set of optimal plans for $\mathsf{GW}_p(\mu_{0},\mu_{1})$ and $\mathcal D_{\pi^{\star}}$ is the set of OT potentials for $\mathsf{OT}_{c_{\pi^{\star}}}(\mu_0,\mu_1)$ for $c_{\pi^{\star}}(x,y)=2\int \Delta(x,y,x',y')d\pi^{\star}(x',y')$.    
    \item If $\varepsilon >0 $, 
   $     \sqrt n\left({\mathsf{EGW}^{\varepsilon}_p(\mu_{0,n},\mu_{1,n})- \mathsf{EGW}_p^{\varepsilon}(\mu_{0},\mu_{1})}\right)
     \stackrel{d}{\to}  \inf_{\pi^{\star}\in\Pi^{\star}(\mu_0,\mu_1)}\left\{v_{ \varphi_0^{\pi^{\star}}}^{\intercal}G_{\mu_0}+v_{ \varphi_1^{\pi^{\star}}}^{\intercal}G_{\mu_1}\right\},
   $ 
    where $\Pi^{\star}(\mu_0,\mu_1)$ is the set of optimal plans for $\mathsf{EGW}_p^{\varepsilon}(\mu_{0},\mu_{1})$ and $(\varphi_0^{\pi^{\star}},\varphi_1^{\pi^{\star}})$ is the unique (up to constants) pair of EOT potentials for $\mathsf{EOT}^{\varepsilon}_{c_{\pi^{\star}}}(\mu_0,\mu_1)$
    \end{enumerate}
\end{theorem}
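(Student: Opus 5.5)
We will prove both parts with the extended delta method described in \cref{sec:deltaMethod}, applied to the map $\Phi:(w_0,w_1)\mapsto \mathsf{EGW}_p^{\varepsilon}(\nu_0,\nu_1)$, where $\nu_i$ has weight vector $w_i$. The map is defined on the product of simplices. We need two ingredients. First, $\Phi$ is Lipschitz. Second, $\Phi$ is Hadamard directionally differentiable at $(w_{\mu_0},w_{\mu_1})$ tangentially to the set of admissible directions $(h_0,h_1)$ with $\mathbf 1^\intercal h_i=0$, with the derivative given by the limit expressions. Lipschitz continuity follows from \cref{thm:variationalForm} together with \cref{prop:LipschitzStability}. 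The outer $\inf$/$\sup$ can be restricted to the compact sets $\rB_0\mathcal K\times \rB_1\mathcal K\subset \rB_0\mathcal B\times\rB_1\mathcal B$, uniformly in the marginals, and the EOT values are uniformly Lipschitz there. An inf-sup of uniformly Lipschitz functions is Lipschitz. Given that, Hadamard and Gateaux directional differentiability coincide, so it suffices to compute one-sided derivatives along straight lines $t\mapsto (w_{\mu_0}+th_0,w_{\mu_1}+th_1)$. Full support guarantees that these lines stay in the simplex for small $t>0$.

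For the derivative we will work directly with the primal quadratic problem rather than the min-max form. Write $F_t(\pi)=\int\Delta\, d\pi\otimes\pi+\varepsilon\mathsf{KL}(\pi\|\nu_{0,t}\otimes\nu_{1,t})$ over $\Pi(\nu_{0,t},\nu_{1,t})$.

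\textbf{Upper bound.} Fix $\pi^\star\in\Pi^\star(\mu_0,\mu_1)$. Linearizing the quadratic at $\pi^\star$ and using the symmetry of $\Delta$ gives
\[
\int\Delta\, d\pi\otimes\pi=\int\Delta\, d\pi^\star\otimes\pi^\star+\int c_{\pi^\star}\,d(\pi-\pi^\star)+\int\Delta\, d(\pi-\pi^\star)^{\otimes 2}.
\]
The last term is $O(\|\pi-\pi^\star\|_1^2)$. We then use a minimizer $\pi_t$ of the (entropic) OT problem with cost $c_{\pi^\star}$ between $\nu_{0,t},\nu_{1,t}$ as a competitor. Stability of OT/EOT plans on finite spaces gives $\|\pi_t-\pi^\star\|=O(t)$, after choosing $\pi_t$ appropriately among the optimal plans when $\varepsilon=0$. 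This yields
\[
\Phi_t-\Phi_0\le \mathsf{EOT}^\varepsilon_{c_{\pi^\star}}(\nu_{0,t},\nu_{1,t})-\mathsf{EOT}^\varepsilon_{c_{\pi^\star}}(\mu_0,\mu_1)+o(t).
\]
Here we use that $\pi^\star$ is itself optimal for $\mathsf{EOT}^\varepsilon_{c_{\pi^\star}}(\mu_0,\mu_1)$, which is the first-order optimality condition for the GW problem. For $\varepsilon>0$ we also account for the change of reference measure in the KL term, which is absorbed into the EOT formulation. The derivative of OT/EOT with respect to the marginals is classical on finite spaces. For OT it is $\sup_{(\varphi_0,\varphi_1)\in\mathcal D_{\pi^\star}}\{v_{\varphi_0}^\intercal h_0+v_{\varphi_1}^\intercal h_1\}$, by Danskin's theorem applied to the LP dual \eqref{eq:OTDuality} restricted to bounded potentials (\cref{lem:potentialBounds}). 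For EOT it is $v_{\varphi_0^{\pi^\star}}^\intercal h_0+v_{\varphi_1^{\pi^\star}}^\intercal h_1$, by uniqueness of potentials. Taking the infimum over $\pi^\star$ gives the upper bound.

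\textbf{Lower bound.} This is the main obstacle. Let $\pi_t$ be optimal for $\Phi_t$. By compactness, along subsequences $\pi_t\to\bar\pi$, and continuity of $\Phi$ forces $\bar\pi\in\Pi^\star(\mu_0,\mu_1)$. Reverse the linearization around $\pi_t$, evaluating the unperturbed objective at a plan $\tilde\pi_t\in\Pi(\mu_0,\mu_1)$ obtained from an OT/EOT problem with cost $c_{\pi_t}$. Using dual potentials of $\mathsf{OT}_{c_{\pi_t}}$, which are uniformly bounded by \cref{lem:potentialBounds,lem:EOTPotentialBounds}, this gives $\Phi_t-\Phi_0\ge t(v_{\varphi_0^t}^\intercal h_0+v_{\varphi_1^t}^\intercal h_1)-O(\|\pi_t-\tilde\pi_t\|^2)$.

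The delicate point is controlling $\|\pi_t-\tilde\pi_t\|=O(t)$ without any rate for $\pi_t\to\bar\pi$. We will handle it by constructing $\tilde\pi_t$ directly from $\pi_t$ via a marginal-correcting coupling (rescaling rows and columns, possible since the marginals have full support). This gives distance $O(t)$, and makes the quadratic remainder $O(t^2)$.

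Passing to the limit, the potentials $\varphi^t$ converge along subsequences to potentials for $c_{\bar\pi}$, by upper semicontinuity of the OT dual solution sets (and continuity for EOT). Hence
\[
\liminf_{t\downarrow 0} t^{-1}(\Phi_t-\Phi_0)\ge \inf_{\pi^\star}\sup_{\mathcal D_{\pi^\star}}\{\cdots\}.
\]
For $\varepsilon=0$ we must be careful to obtain the $\sup$ over $\mathcal D_{\bar\pi}$ and not merely one potential. We do this by choosing, for each $t$, the potential attaining the sup in direction $(h_0,h_1)$; this choice is consistent because $\tilde\pi_t$ is OT-optimal for $c_{\pi_t}$ up to $O(t)$.

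Finally, the delta method gives convergence in distribution of $\sqrt n(\Phi(w_n)-\Phi(w))$ to the derivative evaluated at $(G_{\mu_0},G_{\mu_1})$. The Hadamard differentiability tangential to the zero-sum directions supports the limit, since $\sqrt n(w_{\mu_{i,n}}-w_{\mu_i})$ sums to zero.
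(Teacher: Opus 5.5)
Your overall architecture matches the paper's: Lipschitz continuity from \cref{thm:variationalForm} and \cref{prop:LipschitzStability}, one-sided derivatives along lines, matching $\limsup$/$\liminf$ bounds, then the delta method. Your upper bound is a legitimate variant of \cref{lem:differenceQuotientUpperBound,lem:differenceQuotientUpperBoundEOT}. The paper uses the competitor $\pi^\star+t\gamma$, with $\gamma$ solving the LP \eqref{eq:discreteLP}. You instead take an (E)OT-optimal plan for $c_{\pi^\star}$ at the perturbed marginals that is $O(t)$-close to $\pi^\star$ (for $\varepsilon=0$ Hoffman's bound supplies this), and then differentiate the (E)OT value in the marginals. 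For $\varepsilon>0$ your lower bound also works in outline. The row/column rescaling of $\pi_t$ to marginals $(\mu_0,\mu_1)$ is exactly the EOT plan for $c_{\pi_t}$ and is $O(t)$-close. Uniqueness of EOT potentials then makes the limit passage harmless.

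The gap is the lower bound for $\varepsilon=0$.
\begin{itemize}
\item \textbf{What your bound gives.} Linearizing around $\pi_t$ yields at best $t^{-1}(\Phi_t-\Phi_0)\ge \sup_{\varphi\in\mathcal D_t}\{v_{\varphi_0}^\intercal h_0+v_{\varphi_1}^\intercal h_1\}-O(t)$. Here $\mathcal D_t$ is a set of potentials for the \emph{perturbed} cost $c_{\pi_t}$. You then need $\liminf_t\sup_{\mathcal D_t}\ge\sup_{\mathcal D_{\bar\pi}}$.
\item \textbf{Why your justification fails.} Outer (upper) semicontinuity of dual solution sets only puts cluster points of your $\varphi^t$ in $\mathcal D_{\bar\pi}$. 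That bounds the limit by $\psi^\intercal h$ for \emph{some} $\psi\in\mathcal D_{\bar\pi}$, which need not dominate $\inf_{\pi^\star}\sup_{\mathcal D_{\pi^\star}}$.
\item \textbf{Why choosing maximizers does not help.} Choosing the maximizer over $\mathcal D_t$ cannot fix this. The sets $\mathcal D_t$ can collapse onto a proper face of $\mathcal D_{\bar\pi}$, because perturbed plans may acquire extra support and hence extra equality constraints. This happens near the permutation couplings of the null, where $\mathcal D_{\pi^\star}$ is parametrized by $\mathcal H_{\mu_0}$, which has nonempty interior. The missing ingredient is an inner-semicontinuity statement.
\end{itemize}

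Your route can be completed with no rate for $\pi_t\to\bar\pi$, but it needs two arguments you do not give.
\begin{itemize}
\item[(i)] You need a coupling $\tilde\pi_t\in\Pi(\mu_0,\mu_1)$ with $\supp\tilde\pi_t\subseteq\supp\pi_t$ and $\|\tilde\pi_t-\pi_t\|=O(t)$. Existence along subsequences with a fixed support pattern follows from closedness of the cone $\{\rA x:x\ge0,\ \supp x\subseteq\Sigma\}$. The rate follows from Hoffman's bound over the finitely many patterns. Row/column rescaling is not the right tool: it need not exist even with full-support marginals, since some entries of $\pi_t$ may have to vanish at $t=0$. By complementary slackness, such a $\tilde\pi_t$ is OT-optimal for $c_{\pi_t}$ at $(\mu_0,\mu_1)$. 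It then gives the bound for every potential of $\mathsf{OT}_{c_{\pi_t}}(\mu_0,\mu_1)$.
\item[(ii)] You need inner semicontinuity of $c\mapsto\{\text{potentials of }\mathsf{OT}_c(\mu_0,\mu_1)\}$ at $c_{\bar\pi}$, with the marginals fixed. This holds because, for $c$ near $c_{\bar\pi}$, every optimal vertex of the fixed polytope $\Pi(\mu_0,\mu_1)$ is optimal for $c_{\bar\pi}$. So the equality constraints defining the dual set only shrink, and Hoffman's bound applies.
\end{itemize}

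The paper sidesteps all of this with the upper-Lipschitz stability of QP solution sets (Theorem 3 of \cite{klatte1985Lipschitz}). That gives an \emph{unperturbed} optimal $\bar\pi_t$ with $\|\pi_t-\bar\pi_t\|=O(t)$, and the paper linearizes there. Then $\int c_{\bar\pi_t}\,d(\pi_t-\bar\pi_t)\ge t\,(v_{\varphi_0}^\intercal h_0+v_{\varphi_1}^\intercal h_1)$ holds for every $(\varphi_0,\varphi_1)\in\mathcal D_{\bar\pi_t}$ at each fixed $t$. The bound by $\inf_{\pi^\star}\sup_{\mathcal D_{\pi^\star}}$ is then immediate, and no limit of dual sets is needed.
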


As discussed above, the proof of \cref{thm:limitTheorems}, included in \cref{proof:thm:limitTheorems}, relies on the delta method. In our application, it suffices to show that, for each $\varepsilon\geq 0$, the map $(\mu_0,\mu_1)\in\mathcal P(\mathcal X_0)\times \mathcal P(\mathcal X_1)\mapsto \mathsf{EGW}_p^{\varepsilon}(\mu_0,\mu_1)$ is Lipschitz continuous (which was shown in \cref{prop:LipschitzStability}) and directionally differentiable at $(\mu_0,\mu_1)$. In the case $\varepsilon=0$, the proof of differentiability follows similar lines to the proof of Theorem 2 in \cite{rioux2024limit}, which  established the directional derivative for the quadratic GW distance between finitely supported Euclidean distributions. The main insight is to linearize the GW distance along certain perturbations of the marginals and to tighten the resulting linearization. We also  leverage standard  techniques for proving directional stability of the optimal value function in nonlinear programming \cite{bonnans2013perturbation} as well as the stability of solutions to perturbed quadratic programs \cite{klatte1985Lipschitz}. The proof technique when $\varepsilon>0$ is similar to that of the unregularized case with some important distinctions stemming from the fact that the underlying problems are no longer standard quadratic programs. Similar results were obtained in \cite{rioux2024limit}, but only in the case of Euclidean measures for the quadratic GW distance.

\begin{remark}[Asymptotic normality] The limit laws derived in \cref{thm:limitTheorems} are not guaranteed to be normal due to the outer minimization and maximization. However, from the proof of \cref{prop:LipschitzContinuityGradient} ahead, if $\varepsilon>\lambda_{\max}(\rB_0^{\intercal}\rB_0)-\lambda_{\min}(\rB_1^{\intercal}\rB_1)$, the objective in \eqref{eq:variationalForm} is strictly convex as a function of $u$ and hence admits a unique minimizer. In this setting, Item (1) in \cref{thm:variationalForm} implies that $\Pi^{\star}(\mu_0,\mu_1)$ is a singleton by uniqueness of EOT plans so that the limit distribution for $\mathsf{EGW}_p^{\varepsilon}$ is normal if $G_{\mu_0}$ and $G_{\mu_1}$ are independent Gaussians.
\label{rmk:asymptotic}
\end{remark}

As an application of this result, the following section considers the problem of testing if two finite metric measure spaces are isometric based only on samples.

\subsection{Hypothesis Testing}
\label{sec:hypothesisTesting}
As noted previously, one of the most interesting properties of GW problems is that when $(\mathcal X_0,\mathsf d_0,\mu_0)$ and $(\mathcal X_1,\mathsf d_1,\mu_1)$ are metric measure spaces, their GW cost is $0$ if and only if $\mu_1=T_{\sharp}\mu_0$ for some isometry $T:\supp(\mu_0)\to \supp(\mu_1)$. In this setting the limit distributions derived in \cref{thm:limitTheorems} enable us to test if two population measures are related by some isometry based on independent collections of i.i.d. samples, provided that the quantiles of the limit distribution can be consistently estimated. 
 
As the limit distributions in \cref{thm:limitTheorems} depend on the population distributions in a nontrivial manner, it is unclear if the limit can be estimated in a direct way. Further,  
even if $\mu_{0,n}$ and $\mu_{1,n}$ are chosen to be the empirical measures, the naive bootstrap may fail to be consistent {in general due to the nonlinearity of the directional derivative; see \cite{dumbgen1993nondifferentiable}}. One possible approach is to apply the $m$-out-of-$n$ bootstrap \cite{bickel1996resampling}, but this approach is sensitive to subsampling size and requires solving multiple GW problems with bootstrapped measures which is computationally burdensome.     

This section develops an alternative approach based on the observation that,
under suitable assumptions, a simplified limit law holds under the null hypothesis, $\mathsf{GW}_p(\mu_0,\mu_1)=0$. %

\subsubsection{Limit Laws Under the Null}

Working with arbitrary kernels $\kappa_0,\kappa_1$ in place of metrics, the characterization of measures for which $\mathsf{GW}_p(\mu_0,\mu_1)=0$ is not as simple.  Nevertheless, the following connection holds.
\begin{proposition}[GW indiscernibles]
\label{lem:vanishingGW}
   Suppose that $\kappa_0$ and $\kappa_1$ are dissimilarity measures on $\mathcal X_0=\supp(\mu_0)$ and $\mathcal X_1=\supp(\mu_1)$ for which $\kappa_0(x,\cdot)\neq \kappa_0(x',\cdot)$ as functions on $\mathcal X_0$ for each $x\neq x'$ and similarly for $\kappa_1$. Then, $\mathsf{GW}_p(\mu_0,\mu_1)=0$ if and only if $\mu_1 =S_{\sharp}\mu_0$ for some map 
  $
       S\in\mathcal S\coloneqq\left\{S:\mathcal X_0\to \mathcal X_1: S\text{ is bijective and } \kappa_0(x,x')=\kappa_1(S(x),S(x'))\text{ for every  }x,x'\in\mathcal X_0\right\}.  
  $  
  In particular,  $\mathsf{GW}_p(\mu_0,\mu_1)=0$ if and only if every solution $\pi^{\star}$ of $\mathsf{GW}_p(\mu_0,\mu_1)$ is such that $\pi^{\star}=(\Id,S)_{\sharp}\mu_0$ for some $S\in\mathcal S$.
   \end{proposition}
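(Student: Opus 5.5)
The plan is to prove the two directions of the first equivalence directly from the definition \eqref{eq:GW}, and then read off the statement about optimal plans.

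\emph{Sufficiency.} Suppose $\mu_1=S_\sharp\mu_0$ with $S\in\mathcal S$. Take the coupling $\pi=(\Id,S)_\sharp\mu_0\in\Pi(\mu_0,\mu_1)$. Then
\[
\int|\kappa_0(x,x')-\kappa_1(y,y')|^p\,d\pi\otimes\pi=\int|\kappa_0(x,x')-\kappa_1(S(x),S(x'))|^p\,d\mu_0\otimes\mu_0=0,
\]
so $\mathsf{GW}_p(\mu_0,\mu_1)=0$.

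\emph{Necessity.} Suppose $\mathsf{GW}_p(\mu_0,\mu_1)=0$. Since $\Pi(\mu_0,\mu_1)$ is a compact polytope and the objective is continuous, an optimal $\pi^\star$ exists. For it, $\kappa_0(x,x')=\kappa_1(y,y')$ for every pair $(x,y),(x',y')\in\supp(\pi^\star)$. I first show that $\supp(\pi^\star)$ is the graph of a function. If $(x,y),(x,\tilde y)\in\supp(\pi^\star)$, then for any $y'\in\mathcal X_1$ there is some $x'$ with $(x',y')\in\supp(\pi^\star)$, because $\mu_1$ has full support. This gives
\[
\kappa_1(y,y')=\kappa_0(x,x')=\kappa_1(\tilde y,y').
\]
Hence $\kappa_1(y,\cdot)=\kappa_1(\tilde y,\cdot)$ on $\mathcal X_1$, and the separation hypothesis forces $y=\tilde y$. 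Since $\mu_0$ has full support, every $x$ has some partner in the support, so $\supp(\pi^\star)$ is the graph of a map $S:\mathcal X_0\to\mathcal X_1$. The symmetric argument, using the separation property of $\kappa_0$ and full support of $\mu_0$, shows that each $y$ has a unique partner. Surjectivity comes from full support of $\mu_1$. Therefore $S$ is a bijection.

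The isometry condition $\kappa_0(x,x')=\kappa_1(S(x),S(x'))$ is exactly the support condition above, so $S\in\mathcal S$. Finally, $\pi^\star$ is concentrated on the graph of $S$, which gives $\pi^\star=(\Id,S)_\sharp\mu_0$ and hence $\mu_1=S_\sharp\mu_0$.

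\emph{Optimal plans.} The ``in particular'' statement follows from the same argument. It was applied to an arbitrary optimal $\pi^\star$ when $\mathsf{GW}_p=0$, so every optimal plan has the form $(\Id,S)_\sharp\mu_0$ with $S\in\mathcal S$. Conversely, if every optimal plan has this form, then evaluating the cost at such a plan gives $0$, as in the sufficiency step.

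The main obstacle is the step showing that the support is a graph. It needs both full support of the marginals, to guarantee that every $y'$ has some partner $x'$, and the separation hypothesis; this is the only place where the latter is used. I also need care that $\kappa_0(x,x')=\kappa_1(y,y')$ holds for all pairs in the support. This follows because all weights there are positive and $p>0$, so a vanishing sum of nonnegative terms forces each term to vanish.
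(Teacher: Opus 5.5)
Your proof is correct and follows essentially the same route as the paper: sufficiency via the zero-cost coupling $(\Id,S)_\sharp\mu_0$, and necessity by using full support of the marginals together with the separation hypotheses on $\kappa_1$ and $\kappa_0$ to show that an optimal plan pairs each point with exactly one partner, so that it is the graph of a bijection $S\in\mathcal S$. You also justify the existence of an optimal plan and the ``in particular'' clause explicitly, which the paper leaves implicit.
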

The proof of this result is straightforward, but is included in \cref{proof:lem:vanishingGW} for completeness. Importantly,  \cref{lem:vanishingGW} can inform the choice of kernels if we wish to identify certain invariants. This perspective is used when applying these statistical results for testing for graph distribution isomorphisms in \cref{sec:graphIsomorphism} ahead. 
We underscore that a corresponding result does not hold for the regularized distance due to the added KL divergence term. %

In the sequel we assume that $\kappa_0$ and $\kappa_1$ satisfy the  conditions from \cref{lem:vanishingGW} and consider the problem of testing %
$\mathrm{H}_0 : \mu_1=S_{\sharp}\mu_0 \text{ for some $S\in\mathcal S$, corresponding to }  \mathsf{GW}_p(\mu_0,\mu_1)=0$, versus
$
\mathrm{H}_1 : \mu_1\neq S_{\sharp}\mu_0 \text{ for any $S\in\mathcal S$, corresponding to } \mathsf{GW}_p(\mu_0,\mu_1)>0$. To this end, we study the limit law under the null  $\mathsf{GW}_p(\mu_0,\mu_1)=0$.

\begin{corollary}[Limit law under the null]
\label{cor:simplifiedLimitLaw}
   Suppose that $\kappa_0,\kappa_1$ are dissimilarity measures satisfying the conditions of \cref{lem:vanishingGW}. %
 Then, in the setting of \cref{thm:limitTheorems}, if $\mathsf{GW}_p(\mu_0,\mu_1)=0$, $   \sqrt n \mathsf{GW}_p(\mu_{0,n},\mu_{1,n})^p\stackrel{d}{\to}\inf_{\substack{S\in\mathcal S\\\mu_1=S_{\sharp}\mu_0}}\sup_{h\in\mathcal H_{\mu_0}} \left\{v_{h}^{\intercal}G_{\mu_0}+v_{-h\circ S^{-1}}^{\intercal}G_{\mu_1}\right\}\eqqcolon L_*,\text{ for }$  
   \[
   \begin{gathered}
        \mathcal H_{\mu_0}\coloneqq \left\{h:\mathcal X_0\to \mathbb R\mspace{2mu}|\mspace{2mu}h(x)-h(\tilde x)\leq  2\int |\kappa_0(x,x')-\kappa_0(\tilde x,x')|^pd\mu_0(x')\text{ for each }x,\tilde x\in\mathcal X_0\right\}.
   \end{gathered} 
    \]
   \end{corollary}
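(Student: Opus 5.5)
This follows from Item (1) of \cref{thm:limitTheorems}, specialized with \cref{lem:vanishingGW}. Under the null, $\mathsf{GW}_p(\mu_0,\mu_1)^p=0$, so the left side of Item (1) is exactly $\sqrt n\,\mathsf{GW}_p(\mu_{0,n},\mu_{1,n})^p$. By \cref{lem:vanishingGW}, the optimal plans are $\Pi^{\star}(\mu_0,\mu_1)=\{(\Id,S)_\sharp\mu_0: S\in\mathcal S,\ S_\sharp\mu_0=\mu_1\}$. Every such $\pi$ has zero cost, hence is optimal. Conversely, every optimal plan has this form by the second statement of the proposition. So the outer infimum becomes an infimum over $S\in\mathcal S$ with $\mu_1=S_\sharp\mu_0$. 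It then remains to show that, for a fixed such $S$, the set of linear functionals
$$(G_0,G_1)\mapsto v_{\varphi_0}^{\intercal}G_0+v_{\varphi_1}^{\intercal}G_1,\qquad (\varphi_0,\varphi_1)\in\mathcal D_{\pi^\star},$$
has the same supremum as the functionals indexed by $(h,-h\circ S^{-1})$ with $h\in\mathcal H_{\mu_0}$.

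\textbf{Step 1: compute the cost.} For $\pi^\star=(\Id,S)_\sharp\mu_0$,
$$c_{\pi^\star}(x,y)=2\int|\kappa_0(x,x')-\kappa_1(y,S(x'))|^p\,d\mu_0(x').$$
Writing $y=S(\tilde x)$ (possible since $S$ is a bijection) and using $\kappa_1(S\tilde x,Sx')=\kappa_0(\tilde x,x')$, this becomes
$$c_{\pi^\star}(x,S\tilde x)=2\int|\kappa_0(x,x')-\kappa_0(\tilde x,x')|^p\,d\mu_0(x')\eqqcolon \rho(x,\tilde x).$$
This $\rho$ is symmetric, nonnegative, and vanishes on the diagonal.

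\textbf{Step 2: identify $\mathcal D_{\pi^\star}$.} Transport through $S^{-1}$, so the problem becomes OT on $\mathcal X_0\times\mathcal X_0$ between $\mu_0$ and $\mu_0$ with cost $\rho$. Its optimal value is $0$, attained by the identity coupling, since $\rho\ge0$ and $\rho(x,x)=0$. Take a pair of potentials $(\varphi_0,\psi)$ with $\psi=\varphi_1\circ S$. Optimality gives $\int\varphi_0\,d\mu_0+\int\psi\,d\mu_0=0$. The constraint at the diagonal gives $\varphi_0(x)+\psi(x)\le 0$ pointwise. Since $\mu_0$ has full support, these force $\psi=-\varphi_0$ everywhere. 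The remaining constraint $\varphi_0(x)-\varphi_0(\tilde x)\le\rho(x,\tilde x)$ is exactly $\varphi_0\in\mathcal H_{\mu_0}$.

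Conversely, any $h\in\mathcal H_{\mu_0}$ gives a feasible pair $(h,-h\circ S^{-1})$ with dual value $0$, which is therefore optimal. Hence
$$\mathcal D_{\pi^\star}=\{(h,-h\circ S^{-1}):h\in\mathcal H_{\mu_0}\}.$$
Substituting this into Item (1) yields $L_*$.

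\textbf{Main obstacle.} The delicate point is the full-support argument in Step 2, which pins down $\varphi_1$ exactly, including its additive constant. The constant shift is harmless because it is absorbed in $h$. One should also check that \cref{thm:limitTheorems} applies as stated: the kernels need only satisfy the hypotheses of \cref{lem:vanishingGW}, and $\mathcal X_i=\supp(\mu_i)$ gives full support. Finally, note that \cref{lem:vanishingGW} requires its conditions on both $\kappa_0$ and $\kappa_1$ to guarantee that every optimal plan is induced by some $S\in\mathcal S$; without this, the infimum could not be reduced to maps.
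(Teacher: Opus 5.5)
Your proposal is correct and follows essentially the same route as the paper. You reduce the outer infimum to maps $S\in\mathcal S$ with $S_\sharp\mu_0=\mu_1$ via the GW-indiscernibles proposition, then show $\mathcal D_{\pi^\star}=\{(h,-h\circ S^{-1}):h\in\mathcal H_{\mu_0}\}$. The only cosmetic difference is that the paper gets $\varphi_0(x)+\varphi_1(S(x))=0$ directly from LP complementary slackness, while you derive the same identity from the zero optimal value, the diagonal constraint, and full support of $\mu_0$.
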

 \cref{cor:simplifiedLimitLaw} follows by characterizing the dual potentials under the null   using \cref{lem:vanishingGW} and properties of solutions of linear programs, see 
   \cref{proof:cor:simplifiedLimitLaw} for complete details. 

\subsubsection{Conservative Quantile Estimates}

 While the limiting distribution simplifies under the null, the $\inf$-$\sup$ structure in \cref{cor:simplifiedLimitLaw} makes it difficult to directly estimate its quantiles and hence obtain the critical value for a test of exact asymptotic size $\alpha$. To account for this, we propose a conservative, {yet computationally efficient}, testing procedure based on overestimating the quantiles of $L_*$ in \cref{cor:simplifiedLimitLaw} and characterize when this overestimate is exact. The computational tractability comes from the fact that this test only requires solving random linear programs. We develop this estimator in a general setting which will be used in the test for isomorphism of graph distributions studied in \cref{sec:graphIsomorphism}.  The required assumptions are as follows. 
 
  \begin{assumption}
        \label{assn:generalSettingLimitLaw}
        Under the null $\mathsf{GW}_p(\mu_0,\mu_1)=0$ and in the  setting of \cref{cor:simplifiedLimitLaw},  
        \begin{enumerate}[leftmargin=*]
         \item $G_{\mu_0}\sim N(0,\Sigma_q)$ where $\Sigma_{q}=\mathrm{diag}(q)-qq^{\intercal}$ for a simplex vector $q\in\mathbb R^{N_0}$, 
            \item  For any $S\in\mathcal S$ satisfying $\mu_1=S_{\sharp}\mu_0$, $T_S(G_{\mu_1})$ is an independent copy of $G_{\mu_0}$ and $T_S:\mathbb R^{N_0}\to \mathbb R^{N_0}$ is such that $\left(T_S(x)\right)_i = x_{\sigma_{S}(i)}$ for each $i\in [N_0]$, where $\sigma_S:[N_0]\to [N_0]$ is the permutation induced by $S$ (i.e., $\sigma_S(i)=j$ if  $S(x^{(i)}_0)=x^{(j)}_1$),
            \end{enumerate}
            Furthermore, we assume that we can construct estimators $\hat \Sigma_n$ and $\hat u_n$   
            based on samples $Y_1,\dots, Y_n$ from an auxiliary distribution $\nu$ on some (perhaps distinct) space         satisfying  
            \begin{enumerate}[leftmargin=*]
            \item[3.] Given almost every realization of $Y_1,Y_2,\dots,$ $\hat \Sigma_n\to \Sigma_q$ and $\hat u_n\to u_{\mu_0}$ conditionally on the data, for
            $
            (u_{\mu_0})_l =  2\int |\kappa_0(x_0^{(\lfloor (l-1)/N_0\rfloor+1)},x')-\kappa_0(x_0^{((l-1)\bmod N_0)+1},x')|^pd\mu_0(x'), l\in [N_0^2],
            $%
        \item[4.] $\hat \Sigma_n$ is such that if $Z\sim N(0,\hat \Sigma_n)$, then $\mathbb P(Z^{\intercal}\mathbf 1=0)=1$ for all $n$ sufficiently large.
        \end{enumerate}
        \end{assumption}

 \cref{assn:generalSettingLimitLaw} is naturally met in the case of empirical measures, see  \cref{thm:convergenceDirectEstimator}.

    With these preliminaries in hand, we show that 
    $    L\coloneqq \sup_{h\in\mathcal H_{\mu_0}}\left\{\sqrt 2 w_h^{\intercal} G_{\mu_0}\right\}$
    satisfies the property that its $\beta$-quantile dominates that of $L_*$, the limit law from \cref{cor:simplifiedLimitLaw}, for each $\beta\in(0,1)$. With this, the test which rejects the null if the test statistic exceeds the $(1-\alpha)$-quantile of $L$ is of asymptotic size no more than $\alpha$ (i.e. it is of asymptotic level $\alpha$).
    \begin{proposition}[Conservative quantiles] 
    \label{prop:quantiles}
    Under \cref{assn:generalSettingLimitLaw} (1)-(2),  $\mathbb P(L\leq t)\leq \mathbb P(L_*\leq t)$ for each $t\in\mathbb R$ with equality if $\{S\in\mathcal S:\mu_1=S_{\sharp}\mu_0\}$ is a singleton. In particular, the $\beta$-quantile of $L$ is larger than or equal to that of $L_*$ for each $\beta\in(0,1)$. Finally,  $\{S\in\mathcal S:\mu_1=S_{\sharp}\mu_0\}$ is a singleton if and only if the identity is the unique map $T:\mathcal X_0\to\mathcal X_0$ satisfying $T_{\sharp}\mu_0=\mu_0$ and $\kappa_0(x,x')=\kappa_0(T(x),T(x'))$ for each $x,x'\in\mathcal X_0$.     
    \end{proposition}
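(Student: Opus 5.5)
The plan is to show that, for each fixed admissible $S$, the inner supremum in $L_*$ has the same law as $L$. The cdf comparison then follows from a pointwise inequality between $L_*$ and that inner supremum. The characterization of the singleton case is purely combinatorial.

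\emph{Step 1 (rewriting the inner objective).} Fix $S\in\mathcal S$ with $\mu_1=S_\sharp\mu_0$ and let $\sigma_S$ be the induced permutation. For $h\in\mathcal H_{\mu_0}$, reindexing the sum over $\mathcal X_1$ via $x_1^{(\sigma_S(i))}=S(x_0^{(i)})$ gives
\[
v_{-h\circ S^{-1}}^{\intercal}G_{\mu_1}=-\sum_{i=1}^{N_0}h\bigl(x_0^{(i)}\bigr)(G_{\mu_1})_{\sigma_S(i)}=-v_h^{\intercal}T_S(G_{\mu_1}).
\]
Hence the objective in $L_*$ is $v_h^{\intercal}\bigl(G_{\mu_0}-T_S(G_{\mu_1})\bigr)$. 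By \cref{assn:generalSettingLimitLaw}(1)--(2), $T_S(G_{\mu_1})$ is an independent copy of $G_{\mu_0}\sim N(0,\Sigma_q)$. Therefore $G_{\mu_0}-T_S(G_{\mu_1})\sim N(0,2\Sigma_q)$, which has the same law as $\sqrt2\,G_{\mu_0}$.

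Since the map $g\mapsto\sup_{h\in\mathcal H_{\mu_0}}v_h^{\intercal}g$ is a fixed measurable function of $g$, it follows that
\[
L_S\coloneqq\sup_{h\in\mathcal H_{\mu_0}}v_h^{\intercal}\bigl(G_{\mu_0}-T_S(G_{\mu_1})\bigr)\stackrel d=L .
\]
I would also check that this supremum is a.s. finite, since $\mathcal H_{\mu_0}$ is unbounded. First, $\mathcal H_{\mu_0}$ is invariant under adding constants, and $\mathbf 1^{\intercal}\Sigma_q\mathbf 1=0$ gives $\mathbf 1^{\intercal}g=0$ a.s., so constants do not contribute. Second, modulo constants the set $\mathcal H_{\mu_0}$ is bounded, because the differences $h(x)-h(\tilde x)$ are bounded on both sides by the constraint. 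So the supremum is a finite maximum of a bounded polyhedral problem. I expect this bookkeeping, together with being careful that $w_h$ in $L$ means $v_h$, to be the only real care point.

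\emph{Step 2 (domination).} The index set $\{S\in\mathcal S:\mu_1=S_\sharp\mu_0\}$ is finite and nonempty under the null, so $L_*=\min_S L_S$ is measurable and $L_*\le L_{S_0}$ for any fixed admissible $S_0$. Hence
\[
\mathbb P(L_*\le t)\ge\mathbb P(L_{S_0}\le t)=\mathbb P(L\le t).
\]
When the index set is a singleton, $L_*=L_{S_0}$ and equality holds. The quantile statement then follows directly from the definition $q_\beta=\inf\{t:\mathbb P(\cdot\le t)\ge\beta\}$, since the cdf of $L$ lies below that of $L_*$.

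\emph{Step 3 (singleton characterization).} Let $\mathrm{Aut}$ denote the set of maps $T:\mathcal X_0\to\mathcal X_0$ with $T_\sharp\mu_0=\mu_0$ and $\kappa_0(x,x')=\kappa_0(T(x),T(x'))$. Because $\mu_0$ has full support and $\mathcal X_0$ is finite, each such $T$ can be shown to be a bijection; alternatively, under the injectivity condition on $\kappa_0(x,\cdot)$, $T$ must be injective. Fix an admissible $S_0$. I would show that $T\mapsto S_0\circ T$ is a bijection from $\mathrm{Aut}$ onto the admissible set, with inverse $S\mapsto S_0^{-1}\circ S$. Both directions are immediate from composing pushforwards and the identity $\kappa_1(S_0(\cdot),S_0(\cdot))=\kappa_0$. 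Consequently, the admissible set is a singleton if and only if $\mathrm{Aut}=\{\Id\}$.
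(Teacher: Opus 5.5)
Your proof is correct and follows the paper's argument: the pointwise bound $L_*\le L_{S_0}$ for a fixed admissible $S_0$, the identification of $L_{S_0}$ in law with $L$ via $G_{\mu_0}-T_{S_0}(G_{\mu_1})\stackrel{d}{=}\sqrt{2}\,G_{\mu_0}$, and the composition $T\mapsto S_0\circ T$ (with inverse $S\mapsto S_0^{-1}\circ S$) for the singleton characterization. Your extra checks are details the paper leaves implicit: the a.s.\ finiteness of the supremum, the fact that $L_*$ is a finite minimum, and the automatic bijectivity of such $T$, which is needed so that $S_0\circ T\in\mathcal S$.
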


    The proof of \cref{prop:quantiles} follows by showing that, for any $S\in\mathcal S$ satisfying $\mu_1=S_{\sharp}\mu_0$, $\sup_{h\in\mathcal H_{\mu_0}}\left\{w^{\intercal}_h G_{\mu_0} + w^{\intercal}_{-h\circ S^{-1}} G_{\mu_1}\right\}$ dominates $L_{\ast}$ in the stochastic order and by leveraging \cref{assn:generalSettingLimitLaw} and the definition of $\mathcal S$, see \cref{proof:prop:quantiles} for complete details.

    \subsubsection{Computation and Consistency of Conservative Quantiles}    
    \label{sec:consistencyConservativeQuantiles}
    Given samples, we may  compute the estimators $\hat \Sigma_n$ and $\hat u_n$ of $\Sigma_q$ and $u_{\mu_0}$ satisfying the conditions of \cref{assn:generalSettingLimitLaw} and approximate $\mathcal H_{\mu_0}$ and $L$ as  
    \begin{align} 
    \label{eq:constraintSetEstimator}
            \hat{\mathcal H}_{n}&\coloneqq \left\{w : \mathrm{R} w \leq \hat u_n\right\}, \text{ where } (\mathrm{R}w)_l= w_{\lfloor (l-1)/N_0\rfloor+1}-w_{ (l-1)\bmod N_0+1}, 
    \\
    \label{eq:LnDistribution}
   \hat L_n&\coloneqq \max_{Rw\leq \hat u_n} \sqrt 2 Z^{\intercal} w \text{ for } Z\sim N(0,\hat \Sigma_n).        
    \end{align}
    To sample from $\hat L_n$, it suffices to draw  a sample $Z\sim N(0,\hat \Sigma_n)$ and solve the corresponding linear program, see \cref{alg:directEstimator}. Practical implementation details are included in \cref{rmk:implementationEstimator}. 
    
    \begin{algorithm}
\caption{Draw $K$ samples $L_1,\dots, L_K$ from $\hat L_n$}
\label{alg:directEstimator}
\begin{algorithmic}[1] 
\Statex Given samples $Y_1,\dots,Y_n$ and desired number of samples from $\hat L_n$, $K$,
\State Compute $\hat \Sigma_n$ and $\hat u_n$ satisfying \cref{assn:generalSettingLimitLaw} and construct $\mathrm R$ as defined in \eqref{eq:constraintSetEstimator}
\For{$k=1,\dots, K$}
    \State $L_k\gets 
    \sqrt 2 \max_{\mathrm R w\leq \hat u_n} Z^{\intercal} w$ for a sample $Z\sim N(0,\hat \Sigma_n)$
\EndFor
\end{algorithmic}
\end{algorithm}

Importantly, the conditional quantiles of $\hat L_n$ consistently estimate  those of $L$. 
   
        \begin{proposition}[Consistency of $\hat L_n$]
        \label{thm:convergenceDirectEstimator} 
        In the setting of \cref{cor:simplifiedLimitLaw} and  \cref{assn:generalSettingLimitLaw}, for almost every realization of $Y_1,Y_2,\dots,$  $
        \lim_{n\to \infty}\sup_{t\geq 0}\left| \mathbb P(\hat L_n\leq t|Y_1,\dots,Y_n)-\mathbb P(L\leq t)\right|= 0,$
        provided that $\mu_0$ is not a point mass and that $\Sigma_q\neq 0$. In particular, if  $(\mu_{0,n},\mu_{1,n})=(\hat\mu_{0,n},\hat\mu_{1,n})$ and $\mu_0$ is not a point mass, then all relevant assumptions are satisfied by taking the empirical estimates of $\Sigma_{\mu_0}$ and $u_{\mu_0}$. 
        \end{proposition}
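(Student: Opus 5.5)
The plan is to establish a.s. conditional weak convergence $\hat L_n \stackrel{d}{\to} L$ and then upgrade it to uniform convergence of distribution functions on $[0,\infty)$ via Pólya's theorem, which needs $L$ to have a continuous distribution function on $[0,\infty)$.

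\textbf{Step 1: weak convergence.} Define the value function $\Phi(\Sigma\text{-free}) \colon (z,u)\mapsto \max\{\sqrt 2 z^{\intercal} w : \mathrm R w\leq u\}$. Since $\mathrm R w=0$ exactly when $w$ is constant, the LP is unbounded unless $z^{\intercal}\mathbf 1=0$. On the subspace $\{z^{\intercal}\mathbf 1=0\}$ we may restrict to $w^{\intercal}\mathbf 1=0$. For $u\geq 0$ (note $u_{\mu_0}\geq 0$ with zero diagonal entries) the feasible set is then a nonempty compact polytope: each pairwise difference $|w_i-w_j|$ is bounded by the constraints, and the normalization $w^{\intercal}\mathbf 1=0$ pins down the level.

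Because $\mu_0$ is not a point mass, and the injectivity condition of \cref{lem:vanishingGW} holds, $(u_{\mu_0})_l>0$ for every off-diagonal $l$. Hence Slater's condition holds for the off-diagonal constraints; the diagonal constraints read $0\leq u_l$ and hold trivially. Standard LP stability (e.g.\ the Hoffman bound, or continuity of the optimal value under a Slater point) then gives joint continuity of $\Phi$ at every $(z,u_{\mu_0})$ with $z^{\intercal}\mathbf 1=0$.

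By Assumption 3, $\hat\Sigma_n\to\Sigma_q$, so $N(0,\hat\Sigma_n)\to N(0,\Sigma_q)$ weakly. By Assumption 4, these Gaussians eventually live on $\{z^{\intercal}\mathbf 1=0\}$, which is where $G_{\mu_0}$ lives as well. Also $\hat u_n\to u_{\mu_0}$ by Assumption 3. The extended continuous mapping theorem, applied along the fixed realization, then gives $\hat L_n\stackrel d\to L$ conditionally, for almost every realization of $Y_1,Y_2,\dots$

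\textbf{Step 2: continuity of the limit law.} This is the main obstacle. Write $L=\sigma(G)$, where $\sigma$ is the support function of the compact convex set $\mathcal H_{\mu_0}\cap\{\mathbf 1\}^\perp$ (scaled by $\sqrt 2$). The support function is convex, positively homogeneous, and nonnegative, since $0$ is feasible.

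Restricted to the support $V=\mathrm{range}(\Sigma_q)$, which is nontrivial because $\Sigma_q\neq 0$, $G$ has a density with respect to Lebesgue measure on $V$. The feasible set contains a neighborhood of $0$ within $\{\mathbf 1\}^\perp$, again by strict positivity of the off-diagonal $u$-entries. Hence $\sigma(z)\geq c\|z\|$ on $V$ for some $c>0$, so $\sigma>0$ on $V\setminus\{0\}$.

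For $t>0$, the level set $\{\sigma=t\}\cap V$ is the boundary of a convex body in $V$. Such a boundary has Lebesgue measure zero in $V$, so $\mathbb P(L=t)=0$. At $t=0$, positivity gives $\mathbb P(L=0)=\mathbb P(G=0)=0$. Thus the distribution function of $L$ is continuous on $[0,\infty)$, and indeed on all of $\mathbb R$, since $L\geq 0$.

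Pólya's theorem now yields the uniform convergence $\sup_{t\geq 0}\left|\mathbb P(\hat L_n\leq t\mid Y_1,\dots,Y_n)-\mathbb P(L\leq t)\right|\to 0$.

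\textbf{Step 3: the empirical case.} Take $\nu=\mu_0$ with samples $X_{0,i}$. Under the null, the multinomial CLT gives $G_{\mu_0}\sim N(0,\Sigma_{w_{\mu_0}})$, and $G_{\mu_1}$ is an independent multinomial limit. Since $\mu_1=S_\sharp\mu_0$, the relabeling $T_S$ maps $G_{\mu_1}$ to a copy of $G_{\mu_0}$, which verifies Assumptions 1 and 2.

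For the estimators, set $\hat\Sigma_n=\mathrm{diag}(w_{\hat\mu_{0,n}})-w_{\hat\mu_{0,n}}w_{\hat\mu_{0,n}}^{\intercal}$ and let $\hat u_n$ be the plug-in version of $u_{\mu_0}$ with $\hat\mu_{0,n}$ in place of $\mu_0$. The strong law of large numbers gives $\hat\Sigma_n\to\Sigma_q$ and $\hat u_n\to u_{\mu_0}$ almost surely, which is Assumption 3. Assumption 4 holds because $\hat\Sigma_n\mathbf 1=0$ identically.

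Finally, $\Sigma_q\neq 0$ holds because $\mu_0$ is not a point mass, and every $\hat u_n\geq 0$, so the feasibility used in Step 1 is preserved along the sequence.
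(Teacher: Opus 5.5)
Your proposal is correct and follows essentially the paper's route: conditional weak convergence of $\hat L_n$ via stability of the LP optimal value (using strict positivity of the off-diagonal entries of $u_{\mu_0}$) plus the extended continuous mapping theorem, then continuity of the distribution function of $L$ and P\'olya's theorem. The differences are cosmetic. You normalize by $w^{\intercal}\mathbf 1=0$ instead of $w_1\in[-\delta,\delta]$. You also prove continuity of the law of $L$ directly, noting that level sets of the support function are boundaries of convex bodies in $\mathrm{range}(\Sigma_q)$ and hence null for the Gaussian density, whereas the paper cites the Davydov--Lifshits--Smorodina result on norms of Gaussian vectors.
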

    \cref{thm:convergenceDirectEstimator} follows essentially by showing that the optimal value of the estimated random linear program, $\hat L_n$, converges conditionally in distribution to $L$ given almost every realization of the samples. Under the present conditions, this is effectively a consequence of known stability results for the optimal value of linear programs and the particular structure of the linear program in \eqref{eq:LnDistribution}.  The uniform convergence of the CDFs then follows by showing that the distribution function of $L$ is continuous. The argument can be found in   \cref{proof:thm:convergenceDirectEstimator}. The following result follows from \cref{thm:convergenceDirectEstimator,prop:quantiles}, see \cref{proof:thm:conservativeTesting} for details.
{ 

    \begin{theorem}[Conservative test]
    \label{thm:conservativeTesting} Assume the setting of \cref{thm:convergenceDirectEstimator}. 
    Given almost every realization of the samples,
        the test which rejects the null hypothesis if $\sqrt n\mathsf{GW}_p(\mu_{0,n},\mu_{1,n})^p$ exceeds the $(1-\alpha)$-quantile of $\hat L_n$ is of asymptotic level $\alpha$. If $\{S\in\mathcal S:\mu_1=S_{\sharp}\mu_0\}$ is a singleton, $\hat L_n$ consistently estimates the distribution of $L_*$. Otherwise, if $\mathsf{GW}_p(\mu_0,\mu_1)>0$, the asymptotic power of the test is $1$ given almost every realization of the samples.   
    \end{theorem}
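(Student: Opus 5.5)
The argument splits into three claims: asymptotic level, exactness when the symmetry set is a singleton, and power under the alternative. Throughout, write $T_n\coloneqq\sqrt n\,\mathsf{GW}_p(\mu_{0,n},\mu_{1,n})^p$ and let $\hat c_n$ be the conditional $(1-\alpha)$-quantile of $\hat L_n$ given $Y_1,\dots,Y_n$.

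\emph{Asymptotic level.} Under the null, \cref{cor:simplifiedLimitLaw} gives $T_n\stackrel d\to L_*$. \Cref{thm:convergenceDirectEstimator} gives uniform convergence of the conditional CDF of $\hat L_n$ to that of $L$, for almost every realization. The first step is to show that $\hat c_n\to c_{1-\alpha}$, the $(1-\alpha)$-quantile of $L$. This needs the CDF of $L$ to be continuous, which was established in the proof of \cref{thm:convergenceDirectEstimator}. It also needs the quantile to be well separated, meaning $\mathbb P(L\le c_{1-\alpha}-\delta)<1-\alpha<\mathbb P(L\le c_{1-\alpha}+\delta)$ for every $\delta>0$. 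If strict increase of the CDF at $c_{1-\alpha}$ is not available, I would instead argue with $\liminf_n \hat c_n\ge c_{1-\alpha}-\delta$ for each $\delta>0$; only this lower bound is needed for level. Then
\[\limsup_n\mathbb P(T_n>\hat c_n)\le \mathbb P(L_*\ge c_{1-\alpha}-\delta),\]
by Portmanteau applied to the closed set $[c_{1-\alpha}-\delta,\infty)$, treating $\hat c_n$ as deterministic given the data. By \cref{prop:quantiles}, $\mathbb P(L_*\ge t)\le \mathbb P(L\ge t)$. Letting $\delta\downarrow0$ and using continuity of the CDF of $L$ gives $\mathbb P(L\ge c_{1-\alpha})=\alpha$, so the test has asymptotic level $\alpha$.

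\emph{Singleton case.} If $\{S\in\mathcal S:\mu_1=S_\sharp\mu_0\}$ is a singleton, \cref{prop:quantiles} gives $L\stackrel d=L_*$. The uniform CDF convergence in \cref{thm:convergenceDirectEstimator} then says directly that $\hat L_n$ consistently estimates the law of $L_*$.

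\emph{Power under the alternative.} Suppose $\mathsf{GW}_p(\mu_0,\mu_1)>0$. By \cref{thm:limitTheorems}(1), $\sqrt n\bigl(\mathsf{GW}_p(\mu_{0,n},\mu_{1,n})^p-\mathsf{GW}_p(\mu_0,\mu_1)^p\bigr)$ is tight. Hence $T_n=\sqrt n\,\mathsf{GW}_p(\mu_0,\mu_1)^p+O_{\mathbb P}(1)\to\infty$ in probability. Alternatively, \cref{prop:LipschitzStability} together with $w_{\mu_{i,n}}\to w_{\mu_i}$ in probability shows $\mathsf{GW}_p(\mu_{0,n},\mu_{1,n})^p$ stays bounded away from zero with probability tending to one.

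It remains to show that $\hat c_n$ is bounded, given almost every realization. \Cref{assn:generalSettingLimitLaw}(3)--(4) concern $\hat\Sigma_n,\hat u_n$ and do not depend on the null, so \cref{thm:convergenceDirectEstimator} still applies and $\hat L_n$ converges conditionally in distribution to $L$. A direct bound also works. Because $Z^\intercal\mathbf 1=0$ almost surely, we may restrict to $w$ with, say, $w_1=0$. For $n$ large, $\hat u_n\le 2\|u_{\mu_0}\|_\infty+1$, so the constraints give $|w_i|\le \|\hat u_n\|_\infty$. This yields $\hat L_n\le \sqrt2\,N_0\|Z\|_\infty\|\hat u_n\|_\infty$, and since $\hat\Sigma_n$ converges, $\hat c_n=O(1)$. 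Therefore $\mathbb P(T_n>\hat c_n)\to1$.

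The main obstacle I expect is the quantile-convergence step in the level argument, because ties or flat regions of the CDF of $L$ could prevent $\hat c_n\to c_{1-\alpha}$. The $\delta$-argument above sidesteps this by using only $\liminf_n\hat c_n\ge c_{1-\alpha}-\delta$, which follows from uniform CDF convergence alone.
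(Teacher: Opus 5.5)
Your proposal is correct and follows the paper's route. The null-hypothesis claims come from combining \cref{prop:quantiles,thm:convergenceDirectEstimator}, where you make explicit the step $\liminf_n \hat c_n\geq c_{1-\alpha}-\delta$ plus the Portmanteau theorem that the paper leaves implicit. Power comes from tightness of the centered statistic via \cref{thm:limitTheorems} together with boundedness of the estimated quantiles.

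One small point concerns the case where $\hat L_n$ is built from the same samples as $T_n$ (e.g.\ empirical measures): conditioning on the data then also freezes $T_n$. You should therefore phrase the level step unconditionally as $\mathbb P(T_n>\hat c_n)\leq \mathbb P(T_n\geq c_{1-\alpha}-2\delta)+\mathbb P(\hat c_n<c_{1-\alpha}-2\delta)$, where the last term vanishes because $\liminf_n\hat c_n\geq c_{1-\alpha}-\delta$ almost surely. This is the same decomposition the paper uses in its power argument.
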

   } 
     
    This result enables us to calibrate critical values for the test statistic $\sqrt n \mathsf{GW}_p(\mu_{0,n},\mu_{1,n})^p$ which achieve a desired asymptotic significance level by using the empirical quantiles from samples generated by  \cref{alg:directEstimator}. This procedure only requires solving linear programs which can be done in polynomial time \cite{karmarkar1984new} whereas computing the test statistic itself generally requires solving a nonconvex quadratic program. 
    
    The next section proposes new algorithms for solving the EGW problem based on \eqref{eq:variationalForm}. Similar algorithms for the unregularized distance are used in the application in \cref{sec:graphIsomorphism}, though the convergence guarantees derived in the sequel do not extend to that setting.

     \section{Stability and Algorithms}
\label{sec:computation}

Given finitely discrete probability measures $\mu_0\in\mathcal P(\mathcal X_0),\mu_1\in\mathcal P(\mathcal X_1)$,  with full support  and dissimilarity measures $\kappa_0:\mathcal X_0\times \mathcal X_0\to \mathbb R,\kappa_1:\mathcal X_1\times \mathcal X_1\to \mathbb R$, we aim to solve  
$
    \inf_{x\in\mathcal K}\left\{\frac 12 x^{\intercal}\rM x-\varepsilon\mathsf{H}(x)\right\}$
by leveraging its variational representation 
\begin{equation}
\label{eq:objectiveGW}
    \inf_{u\in\mathbb R^{r_0}} \ell_{\varepsilon}(u) \coloneqq \frac12\|u\|^2+ \sup_{v\in\mathbb R^{r_1}}\left\{  -\frac{1}{2}\|v\|^2 +\inf_{x\in\mathcal K}\left\{\left( \rB^{\intercal}_1 v-\rB_0^{\intercal} u\right)^{\intercal} x -\varepsilon\sH(x)\right\}\right\}, 
\end{equation}
recalling that $\mathsf H$ is used in place of the KL divergence only for notational convenience. 

We first address the properties of the objective $\ell_{\varepsilon}$ and later provide algorithms for minimizing it which are subject to nonasymptotic convergence rate results and account for imprecision in gradient computations. We focus on the case where $\varepsilon>0$, as entropic GW is most commonly used for large-scale problems arising in applications. In this setting, the objective $\ell_{\varepsilon}$ is differentiable in the classical sense with gradient 
\begin{equation}
\label{eq:gradientObjective}
    \nabla \ell_{\varepsilon}(u)=u-\rB_0 x^{\star}_{u}, \text{ where } \{x^{\star}_{u}\}=\argmin_{x\in\mathcal K}\left\{\frac 12 x^{\intercal}\rB_1^{\intercal}\rB_1x-u^{\intercal}\rB_0x -\varepsilon \mathsf{H}(x)\right\},
\end{equation}
as noted in \cref{lem:derivativeObjective}. The algorithms described in the sequel also naturally admit unregularized counterparts which we use in \cref{sec:graphIsomorphismExperiments}, though the derived convergence rate results do not readily transfer to the $\varepsilon=0$ case. 

We defer numerical validation of the subsequent results and a comparison between the proposed methods and the popular mirror descent algorithm from \cite{peyre2016gromov} to \cref{sec:validation}.

\subsection{Objective Stability} We first establish that $\ell_{\varepsilon}$ is $L$-smooth, that is, its gradient is $L$-Lipschitz continuous; this property is central to establishing the rate of convergence for the algorithms in \cref{subsec:algorithms}. We also elucidate the convexity properties of $\ell_{\varepsilon}$ which, naturally, depend on the choice of $\varepsilon$. In the sequel, we write $\ell$ in place of $\ell_{\varepsilon}$ to emphasize that $\varepsilon>0$ is fixed.

\begin{proposition}[$L$-smoothness of $\ell$]
\label{prop:LipschitzContinuityGradient}
    For any $\varepsilon>0$, the gradient of $\ell$ is Lipschitz continuous with constant 
    $
        L= \max\left\{1,\frac{\lambda_{\max}(\rB_0^{\intercal}\rB_0)}{\lambda_{\min}(\rB_1^{\intercal}\rB_1)+\varepsilon}-1 \right\}.
    $
    If $\varepsilon\geq \lambda_{\max}(\rB_0^{\intercal}\rB_0)-\lambda_{\min}(\rB_1^{\intercal}\rB_1)$, $\ell$ is  convex.
\end{proposition}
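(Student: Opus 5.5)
The plan is to work directly with the gradient formula \eqref{eq:gradientObjective}, $\nabla\ell(u)=u-\rB_0x^\star_u$, which I take as given (it is stated in the paper before this proposition). Fix $u,w\in\mathbb R^{r_0}$ and write $\delta=u-w$, $e=x^\star_u-x^\star_w$ and $d=\rB_0e$. Then $\nabla\ell(u)-\nabla\ell(w)=\delta-d$, and everything reduces to relating $d$ to $\delta$ through the strong convexity of the inner problem.

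\textbf{Step 1: strong convexity of the inner objective.} Consider $g_u(x)=\frac12x^\intercal\rB_1^\intercal\rB_1x-u^\intercal\rB_0x-\varepsilon\sH(x)$ on $\mathcal K$. Every $x\in\mathcal K$ has entries in $[0,1]$, so on the relative interior the Hessian of $-\sH$ is $\mathrm{diag}(1/x_i)\succeq I$. Hence $g_u$ is $\mu$-strongly convex in the Euclidean norm, with $\mu\coloneqq\lambda_{\min}(\rB_1^\intercal\rB_1)+\varepsilon$.

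By \cref{lem:EOTPotentialBounds}, the minimizer $x^\star_u$ is an EOT plan and therefore has strictly positive entries. The first-order optimality condition thus holds as a variational inequality over $\mathcal K$. Writing the two variational inequalities (one at $x^\star_u$ tested against $x^\star_w$, one at $x^\star_w$ tested against $x^\star_u$), adding them, and using strong monotonicity of $\nabla(\frac12x^\intercal\rB_1^\intercal\rB_1x-\varepsilon\sH)$ gives
\[
\mu\|e\|^2\le \delta^\intercal\rB_0e=\delta^\intercal d .
\]
I expect this to be the only delicate step: it needs care with the boundary of $\mathcal K$ and with the fact that $-\sH$ is only strongly convex on the bounded set. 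Interior positivity of the EOT plan handles both issues.

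\textbf{Step 2: the Lipschitz bound.} Write $\lambda\coloneqq\lambda_{\max}(\rB_0^\intercal\rB_0)$.
\begin{itemize}
\item Since $\|d\|^2\le\lambda\|e\|^2$, Step 1 gives $\delta^\intercal d\ge(\mu/\lambda)\|d\|^2$.
\item Combining this with Cauchy--Schwarz gives $\|d\|\le(\lambda/\mu)\|\delta\|$.
\end{itemize}
Expanding the square and using the first bullet,
\[
\|\delta-d\|^2=\|\delta\|^2-2\delta^\intercal d+\|d\|^2\le\|\delta\|^2+\Bigl(1-\tfrac{2\mu}{\lambda}\Bigr)\|d\|^2 .
\]
There are two cases.
\begin{itemize}
\item If $1-2\mu/\lambda\le0$, the right-hand side is at most $\|\delta\|^2$, giving constant $1$.
\item Otherwise, the second bullet gives
\[
\|\delta-d\|^2\le\Bigl(1+\bigl(1-\tfrac{2\mu}{\lambda}\bigr)\tfrac{\lambda^2}{\mu^2}\Bigr)\|\delta\|^2=\Bigl(\tfrac{\lambda}{\mu}-1\Bigr)^2\|\delta\|^2 .
\]
In this case $\lambda/\mu>2$, so $\lambda/\mu-1>1$.
\end{itemize}
Together the two cases give $L=\max\{1,\lambda/\mu-1\}$, as claimed.

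\textbf{Step 3: convexity.} It suffices to show $\nabla\ell$ is monotone, i.e.\ $\delta^\intercal(\delta-d)\ge0$. From Step 1,
\[
\mu\|e\|^2\le\delta^\intercal\rB_0e\le\|\rB_0^\intercal\delta\|\,\|e\|,
\]
so $\|e\|\le\|\rB_0^\intercal\delta\|/\mu$. Substituting back,
\[
\delta^\intercal d\le\frac{\|\rB_0^\intercal\delta\|^2}{\mu}\le\frac{\lambda}{\mu}\|\delta\|^2 .
\]
This is at most $\|\delta\|^2$ exactly when $\mu\ge\lambda$, i.e.\ when $\varepsilon\ge\lambda_{\max}(\rB_0^\intercal\rB_0)-\lambda_{\min}(\rB_1^\intercal\rB_1)$. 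Hence $\ell$ is convex under the stated condition.
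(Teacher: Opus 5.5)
Your argument is correct and reaches exactly the paper's constant, but by a different, first-order route. Below, $\lambda=\lambda_{\max}(\rB_0^\intercal\rB_0)$ and $\mu=\lambda_{\min}(\rB_1^\intercal\rB_1)+\varepsilon$, as in your notation.

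\textbf{The paper's route (second order).} The paper restricts the KKT system of the inner problem to a box on which the entries of $x^\star_u$ are bounded below by a uniform $l_{\min}$. It then applies the implicit function theorem, using the compact SVD of $\rA$ and a block-matrix inverse, to obtain the explicit Hessian
$D^2\ell(u)=\Id-\rB_0\rT_u^{-1}\rB_0^\intercal+\rB_0\rT_u^{-1}\rN^\intercal(\rN\rT_u^{-1}\rN^\intercal)^{-1}\rN\rT_u^{-1}\rB_0^\intercal$, where $\rT_u=\rB_1^\intercal\rB_1+\varepsilon\,\mathrm{diag}(1/x^\star_u)$. It shows the spectrum of this matrix lies in $[1-\lambda/\mu,1]$, using concavity of the value function for the upper end. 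It concludes with the mean value inequality for the Lipschitz constant and the psd-Hessian criterion for convexity.

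\textbf{Your route (first order).} You add the two variational inequalities to get $\delta^\intercal d\ge(\mu/\lambda)\|d\|^2$. This is a Baillon--Haddad-type co-coercivity estimate for $u\mapsto\rB_0x^\star_u$, the gradient of the convex function $V_\varepsilon$. Your case analysis producing $(\lambda/\mu-1)^2$, together with the monotonicity bound of Step 3, is precisely the first-order counterpart of the paper's eigenvalue interval.

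\textbf{What each route buys.} Yours avoids the implicit function theorem, the SVD and block inversion, twice differentiability, and the uniform bound $l_{\min}$. It needs only the strong convexity modulus $\mu$ of the inner objective and interiority of the two minimizers being compared. It transfers verbatim to any $\mu$-strongly convex regularizer. It also yields strong convexity of $\ell$ with modulus $1-\lambda/\mu$ when $\mu>\lambda$, which is what \cref{rmk:asymptotic} appeals to. The paper's route gives more information, namely $\ell\in C^2$ with an explicit Hessian. Its template is also reused for \cref{lemma:fu-proprties} and for the constant $L'$ in \cref{lem:convergenceRateMin}.

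\textbf{One small citation fix.} The fact that $x^\star_u$ is an EOT plan, with cost $\rB_1^\intercal\rB_1x^\star_u-\rB_0^\intercal u$, comes from \cref{lem:QPvsLPsoln}, not from \cref{lem:EOTPotentialBounds}. Only after that identification does \cref{lem:EOTPotentialBounds}, together with full support of $\mu_0,\mu_1$, give the strict positivity you use.
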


This result follows by characterizing the derivative of the map $u\in\mathbb R^{r_0}\mapsto x^{\star}_u$ via the implicit function theorem, enabling an analysis of the Hessian of $\ell$. This is done by leveraging the structural properties of solutions to EOT problems, see
\cref{proof:prop:LipschitzContinuityGradient} for details.

While \cref{prop:LipschitzContinuityGradient} establishes some useful properties of $\ell$, computing its gradient \eqref{eq:gradientObjective} still requires the resolution of a regularized convex quadratic program. To simplify computation, we establish that $x^{\star}_u$ can be obtained by solving an optimization problem whose objective depends on the constraint set $\mathcal K$ only through an entropic linear program which can thus be solved efficiently via Sinkhorn's algorithm \cite{cuturi2013lightspeed,sinkhorn1967diagonal}, see  \cref{sec:inexactOracle}. 
\begin{proposition}[Gradient via EOT]
    \label{lem:alternativeGradient}
    For any $u\in\mathbb R^{r_0}$ and $\varepsilon>0$, let $x^{\star}_u$ be the unique solution of the regularized quadratic program $\min_{x\in\mathcal K}\left\{\frac 12 x^{\intercal}\rB_1^{\intercal}\rB_1x-u^{\intercal}\rB_0x -\varepsilon \mathsf{H}(x)\right\}$. Then, $x^{\star}_u$ coincides with $x^{\star}_{(u,v^{\star}(u))}$, the unique solution of $\min_{x\in\mathcal K}\left\{\left(\rB_1^{\intercal} v^{\star}(u)-\rB_0^{\intercal} u\right)^{\intercal}x-\varepsilon \mathsf{H}(x)\right\}$, where $v^{\star}(u)$ is the unique solution of the strongly concave maximization problem 
    \begin{equation}
    \label{eq:innerMax}
        \sup_{v\in\mathbb R^{r_1}} f_{u}(v)\coloneqq-\frac 12 \|v\|^2+\min_{x\in\mathcal K}\left\{\left(\rB_1^{\intercal} v-\rB_0^{\intercal} u\right)^{\intercal}x-\varepsilon \mathsf{H}(x)\right\}.
    \end{equation}
\end{proposition}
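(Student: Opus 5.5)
The plan is to reproduce the Fenchel--Moreau argument behind \cref{thm:variationalForm}, applied to the convex problem $\min_{x\in\mathcal K}\{\frac12 x^{\intercal}\rB_1^{\intercal}\rB_1 x-u^{\intercal}\rB_0x-\varepsilon\sH(x)\}$ with $u$ held fixed. Because $\varepsilon>0$, the function $-\varepsilon\sH$ is strictly convex on the compact polytope $\mathcal K$, so $x^{\star}_u$ exists and is unique. For $f_u$, the inner minimum $g(v):=\min_{x\in\mathcal K}\{(\rB_1^{\intercal}v-\rB_0^{\intercal}u)^{\intercal}x-\varepsilon\sH(x)\}$ is a pointwise minimum of functions affine in $v$. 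Hence $g$ is concave, and $f_u=-\frac12\|v\|^2+g$ is $1$-strongly concave. It is also finite, since $\mathcal K$ is compact and $\sH$ is bounded, so $v^{\star}(u)$ exists and is unique. By strict convexity in $x$, the inner minimizer $x^{\star}_{(u,v)}$ is unique for every $v$.

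Next, write $\frac12 x^{\intercal}\rB_1^{\intercal}\rB_1x=\sup_v\{v^{\intercal}\rB_1x-\frac12\|v\|^2\}$. Then
\[
\min_{x\in\mathcal K}\sup_{v}\Phi(x,v),\qquad \Phi(x,v):=v^{\intercal}\rB_1x-\tfrac12\|v\|^2-u^{\intercal}\rB_0x-\varepsilon\sH(x),
\]
recovers the regularized QP. The function $\Phi$ is convex and lower semicontinuous in $x$ on the compact convex set $\mathcal K$, and concave in $v$. By Sion's minimax theorem (the same interchange used for \cref{thm:variationalForm}), $\min_x\sup_v\Phi=\sup_v\min_x\Phi=\sup_v f_u(v)$. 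Since a solution exists on both sides, $(x^{\star}_u,v^{\star}(u))$ is a saddle point. More concretely, I would show directly that the pair $(\bar x,\bar v)=(x^{\star}_u,\rB_1x^{\star}_u)$ is a saddle point. First, $\bar v$ maximizes $\Phi(\bar x,\cdot)$, which is immediate. Second, $\bar x$ minimizes $\Phi(\cdot,\bar v)$ over $\mathcal K$. This second claim follows from first-order optimality. The objective $\frac12x^{\intercal}\rB_1^{\intercal}\rB_1x-u^{\intercal}\rB_0x-\varepsilon\sH(x)$ and the linearized objective $\Phi(\cdot,\bar v)$ are both convex, and their gradients coincide at $\bar x$: both equal $\rB_1^{\intercal}\rB_1\bar x-\rB_0^{\intercal}u+\varepsilon(\log\bar x+\mathbf 1)$. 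Therefore the variational inequality characterizing $\bar x$ as the minimizer of the QP also certifies that $\bar x$ minimizes $\Phi(\cdot,\bar v)$.

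The step I expect to need care is the entropy gradient at the boundary of the nonnegative orthant, where $\log$ blows up. To handle it, I would invoke \cref{lem:EOTPotentialBounds}, using full support of $\mu_0,\mu_1$. Since $\Phi(\cdot,\bar v)$ is an EOT problem, its minimizer has strictly positive entries. For the QP minimizer I would argue the same way: the directional derivative of $-\sH$ toward the interior is $-\infty$ at a point with a zero entry, so $x^{\star}_u$ also lies in the relative interior of $\mathcal K$, where the gradient computation is legitimate.

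With the saddle point in hand, the conclusion follows. Saddle-point values give $f_u(\bar v)=\Phi(\bar x,\bar v)=\sup_v f_u(v)$, so uniqueness of the maximizer yields $v^{\star}(u)=\rB_1x^{\star}_u$. Then $x^{\star}_u$ minimizes $\Phi(\cdot,v^{\star}(u))$, which is exactly the EOT problem in the statement; by uniqueness of its minimizer, $x^{\star}_u=x^{\star}_{(u,v^{\star}(u))}$.
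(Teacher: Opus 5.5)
Your proof is correct, but it runs in the opposite direction from the paper's.

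\textbf{The paper's route.} It starts on the dual side. Because $\varepsilon>0$ makes the inner minimizer unique, \cref{lem:derivativeObjective} (Danskin's theorem) gives $\nabla f_u(v)=-v+\rB_1x^{\star}_{(u,v)}$. The first-order condition at $v^{\star}(u)$ then yields $v^{\star}(u)=\rB_1x^{\star}_{(u,v^{\star}(u))}$. Substituting this into $f_u$ shows that $\sup_v f_u$ equals the regularized QP objective evaluated at $x^{\star}_{(u,v^{\star}(u))}$. Since $\sup_v f_u$ also equals the QP minimum by the minimax identity \eqref{eq:QPDecomposition}, strict convexity forces $x^{\star}_{(u,v^{\star}(u))}=x^{\star}_u$. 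This mirrors item (2) of \cref{thm:variationalForm}.

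\textbf{Your route.} You start from the primal minimizer and certify that $(x^{\star}_u,\rB_1x^{\star}_u)$ is a saddle point of $\Phi$. This mirrors item (1) of \cref{thm:variationalForm}, with your gradient-matching step playing the role of \cref{lem:QPvsLPsoln}.

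\textbf{What each buys.} Your direct verification is more elementary. Once the two saddle inequalities hold, the chain $f_u(v)\le\Phi(\bar x,v)\le\Phi(\bar x,\bar v)=f_u(\bar v)$ shows that $\bar v$ maximizes $f_u$, with no need for Sion, Danskin, or Clarke subdifferentials. The price is the boundary issue. Your gradient argument needs $x^{\star}_u>0$, hence full support of $\mu_0,\mu_1$ (which \cref{sec:computation} does assume) together with your infinite-slope argument for the entropy. That argument is what actually carries the step. Appealing to \cref{lem:EOTPotentialBounds} adds nothing, because positivity of the EOT minimizer says nothing about $\bar x$ before you know $\bar x$ is that minimizer. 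The paper's route avoids the boundary issue entirely, since its first-order condition is taken in $v$, where $f_u$ is smooth everywhere.

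Your opening observation already completes the proof on its own. Sion's equality, together with attainment of $\min_x\sup_v\Phi$ at $x^{\star}_u$ and of $\sup_v\min_x\Phi$ at $v^{\star}(u)$, makes $(x^{\star}_u,v^{\star}(u))$ a saddle point. Hence $x^{\star}_u$ minimizes the EOT objective with $v=v^{\star}(u)$, and uniqueness finishes the argument with no differentiability required.
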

\cref{lem:alternativeGradient} is an application of \cref{thm:variationalForm} in the convex setting, see  
\cref{proof:lem:alternativeGradient}.

With these results in hand, the algorithmic framework becomes clear. A first-order method can be used to locally minimize $\ell$, where the gradient in \eqref{eq:gradientObjective} is computed by solving the maximization problem \eqref{eq:innerMax} again via a first-order method, recalling that, by \cref{lem:derivativeObjective}, 
\begin{equation}
\label{eq:fuDerivative}
\nabla f_u(v) = -v+\rB_1 x^{\star}_{(u,v)}, \text{ where } \{x^{\star}_{(u,v)}\}=\argmin_{x\in\mathcal K}\left\{\left(\rB_1^{\intercal} v-\rB_0^{\intercal} u\right)^{\intercal}x-\varepsilon \mathsf{H}(x)\right\}.
\end{equation}
 Following the proof of  \cref{prop:LipschitzContinuityGradient}, we can also establish the smoothness and concavity properties of $f_u$, see \cref{proof:lemma:fu-proprties}.

\begin{proposition}[$L$-smoothness of $f_u$]\label{lemma:fu-proprties}
    For each $u\in\mathbb R^{r_0}$, $f_u$ is $1$-strongly concave and, if $\varepsilon>0$, its gradient is $1+\varepsilon^{-1}\|\rB_1\|_{\mathrm{op}}^2$-Lipschitz.
\end{proposition}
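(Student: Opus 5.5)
We write $f_u(v)=-\frac12\|v\|^2+g(\rB_1^{\intercal}v-\rB_0^{\intercal}u)$, where $g(c)\coloneqq\min_{x\in\mathcal K}\{c^{\intercal}x-\varepsilon\sH(x)\}$. The function $g$ is a pointwise minimum of affine functions of $c$, so it is concave, and $v\mapsto g(\rB_1^{\intercal}v-\rB_0^{\intercal}u)$ is concave as the composition of a concave function with an affine map. Adding $-\frac12\|v\|^2$ therefore yields a $1$-strongly concave function. This argument holds for every $\varepsilon\geq 0$ and establishes the first claim.

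For the smoothness claim, fix $\varepsilon>0$. We use the gradient formula \eqref{eq:fuDerivative} from \cref{lem:derivativeObjective}, namely $\nabla f_u(v)=-v+\rB_1x^{\star}_{(u,v)}$. It then suffices to show that $v\mapsto \rB_1 x^{\star}_{(u,v)}$ is $\varepsilon^{-1}\|\rB_1\|_{\mathrm{op}}^2$-Lipschitz. Since $x^{\star}_{(u,v)}=\nabla g(c)$ at $c=\rB_1^{\intercal}v-\rB_0^{\intercal}u$, the task reduces to proving that $\nabla g$ is $\varepsilon^{-1}$-Lipschitz in the Euclidean norm. The chain rule then gives $\|\rB_1(\nabla g(c)-\nabla g(c'))\|\leq \|\rB_1\|_{\mathrm{op}}\varepsilon^{-1}\|\rB_1^{\intercal}(v-v')\|\leq \varepsilon^{-1}\|\rB_1\|_{\mathrm{op}}^2\|v-v'\|$.

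To show that $\nabla g$ is $\varepsilon^{-1}$-Lipschitz, write $-g(c)=\sup_{x}\{-c^{\intercal}x-(-\varepsilon\sH(x)+\iota_{\mathcal K}(x))\}$. This is the convex conjugate, evaluated at $-c$, of $\phi(x)\coloneqq-\varepsilon\sH(x)+\iota_{\mathcal K}(x)$. Negative entropy has Hessian $\mathrm{diag}(1/x_i)\succeq I$ on the simplex-like set $\mathcal K$, because each $x_i\leq 1$ there. Hence $\phi$ is $\varepsilon$-strongly convex with respect to $\|\cdot\|_2$; in fact it is strongly convex even in $\|\cdot\|_1$ by Pinsker. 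By the standard duality between strong convexity and smoothness (e.g. \cite[Proposition 12.60]{rockafellar1998variational}), $\phi^*$ is differentiable with $\varepsilon^{-1}$-Lipschitz gradient, and this transfers to $\nabla g$. Alternatively, one can argue directly from first-order optimality. For two costs $c,c'$ with minimizers $x,x'$, strong convexity gives $\varepsilon\|x-x'\|^2\leq (c'-c)^{\intercal}(x-x')$, and Cauchy--Schwarz then yields $\|x-x'\|\leq\varepsilon^{-1}\|c-c'\|$. Combining these bounds, $\nabla f_u$ is Lipschitz with constant $1+\varepsilon^{-1}\|\rB_1\|_{\mathrm{op}}^2$.

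The main point requiring care is the strong-convexity constant of $-\sH$ on $\mathcal K$. The Hessian bound $\mathrm{diag}(1/x_i)\succeq I$ needs $x_i\leq 1$, which holds because entries of couplings are at most $1$. It also requires handling the boundary where some $x_i=0$. This can be done by noting that the EOT minimizers have full support, by \cref{lem:EOTPotentialBounds}, so the first-order conditions hold in the relative interior. Alternatively, the strong convexity inequality for $-\sH$ extends to the boundary by continuity.
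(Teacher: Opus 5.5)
Your proof is correct. The strong-concavity part matches the paper's argument, but for the Lipschitz bound you take a genuinely different route.

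The paper repeats the machinery of its proof of \cref{prop:LipschitzContinuityGradient}. It uses the uniform positive lower bound on the entries of EOT plans (from \cref{lem:EOTPotentialBounds} and full support of $\mu_0,\mu_1$) to work in an open region where $\sH$ is smooth. It then rewrites the KKT system via a compact SVD of $\rA$ and applies the implicit function theorem to differentiate $v\mapsto x^{\star}_{(u,v)}$. From there it computes the Hessian of $f_u$ by block-matrix inversion and bounds its smallest eigenvalue below by $-1-\varepsilon^{-1}\|\rB_1\|_{\mathrm{op}}^2$, using $\max_i (x^{\star}_{(u,v)})_i\leq 1$. It concludes with the mean value inequality.

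You instead recognize $x^{\star}_{(u,v)}$ as the gradient of a conjugate of the $\varepsilon$-strongly convex function $-\varepsilon\sH+\mathcal I_{\mathcal K}$. You then invoke strong convexity/smoothness duality, or equivalently your symmetric two-point growth inequality. Both arguments rest on the same fact: $x_i\leq 1$ on $\mathcal K$, i.e.\ $\mathrm{diag}(1/x)\succeq \Id$.

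Your route is far shorter and gives the global constant directly. It needs no second-order differentiability and no lower bound on plan entries, so not even full support of the marginals. The paper's route additionally yields $C^2$ regularity and an explicit Hessian, keeping the argument parallel to \cref{prop:LipschitzContinuityGradient}, where the explicit Hessian also delivers the convexity criterion.

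As a bonus, your two-point inequality gives $\|x^{\star}_{(u,v)}-x^{\star}_{(u,v')}\|\leq \varepsilon^{-1}\|\rB_1\|_{\mathrm{op}}\|v-v'\|$. This is a factor of $2$ better than the one-sided quadratic-growth estimate used in the proof of \cref{thm:inexactGradientOracle}.
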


\subsection{Inexact Gradient Method} 
\label{subsec:algorithms}
Motivated by the previous discussion, we propose to minimize~$\ell$ via a projected accelerated gradient method, \cref{algo:outer-loop-accelerated}. While gradient methods without projection can be used, our analysis hinges on minimizing over a compact set. There is, however, no loss of generality as \cref{thm:variationalForm} asserts that every minimizer of $\ell$ is an element of $\rB_0\mathcal K\subset \mathbb B_{R_0}$, the closed ball in $\mathbb R^{r_0}$ of radius $R_0=\|\rB_0\|_{1,2}$,\footnote{For $p,q\in(0,\infty]$ and $\mathrm{A}\in\mathbb R^{n\times m}$, $\|\mathrm{A}\|_{p,q}=\sup_{\|x\|_p\leq 1}\|\mathrm{A}x\|_q$.} and that, for any fixed $u\in\mathbb R^{r_0}$, the maximizer of $f_u$ is an element of $\rB_1\mathcal K\subset \mathbb B_{R_1}$, the closed ball in $\mathbb R^{r_1}$ of radius $R_1=\|\rB_1\|_{1,2}$.

Our proposed accelerated gradient method, given below, enjoys rate adaptivity in the sense that its convergence rate becomes provably faster if $\ell$ is convex. In \cref{sec:standardGradientMethod}, we also analyze a gradient method without acceleration which does not benefit from this adaptivity; we defer the analysis of this simpler algorithm to the appendix, as the overall presentation and proof techniques are essentially the same.           

\begin{algorithm}
\caption{Inexact Accelerated Gradient Method for $\ell$}
\label{algo:outer-loop-accelerated}
\begin{algorithmic}[1]
\Statex Given initialization  $z_1=s_0\in\mathbb B_{R_0}$, the Lipschitz constant of $\nabla\ell$, $L$, and step sequences $\beta_j=\frac 1{2L}$, $\gamma_j=\frac{j}{4L}$, and $\tau_j=\frac{2}{j+2}$.
\For{$j = 1, \ldots, J$}
\State $g_j\gets \texttt{inexactGradientOracle}(z_j)$\Comment{ compute an approximation, $\tilde{\nabla}  \ell(z_j)$, of $\nabla  \ell(z_j)$ }
\State $u_{j}\gets \min\left(1,\frac{R_0}{\|z_j-\beta_jg_{j}\|}\right)(z_j-\beta_jg_{j})$
\State $s_j\gets \min\left(1,\frac{R_0}{\|s_{j-1}-\gamma_jg_{j}\|}\right)(s_{j-1}-\gamma_jg_{j})$
\State $z_{j+1}\gets \tau_js_j+(1-\tau_j)u_j$
\EndFor
\end{algorithmic}
\end{algorithm}

Note that the algorithm relies on the access to an inexact gradient oracle which, given a point $u\in \mathbb B_{R_0}$, returns an approximation, $\tilde{\nabla} \ell(u)$, of $\nabla \ell(u)$. We describe a method for accessing such a gradient approximation in \cref{sec:inexactOracle} and assume for the moment that a suitable oracle is available. A similar method was analyzed in \cite{rioux2024entropic} for solving the quadratic GW problem between Euclidean distributions. In fact, it can be shown that the algorithm analyzed in that work can be obtained from \cref{algo:outer-loop-accelerated} by leveraging Remark 4 in \cite{karumanchi2025approximation}. 

Assuming that the gradient oracle admits a fixed uniform precision (which will be established for the oracle proposed in \cref{sec:inexactOracle}), we obtain the following convergence rate result. 
\begin{theorem}[Convergence of \cref{algo:outer-loop-accelerated}]
\label{lem:convergenceRateAccMin}
    Fix $\eta'>0$ and assume that $\|\tilde \nabla \ell(u)-\nabla \ell(u)\|\leq \eta'$ for each $u\in\mathbb B_{R_0}$. Then, we may set $L=\max\left\{1,\frac{\lambda_{\max}(\rB_0^{\intercal}\rB_0)}{\lambda_{\min}(\rB_1^{\intercal}\rB_1)+\varepsilon}-1\right\}$ in \cref{algo:outer-loop-accelerated} and
    \begin{enumerate}[leftmargin=*]
        \item If $\ell$ is convex and $u^{\star}$ minimizes $\ell$,\footnote{This is the case if, for instance, $\varepsilon\geq \lambda_{\max}(\rB_0^{\intercal}\rB_0)-\lambda_{\min}(\rB_1^{\intercal}\rB_1)$, see \cref{prop:LipschitzContinuityGradient}.} the iterates $u_j$ of \cref{algo:outer-loop-accelerated} satisfy  
        \[
        \min_{j=1}^{J}\|{2L}\left(u_{j}-z_j\right)\|^2\le \frac{96L^2}{J(J+1)(J+2)}\|s_0-u^{\star}\|^2 + 16LR_0\eta'.
        \]
    \item If $\ell$ is nonconvex and $L'$ is a Lipschitz constant of $\nabla(\ell-\frac 12 \|\cdot \|^2)$, e.g., $L'=\frac{\lambda_{\max}(\rB_0^{\intercal}\rB_0)}{\lambda_{\min}(\rB_1^{\intercal}\rB_1)+\varepsilon}$, 
    \[
        \min_{j=1}^{J}\|{2L}\left(u_{j}-z_j\right)\|^2\le \frac{96L^2}{J(J+1)(J+2)}\|s_0-u^{\star}\|^2+\frac{24LL'}{J}\left(\|u^{\star}\|^2+\frac{5}{4}R_0^2\right) + 16LR_0\eta'.
    \]
    \end{enumerate} 
\end{theorem}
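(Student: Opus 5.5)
This is the standard analysis of a projected accelerated gradient method with inexact gradients on a compact convex set, in the style of Ghadimi--Lan, adapted as in \cite{rioux2024entropic}. The stationarity measure $\|2L(u_j-z_j)\|=\|(z_j-u_j)/\beta_j\|$ is the norm of the gradient mapping at $z_j$ for the projection onto $\mathbb B_{R_0}$.

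First, by \cref{prop:LipschitzContinuityGradient}, $\nabla\ell$ is $L$-Lipschitz with the stated $L$, which justifies the choice of $L$. The function $\ell-\frac12\|\cdot\|^2$ has gradient $-\rB_0x^\star_u$, whose Lipschitz constant $L'$ comes from the same proof. This gives the two-sided bounds
\[
-\tfrac{L'}{2}\|y-x\|^2\le \ell(y)-\ell(x)-\langle\nabla\ell(x),y-x\rangle\le \tfrac L2\|y-x\|^2 .
\]
In the convex case the left side is replaced by $0$.

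The core is a one-step inequality. Write $g_j=\nabla\ell(z_j)+e_j$ with $\|e_j\|\le\eta'$. From the optimality of the projection defining $u_j$, for every $w\in\mathbb B_{R_0}$,
\[
\langle g_j,u_j-w\rangle\le \tfrac1{\beta_j}\langle z_j-u_j,u_j-w\rangle .
\]
An analogous inequality holds for $s_j$ with step $\gamma_j$ and anchor $s_{j-1}$. Combining the upper smoothness bound at $u_j$ with this inequality, using $\beta_j=1/(2L)$, yields
\[
\ell(u_j)\le \ell(z_j)-\tfrac{1}{4L}\|2L(u_j-z_j)\|^2+\langle e_j, w-u_j\rangle+\cdots .
\]
Since all points lie in $\mathbb B_{R_0}$, every error term is bounded by $\eta'\cdot 2R_0$.

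Next comes the usual estimate-sequence telescoping with $\tau_j=2/(j+2)$ and $\gamma_j=j/(4L)$. Take $w=u^\star$ in the $s_j$ inequality and write $z_{j+1}-s_j$ in terms of $u_j$. The potential $\Gamma_j$, the product of the $(1-\tau_i)$ factors, is proportional to $1/(j(j+1)(j+2))$-type weights. Summing with weights $\propto j(j+1)$ telescopes the terms $\|s_j-u^\star\|^2$ down to $\|s_0-u^\star\|^2$. It also leaves $\sum_j j(j+1)\|2L(u_j-z_j)\|^2/L$ on the left, and the minimum over $j$ is bounded by this weighted average. Dividing by $\sum j(j+1)\asymp J(J+1)(J+2)/3$ produces the $96L^2/(J(J+1)(J+2))$ term. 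The error terms, at most $2R_0\eta'$ per step times the same weights, average to the $16LR_0\eta'$ term.

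In the nonconvex case, convexity was used when lower bounding $\ell(u^\star)-\ell(z_j)-\langle\nabla\ell(z_j),u^\star-z_j\rangle\ge 0$, together with a similar bound between $z_j$ and $u_{j-1}$. Replace these by the $-\frac{L'}2\|\cdot\|^2$ bounds. The extra terms are $L'\|u^\star-z_j\|^2$ and $L'\|z_j-u_{j-1}\|^2=L'\tau_{j-1}^2\|s_{j-1}-u_{j-1}\|^2$. Bound them crudely using $\|z_j\|,\|s_j\|,\|u_j\|\le R_0$, for instance $\|u^\star-z_j\|^2\le 2\|u^\star\|^2+2R_0^2$. They then accumulate, with weights $j(j+1)$ summed against the $1/J^3$ normalization, to $O(LL'(\|u^\star\|^2+R_0^2)/J)$, giving the stated $\frac{24LL'}{J}(\|u^\star\|^2+\frac54R_0^2)$.

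I expect the main obstacle to be bookkeeping the constants so that they match the stated $96$, $24$, $5/4$ and $16$, in particular handling the cross term between the $u_j$ and $s_j$ updates under inexact gradients. I would follow Ghadimi--Lan's Theorem 2 proof template line by line, inserting $e_j$ terms and bounding each by Cauchy--Schwarz with diameter $2R_0$.
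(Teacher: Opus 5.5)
Your plan is correct and follows the paper's route, except that the paper does not redo the Ghadimi--Lan telescoping: it only notes that the norm-accurate oracle gives $|\langle\tilde\nabla\ell(u)-\nabla\ell(u),u'-u''\rangle|\le 2R_0\eta'$ for all $u,u',u''\in\mathbb B_{R_0}$, and then invokes Theorem 11 of \cite{rioux2024entropic} (the inexact-gradient version of precisely the argument you sketch) with radius $M/2=R_0$, $\delta'=2R_0\eta'$, and the smoothness constants from \cref{prop:LipschitzContinuityGradient}, so the constants $96$, $24$, $\frac54$, $16$ are read off that theorem rather than recomputed. One small slip in your sketch: the relevant product is $\Gamma_j=\prod_{i<j}(1-\tau_i)=2/(j(j+1))$, which is quadratic rather than cubic in $j$, and this is exactly why the weights $j(j+1)$ you then sum with are the right ones.
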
 

\cref{lem:convergenceRateAccMin} follows from  Theorem 11 in \cite{rioux2024entropic} which, in turn, builds on the results of \cite{aspremont2008smooth}~and~\cite{ghadimi2016accelerated}. Indeed, the  gradient oracle used herein is shown in \cref{proof:lem:convergenceRateAccMin} to yield an approximate gradient as analyzed in \cite{aspremont2008smooth} so that the analysis from \cite{rioux2024entropic} holds verbatim given \cref{prop:LipschitzContinuityGradient}.

The derived convergence rates include a fixed error term that accounts for the inexact gradient evaluations and a term that tracks the algorithmic progress. As mentioned previously, \cref{algo:outer-loop-accelerated} is adaptive in the sense that it yields two distinct rates depending on if $\ell$ is convex or not, namely $O(J^{-3})$ or $O(J^{-1})$.  As discussed in Remark 13 in \cite{rioux2024entropic}, these rates coincide with the best known convergence rates for solving the relevant classes of problems using first-order methods. Furthermore, as shown in Corollary 12 in \cite{rioux2024entropic}, if $u_j$ is an interior point of $\mathbb B_{R_0}$, $\|\nabla \ell(u_j)\|$ can be bounded above in terms of $\|2L(u_j-z_j)\|$ and an additive term which accounts for the gradient oracle error. This shows that \cref{algo:outer-loop-accelerated} can yield approximate stationary points of $\ell$ if the iterates are interior points. As such, we may consider terminating \cref{algo:outer-loop-accelerated} if $\|g_j\|$ is small.

\subsection{An Inexact Gradient Oracle}\label{sec:inexactOracle} 
This section proposes an inexact gradient oracle that instantiates the function $\texttt{inexactGradientOracle}$ from \cref{algo:outer-loop-accelerated} and satisfies the uniform error condition from  \cref{lem:convergenceRateAccMin}. To this end, we recall from \cref{lem:alternativeGradient} that $\nabla \ell(u)=u-\rB_0 x^{\star}_{(u,v^{\star}(u))}$ where $v^{\star}(u)$ is the unique maximizer of the strongly concave function $f_u$ in \eqref{eq:innerMax} and $x^{\star}_{(u,v^{\star}(u))}$ is the unique solution of the EOT problem $\min_{x\in\mathcal K}\left\{\left(\rB_1^{\intercal} v^{\star}(u)-\rB_0^{\intercal} u\right)^{\intercal}x-\varepsilon \mathsf{H}(x)\right\}$. While there is no guarantee that Sinkhorn's method  returns a global solution of a given EOT problem after finitely many iterations, its accuracy can still be characterized as follows. 
\begin{lemma}[Proposition 8 in \cite{rioux2024entropic}]
\label{lem:SinkhornApproxError}
Let  $x^{\star}$ be the unique solution of   $\mathsf{EOT}_{c}(\mu_0,\mu_1)$. Then, for each $\delta>0$, there exists a number of steps $k$ depending on $\delta,\varepsilon, \|c\|_{\infty},\mu_0,$ and $\mu_1$ after which Sinkhorn's algorithm, as written in Algorithm 3 in \cite{rioux2024entropic}, outputs a vector,  $\tilde x$, satisfying $\|\tilde x-x^{\star}\|_{\infty}\leq e^{\delta}-1$.  
\end{lemma}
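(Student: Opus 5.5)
The statement is Proposition 8 of \cite{rioux2024entropic}. Below I sketch how I would prove it directly from standard Sinkhorn convergence theory.

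\textbf{Setup.} Write the EOT plan as
\[
x^{\star}_{ij}=\exp\big((\varphi_0^{\star}(i)+\varphi_1^{\star}(j)-c_{ij})/\varepsilon\big)\,\mu_0(i)\mu_1(j),
\]
where $(\varphi_0^{\star},\varphi_1^{\star})$ solve the Schr\"odinger system \eqref{eq:SchrodingerSystem}. Sinkhorn's algorithm produces iterates $(\varphi_0^{(t)},\varphi_1^{(t)})$ by alternately enforcing the two equations of \eqref{eq:SchrodingerSystem}. It outputs $\tilde x$ of the same Gibbs form with $(\varphi_0^{(t)},\varphi_1^{(t)})$ in place of the optimal potentials. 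Taking the entrywise ratio gives
\[
\tilde x_{ij}/x^{\star}_{ij}=\exp\big((a_i+b_j)/\varepsilon\big),\qquad a=\varphi_0^{(t)}-\varphi_0^{\star},\quad b=\varphi_1^{(t)}-\varphi_1^{\star}.
\]
Since potentials are only defined up to versions, I normalize so that the constant is absorbed. If $\|a\|_\infty+\|b\|_\infty\le \varepsilon\delta$, then every ratio lies in $[e^{-\delta},e^{\delta}]$. Because $x^{\star}_{ij}\le 1$, this gives
\[
|\tilde x_{ij}-x^{\star}_{ij}|\le x^{\star}_{ij}(e^{\delta}-1)\le e^{\delta}-1,
\]
using $1-e^{-\delta}\le e^{\delta}-1$. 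So it suffices to show that the Sinkhorn potentials converge in sup norm, modulo constants, at a rate depending only on $\varepsilon$, $\|c\|_\infty$, $\mu_0$ and $\mu_1$.

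\textbf{Key step: contraction in the Hilbert projective metric.} The Sinkhorn updates are log-sum-exp ("soft $c$-transform") maps. I would invoke the Birkhoff--Hopf theorem for the positive kernel $K_{ij}=e^{-c_{ij}/\varepsilon}$. Its projective diameter is at most $4\|c\|_\infty/\varepsilon$, so each half-step contracts the oscillation seminorm $\operatorname{osc}(f)=\max f-\min f$ of the potential error by the factor
\[
\lambda=\tanh\big(\|c\|_\infty/\varepsilon\big)<1 .
\]
After $t$ iterations the oscillation of $a$ is therefore at most $\lambda^{t}\operatorname{osc}(a^{(0)})$.

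The initial oscillation is finite and explicit. With the bounded initialization of Algorithm 3 and the a priori bounds of \cref{lem:EOTPotentialBounds}, it is at most a constant multiple of $\|c\|_\infty$. A similar bound holds for $b$, since $b$ is determined from $a$ through the other marginal equation and that map is nonexpansive in oscillation. Finally, the marginal constraint fixes the additive constant: the $j$-th column sums of $\tilde x$ and $x^{\star}$ both equal $\mu_1(j)$ after the column-update half-step. This forces
\[
\min_i\big(a_i+b_j\big)\le 0\le \max_i\big(a_i+b_j\big),
\]
so $\|a+b\|_\infty\le \operatorname{osc}(a)$.

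\textbf{Choosing $k$ and the main obstacle.} Choose $k$ with $\lambda^{k}\cdot O(\|c\|_\infty)\le\varepsilon\delta$. This gives
\[
k=O\!\left(\frac{\log\big(\|c\|_\infty/(\varepsilon\delta)\big)}{-\log\tanh(\|c\|_\infty/\varepsilon)}\right),
\]
which depends only on the claimed quantities; the dependence on $\mu_0,\mu_1$ enters through the initialization and the weighting. I expect the main obstacle to be bookkeeping the gauge constant between the two half-steps, so that the entrywise ratio bound controls $a+b$ jointly rather than each potential separately. I would handle it via the column-sum argument above.
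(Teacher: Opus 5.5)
The paper gives no proof of this lemma. It imports Proposition 8 of \cite{rioux2024entropic} and points to Equation (28) there for the explicit iteration count. Your argument is a correct, self-contained replacement in the classical Hilbert-metric (Birkhoff--Hopf) style.

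The two load-bearing steps both check out.
\begin{itemize}
\item \emph{Contraction.} For $K=e^{-c/\varepsilon}$ the projective diameter $\theta_K$ satisfies $\theta_K\le 4\|c\|_\infty/\varepsilon$, and diagonal weightings by $\mu_0,\mu_1$ leave it unchanged. Hence each half-step contracts $\operatorname{osc}$ of the potential error by $\tanh(\theta_K/4)\le\tanh(\|c\|_\infty/\varepsilon)$.
\item \emph{Gauge.} Since $\sum_i x^\star_{ij}\bigl(e^{(a_i+b_j)/\varepsilon}-1\bigr)=0$ with $x^\star_{ij}>0$, the values $\{a_i+b_j\}_i$ straddle $0$ for each $j$. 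This gives $\max_{i,j}|a_i+b_j|\le\operatorname{osc}(a)$, exactly the log-ratio control that the $e^{\delta}-1$ form of the statement encodes. It also spares you one of the two Hilbert-distance terms in the textbook bound on $\|\log\tilde x-\log x^\star\|_\infty$.
\end{itemize}

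The citation buys an explicit constant matched to the exact specification of Algorithm 3. Your route buys transparency, and it shows $k$ need not depend on $\mu_0,\mu_1$ at all. To see this, bound the error after the first half-step directly by the projective diameter, $\operatorname{osc}\le\varepsilon\theta_K\le 4\|c\|_\infty$, rather than through the initialization and the a priori potential bounds. This also sidesteps the $\varepsilon\log\mu_i$ shifts that the potentials pick up in the Shannon-entropy parametrization the paper uses.

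Two minor remarks. Your preliminary sufficient condition $\|a\|_\infty+\|b\|_\infty\le\varepsilon\delta$ is superseded by the joint bound and can be dropped. The argument also presumes the output is formed right after a marginal-enforcing half-step; if Algorithm 3 ends on a row update, run the symmetric argument with row sums.
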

A precise bound on the number of steps required to achieve the claimed accuracy can be found in Equation (28) of \cite{rioux2024entropic}. By restricting the pair $(u,v)$ in the variational form to lie in $\mathbb B_{R_0}\times \mathbb B_{R_1}$, it follows from \cref{lem:SinkhornApproxError} that the EOT problems $\mathsf{EOT}_{\rB_1^{\intercal}v-\rB_0^{\intercal} u}(\mu_0,\mu_1)$ can be solved to within a desired precision $e^{\delta}-1$ with a number of steps that is independent of $(u,v)\in \mathbb B_{R_0}\times \mathbb B_{R_1}$.  

In a similar vein, optimization routines for maximizing $f_u$ generally return an approximate maximizer $\tilde v(u)$ in place of the true solution $v^{\star}(u)$. This issue is compounded by the fact that computing $\nabla f_u(v) = -v + \rB_1 x^{\star}_{(u,v)}$ as in \eqref{eq:fuDerivative} requires solving another EOT problem. Nevertheless, we show that the inexact gradient oracle given in \cref{algo:inexactGradient} below can satisfy the conditions of \cref{lem:convergenceRateAccMin} for any desired accuracy.

\begin{algorithm}
\caption{\texttt{inexactGradientOracle}$(u)$}
\label{algo:inexactGradient}
\begin{algorithmic}[1]
\Statex Given $u\in\mathbb B_{R_0}$,
\State $\tilde v(u)\gets $ approximate solution of $\sup_{\mathbb B_{R_1}}f_{u}$ 
\State $\tilde x_{(u,\tilde v(u))} \gets\text{approximate solution of } \inf_{x\in\mathcal K}\left\{\left( \rB^{\intercal}_1 \tilde v(u)-\rB_0^{\intercal} u\right)^{\intercal} x -\varepsilon\sH(x)\right\}$
\State 
\Return{$\tilde{\nabla}\ell(u)\coloneqq u-\rB_0\tilde x_{(u,\tilde v(u))}$}
\end{algorithmic}
\end{algorithm}

\begin{theorem}[Inexact oracle]
    \label{thm:inexactGradientOracle} 
    Fix $\delta,\tau >0$ and, in \cref{algo:inexactGradient}, suppose that $\|\tilde v(u)-v^{\star}(u)\|\leq \tau$ and $\|\tilde x_{(u,\tilde v(u))}-x^{\star}_{(u,\tilde v(u))}\|_{\infty}\leq e^{\delta}-1$ for each $u\in\mathbb B_{R_0}$. Then, \cref{algo:inexactGradient} is an inexact gradient oracle for $\ell$ satisfying 
    $
    \sup_{u\in\mathbb B_{R_0}}\|\tilde \nabla \ell(u)-\nabla \ell(u)\|\leq \|\rB_0\|_{\infty,2}\left(e^{\delta}-1 +2\varepsilon^{-1}\|\rB_1\|_{\mathrm{op}}\tau\right)$.
\end{theorem}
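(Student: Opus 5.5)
Fix $u\in\mathbb B_{R_0}$ and write $v^\star=v^\star(u)$, $\tilde v=\tilde v(u)$. By \cref{lem:alternativeGradient}, $\nabla\ell(u)=u-\rB_0x^\star_{(u,v^\star)}$, while \cref{algo:inexactGradient} returns $u-\rB_0\tilde x_{(u,\tilde v)}$. The plan is to insert the intermediate point $x^\star_{(u,\tilde v)}$ and use the triangle inequality:
\[
\|\tilde\nabla\ell(u)-\nabla\ell(u)\|\le \|\rB_0(\tilde x_{(u,\tilde v)}-x^\star_{(u,\tilde v)})\|+\|\rB_0(x^\star_{(u,\tilde v)}-x^\star_{(u,v^\star)})\|.
\]
The first term is at most $\|\rB_0\|_{\infty,2}(e^\delta-1)$, directly from the assumed Sinkhorn accuracy and the definition of the mixed norm. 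For the second term I again use $\|\rB_0 z\|\le\|\rB_0\|_{\infty,2}\|z\|_\infty$. It then suffices to show
\[
\|x^\star_{(u,v)}-x^\star_{(u,v')}\|_\infty\le 2\varepsilon^{-1}\|\rB_1\|_{\mathrm{op}}\|v-v'\|
\]
for all $v,v'$, and to apply this with $\|\tilde v-v^\star\|\le\tau$.

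This Lipschitz estimate for the EOT plan as a function of the cost is the main step. The map $v\mapsto x^\star_{(u,v)}$ is the solution map of $\min_{x\in\mathcal K}\{c^\intercal x-\varepsilon\sH(x)\}$ with linear cost $c=\rB_1^\intercal v-\rB_0^\intercal u$. Since $-\varepsilon\sH$ is $\varepsilon$-strongly convex with respect to $\|\cdot\|_1$ on the simplex (Pinsker), the standard argument adds the two first-order optimality inequalities for costs $c$ and $c'$. This gives
\[
\varepsilon\|x-x'\|_1^2\le (c'-c)^\intercal(x-x')\le \|c-c'\|_\infty\|x-x'\|_1,
\]
so $\|x-x'\|_\infty\le\|x-x'\|_1\le\varepsilon^{-1}\|c-c'\|_\infty$. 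With $c-c'=\rB_1^\intercal(v-v')$ and $\|\rB_1^\intercal w\|_\infty\le\|\rB_1^\intercal w\|_2\le\|\rB_1\|_{\mathrm{op}}\|w\|$, this yields the bound with constant $\varepsilon^{-1}\|\rB_1\|_{\mathrm{op}}$.

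Two points need care. First, the entropy term is $\sH$ of the coupling vector, not the KL divergence. They differ by constants on $\mathcal K$, and on the simplex strong convexity holds with modulus $1$ in $\|\cdot\|_1$, so the optimality conditions still apply, with the minimizers interior by \cref{lem:EOTPotentialBounds}. Second, the factor $2$ in the statement leaves slack. It can absorb a weaker strong-convexity constant: Pinsker-type bounds sometimes carry a factor $1/2$, and the total mass $\|x\|_1=1$ on $\mathcal K$ may matter. Alternatively, I would use the Hessian computation from the proof of \cref{prop:LipschitzContinuityGradient}, whose derivative of $v\mapsto \rB_1x^\star_{(u,v)}$ is controlled by $\varepsilon^{-1}\|\rB_1\|_{\mathrm{op}}^2$ (compare \cref{lemma:fu-proprties}). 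If the modulus comes out as $1/2$, the constant becomes exactly $2\varepsilon^{-1}\|\rB_1\|_{\mathrm{op}}$.

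Combining the two terms and taking the supremum over $u\in\mathbb B_{R_0}$ gives the claimed bound.
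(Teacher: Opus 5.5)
Your proof is correct, with the same outer structure as the paper's: the triangle inequality through the intermediate plan $x^\star_{(u,\tilde v(u))}$, the bound $\|\rB_0 z\|\le\|\rB_0\|_{\infty,2}\|z\|_\infty$, and a reduction to a Lipschitz estimate for $v\mapsto x^\star_{(u,v)}$. You prove that key estimate differently.

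\textbf{The paper's route.} It works in the Euclidean norm and proceeds in four steps. It bounds the Hessian $\varepsilon\,\mathrm{diag}(1/x)$ of $g_v(x)=(\rB_1^\intercal v-\rB_0^\intercal u)^\intercal x-\varepsilon\sH(x)$ from below on a box $\{m<x_i<M\}$ containing the strictly positive plan. Letting $M\downarrow 1$, it obtains quadratic growth $g_v(x)\ge g_v(x^\star(v))+\frac{\varepsilon}{2}\|x-x^\star(v)\|^2$ on $\mathcal K$. It checks that $g_v-g_{v'}$ is Lipschitz in $x$ with modulus $\|\rB_1\|_{\mathrm{op}}\|v-v'\|$. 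Finally it invokes Proposition 4.32 in \cite{bonnans2013perturbation}. The factor $2$ in the statement is exactly the inverse of the growth constant $\varepsilon/2$.

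\textbf{Your route.} You use the $\|\cdot\|_1$-strong convexity of $\phi(x)=\sum_i x_i\log x_i$ on the simplex and add the two first-order optimality inequalities. On the simplex the Bregman divergence of $\phi$ is $\mathsf{KL}$, so Pinsker applies globally, with no box or limiting argument. Since $\langle\nabla\phi(x)-\nabla\phi(x'),x-x'\rangle=\mathsf{KL}(x\|x')+\mathsf{KL}(x'\|x)\ge\|x-x'\|_1^2$, your display is right as written. It yields $\|x^\star_{(u,v)}-x^\star_{(u,v')}\|_1\le\varepsilon^{-1}\|\rB_1^\intercal(v-v')\|_\infty$.

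\textbf{Comparison.} Your estimate is stronger than the paper's in two ways. It is an $\ell_1$ (hence $\ell_\infty$) bound in terms of the sup-norm of the cost perturbation. Its constant is $\varepsilon^{-1}$ rather than $2\varepsilon^{-1}$. The stated inequality therefore follows a fortiori, and your hedging about a possible modulus $1/2$ or a Hessian-based fallback can be dropped. The only outside input you need is strict positivity of the minimizers, so the optimality conditions can be written with $\nabla\sH$. You correctly take this from the EOT potential bounds under full support. The paper's black-box perturbation argument works for any objective with quadratic growth, but costs a looser constant and gives only a Euclidean estimate.
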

The proof of this result follows from known stability results on solutions to nonlinear programs with certain structural properties, see \cref{proof:thm:inexactGradientOracle} for the complete proof. Note that \cref{thm:inexactGradientOracle} is generic in the sense that any methods for approximately solving $\sup_{\mathbb B_{R_1}} f_u$ and the EOT problem can be applied. Importantly, \cref{thm:inexactGradientOracle} asserts that \cref{algo:inexactGradient} yields a gradient approximation that is compatible with the assumptions of \cref{lem:convergenceRateAccMin} for any desired accuracy. For completeness, we conclude this section with a brief comment on using inexact gradient methods to approximately solve $\sup_{\mathbb B_{R_1}} f_u$ in \cref{algo:inexactGradient}.

\begin{remark}[Maximizing $f_u$]
\label{rmk:oracleImplementation}
A natural approach to approximating $\{v^{\star}(u)\}=\argmax_{\mathbb B_{R_1}} f_u$ is via gradient methods. Given a gradient oracle $\tilde \nabla f_u$ with $\sup_{v\in\mathbb B_{R_1}}\|\tilde \nabla f_u(v)-\nabla f_{u}(v)\|\leq \eta$ for some $\eta>0$, we show in \cref{sec:oracleImplementation} that the output, $\tilde v(u)$, of inexact gradient methods with (\cref{algo:inner-loop-accelerated}) and without (\cref{algo:inner-loop-simple}) acceleration after $J$ iterations satisfy
\begin{equation}
\label{eq:approxSoln}
    \|\tilde v(u)-v^{\star}(u)\|^2\leq \begin{cases}
    \frac{4\left(\varepsilon^{-1}\|\rB_1\|^{2}_{\mathrm{op}}+1\right)\|v^{\star}(u)\|^2}{J(J+1)}+12R_1\eta, \;\text{using \cref{algo:inner-loop-accelerated}},
    \\
        \frac{\left(\varepsilon^{-1}\|\rB_1\|^{2}_{\mathrm{op}}+1\right)\|v_0-v^{\star}(u)\|^2}{J}+8R_1\eta, \;\text{using \cref{algo:inner-loop-simple} initialized at $v_0$},
    \end{cases}
\end{equation}
In particular, using $\tilde \nabla f_{u}(v)=-v+\rB_1 \tilde x_{(u,v)}$, where $\tilde x_{(u,v)}$ is the output of Sinkhorn's algorithm for solving $\inf_{x\in\mathcal K}\left\{(\rB_1^{\intercal}v-\rB_0^{\intercal} u)^{\intercal} x-\varepsilon \sH(x)\right\}$ for each $(u,v)\in \mathbb B_{R_0}\times \mathbb B_{R_1}$, one has $\sup_{v\in\mathbb B_{R_1}}\|\tilde \nabla f_u(v)-\nabla f_{u}(v)\|\leq \|\rB_1\|_{\infty,2}(e^{\delta}-1)$. In sum, by taking sufficiently many gradient descent steps and solving each EOT problem to a sufficient precision, the assumptions of \cref{thm:inexactGradientOracle}  and hence \cref{lem:convergenceRateAccMin} can be met.     
\end{remark}

\subsection{An Unaccelerated Gradient Method}
\label{sec:standardGradientMethod}

Following the presentation in \cref{subsec:algorithms}, we establish the convergence of a gradient method without acceleration for minimizing $\ell$. 

\begin{algorithm}
\caption{Inexact Gradient Method for $\ell$}
\label{algo:outer-loop-simple}
\begin{algorithmic}[1]
\Statex Given initialization  $u_0\in\mathbb B_{R_0}$, maximum number of iterations, $J$, and  the Lipschitz constant of $\nabla\left(\ell-\frac 12 \|\cdot\|^2\right)$, $L'$.
\For{$j = 0, \ldots, J-1$}
\State $g_j\gets \texttt{inexactGradientOracle}(u_j)$\Comment{ compute an approximation, $\tilde{\nabla}  \ell(u_j)$, of $\nabla  \ell(u_j)$ }
\State $u_{j+1}\gets \min\left(1,\frac{R_0}{\|u_j-(L'+1)^{-1}g_{j}\|}\right)(u_j-(L'+1)^{-1}g_{j})$ 
\EndFor
\end{algorithmic}
\end{algorithm}

Again, we rely on an oracle with a fixed uniform error to obtain approximate gradients for $\ell$ at each point $u\in\mathbb B_{R_0}$, see \cref{sec:inexactOracle} for additional discussion. 

\begin{proposition}[Convergence of \cref{algo:outer-loop-simple}]
\label{lem:convergenceRateMin}
    Fix $\eta'>0$ and set $h:u\in\mathbb R^{r_0}\mapsto \ell(u)-\frac 12 \|u\|^2.$ Then, $\nabla h$ is Lipschitz continuous with constant $L'=\frac{\lambda_{\max}(\rB_0^{\intercal}\rB_0)}{\lambda_{\min}(\rB_1^{\intercal}\rB_1)+\varepsilon}$ and, if $\|\tilde \nabla \ell(u)-\nabla \ell(u)\|\leq \eta'$ for each $u\in\mathbb B_{R_0}$, the iterates of \cref{algo:outer-loop-simple} satisfy
    \[
        \min_{j=0}^{J-1}\|{L'}\left(u_{j+1}-u_j\right)\|^2\le \frac{2L'(\ell(u_0)-\inf_{\mathbb R^{r_0}}\ell)}{J} + 8L'R_0\eta'.
    \]
\end{proposition}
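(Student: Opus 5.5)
The proof has two parts. First, $\nabla h$ is $L'$-Lipschitz. Second, the projected inexact gradient method satisfies a nonconvex descent bound.

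\textbf{Step 1: Lipschitz constant of $\nabla h$.} By \eqref{eq:gradientObjective},
\[
\nabla h(u)=\nabla\ell(u)-u=-\rB_0x^{\star}_u .
\]
So it suffices to bound the Lipschitz constant of $u\mapsto \rB_0 x^\star_u$. This is exactly the quantity controlled in the proof of \cref{prop:LipschitzContinuityGradient}. There, the implicit function theorem applied to the optimality conditions of the entropic QP defining $x^\star_u$ gives
\[
D_u(\rB_0x^\star_u)=\rB_0\,\mathrm{P}\,\rB_0^{\intercal},
\]
where $\mathrm P$ is psd and is the inverse of $\rB_1^{\intercal}\rB_1+\varepsilon\,\mathrm{diag}(1/x^\star_u)$ restricted to the tangent space of $\mathcal K$. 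Since every entry of $x^\star_u$ is at most $1$, we have $\varepsilon\,\mathrm{diag}(1/x)\succeq\varepsilon I$. Hence
\[
\|\mathrm P\|_{\mathrm{op}}\le (\lambda_{\min}(\rB_1^{\intercal}\rB_1)+\varepsilon)^{-1},
\]
and therefore
\[
\|D\nabla h\|_{\mathrm{op}}\le \frac{\lambda_{\max}(\rB_0^{\intercal}\rB_0)}{\lambda_{\min}(\rB_1^{\intercal}\rB_1)+\varepsilon}=L'.
\]
I will reuse that computation verbatim rather than redo it. It also shows that $\nabla\ell$ is $(L'+1)$-Lipschitz.

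\textbf{Step 2: descent inequality.} Write $\gamma=(L'+1)^{-1}$, and let $\Pi$ denote the projection onto the closed convex set $\mathbb B_{R_0}$, so that $u_{j+1}=\Pi(u_j-\gamma g_j)$. The projection characterization gives
\[
\langle u_j-\gamma g_j-u_{j+1},\,u_j-u_{j+1}\rangle\le 0,
\]
that is,
\[
\langle g_j,u_{j+1}-u_j\rangle\le -\gamma^{-1}\|u_{j+1}-u_j\|^2 .
\]
Combining this with the $(L'+1)$-smoothness of $\ell$ yields
\[
\ell(u_{j+1})\le \ell(u_j)+\langle\nabla\ell(u_j),u_{j+1}-u_j\rangle+\tfrac{L'+1}{2}\|u_{j+1}-u_j\|^2 .
\]
Now write $\nabla\ell(u_j)=g_j+e_j$ with $\|e_j\|\le\eta'$. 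Since both points lie in $\mathbb B_{R_0}$, $\|u_{j+1}-u_j\|\le 2R_0$. Together these give
\[
\ell(u_{j+1})\le\ell(u_j)-\tfrac{L'+1}{2}\|u_{j+1}-u_j\|^2+2R_0\eta'.
\]

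\textbf{Step 3: telescoping.} Summing over $j=0,\dots,J-1$ and bounding $\ell(u_J)\ge\inf\ell$ gives
\[
\min_j\|u_{j+1}-u_j\|^2\le \frac{2(\ell(u_0)-\inf\ell)}{(L'+1)J}+\frac{4R_0\eta'}{L'+1}.
\]
Multiplying by $L'^2$ and using $L'^2/(L'+1)\le L'$ gives the stated form, even with a better constant than $8$. The constants should be matched to the paper's cited template (e.g.\ Ghadimi–Lan style analyses), which may account for the factor $8$.

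\textbf{Main obstacle.} The only delicate point is Step 1, namely justifying the Hessian bound uniformly in $u$. This requires the implicit function theorem for the entropic QP on the affine set $\mathcal K$. Positivity of $x^\star_u$ (from the entropy, as in \cref{lem:EOTPotentialBounds}) makes the problem locally unconstrained in the affine hull. I will rely on that argument from \cref{prop:LipschitzContinuityGradient}.
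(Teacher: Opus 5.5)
Your argument is correct. The Lipschitz bound for $\nabla h$ is obtained exactly as in the paper: you reuse the implicit-function Hessian computation from the proof of \cref{prop:LipschitzContinuityGradient}, and your bound $0\preceq \mathrm P\preceq \rT_u^{-1}$ is the same estimate phrased as an operator-norm bound rather than an eigenvalue bound.

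For the rate, however, you take a genuinely different route. The paper never analyzes the iteration directly. It splits $\ell=h+r$ with $r=\frac12\|\cdot\|^2+\mathcal I_{\mathbb B_{R_0}}$ convex and $h$ nonconvex and $L'$-smooth. It converts the gradient error into an inexact oracle for $h$ in the sense of \cite{nabou2025proximal}, with $q=0$, $\delta=4R_0\eta'$ and $L=L'$. It then checks that the step $\mathrm{prox}_{r/L'}(u_k-\tilde\nabla h(u_k)/L')$ coincides with the projected step of size $(L'+1)^{-1}$ in \cref{algo:outer-loop-simple}, and quotes Corollary 1 of that work. The error term $2L'\delta$ there produces the $8L'R_0\eta'$.

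You instead run the standard projected-gradient descent argument on $\ell$ itself. Its ingredients are the $(L'+1)$-smoothness of $\ell$, the projection inequality tested at $u_j\in\mathbb B_{R_0}$, and the bound $|\langle e_j,u_{j+1}-u_j\rangle|\le 2R_0\eta'$. This is self-contained and yields the sharper error term $4L'^2R_0\eta'/(L'+1)\le 4L'R_0\eta'$. So your closing hedge about matching the constant $8$ is unnecessary: the extra factor $2$ in the paper is the price of passing through the $(\delta,L)$-oracle model. You could even use descent constant $1$ instead of $L'+1$, since $h$ is concave.

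What the paper's route buys is access to the rest of that framework. Theorem 3 of \cite{nabou2025proximal} converts $\|L'(u_{j+1}-u_j)\|$ into a bound on $\mathrm{dist}(0,\partial(\ell+\mathcal I_{\mathbb B_{R_0}})(u_{j+1}))$, as in \eqref{eq:distToStationarity}. The same framework also gives the variant with error term $2(\eta')^2$ under step size $(2L'+1)^{-1}$. Neither follows from your direct calculation without further work.
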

The convergence properties of \cref{algo:outer-loop-simple} follow from the results of
\cite{nabou2025proximal} upon showing that our notion of approximate gradient corresponds to their gradient oracle model, see 
\cref{proof:lem:convergenceRateMin} for the proof.

As with \cref{algo:outer-loop-accelerated}, the quantity $\|{L'}\left(u_{j+1}-u_j\right)\|^2$ can be related to the gradient of $\ell$ so that it is reasonable to terminate \cref{algo:outer-loop-simple} if $\|g_j\|$ is small.

\begin{remark}[Comparison with \cref{algo:outer-loop-accelerated}]
   Compared with \cref{algo:outer-loop-accelerated}, \cref{algo:outer-loop-simple} does not benefit from an adaptive rate of convergence.  Nevertheless, this is the best known rate of convergence for solving such nonconvex problems using first-order methods, see Remark 13 in \cite{rioux2024entropic}.
   
    While Example 1 and Remark 4 in \cite{nabou2025proximal} imply that the bounds in \cref{lem:convergenceRateMin} can be improved so that the error term is $2(\eta')^2$ in place of $8L'R_0\eta'$, this is done at the cost of changing the step size of $\frac{1}{L'+1}$ in \cref{algo:outer-loop-simple} to $\frac{1}{2L'+1}$ which may yield slower convergence in practice. 

    Finally, the convergence rate in \cref{lem:convergenceRateMin} is measured in terms of $\min_{j=0}^{J-1}\|L'(u_{j+1}-u_j)\|^2$. To see why this is a reasonable figure of merit,  Theorem 3 in \cite{nabou2025proximal} asserts that, 
    \begin{equation}
    \label{eq:distToStationarity}
        \mathrm{dist}(0,\partial(\ell + \mathcal I_{\mathbb B_{R_0}})(u_{j+1}))\leq 2\|L'(u_{j+1}-u_j)\|+\eta',
    \end{equation}
    where $\mathcal I$ is the indicator function defined in \eqref{eq:indicator}.
    Thus, if $\eta'$ and  $\min_{j=0}^{J-1}\|L'(u_{j+1}-u_j)\|$ are small (which can be assured by taking sufficiently many steps in \cref{algo:outer-loop-simple}), one of the iterates will be near stationary in the sense that there exists $s\in\partial(\ell + \mathcal I_{\mathbb B_{R_0}})(u_{j+1})$ with small norm.  
    Moreover, if the iterates are interior points of $\mathbb B_{R_0}$, then $\partial(\ell + \mathcal I_{\mathbb B_{R_0}})(u_{j+1})=\nabla \ell(u_{j+1})$ so that \eqref{eq:distToStationarity} applies with $\|\nabla \ell(u_{j+1})\|$ in place of the distance to the subdifferential. 
\end{remark}

As noted in \cref{rmk:oracleImplementation}, gradient methods with and without acceleration can be leveraged to obtain suitable gradient oracles for \cref{algo:outer-loop-accelerated,algo:outer-loop-simple}.

\subsection{Gradient Methods as Inexact Oracles} 
\label{sec:oracleImplementation}
This section provides  the theoretical justification for the claims in \cref{rmk:oracleImplementation}. Namely, we establish the bounds on the distance to optimality presented in \eqref{eq:approxSoln} for \cref{algo:inner-loop-accelerated,algo:inner-loop-simple}. 

\begin{algorithm}
\caption{Accelerated Inexact Gradient Method for $f_{u}$}
\label{algo:inner-loop-accelerated}
\begin{algorithmic}[1] 
\Statex  Given $u\in\mathbb B_{R_0}$, initialization $y_0=0 \in \mathbb R^{r_1}$, maximum number of iterations, $J$, and  the Lipschitz constant of $\nabla f_u$, $L$, fix $\alpha_j=\frac{j+1}{2}$ and $\tau_j=\frac{2}{j+3}$.

\For{$j = 0, 1, \ldots, J-1$}
\State $\tilde x_j \gets\text{approximate solution of } \inf_{x\in\mathcal K}\left\{\left( \rB^{\intercal}_1 y_{j}-\rB_0^{\intercal} u\right)^{\intercal} x -\varepsilon\sH(x)\right\}$
\State $g_{j}\gets -\tilde{\nabla}f_u(y_j)=y_j-\rB_1\tilde x_j$ 
\State $w_j\gets \sum_{k=0}^j\alpha_kg_k$
\State $v_j\gets \min\left(1,\frac{R_1}{\|y_j-L^{-1}g_j\|}\right)(y_j-L^{-1}g_j)$
\State $z_{j}\gets -\min\left(1,\frac{R_1}{\|L^{-1}w_j\|}\right)L^{-1}w_j$
\State $y_{j+1}\gets \tau_jz_j+(1-\tau_j)v_j$
\EndFor
\end{algorithmic}
\end{algorithm}

\cref{algo:inner-loop-accelerated} corresponds to an accelerated variant of the gradient method with an inexact gradient oracle as studied in \cite{aspremont2008smooth}, where the inexactness stems from the fact that computing the gradient requires solving an EOT problem via, e.g., Sinkhorn's algorithm (recall that this can be done to within a uniform accuracy $e^{\delta}-1$ by \cref{lem:SinkhornApproxError}). The following convergence result follows essentially from the analysis in \cite{aspremont2008smooth} and from strong convexity of $-f_u$, see \cref{proof:lem:convergenceRateAccMax} for complete details.   

\begin{proposition}[Convergence of \cref{algo:inner-loop-accelerated}]
\label{lem:convergenceRateAccMax}
    Fix $\eta>0$ and assume that $\|\tilde \nabla f_u(v)-\nabla f_u(v)\|\leq \eta$ for each $v$ satisfying $\|v\|\leq R_1$. Then, for  $j=0,\dots, J-1$, the iterates, $v_j$, of \cref{algo:inner-loop-accelerated} satisfy
    \[
        \frac{1}{2}\|v_j - v^{\star} \|^2 \le   f_u(v^{\star}) -f_{u}(v_j) \le \frac{2(\varepsilon^{-1}\|\rB_1\|_{\mathrm{op}}^2+1) \|v^{\star} \|^2}{(j+1)(j+2)} +6{R_1}\eta,
    \]
    where $v^{\star}$ is the unique maximizer of $f_u$. Furthermore, we may set $L=1+\varepsilon^{-1}\|\rB_1\|^{2}_{\mathrm{op}}$ in \cref{algo:inner-loop-accelerated}. 
\end{proposition}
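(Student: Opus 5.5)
We treat \cref{algo:inner-loop-accelerated} as an instance of the accelerated method with inexact oracle of \cite{aspremont2008smooth}, applied to the convex function $F\coloneqq -f_u$ over the compact convex set $Q=\mathbb B_{R_1}$. The two prox terms are the Euclidean prox function $d(v)=\frac12\|v\|^2$ (strong convexity modulus $1$, minimized at $0=y_0$) and the Euclidean projection onto $Q$. With these choices, the updates for $v_j$ and $z_j$ in the algorithm are exactly the two projection steps of that scheme. The remaining weights are $\alpha_j=\frac{j+1}{2}$ and $\tau_j=\frac{2}{j+3}=\alpha_{j+1}/\sum_{k\le j+1}\alpha_k$, matching the standard Nesterov choices. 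Recall that $v^{\star}=v^\star(u)\in\rB_1\mathcal K\subset Q$ by \cref{thm:variationalForm}, so the constrained and unconstrained maximizers coincide.

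\textbf{Step 1: smoothness and strong concavity.} By \cref{lemma:fu-proprties}, $\nabla F$ is $L$-Lipschitz with $L=1+\varepsilon^{-1}\|\rB_1\|_{\mathrm{op}}^2$, which justifies this choice of $L$. The same result gives $1$-strong convexity of $F$, and hence the left inequality $\frac12\|v_j-v^\star\|^2\le f_u(v^\star)-f_u(v_j)$: since $v^\star$ maximizes $f_u$, $\nabla f_u(v^\star)=0$, and strong concavity then yields the bound directly.

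\textbf{Step 2: fit the oracle into the $(\delta,L)$ framework.} The oracle returns $\tilde g(v)$ with $\|\tilde g(v)-\nabla F(v)\|\le\eta$ on $Q$. Following \cite{aspremont2008smooth}, this means that for all $x,y\in Q$,
\[
\bigl|\langle \tilde g(y)-\nabla F(y),x-y\rangle\bigr|\le \eta\,\mathrm{diam}(Q)=2R_1\eta.
\]
Consequently, the lower linear model $F(y)+\langle\tilde g(y),x-y\rangle$ and the upper quadratic model $F(y)+\langle\tilde g(y),x-y\rangle+\frac L2\|x-y\|^2$ hold up to an additive error of order $R_1\eta$.

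\textbf{Step 3: invoke the convergence theorem.} The accelerated inexact gradient theorem of \cite{aspremont2008smooth} then gives
\[
F(v_j)-F(v^\star)\le \frac{4L\,d(v^\star)}{(j+1)(j+2)}+c\,R_1\eta,
\]
and we have $4L\,d(v^\star)=2L\|v^\star\|^2$, which is the claimed first term. It remains to track the error constant $c$. In the estimate-sequence argument, the error incurred at each step is weighted by $\alpha_k$ relative to $A_j=\sum_{k\le j}\alpha_k$; the lower-model and upper-model errors accumulate separately, and together they should give $c=6$, i.e.\ a total of at most $3\cdot 2R_1\eta$.

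The main obstacle is this bookkeeping: we need to verify that the error accumulated across iterations stays bounded by $6R_1\eta$ uniformly in $j$, rather than growing with $j$. This requires checking that the recursion with the specific weights $\alpha_j$ and $\tau_j$ normalizes the accumulated error appropriately, for instance that $\sum_{k\le j}\alpha_k\cdot 2R_1\eta/A_j$ stays bounded together with the upper-model term. We would carry out this check directly in the estimate-sequence inequality $A_jF(v_j)\le\min_x\{Ld(x)+\sum_k\alpha_k[F(y_k)+\langle \tilde g_k,x-y_k\rangle]\}+\text{err}_j$.
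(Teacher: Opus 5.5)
Your argument is the paper's: treat \cref{algo:inner-loop-accelerated} as the accelerated scheme of \cite{aspremont2008smooth} for $F=-f_u$ on $\mathbb B_{R_1}$ with prox-function $\frac12\|\cdot\|^2$. You then check the inexactness condition $|\langle \tilde\nabla F(x)-\nabla F(x),y-z\rangle|\le \delta$ on the ball with $\delta=2R_1\eta$, take $L=1+\varepsilon^{-1}\|\rB_1\|_{\mathrm{op}}^2$ from \cref{lemma:fu-proprties}, and get the left inequality from $1$-strong concavity. Your remark that $v^\star\in\rB_1\mathcal K\subset\mathbb B_{R_1}$, so the constrained and unconstrained maximizers coincide, is a point the paper leaves implicit.

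The step you leave open is the additive constant. Theorem 2.2 of \cite{aspremont2008smooth} states $F(v_j)-F(v^\star)\le \frac{4L\,d(v^\star)}{(j+1)(j+2)}+3\delta$ under exactly that condition. So $3\delta=6R_1\eta$ is read off directly, which is how the paper concludes. As written, your ``together they should give $c=6$'' is a guess rather than a proof, and it needs this citation.

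Do not replace that citation by the bookkeeping you sketch. In the estimate-sequence induction, only the lower-model errors come with weight $\alpha_k$. For instance, the convexity step at $v_k$ costs $A_k\tau_k\delta\le \alpha_{k+1}\delta$, since $v_k-y_{k+1}=\tau_k(v_k-z_k)$. The descent step from $y_{k+1}$ to $v_{k+1}$, however, carries weight $A_{k+1}$: the error $|A_{k+1}\langle \tilde\nabla F(y_{k+1})-\nabla F(y_{k+1}),v_{k+1}-y_{k+1}\rangle|$ is only bounded by $A_{k+1}\delta$. Carrying these per-step bounds forward gives $\sum_{k\le j}A_k\delta/A_j=\frac{j+3}{3}\delta$, which grows with $j$. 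So the direct check you propose would not yield a $j$-independent constant.
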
 

As with the other inexact methods studied previously, the convergence rate result involves one term that accounts for the gradient error and another which tracks the optimization error. This latter term converges to $0$ at the rate of $O(1/j^2)$ which is known to be optimal among first-order methods for this choice of convergence metric, see \cite{nesterov2018lectures}.  

As noted previously, it is standard to terminate gradient methods early if $\|g_j\|<\epsilon$ at a given iteration for some desired tolerance $\epsilon>0$. Given that $f_u$ is $1$-strongly concave, we have from Exercise 12.59 in \cite{rockafellar1998variational} that 
\begin{equation}
\label{eq:boundtoOpt}
    \|y_j-v^{\star}\|^2\leq -\nabla f_u(y_j)^{\intercal}(y_j-v^{\star})\leq (\|g_j\|+\|\tilde \nabla f_u(y_j)-\nabla f_u(y_j)\|)\|y_j-v^{\star}\|,  
\end{equation}
noting that $\nabla f_u(v^{\star})=0$. Under the inexact gradient assumption in \cref{lem:convergenceRateAccMax}, it follows that \[
\|v_j-v^{\star}\|\leq \|y_j-v^{\star}\|+\|y_j-v_j\|\leq \|g_j\|+\eta + L^{-1}\|g_j\| 
\]
if $\|y_j-L^{-1}g_j\|\leq R_1$ so that terminating the algorithm once $\|g_j\|$ is small can still yield a sensible approximation of the true maximizer of $f_u$ which is compatible with the conditions of \cref{thm:inexactGradientOracle}.

We conclude this section by establishing a similar result for a gradient method without acceleration, \cref{algo:inner-loop-simple}.

\begin{algorithm}
\caption{Inexact Gradient Method for $f_{u}$}
\label{algo:inner-loop-simple}
\begin{algorithmic}[1] 
\Statex Given $u\in\mathbb R^{r_0}$, initialization $v_0 \in \mathbb B_{R_1}$, maximum number of iterations, $J$, and  the Lipschitz constant of $\nabla f_u$, $L$,
\For{$j = 0, 1, \ldots, J-1$}
\State $\tilde x_j \gets\text{approximate solution of } \inf_{x\in\mathcal K}\left\{\left( \rB^{\intercal}_1 v_{j}-\rB_0^{\intercal} u\right)^{\intercal} x -\varepsilon\sH(x)\right\}$
\State $g_{j}\gets -\tilde{\nabla}f_u(v_j)\coloneqq v_j-\rB_1\tilde x_j$
\State $v_{j+1}\gets \min\left(1,\frac{R_1}{\|v_j-L^{-1}g_{j}\|}\right)(v_j-L^{-1}g_{j})$
   \EndFor
\end{algorithmic}
\end{algorithm}

\begin{proposition}[Convergence of \cref{algo:inner-loop-simple}]
\label{lem:convergenceRateMax}
    Fix $\eta>0$ and assume that $\|\tilde \nabla f_u(v)-\nabla f_u(v)\|\leq \eta$ for each $v$ satisfying $\|v\|\leq R_1$. Then,  the iterates, $v_j$, of \cref{algo:inner-loop-simple} satisfy
    \[
        \frac{1}{2}\min_{j=0}^{J-1}\|v_j - v^{\star} \|^2 \le\min_{j=0}^{J-1}\left(   f_u(v^{\star}) -f_{u}(v_j)\right) \le \frac{(\varepsilon^{-1}\|\rB_1\|_{\mathrm{op}}^2+1) \|v_0 - v^{\star} \|^2}{2J} + 4{R_1}\eta,
    \]
    where $v^{\star}$ is the unique maximizer of $f_u$. Furthermore, we may set $L= 1+\varepsilon^{-1}\|\rB_1\|^{2}_{\mathrm{op}}$ in \cref{algo:inner-loop-simple}. 
\end{proposition}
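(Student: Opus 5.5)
The argument runs on $g\coloneqq -f_u$. By \cref{lemma:fu-proprties}, $g$ is $1$-strongly convex and $L$-smooth with $L=1+\varepsilon^{-1}\|\rB_1\|_{\mathrm{op}}^2$, which justifies the choice of $L$ in \cref{algo:inner-loop-simple}. By \cref{thm:variationalForm}, applied with $\rB_0$ replaced by $0$ and the linear term absorbed, the maximizer $v^{\star}$ lies in $\rB_1\mathcal K\subset\mathbb B_{R_1}$. Hence minimizing $g$ over $\mathbb B_{R_1}$ is equivalent to the unconstrained problem, and $v^\star$ is also the constrained minimizer.

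The left inequality follows from strong convexity. Since $v^\star$ minimizes $g$ over the convex set $\mathbb B_{R_1}$ and $v_j\in\mathbb B_{R_1}$, first-order optimality gives $\nabla g(v^\star)^\intercal(v_j-v^\star)\ge 0$ (indeed $\nabla g(v^\star)=0$). Then $1$-strong convexity yields $g(v_j)-g(v^\star)\ge \frac12\|v_j-v^\star\|^2$, and taking the minimum over $j$ preserves this.

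For the right inequality, I would use the standard analysis of projected gradient descent with an inexact oracle, using the fact that everything lives in a set of diameter $2R_1$.

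\textbf{One-step inequality.} Write $\tilde g_j=g_j$ for the approximate gradient and $e_j=\tilde g_j-\nabla g(v_j)$, so that $\|e_j\|\le\eta$. Since $v_{j+1}=P_{\mathbb B_{R_1}}(v_j-L^{-1}\tilde g_j)$, the projection inequality gives, for $x=v^\star$,
\[
\tilde g_j^\intercal(v_{j+1}-v^\star)\le \tfrac L2\left(\|v_j-v^\star\|^2-\|v_{j+1}-v^\star\|^2-\|v_{j+1}-v_j\|^2\right).
\]

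\textbf{Combining with smoothness and convexity.} $L$-smoothness gives $g(v_{j+1})\le g(v_j)+\nabla g(v_j)^\intercal(v_{j+1}-v_j)+\frac L2\|v_{j+1}-v_j\|^2$, and convexity gives $g(v_j)\le g(v^\star)+\nabla g(v_j)^\intercal(v_j-v^\star)$. Adding these and substituting $\nabla g(v_j)=\tilde g_j-e_j$ leads to
\[
g(v_{j+1})-g(v^\star)\le \tfrac L2\left(\|v_j-v^\star\|^2-\|v_{j+1}-v^\star\|^2\right)-e_j^\intercal(v_{j+1}-v^\star).
\]
The error term is bounded by $\eta\cdot 2R_1$, since both points lie in $\mathbb B_{R_1}$.

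\textbf{Telescoping.} Summing over $j$ and dividing by $J$ gives $\min_j\big(g(v_{j+1})-g(v^\star)\big)\le \frac{L\|v_0-v^\star\|^2}{2J}+2R_1\eta$. The index shift from $v_{j+1}$ to $v_j$ only affects the endpoints. I would handle it by applying the same argument to $v_0,\dots,v_{J-1}$, or by noting that a generous constant absorbs it: the stated $4R_1\eta$ leaves slack, e.g. a second $2R_1\eta$ from bounding $g(v_{j+1})$ against $g(v_j)$ via an inexact descent argument. Alternatively, I would cite the inexact-oracle framework of \cite{aspremont2008smooth} or \cite{nabou2025proximal}, as done for \cref{lem:convergenceRateMin}. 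Checking that the oracle is $(\delta,L)$-inexact there costs exactly $\eta\cdot\mathrm{diam}=2R_1\eta$ per inequality, and two such inequalities give $4R_1\eta$.

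The main obstacle is this indexing issue together with the bookkeeping of the error constant. The rest is routine.
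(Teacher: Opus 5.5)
Your core argument is correct, but it is not the paper's route. The paper never analyzes the iteration itself. It observes that $\bigl(-f_u(y)-2R_1\eta,\,-\tilde\nabla f_u(y)\bigr)$ is a $(4R_1\eta,L)$-oracle for $-f_u$ on $\mathbb B_{R_1}$ in the sense of \cite{devolder2014first}, with $L=1+\varepsilon^{-1}\|\rB_1\|_{\mathrm{op}}^2$ from \cref{lemma:fu-proprties}. It then quotes Theorem~2 of that reference. The lower bound $\frac12\|v_j-v^\star\|^2\le f_u(v^\star)-f_u(v_j)$ comes from $1$-strong concavity, exactly as you do.

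Your three-inequality telescoping (projection inequality at $v^\star$, $L$-smoothness from $v_j$ to $v_{j+1}$, convexity at $v_j$) is self-contained. It pays for the oracle error only once, in the term $-e_j^{\intercal}(v_{j+1}-v^\star)$, so it yields $2R_1\eta$ instead of $4R_1\eta$. The paper's extra factor of two is the cost of shifting the function value down by $2R_1\eta$, which is needed for the lower half of the oracle inequality to hold. The citation buys brevity and the averaged-iterate bound, but your summed inequality gives the latter by Jensen as well. You can also use the last assertion of \cref{thm:variationalForm} directly to get $v^\star\in\rB_1\mathcal K\subset\mathbb B_{R_1}$; there is no need to replace $\rB_0$ by $0$.

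Where your write-up goes wrong is the index shift. Summing your one-step bound over $j=0,\dots,J-1$ gives $\min_{1\le j\le J}\bigl(f_u(v^\star)-f_u(v_j)\bigr)\le \frac{L\|v_0-v^\star\|^2}{2J}+2R_1\eta$. This bounds the points produced by the projection step, and neither of your patches moves it to $\{0,\dots,J-1\}$.
\begin{itemize}
\item Rerunning the argument on the first $J-1$ steps puts $J-1$ in the denominator. The excess $\frac{L\|v_0-v^\star\|^2}{2J(J-1)}$ cannot be absorbed into a multiple of $R_1\eta$, since $\eta$ may be arbitrarily small.
\item An inexact descent inequality $g(v_{j+1})\le g(v_j)+c$ bounds later values by earlier ones. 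That is the wrong direction for transferring a small value of $g(v_J)$ back to an earlier index.
\end{itemize}
Drop both patches. The $(\delta,L)$-oracle theorem the paper cites rests on the same telescoping and likewise bounds the points generated by the update. So the index range in the statement is best read as indexing the $J$ iterates produced by the loop. Read that way, your bound proves the proposition, with a better error constant.
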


This result follows by showing that our notion of approximate gradient is compatible with the results of \cite{devolder2014first}, see \cref{proof:lem:convergenceRateMax}. While the theoretical convergence rate derived in \cref{lem:convergenceRateMax} is slower than that in \cref{lem:convergenceRateAccMax}, this does not necessarily imply that \cref{algo:inner-loop-accelerated} outperforms \cref{algo:inner-loop-simple} on any given problem.  

While \cref{lem:convergenceRateMax} only applies to the minimum iterate, which is generally unknown, Equation (34) in \cite{devolder2014first} asserts that the same rate holds for $f_u(v^{\star})-f_u(\frac 1 J\sum_{j=0}^{J-1}v_j)$ and $\|\frac 1J\sum_{j=0}^{J-1}v_j-v^{\star}\|^2$ in place of $\min_{j=0}^{J-1}\left(   f_u(v^{\star}) -f_{u}(v_j)\right)$ and $\min_{j=0}^{J-1}\|v_j - v^{\star} \|^2$. Thus, that average point can be used to justify the discussion in \cref{rmk:oracleImplementation}. Alternatively, if $\|g_j\|$ is sufficiently small at a given iteration, \eqref{eq:boundtoOpt} yields a bound on $\|v_j-v^{\star}\|$.

\subsection{Computational Complexity} 
\label{subsec:complexity}
We now discuss the complexity of this algorithm,   
 separating the discussion of the complexity of implementing this approach into an analysis of the cost of decomposing $\rM$ as $\rB_1^{\intercal}\rB_1-\rB_0^{\intercal}\rB_0$ and that of running the algorithm for a prescribed number of steps given  $\rB_0$ and $\rB_1$. 

 {\textbf{Memory-efficient and computation-efficient cost decomposition.}} To decompose  $\rM$ as $\rB_1^{\intercal}\rB_1-\rB_0^{\intercal}\rB_0$, 
we first note that some problems admit canonical decompositions, e.g., when the cost is the squared Euclidean distance or inner product and $p=2$, see, e.g., \cite{scetbon2022linear}. A generic approach, described in the proof of \cref{thm:variationalForm}, is to compute the eigendecomposition of $\rM$ and let $\rB_1=\mathrm{diag}(\sqrt{\lambda_1},\dots,\sqrt{\lambda_{r_1}}) \mathbf V_1$, where $(\lambda_1,\dots,\lambda_{r_1})$ is the collection of all positive eigenvalues of $\rM$ and $\mathbf V_1\in\mathbb R^{r_1\times k}$ has $i$-th row given by the (transpose of the) $i$-th eigenvector $v_i$. $\rB_0$ is constructed similarly using the absolute value of the negative eigenvalues. Directly  diagonalizing $\rM$, an  $N_0N_1\times N_0N_1$ matrix, has a complexity of $O((N_0N_1)^3)$ and can thus be prohibitively expensive, see Chapter 42 in \cite{hogben2006handbook}. 
 
For the case $p=2$, this complexity can be reduced to $O(\max\{N_0^3,N_1^3\})$ by leveraging properties of Kronecker products as explained below. 
We note that when $p=2$, 
 \[
 \begin{aligned}
\mathsf{EGW}^{\varepsilon}_{2}(\mu_0,\mu_1)&=\int \kappa_0(x,x')^2 d\mu_0\otimes \mu_0(x,x') + \int \kappa_1(y,y')^2 d\mu_1\otimes \mu_1(y,y') 
\\
&+ \inf_{\pi\in\Pi(\mu_0,\mu_1)}\left\{-2\int \kappa_0(x,x')\kappa_1(y,y')d\pi\otimes \pi(x,y,x',y')+\varepsilon\mathsf{KL}(\pi\|\mu_0\otimes \mu_1)\right\},
\end{aligned} 
 \]
 so that 
 it suffices to solve the regularized quadratic program,  \eqref{eq:entropicQP}, with $\rM=-4\mathrm K_0\otimes \mathrm{K}_1$, where $\mathrm K_0\otimes \mathrm{K}_1$ is the Kronecker product of
 the pairwise cost matrices $\mathrm{K}_0\in\mathbb R^{N_0\times N_0},\mathrm{K}_1\in\mathbb R^{N_1\times N_1}$ whose $ij$-th entries are given by $\kappa_0\left(x_0^{(i)},x_0^{(j)}\right)$ and $\kappa_1\left(x_1^{(i)},x_1^{(j)}\right)$ respectively. Now, if $(\lambda_0,v_0)$ and $(\lambda_1,v_1)$ are eigenvalue-eigenvector pairs for $\mathrm{K}_0$ and $\mathrm{K}_1$ respectively, then all eigenvalue-eigenvector pairs of $\mathrm{K}_0\otimes \mathrm{K}_1$ are of the form $(\lambda_0\lambda_1,v_0\otimes v_1)$ by Theorem 4.2.12 in \cite{horn2012matrix}. It suffices, therefore, to separately diagonalize $\mathrm{K}_0$ and $\mathrm{K}_1$ and then to construct $\rB_0$ and $\rB_1$ as described above by separating the positive and negative eigenvalue pairs. The overall complexity of this routine is thus $O(\max\{N_0^3,N_1^3\})$, which is dictated by the cost of diagonalizing both matrices. %

 Although separately diagonalizing $\mathrm K_0$ and $\mathrm K_1$ reduces the cost of the decomposition, explicitly forming the matrices $\rB_0$ and $\rB_1$ remains memory-intensive, since they have sizes $r_0\times N_0N_1$,$r_1\times N_0N_1$  respectively. To overcome this, we propose a memory-efficient implementation based on factorizing these matrices first. Indeed, if $\mathrm K_0$ and $\mathrm K_1$ are symmetric, we may express them as
\[
\mathrm{K_0}= (\mathrm{K_0^+})^{\intercal}\mathrm{K_0^+}-(\mathrm{K_0^-})^{\intercal}\mathrm{K_0^-},
\qquad
\mathrm{K_1}= (\mathrm{K_1^+})^{\intercal}\mathrm{K_1^+}-(\mathrm{K_1^-})^{\intercal}\mathrm{K_1^-},
\]
where $\mathrm{K_0}^{+}\in \mathbb R^{r_{K_0^+}\times N_0}$, $\mathrm{K_0}^{-}\in \mathbb R^{r_{K_0^-}\times N_0}$, $\mathrm{K_1}^{+}\in \mathbb R^{r_{K_1^+}\times N_1}$, $\mathrm{K_1}^{-}\in \mathbb R^{r_{K_1^-}\times N_1}$. Thus,
applying the mixed-product property, $(\mathrm{A}\otimes \mathrm{B})(\mathrm{C}\otimes \mathrm{D})=(\mathrm{A}\mathrm{C})\otimes(\mathrm{B}\mathrm{D})$ for matrices $\mathrm{A},\mathrm{B},\mathrm{C},$ and $\mathrm{D}$ of a compatible size,
we see that
\[
\begin{aligned}
\mathrm K_0\otimes\mathrm K_1
&=
(\mathrm{K_0^+})^{\intercal}\mathrm{K_0^+}\otimes (\mathrm{K_1^+})^{\intercal}\mathrm{K_1^+}
+
(\mathrm{K_0^-})^{\intercal}\mathrm{K_0^-}\otimes (\mathrm{K_1^-})^{\intercal}\mathrm{K_1^-}
\\
&\quad
-
(\mathrm{K_0^+})^{\intercal}\mathrm{K_0^+}\otimes (\mathrm{K_1^-})^{\intercal}\mathrm{K_1^-}
-
(\mathrm{K_0^-})^{\intercal}\mathrm{K_0^-}\otimes (\mathrm{K_1^+})^{\intercal}\mathrm{K_1^+}
\\
&=
\left(\mathrm K_0^{+}\otimes \mathrm K_1^{+}\right)^{\intercal}\left(\mathrm K_0^{+}\otimes \mathrm K_1^{+}\right)
+
\left(\mathrm K_0^{-}\otimes \mathrm K_1^{-}\right)^{\intercal}\left(\mathrm K_0^{-}\otimes \mathrm K_1^{-}\right)
\\
&\quad
-
\left(\mathrm K_0^{+}\otimes \mathrm K_1^{-}\right)^{\intercal}\left(\mathrm K_0^{+}\otimes \mathrm K_1^{-}\right)
-
\left(\mathrm K_0^{-}\otimes \mathrm K_1^{+}\right)^{\intercal}\left(\mathrm K_0^{-}\otimes \mathrm K_1^{+}\right).
\end{aligned}
\]
Conclude that $\rM = \rB_1^{\intercal}\rB_1-\rB_0^{\intercal}\rB_0$ with
\[
\rB_0= 2
\begin{pmatrix}
\mathrm K_0^{+}\otimes \mathrm K_1^{+}\\
\mathrm K_0^{-}\otimes \mathrm K_1^{-}
\end{pmatrix}
\in \mathbb R^{r_1\times N_0N_1},
\qquad
\rB_1=2
\begin{pmatrix}
\mathrm K_0^{+}\otimes \mathrm K_1^{-}\\
\mathrm K_0^{-}\otimes \mathrm K_1^{+}
\end{pmatrix}
\in \mathbb R^{r_0\times N_0N_1},
\]
where we set 
$
r_0=r_{K_0^+}r_{K_1^+}+r_{K_0^-}r_{K_1^-},
r_1=r_{K_0^+}r_{K_1^-}+r_{K_0^-}r_{K_1^+}.
$

Crucially, this enables us to perform matrix-vector products with $\rB_0$ and $\rB_1$ without computing the underlying Kronecker products. Indeed, if $x=\mathrm{vec}(X)$ with $X\in\mathbb R^{N_1\times N_0}$, we have that 
\[
\rB_0x=2
\begin{pmatrix}
\mathrm{vec}\!\left(\mathrm K_1^{+}X(\mathrm K_0^{+})^{\intercal}\right)\\
\mathrm{vec}\!\left(\mathrm K_1^{-}X(\mathrm K_0^{-})^{\intercal}\right)
\end{pmatrix},
\qquad \rB_1x=2
\begin{pmatrix}
\mathrm{vec}\!\left(\mathrm K_1^{-}X(\mathrm K_0^{+})^{\intercal}\right)\\
\mathrm{vec}\!\left(\mathrm K_1^{+}X(\mathrm K_0^{-})^{\intercal}\right)
\end{pmatrix}
,
\]
and that $\rB_0^{\intercal}u=2
\mathrm{vec}\!\left(
(\mathrm K_1^{+})^{\intercal}U_1\mathrm K_0^{+}
+
(\mathrm K_1^{-})^{\intercal}U_2\mathrm K_0^{-}
\right),$
\text{ for } 
\[
v=
\begin{pmatrix}
\mathrm{vec}(U_1)\\
\mathrm{vec}(U_2)
\end{pmatrix}\text{ with }
U_1\in\mathbb R^{r_{K_1^+}\times r_{K_0^+}},
\;
U_2\in\mathbb R^{r_{K_1^-}\times r_{K_0^-}}, 
\]
and  
$\rB_0^{\intercal}v=2
\mathrm{vec}\!\left(
(\mathrm K_1^{-})^{\intercal}V_1\mathrm K_0^{+}
+
(\mathrm K_1^{+})^{\intercal}V_2\mathrm K_0^{-}
\right),$
\text{ for } 
\[
u=
\begin{pmatrix}
\mathrm{vec}(V_1)\\
\mathrm{vec}(V_2)
\end{pmatrix}\text{ with }
V_1\in\mathbb R^{r_{K_1^-}\times r_{K_0^+}},
\;
V_2\in\mathbb R^{r_{K_1^+}\times r_{K_0^-}}.
\]
This implementation therefore only stores  the factors $\mathrm K_0^\pm,\mathrm K_1^\pm$ which require total storage of size $(r_{\mathrm K_0^{+}}+r_{\mathrm K_0^{-}})N_0 + (r_{\mathrm K_1^{+}}+r_{\mathrm K_1^{-}})N_1$ compared to $(r_0 + r_1)N_0N_1$ for the naive implementation. This substantially reduces the memory cost for large $N_0$ and $N_1$.

{\textbf{Hyperparameters.}} When using \cref{algo:outer-loop-accelerated} with the theoretically convergent step sizes, it is required to compute the Lipschitz constant, $L$, of $\nabla \ell$ which is expressed in terms of $\lambda_{\max}(\rB_0^{\intercal}\rB_0)$ and $\lambda_{\min}(\rB_1^{\intercal}\rB_1)$. Given that eigenvalue decompositions are used to form $\rB_0$ and $\rB_1$, it is straightforward to obtain these values. To set the radii $R_0 = \|\rB_0\|_{1,2}$ and $R_1=\|\rB_1\|_{1,2}$,  note that $\|\cdot\|_{1,2}$ is the maximum $2$-norm of the columns of the input matrix.

{\textbf{Complexity of the algorithms.}} Once  the matrices $\rB_0,\rB_1$ for the decomposition of the cost matrix  $\rM=\rB_1^{\intercal}\rB_1-\rB_0^{\intercal}\rB_0$ have been obtained,  each iteration of Algorithm \ref{algo:outer-loop-accelerated} requires solving an EOT problem which can be done using $O(SN_0 N_1)$ operations up to $\log$ factors using $S$ iterations of Sinkhorn's algorithm, see Section 4.3 in \cite{peyre2019computational}. 

Beyond this, matrix vector products with $\rB_i,\rB_i^{\intercal}$ for $i\in\{0,1\}$ are required; the naive implementation of these products requires $O(r_iN_0N_1)$ operations.
As discussed above when $p=2$ we use an equivalent implementation which reduces the memory overhead of the method, but the complexity still  remains $O(r_iN_0N_1)$.

The cost of querying the gradient oracle depends on the specific implementation. In the case that $K$ iterations of a first-order method are used, as described in \cref{rmk:oracleImplementation}, the complexity of obtaining an approximate gradient is $O(K(S+\max\{r_0,r_1\})N_0N_1)$ up to $\log$ factors assuming that the EOT problems are solved using $S$ iterations of Sinkhorn's method.  

Altogether, $J$ iterations of \cref{algo:outer-loop-accelerated} requires a number of operations scaling as $O(JK(S+\max\{r_0,r_1\})N_0N_1)$ under the assumption that $\rB_0$ and $\rB_1$ are given and that all EOT problems are solved by $S$ iterations of Sinkhorn's algorithm.

\subsection{Numerical Validation} 
\label{sec:validation}
We now validate the convergence rates derived in \cref{lem:convergenceRateMin,lem:convergenceRateAccMin} and compare these methods to the  mirror descent algorithm \cite{peyre2016gromov,scetbon2022linear} which is most often used in practice. Throughout, we use the POT package's implementation of Sinkhorn's algorithm \cite{flamary2021pot} and, unless  otherwise stated, use the gradient method (\cref{algo:inner-loop-simple}) to solve the concave subproblems $\sup_{\mathbb B_{R_1}} f_u$ in \cref{algo:outer-loop-simple,algo:outer-loop-accelerated}. 

\begin{figure}
    \centering
    \begin{subfigure}[b]{0.48\textwidth}
         \centering
         \includegraphics[width=0.7\textwidth]{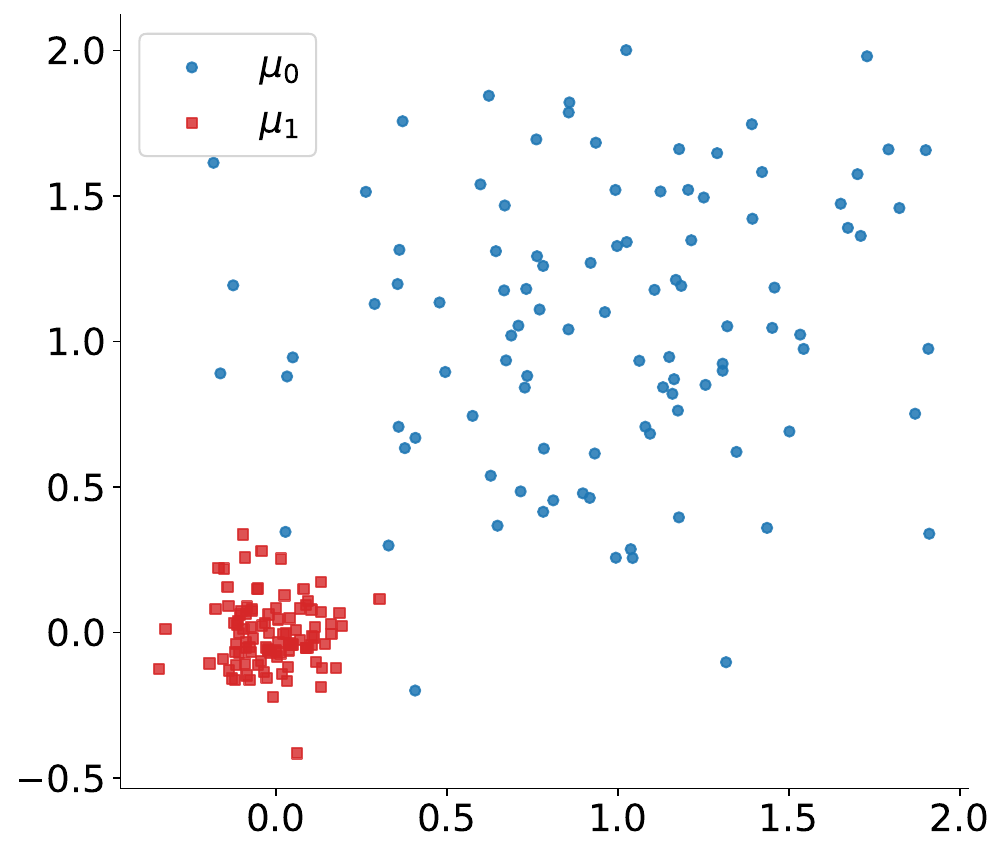}
         \caption{}
         \label{fig:euclidaen_visualisation}
     \end{subfigure}
     \begin{subfigure}[b]{0.48\textwidth}
         \centering
         \includegraphics[width=0.6\textwidth]{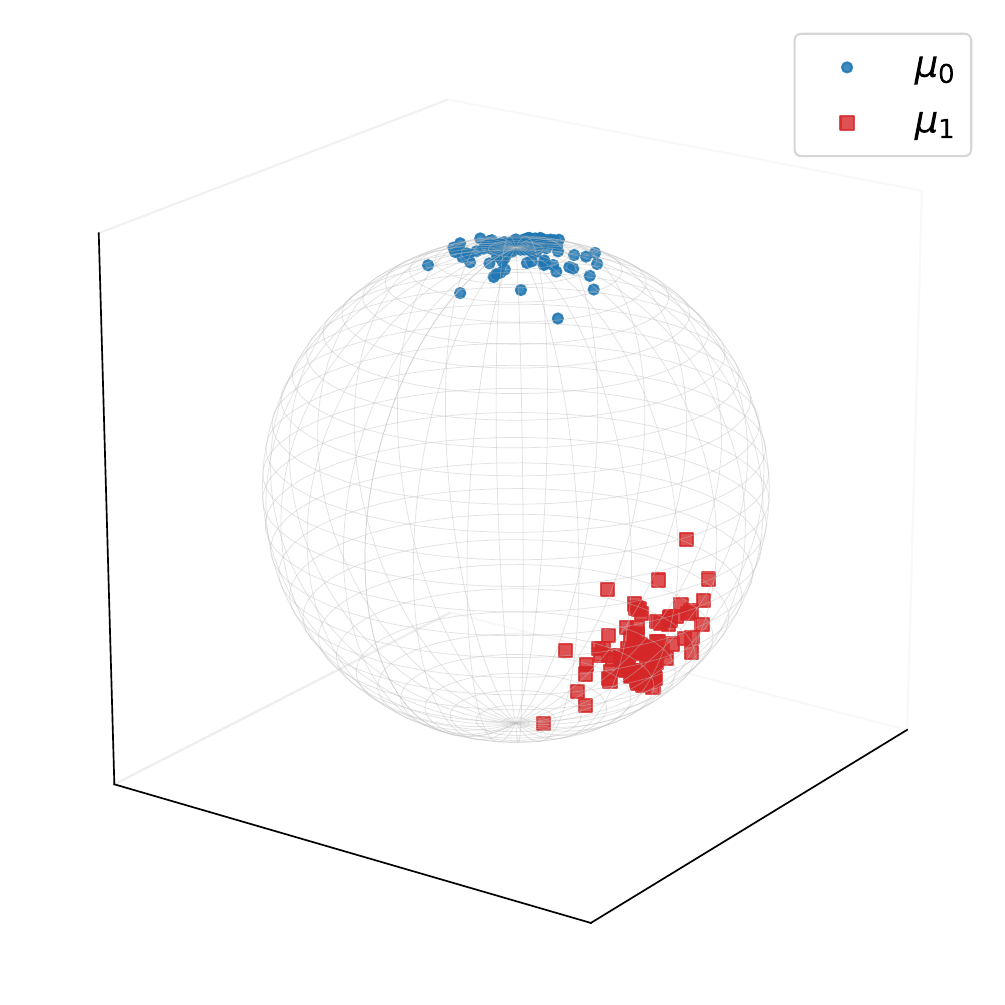}
         \caption{}
         \label{fig:sphere_visualisation}
     \end{subfigure}
     \caption{Visualizations of the datasets used in the convergence rate experiments. \textbf{(a):} samples from two-dimensional Gaussian distributions with $\mu_0$ and $\mu_1$ set to be uniform over the realizations. 
     \textbf{(b):} points on the unit sphere on which $\mu_0$ and $\mu_1$ are uniformly supported. In this case $\mu_1$ is obtained from $\mu_0$ by a fixed rotation applied to the support points.}
\end{figure}

{\textbf{Convergence rates.}} To empirically validate \cref{lem:convergenceRateAccMin,lem:convergenceRateMin},
we track the progress of  
 $\|u_{j+1} - u_j \|^2 $ for the gradient method (\cref{algo:outer-loop-simple}), and $\min_{j =1}^k \|u_{j} - z_j \|^2$ (at the $k$-th iteration of the loop) for the accelerated gradient method (\cref{algo:outer-loop-accelerated}) on some examples in the convex and nonconvex regimes. 
 
In the first example, $\mu_0$ is the uniform distribution on $N=100$ data points sampled i.i.d. from
$
{N}\left(
\left(
1\;
1
\right)^{\intercal},
0.5^2 \Id
\right)$ and
$\mu_1$ is defined analogously with $100$ i.i.d. samples from
$
N\left(
\left(
0\;
0
\right)^{\intercal},
0.11^2 \Id\right)$.
In this case, we choose $\kappa_0=\kappa_1$ to be the squared Euclidean cost; for $p=2$, the resulting  matrix $\rM$ used in the variational form \eqref{eq:variationalForm} can be decomposed as $\rB_1^{\intercal}\rB_1-\rB_0^{\intercal}\rB_0$ where, for this particular example, $r_0 = 6$ and $r_1 = 10$. We consider both the convex and nonconvex regimes, in the former case we take $\varepsilon =  \lambda_{\max}{(\rB_0^\intercal \rB_0)} - \lambda_{\min}{(\rB_1^\intercal \rB_1)} \approx 2367.34$ and in the latter,  $\varepsilon = 0.01$. The progress of the iterates when computing $\mathsf{EGW}^{\varepsilon}_2(\mu_0,\mu_1)$ using these choices of $\varepsilon$ are presented in \cref{fig:convergence_all}.   

  In the second example, $\mu_0$ is the uniform distribution on $N=100$ data points from the unit sphere in $\mathbb R^3$ and $\mu_1$ is obtained by rotating $\mu_0$ according to a fixed rotation matrix. In this case, we choose $\kappa_0=\kappa_1$ to be the geodesic distance on the  sphere and set $p=2$. For this particular example, $r_0 = 9802$ and $r_1 = 198$. Plots of the progress of the iterates are included in \cref{fig:convergence_all} both in the convex ($\varepsilon = \lambda_{\max}{(\rB_0^\intercal \rB_0)} - \lambda_{\min}{(\rB_1^\intercal \rB_1) \approx 2815.17}$) and nonconvex ($\varepsilon = 0.01$) regimes. The details of the hyperparameters and experimental setup are outlined in \cref{sec:experimental-details}. 

As illustrated in \cref{fig:convergence_all}, the iterates are consistent with the theoretical convergence rates. We observe that the iterates of the accelerated gradient method exhibit a staircase shape which is standard in momentum-based methods.

 \begin{figure}[htbp]
     \centering
     \begin{subfigure}[b]{0.48\textwidth}
         \centering
         \includegraphics[width=\textwidth]{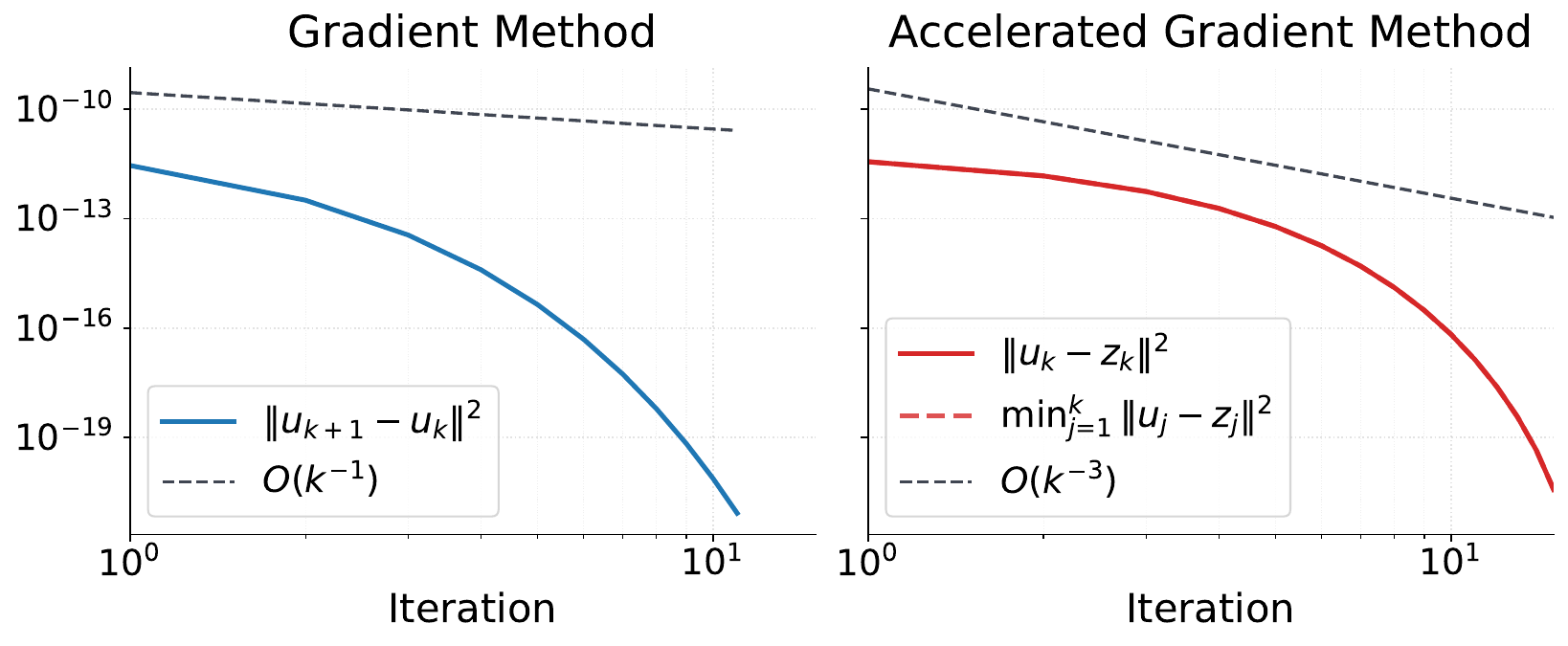}
         \caption{}
         \label{fig:simple_conv}
     \end{subfigure}
     \hfill
     \begin{subfigure}[b]{0.48\textwidth}
         \centering
         \includegraphics[width=\textwidth]{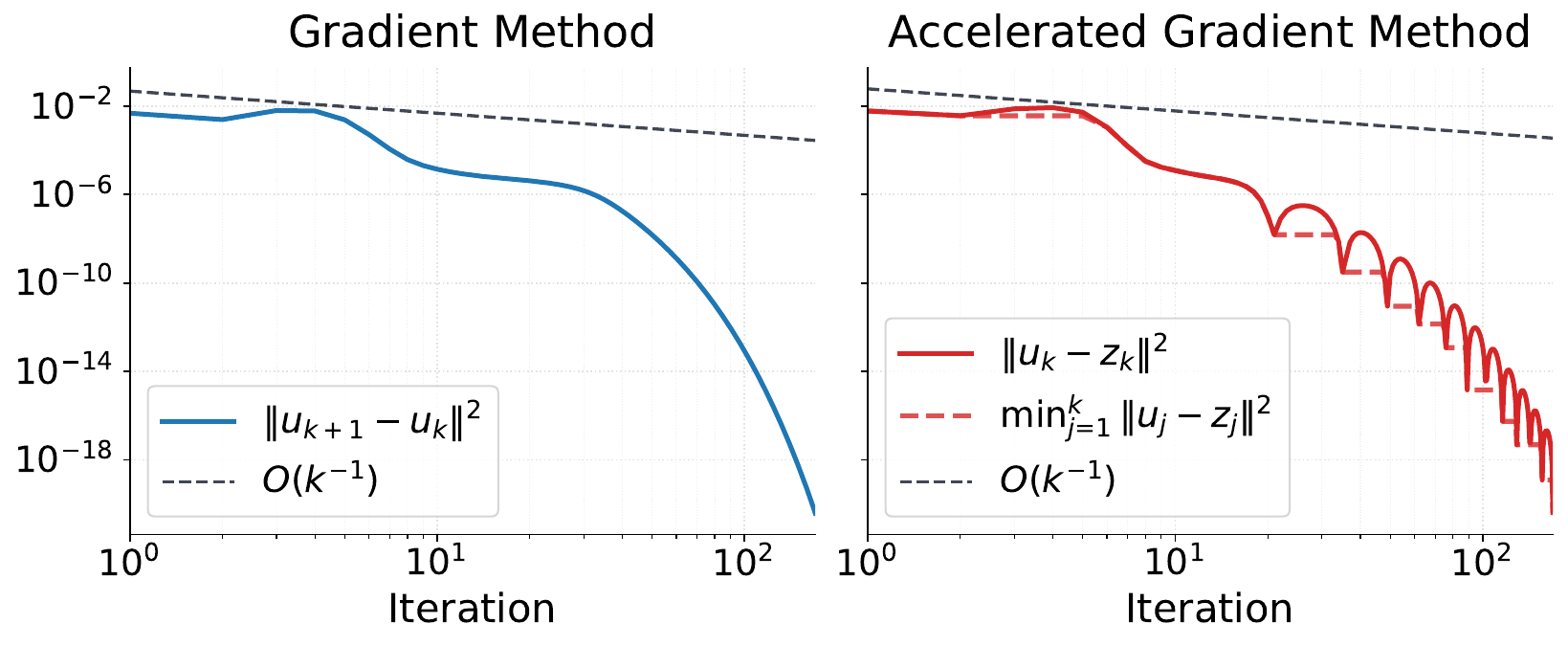}
         \caption{}
         \label{fig:simple_nonconv}
     \end{subfigure}

     \vspace{1em} %

     \begin{subfigure}[b]{0.48\textwidth}
         \centering
         \includegraphics[width=\textwidth]{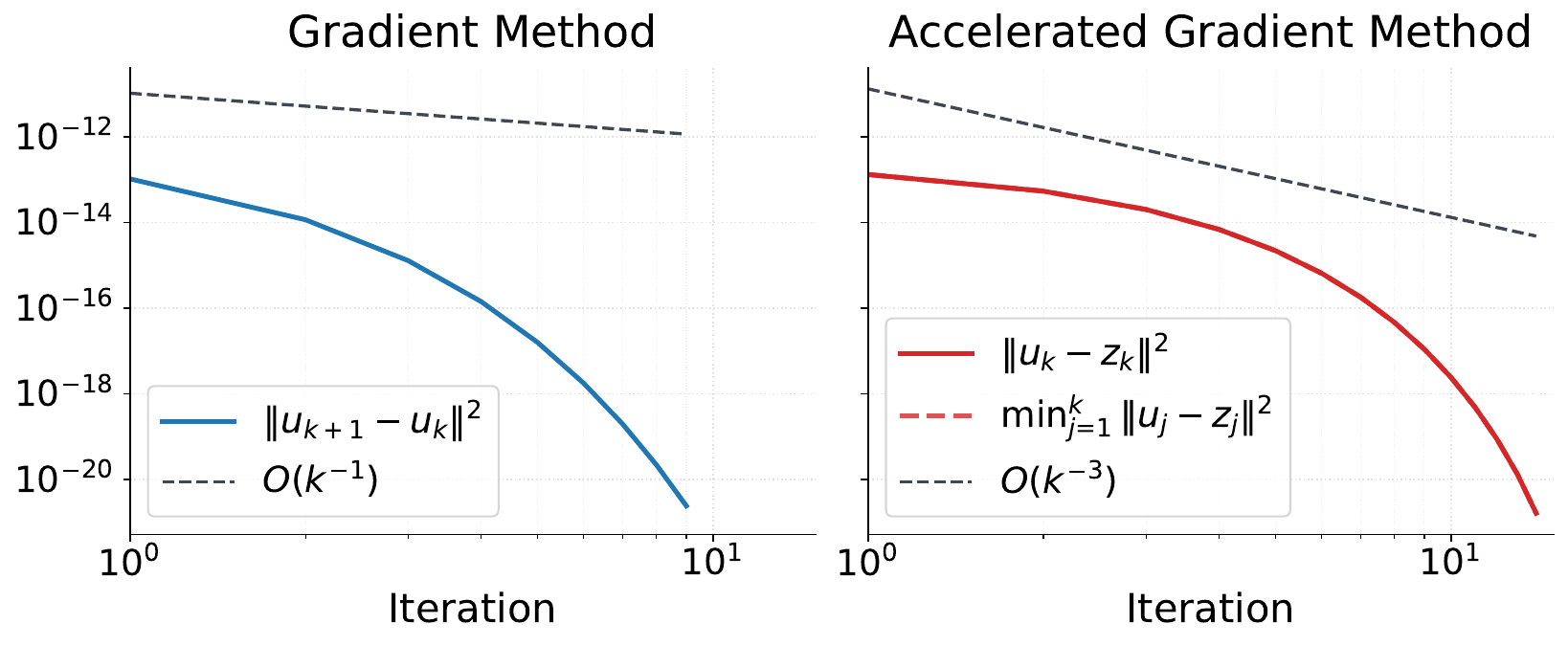}
         \caption{}
         \label{fig:sphere_conv}
     \end{subfigure}
     \hfill
     \begin{subfigure}[b]{0.48\textwidth}
         \centering
         \includegraphics[width=\textwidth]{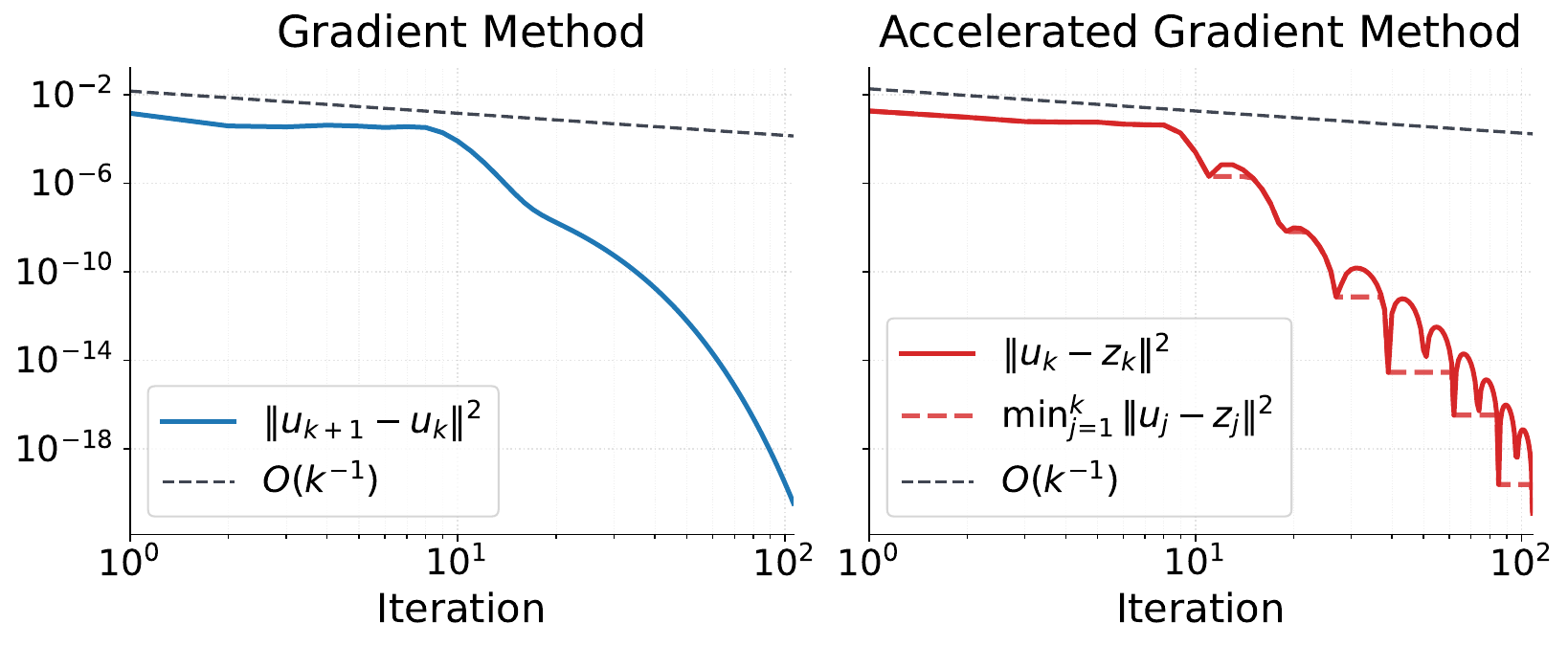}
         \caption{}
         \label{fig:sphere_nonconv}
     \end{subfigure}
     
     \caption{Progress of the iterates for the theoretical convergence rate validation. \textbf{(a)-(b):} progress for the first example, where $\mu_0$ and $\mu_1$ are constructed from samples of normal distributions and $\kappa_0,\kappa_1$ are taken to be squared Euclidean distances. \textbf{(a)} corresponds to the convex case where $\varepsilon\geq\lambda_{\max}{(\rB_0^\intercal \rB_0)}) - \lambda_{\min}{(\rB_1^\intercal \rB_1)}$ whereas \textbf{(b)} depicts the case where $\varepsilon<\lambda_{\max}{(\rB_0^\intercal \rB_0)}) - \lambda_{\min}{(\rB_1^\intercal \rB_1)}$ and convexity is not guaranteed. \textbf{(c)-(d)} are arranged similarly for the second example, where $\mu_0$ and $\mu_1$ are constructed from points on the unit sphere and $\kappa_0,\kappa_1$ are the geodesic distances.}
     \label{fig:convergence_all}
\end{figure}

{\textbf{Runtime comparison.}} As a supplement to the previous experiment, we compare the runtime of our methods to the mirror descent algorithm \cite{peyre2016gromov,scetbon2022linear} as implemented in  \cite[Algorithm 2]{scetbon2022linear}.

{To study the effect of the rank, we construct the cost matrix $\rM$ so that it exhibits a prescribed low-rank structure. Precisely, we generate matrices $\mathrm A_0,\mathrm A_1\in\mathbb R^{r\times N}$ with entries sampled uniformly at random from the interval $[0,1]$, and define the within-space cost matrices $\mathrm K_0=\mathrm A_0^{\intercal}\mathrm A_0, \mathrm K_1=\mathrm A_1^{\intercal}\mathrm A_1.$ We normalize each factor so that the maximum entry of its corresponding cost matrix is one, ensuring that both cost matrices are of a similar scale. Since $\mathrm K_0$ and $\mathrm K_1$ are psd, their Kronecker product is also psd, and hence $\rM=-4(\mathrm K_0\otimes\mathrm K_1)=-\rB_0^{\intercal}\rB_0$ is nsd with $\rB_0=2(\mathrm A_0\otimes\mathrm A_1)$ and the decomposition of $\rM$ is therefore known explicitly. As such, it is not required to solve a concave maximization problem to obtain a gradient oracle for minimizing $\ell$, recalling \cref{rmk:oracleImplementation}.  Moreover, $\rM\in\mathbb R^{N^2\times N^2}$ has rank at most $r^2$.}
We underscore that the mirror descent algorithm can also leverage such a decomposition to improve its per-iteration complexity \cite{scetbon2022linear}. 

With this setup in mind, we  generate $\rM$ according to the above procedure for each combination of $N\in\{2^k\}_{k=4}^{14}$ and $r\in\{4^k\}_{k=0}^3$, solve the resulting EGW problem, \eqref{eq:entropicQP}, where $\mu_0$ and $\mu_1$ are taken to be uniformly weighted on $N$ points using \cref{algo:outer-loop-simple,algo:outer-loop-accelerated}, and the mirror descent algorithm. For each such problem, we record the time required for each algorithm to converge neglecting the time required to construct $\rA_0,\rA_1$ and report the average runtime over $50$ randomly generated instances for each pair $(N,r)$ in \cref{fig:runtimeplot}. The shaded area between curves represents the range between the maximum and minimum runtime for the $50$ runs for a given pair $(N,r)$. Again we consider two regimes: convex, where we set $\varepsilon =  \lambda_{\max}{(\rB_0^\intercal \rB_0)}) - \lambda_{\min}{(\rB_1^\intercal \rB_1)}$ calculated separately for each problem instance (see \cref{sec:experimental-details} for exact values) and nonconvex where we set $\varepsilon=1$ for all problem instances.

\cref{fig:runtime-convex} illustrates the average runtimes for the convex setting. As shown, both the simple and accelerated methods adhere to the theoretical runtime complexity of $O(N^2)$. These methods achieve comparable runtimes across all rank values and for large values of $N$. Furthermore, both methods exhibit convergence times similar to mirror descent, with our algorithms converging slightly faster as $N$ increases. Similar trends are evident in the nonconvex setting (\cref{fig:runtime-nonconvex}), with the slight difference that the accelerated method requires more time to converge than both mirror descent and the gradient method. In this setting, the theoretical runtime complexity of $O(N^2)$ is again maintained by both methods. We also compare the final couplings from our algorithms and mirror descent. In the nonconvex setting, the maximum discrepancy in Frobenius norm over all tested configurations is $2.4022\times 10^{-8}$, while in the convex setting it is $6.0448\times 10^{-11}$, illustrating that both methods effectively converge to the same coupling for these examples.

\begin{figure}[!htb]
    \centering
    \begin{subfigure}[b]{0.48\textwidth}
         \centering
         \includegraphics[width=\textwidth]{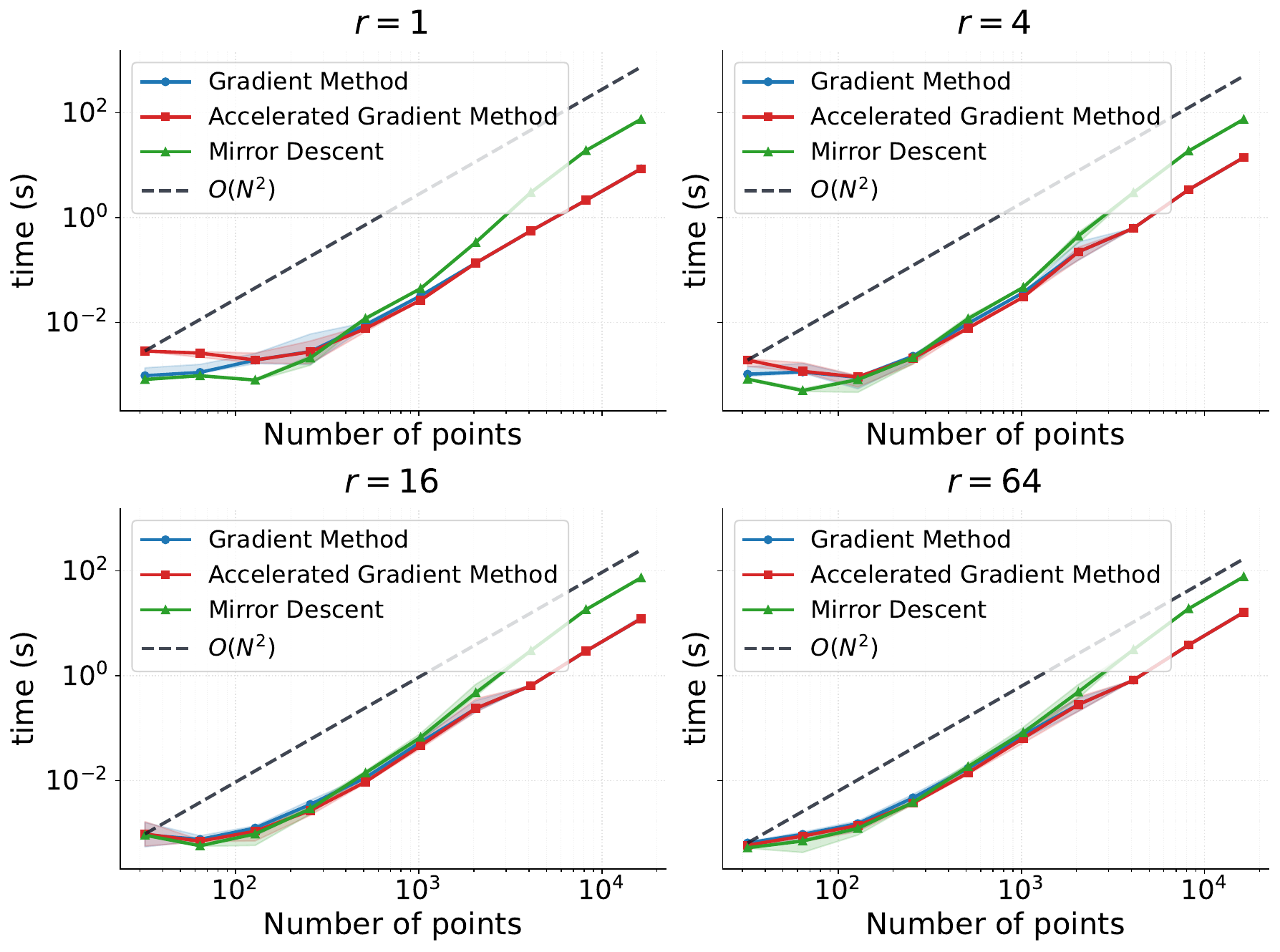}
         \caption{Convex setting}
         \label{fig:runtime-convex}
     \end{subfigure}
     \ \ \ \ \begin{subfigure}[b]{0.48\textwidth}
         \centering
         \includegraphics[width=\textwidth]{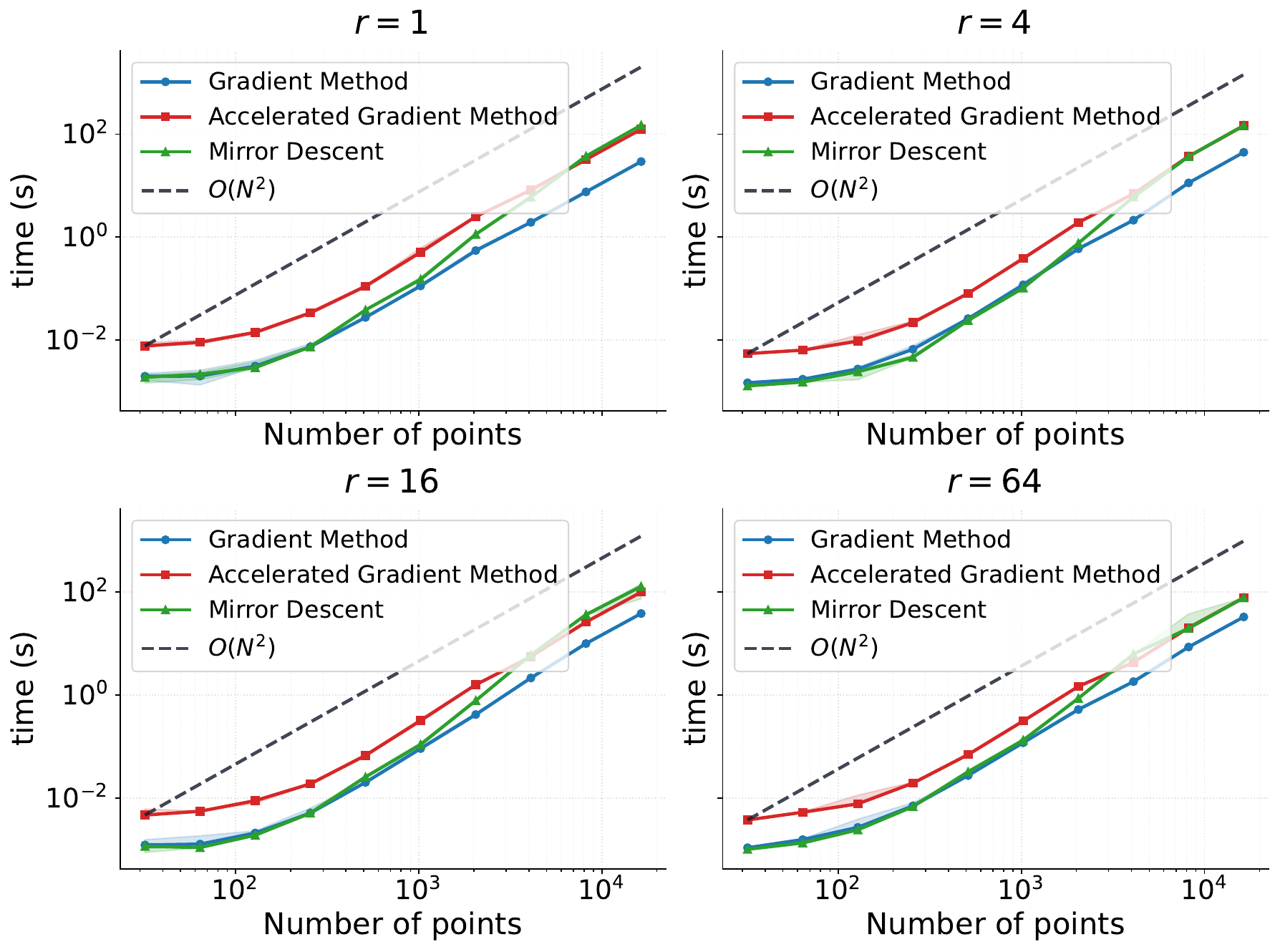}
         \caption{Nonconvex setting}
         \label{fig:runtime-nonconvex}
     \end{subfigure}
     \caption{Results of the runtime comparison experiment. \textbf{(a):} results for $\varepsilon = \lambda_{\max}{(\rB_0^\intercal \rB_0)} - \lambda_{\min}{(\rB_1^\intercal \rB_1)}$  so that each problem instance  is convex (see \cref{tab:L-details} for exact values). \textbf{(b):} results for $\varepsilon=1$ so that the problems are not guaranteed to be convex. All algorithms are seen to scale as $O(N^2)$ where $N$ is the number of support points.}
     \label{fig:runtimeplot}
\end{figure}

We make precise that the various algorithms use different termination conditions. For the gradient methods, we follow the discussion in \cref{subsec:algorithms} and terminate the algorithm if the norm of the approximate gradient at the given iterate is less than the tolerance $\delta = 10^{-6}$. By contrast, the standard implementation of the mirror descent algorithm \cite{scetbon2022linear} used for these experiments terminates once the distance between successive iterates is less than the tolerance $\delta=10^{-6}$. Furthermore, our method requires setting a step size. As remarked earlier, though \cref{lem:convergenceRateMin,lem:convergenceRateAccMin} establish principled choices of step size in terms of the constant $L$, this may be overly conservative in practice and lead to slow convergence of the algorithms. Similarly to the convergence experiments, we therefore tune $L$ by initializing at $L=L_0=2$ for each combination of $(N, r)$. If all runs converge for this choice of $L$, we multiply it by $0.99$ and repeat this procedure until at least one run fails to converge. The last choice of $L$ for which all runs were successful is then used to set the step size (see \cref{sec:experimental-details} for exact values per setting). By contrast, the mirror descent algorithm does not require setting a step size.  %

\section{Testing for Isomorphism of Graph Distributions} 
\label{sec:graphIsomorphism}

As an application of the theory developed in \cref{sec:statistics}, we consider the problem of testing if two  unknown distributions $\mu_0$ and $\mu_1$ on the set $\mathcal G_N$ of all labeled unweighted undirected graphs with $N$ vertices and no self-loops are such that $\mu_0$ and $\mu_1$ are equivalent up to a permutation of labels based only on independent collections of graphs $\{G_{0,i}\}_{i=1}^n$ and $\{G_{1,i}\}_{i=1}^n$ sampled i.i.d. from the populations. A similar application was explored in \cite{rioux2024limit} using limit laws for the quadratic Euclidean GW distance. Their method requires embedding the graph distributions into a Euclidean space and comparing the embedded distributions. An important limitation of their methodology, which arises as a byproduct of this embedding step, is that it only allows comparing distributions on graphs which have independent edges. By contrast, the method we propose in \cref{sec:hypothesisTesting} does not impose such a restrictive assumption and enables comparing arbitrary distributions on graphs. In effect, their method compares the marginal distributions of the edges up to isomorphism whereas the current approach compares the joint distribution of the edges, again up to isomorphism.

Concretely, for $N>1$, the populations $\mu_0$ and $\mu_1$ assign probabilities to each $G\in\mathcal G_N$ and we wish to test, based on the samples, if there exists a permutation of the labels $\sigma:[N]\to [N]$ such that $\mu_1=(P_{\sigma})_{\sharp}\mu_0$, where $P_{\sigma}(G)$ is the graph whose adjacency matrix,\footnote{The adjacency matrix, $\mathrm{A}_G$, of a graph $G\in\mathcal G_{N}$ has $ij$-th entry equal to $1$ if there is an edge connecting the $i$-th and $j$-th vertices and $0$ otherwise for $(i,j)\in[N]\times[N]$.} $\mathrm{A}_{P_{\sigma}(G)}$, satisfies $\left(\mathrm{A}_{P_{\sigma}(G)}\right)_{ij}= \left(\mathrm{A}_{G}\right)_{\sigma(i)\sigma(j)}$ for each $(i,j)\in[N]\times [N]$. To formulate this problem in terms of the GW framework, we consider the kernel
\[
    \kappa:(G,G')\in\mathcal G_N\times \mathcal G_N \mapsto 2 \langle \mathrm{A}_G\mathbf 1, \mathrm{A}_{G'}\mathbf 1\rangle+ \frac 12\langle \mathrm{A}_G, \mathrm{A}_{G'} \rangle_{\mathrm{F}},
\]
where $\mathrm{A}_G\mathbf 1$ is a vector that contains the degree of each vertex and $\langle \mathrm{A}_G, \mathrm{A}_{G'} \rangle_{\mathrm{F}}$ accounts for the number of edges that both graphs have in common. We first establish that this kernel is compatible with the desired hypothesis testing problem in the sense of \cref{lem:vanishingGW}.

\begin{proposition}[Kernel compatibility]
\label{prop:GWNullGraph}
Suppose that $\kappa_0=\kappa_1=\kappa$, $N\neq 4$, and that $\mu_0$ and $\mu_1$ assign positive mass to every graph with a single edge. Then, $\mathsf{GW}_p(\mu_0,\mu_1)=0$ if and only if $\mu_1=(P_{\sigma})_{\sharp} \mu_0$ for some permutation $\sigma:[N]\to [N]$.
\end{proposition}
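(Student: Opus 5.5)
The plan is to reduce to \cref{lem:vanishingGW} with $\mathcal X_0=\supp(\mu_0)$ and $\mathcal X_1=\supp(\mu_1)$. Two things are needed: first, that $\kappa$ separates points on these supports; second, that every $\kappa$-preserving bijection $S:\mathcal X_0\to\mathcal X_1$ agrees with some relabeling $P_\sigma$. The converse direction is immediate: $P_\sigma$ permutes degree vectors and preserves Frobenius inner products, so $\kappa(P_\sigma G,P_\sigma G')=\kappa(G,G')$. Hence $\mu_1=(P_\sigma)_\sharp\mu_0$ gives a bijection in $\mathcal S$, namely $P_\sigma$ restricted to the support, and $\mathsf{GW}_p=0$.

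\emph{Separation.} Let $e_{ij}$ denote the graph with the single edge $\{i,j\}$; by assumption it lies in both supports. For any $G$, with degrees $d$,
\[
\kappa(G,e_{ij})=2(d_i+d_j)+(\mathrm A_G)_{ij},
\]
since $\mathrm A_{e_{ij}}\mathbf 1=\mathbf 1_i+\mathbf 1_j$ and $\langle \mathrm A_G,\mathrm A_{e_{ij}}\rangle_{\mathrm F}=2(\mathrm A_G)_{ij}$. The first term is even and $(\mathrm A_G)_{ij}\in\{0,1\}$, so the parity of $\kappa(G,e_{ij})$ recovers $(\mathrm A_G)_{ij}$. Thus $G\mapsto(\kappa(G,e_{ij}))_{i<j}$ is injective, and the hypothesis of \cref{lem:vanishingGW} holds on both supports.

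\emph{Structure of $S$.} Fix $S\in\mathcal S$. Since $S$ preserves the diagonal value $\kappa(G,G)=2\|\mathrm A_G\mathbf 1\|^2+|E(G)|$, I first show that this equals $5$ exactly for single-edge graphs. Indeed, the empty graph gives $0$, one edge gives $4+1=5$, and $m\ge2$ edges give $2\sum_i d_i^2+m\ge 2\cdot\frac{(2m)^2}{N}\cdot\frac{N}{2m}\cdots$; more simply, $\sum_i d_i^2\ge\sum_i d_i=2m\ge4$, so the value is at least $8+m\geq 10$. Hence $S$ maps single-edge graphs into single-edge graphs. It is injective, both sets have cardinality $\binom N2$, and both lie in the supports, so $S$ is a bijection on single edges. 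For distinct single edges, $\kappa(e,e')$ equals $2$ if they share a vertex and $0$ if they are disjoint. So $S$ induces an automorphism of the line graph $L(K_N)$.

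\emph{Main step.} This is the step I expect to be the main obstacle. I will invoke Whitney's theorem: for connected graphs other than $K_3$ versus $K_{1,3}$, and excluding the small exceptional cases (of which $K_4$ is the relevant one, since $L(K_4)$ is the octahedron and has extra automorphisms), every automorphism of $L(K_N)$ is induced by a vertex permutation $\sigma$. The cases $N=2$ and $N=3$ must be checked by hand. For $N=2$ there is only one edge. For $N=3$, $\mathrm{Aut}(L(K_3))=S_3$, and every such automorphism is induced by a vertex permutation. This yields $S(e)=P_\sigma(e)$ for every single edge, after fixing the convention $\sigma$ versus $\sigma^{-1}$.

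\emph{Extension to all of the support.} For general $G\in\mathcal X_0$ and every single edge $e$,
\[
\kappa(S(G),P_\sigma e)=\kappa(S(G),S(e))=\kappa(G,e)=\kappa(P_\sigma G,P_\sigma e).
\]
As $e$ ranges over all single edges, so does $P_\sigma e$. By the separation step, $S(G)=P_\sigma G$. Therefore $\mu_1=S_\sharp\mu_0=(P_\sigma)_\sharp\mu_0$, and \cref{lem:vanishingGW} completes the equivalence.
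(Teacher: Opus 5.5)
Your proof is correct and follows the paper's route. Separation and the final extension both come from the parity of $\kappa(G,G_{ij})=2(d_i+d_j)+(\mathrm{A}_G)_{ij}$, the diagonal value $5$ pins down the single-edge graphs, and the values $0$ and $2$ show that $S$ induces an automorphism of $L(K_N)$; the only difference is the main step. There you cite Whitney's theorem in its strong form (Whitney and Jung: for connected graphs on more than four vertices, every line-graph isomorphism is induced by a unique vertex isomorphism), whereas the paper proves it by hand, showing that the image of the star $\{G_{1j}\}_j$ must again be a star because, once $N\geq 5$, no further edge can meet all three sides of a triangle.
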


It is easy to see that if $\mu_1=(P_{\sigma})_{\sharp} \mu_0$ for some permutation $\sigma:[N]\to [N]$, then $\mathsf{GW}_p(\mu_0,\mu_1)=0$ under this choice of kernel. The reverse implication requires slightly more care, whence the added assumptions that $N\neq 4$ and that each graph with a single edge occurs with nonzero probability. The result is proved by first showing that if $\mathsf{GW}_p(\mu_0,\mu_1)=0$ and $\mu_0,\mu_1$ assign some positive mass to each graph with a single edge, then $\mu_1=B_{\sharp}\mu_0$ for some bijection $B:\supp(\mu_0)\to\supp(\mu_1)$. Then it is argued that if $G$ is a graph with edges between the nodes $i,j$ and $i,k$, then $B(G)$ must have edges between the nodes $i',j'$ and $i',k'$ for some possibly distinct indices.   With this, it can be shown that if $N\neq 4$, $B$ is induced by a permutation, see \cref{proof:prop:GWNullGraph} for complete details. 

Given that \cref{prop:GWNullGraph} requires both $\mu_0$ and $\mu_1$ to assign some positive mass to each graph with a single edge, we may apply that result to a simple modification of $\mu_0$ and $\mu_1$ to discern whether they are related by an isomorphism. To state this result, let $\mathcal E$ be the collection of all graphs on $N$ nodes with one edge and $\mathcal U$ be the uniform measure on $\mathcal E$. 

\begin{corollary}[Isomorphic distributions]
\label{cor:tildeeta}
Fix $N\neq 4,\lambda\in(0,1)$ and, for each  $\eta \in\mathcal P(\mathcal G_N)$, let $\tilde \eta=(1-\lambda)\eta +\lambda \mathcal U$. %
    Then, for any $\mu_0,\mu_1\in\mathcal P(\mathcal G_N)$,  
    $\mathsf{GW}_p(\tilde \mu_0,\tilde \mu_1)=0$ if and only if $\mu_1=(P_{\sigma})_{\sharp}\mu_0$ for some permutation $\sigma:[N]\to [N]$.%
\end{corollary}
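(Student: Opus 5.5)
We reduce the corollary to \cref{prop:GWNullGraph} applied to the mixtures $\tilde\mu_0,\tilde\mu_1$. Since $\lambda\in(0,1)$ and $\mathcal U$ charges every graph in $\mathcal E$, both $\tilde\mu_0$ and $\tilde\mu_1$ give positive mass to every single-edge graph. With $N\neq 4$ and $\kappa_0=\kappa_1=\kappa$, \cref{prop:GWNullGraph} then gives
\[
\mathsf{GW}_p(\tilde\mu_0,\tilde\mu_1)=0 \iff \tilde\mu_1=(P_\sigma)_\sharp\tilde\mu_0 \text{ for some permutation } \sigma.
\]
Everything therefore reduces to showing that, for a fixed permutation $\sigma$,
\[
\tilde\mu_1=(P_\sigma)_\sharp\tilde\mu_0 \quad\text{if and only if}\quad \mu_1=(P_\sigma)_\sharp\mu_0.
\]

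The key observation is that $\mathcal U$ is invariant under relabeling, i.e. $(P_\sigma)_\sharp\mathcal U=\mathcal U$. Indeed, $P_\sigma$ is a bijection of $\mathcal G_N$ that preserves the number of edges, so it restricts to a bijection of $\mathcal E$. It therefore pushes the uniform measure on $\mathcal E$ to itself. Since pushforward is linear on measures,
\[
(P_\sigma)_\sharp\tilde\mu_0=(1-\lambda)(P_\sigma)_\sharp\mu_0+\lambda\,\mathcal U .
\]

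For the reverse direction, if $\mu_1=(P_\sigma)_\sharp\mu_0$, the display gives $(P_\sigma)_\sharp\tilde\mu_0=(1-\lambda)\mu_1+\lambda\mathcal U=\tilde\mu_1$.

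For the forward direction, suppose $\tilde\mu_1=(P_\sigma)_\sharp\tilde\mu_0$. Then
\[
(1-\lambda)\mu_1+\lambda\mathcal U=(1-\lambda)(P_\sigma)_\sharp\mu_0+\lambda\mathcal U .
\]
Subtracting $\lambda\mathcal U$ and dividing by $1-\lambda>0$ yields $\mu_1=(P_\sigma)_\sharp\mu_0$. Combined with the equivalence from \cref{prop:GWNullGraph}, this proves the corollary.

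There is no real obstacle, since the difficult combinatorial content sits in \cref{prop:GWNullGraph}. The only points to verify are that the positivity hypothesis of that proposition holds for the mixtures, and that $\mathcal U$ is invariant under $P_\sigma$; both are immediate from the definitions. The reverse direction could alternatively be obtained directly from the fact that $\kappa$ is invariant under simultaneous relabeling, but the route above avoids needing that.
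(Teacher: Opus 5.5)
Your proposal is correct and follows the route the paper indicates. The paper omits this proof, calling the corollary a direct consequence of \cref{prop:GWNullGraph}: apply that proposition to $\tilde\mu_0,\tilde\mu_1$, which charge every single-edge graph, and then use $(P_\sigma)_\sharp\mathcal U=\mathcal U$ and $1-\lambda>0$ to remove the $\lambda\,\mathcal U$ component; you have simply written out this invariance-and-cancellation step explicitly.
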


\cref{cor:tildeeta} is a direct consequence of \cref{prop:GWNullGraph}; its proof is thus omitted for brevity.
To apply this result for testing if population measures $\mu_0,\mu_1\in\mathcal P(\mathcal G_N)$ are isomorphic based on independent collections of  sampled graphs  $(G_{0,i})_{i\in\mathbb N}\stackrel{i.i.d.}{\sim}\mu_0$ and $(G_{1,i})_{i\in\mathbb N}\stackrel{i.i.d.}{\sim}\mu_1$, it suffices to define $\mu_{0,n} \coloneqq (1-\lambda)\hat \mu_{0,n}+\lambda \mathcal U$ and  $\mu_{1,n} \coloneqq (1-\lambda)\hat \mu_{1,n}+\lambda \mathcal U$ and use $\sqrt n \mathsf{GW}_p(\mu_{0,n},\mu_{1,n})^p$ as the test statistic. We may thus apply the results of \cref{sec:hypothesisTesting} with $\kappa_0=\kappa_1=\kappa$ to test $
\mathrm{H}_0 : \mu_1=(P_{\sigma})_{\sharp}\mu_0$ { for some permutation $\sigma:[N]\to [N]$, corresponding to }  $\mathsf{GW}_p(\tilde \mu_0,\tilde \mu_1)=0$ versus
$
\mathrm{H}_1 : \mu_1\neq (P_{\sigma})_{\sharp}\mu_0 \text{ for each permutation $\sigma:[N]\to [N]$, corresponding to }  \mathsf{GW}_p(\tilde \mu_0,\tilde \mu_1)>0.$
We set the critical values of this test for a given (asymptotic) level using a simple scaling of the conservative estimator from \cref{sec:consistencyConservativeQuantiles}  from which we  obtain the following result.
\begin{theorem}[Consistent Isomorphism Testing]
   \label{thm:consistencyLimitLawsGraphs}
    Fix $\lambda\in(0,1)$, $N\neq 4$,  $\mu_{i,n} = (1-\lambda)\hat \mu_{i,n}+\lambda \mathcal U$ for $i\in\{0,1\}$,  %
       $\hat \Sigma_n = \Sigma_{w_{\hat \mu_{0,n}}}$, and $\hat u_n=u_{\mu_{0,n}}$ (defined by analogy with $u_{\tilde \mu_0}$ in \cref{assn:generalSettingLimitLaw}). Then, 
         $\hat \Sigma_n\to \Sigma_{w_{\mu_0}}$ and $\hat u_n\to u_{\tilde \mu_0}$ conditionally on the data given almost every realization of $G_{0,1},G_{0,2},\dots,$ and, if $\mathsf{GW}_p(\tilde \mu_0,\tilde \mu_1)=0$,  the conservative testing procedure from \cref{sec:consistencyConservativeQuantiles} yields a test for graph distribution isomorphism of any given asymptotic level upon scaling $\hat L_n$ by $(1-\lambda)$. If $\mathsf{GW}_p(\tilde \mu_0,\tilde \mu_1)>0$, the probability that the test rejects the null hypothesis converges to $1$. 
\end{theorem}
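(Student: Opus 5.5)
The plan is to reduce the statement to \cref{thm:conservativeTesting} and \cref{thm:convergenceDirectEstimator}, applied to the smoothed populations $\tilde\mu_0,\tilde\mu_1$ from \cref{cor:tildeeta}, with kernel $\kappa_0=\kappa_1=\kappa$ and the sequence $(\mu_{0,n},\mu_{1,n})$. The factor $(1-\lambda)$ comes from the fluctuations of the smoothed empirical weights. Working on $\mathcal X_0=\supp(\tilde\mu_0)$, we have
\[
\sqrt n\,(w_{\mu_{0,n}}-w_{\tilde\mu_0})=(1-\lambda)\sqrt n\,(w_{\hat\mu_{0,n}}-w_{\mu_0}),
\]
where $w_{\mu_0}$ is extended by zeros on $\mathcal E\setminus\supp(\mu_0)$. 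The same identity holds for index $1$. By the multivariate CLT for multinomial vectors and independence of the two samples, these converge jointly to $(1-\lambda)(G_0,G_1)$, with $G_0\sim N(0,\Sigma_{w_{\mu_0}})$ and $G_1\sim N(0,\Sigma_{w_{\mu_1}})$ independent. The limit $L_*$ in \cref{cor:simplifiedLimitLaw} is a sup of linear functionals of $(G_{\tilde\mu_0},G_{\tilde\mu_1})$, hence positively homogeneous. So I will apply the \cref{sec:hypothesisTesting} machinery to the unscaled Gaussians $(G_0,G_1)$, with $q=w_{\mu_0}$, and multiply the resulting $L$ and $\hat L_n$ by $(1-\lambda)$. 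This explains the rescaled critical value.

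The steps, in order, are as follows.

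First, I check the hypotheses of \cref{lem:vanishingGW} for $\kappa$ on $\supp(\tilde\mu_0)\supseteq\mathcal E$. For the single-edge graph $e_{ij}$ one computes $\kappa(G,e_{ij})=2(d_i+d_j)+(\mathrm A_G)_{ij}$. For $N\ge3$ the pair sums $d_i+d_j$ can be separated from the $0/1$ term and determine the degree vector, so the values $\kappa(G,\cdot)|_{\mathcal E}$ determine $\mathrm A_G$. Hence $\kappa(G,\cdot)\neq\kappa(G',\cdot)$ for $G\neq G'$. For $N=2$ the graph space has two elements and the check is direct.

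Second, I verify \cref{assn:generalSettingLimitLaw}(1)--(2). Item (1) holds with $q=w_{\mu_0}$ after the rescaling above. For item (2), under the null every $S\in\mathcal S$ with $\tilde\mu_1=S_\sharp\tilde\mu_0$ is induced by a label permutation $P_\sigma$; this is the content of the proof of \cref{prop:GWNullGraph}, which uses $N\neq4$. Such an $S$ maps $\mathcal U$ to $\mathcal U$, so it maps $\mu_0$ to $\mu_1$, i.e. $w_{\mu_1}\circ\sigma_S=w_{\mu_0}$. Therefore $T_S(G_1)\sim N(0,\Sigma_{w_{\mu_0}})$, and it is independent of $G_0$ by independence of the samples.

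Third, I verify items (3)--(4). The estimator $\hat\Sigma_n=\Sigma_{w_{\hat\mu_{0,n}}}\to\Sigma_{w_{\mu_0}}$ almost surely by the strong law of large numbers. The map $\eta\mapsto u_\eta$ is a finite sum that is continuous in $w_\eta$, and $w_{\mu_{0,n}}\to w_{\tilde\mu_0}$ almost surely, so $\hat u_n\to u_{\tilde\mu_0}$. Finally, $\Sigma_{w}\mathbf 1=w-w(w^\intercal\mathbf1)=0$, so $Z^\intercal\mathbf1=0$ almost surely.

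Fourth, I invoke \cref{thm:convergenceDirectEstimator} and \cref{thm:conservativeTesting}, which give asymptotic level $\alpha$ under $\mathsf{GW}_p(\tilde\mu_0,\tilde\mu_1)=0$ and power tending to one otherwise. \cref{cor:tildeeta} translates both conclusions into statements about graph isomorphism.

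The main obstacle is the nondegeneracy requirement $\Sigma_q\neq0$ in \cref{thm:convergenceDirectEstimator}, i.e. that $\mu_0$ is not a point mass. Smoothing does not help here, since $\Sigma_q$ is built from $w_{\mu_0}$ and not from $w_{\tilde\mu_0}$. If $\mu_0=\delta_G$, then under the null $\mu_1=\delta_{P_\sigma G}$, so $\hat\mu_{i,n}=\mu_i$ for all $n$ and the statistic is identically zero. Likewise $\hat\Sigma_n=0$, so $\hat L_n\equiv0$ and the test never rejects. I would treat this degenerate case separately by this direct argument; for the alternative, divergence of the statistic still gives power one. A secondary point needing care is confirming that the bijection produced in the proof of \cref{prop:GWNullGraph} is exactly a $P_\sigma$ on all of $\supp(\tilde\mu_0)$, so that $\mathcal U$ is preserved.
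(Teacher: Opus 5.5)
Your proposal is correct and takes the paper's route. The paper only remarks that the result is a direct application of \cref{thm:conservativeTesting} to the smoothed measures, and you carry out that reduction in detail: you extract the factor $(1-\lambda)$ by positive homogeneity so that \cref{assn:generalSettingLimitLaw}(1) holds with $q=w_{\mu_0}$, you get item (2) from the fact that every $S\in\mathcal S$ is some $P_\sigma$ and therefore preserves $\mathcal U$, and you treat separately the point-mass case $\Sigma_{w_{\mu_0}}=0$, which \cref{thm:convergenceDirectEstimator} excludes but the paper does not mention.
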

 Since the measures $\mu_{0,n}$ and $\mu_{1,n}$ are simple modifications to the empirical measures, the result is a direct application of \cref{thm:conservativeTesting}. 

 We conclude this section by noting that graphs with $4$ nodes can be embedded into the space of graphs with $5$ nodes by adding an additional node to the sampled graphs. With this, we can leverage all of the results described above to treat the case $N=4$.

\subsection{Numerical Validation}
\label{sec:graphIsomorphismExperiments}
This section numerically validates the graph isomorphism testing framework from \cref{thm:consistencyLimitLawsGraphs}.

{\textbf{Population measures.}}
We construct $\mu_0\in\mathcal P(\mathcal G_5)$ by first assigning to the edge between nodes $i$ and $j$ a fixed probability $p_{ij}$, denoted $\mathrm{Ber}(p_{ij})$. We then sample independent Bernoulli random variables $X_{ij}$ with success probabilities $p_{ij}$ and draw the edge between the $i$-th and $j$-th nodes if $X_{ij}=1$ and do not draw it otherwise. As an exception to this rule, the edge between nodes $1$ and $3$ follows the distribution $(X_{12}+B_{1/4})\pmod{2}$, where $B_{1/4}\sim\mathrm{Ber}(1/4)$, see \cref{fig:graphModels} for the success probabilities.

\begin{figure}[!htb]
    \centering
    \includegraphics[width=0.6\linewidth]{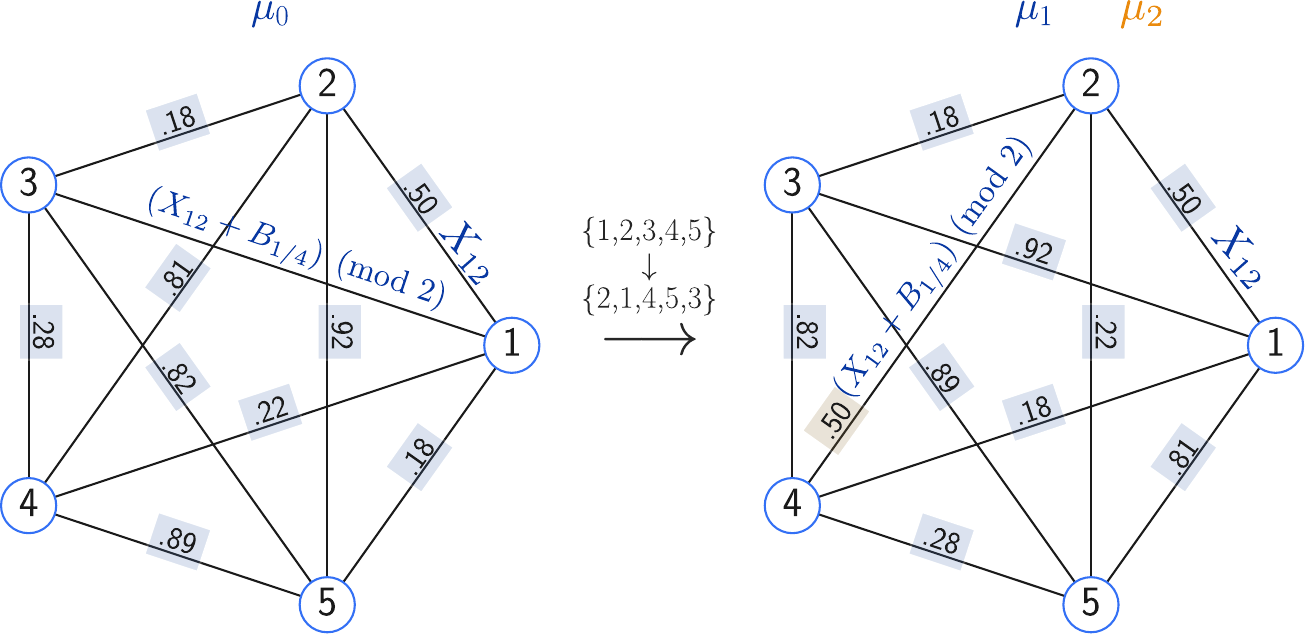}
    \caption{Population distributions used in \cref{sec:graphIsomorphismExperiments}. The boxed numbers on each edge  represent the probability that this edge is drawn for a given sample. The distribution of $\mu_1$ is obtained from $\mu_0$ by simply permuting the node labels so that $\mathsf{GW}_2(\mu_0,\mu_1)=0$. $\mu_2$ is identical to $\mu_1$ except that the edge between nodes $2$ and $4$ is sampled independently from the other edges with probability $1/2$ as illustrated by the boxed number, thus  $\mathsf{GW}_2(\mu_0,\mu_2)>0$.}
    \label{fig:graphModels}
\end{figure}

$\mu_1$ is obtained from $\mu_0$ by applying a fixed permutation $\sigma$ to the nodes as shown in \cref{fig:graphModels}, that is, $\mu_1=(P_{\sigma})_{\sharp}\mu_0$ whereby $\mathsf{GW}_2(\mu_0,\mu_1)=0$. In this experiment, $\sigma(1)=2,\sigma(2)=1,\sigma(3)=4,\sigma(4)=5$, and $\sigma(5)=3$. 
To assess the power of the proposed testing methodology, we also consider a distribution $\mu_2$ which is not isomorphic to $\mu_0$ and is also displayed in \cref{fig:graphModels}.  $\mu_2$ is constructed in the same way as $\mu_1$ except that the edge between the nodes $2$ and $4$ (corresponding to the edge between $1$ and $3$ after applying $\sigma$) has the  $\mathrm{Ber}(1/2)$ distribution so that all edges are drawn independently. The rationale for this choice is that the edge between nodes $2$ and $4$ has the same probability of being drawn under $\mu_1$ or $\mu_2$ for any given graph, but the edges are not drawn independently under $\mu_1$.  

This example highlights the difference between the current approach, which applies to arbitrary distributions, and that proposed in \cite{rioux2024limit}, which effectively compares the marginal distributions of the edges. As demonstrated in \cref{fig:type12error}, the type 2 error of our method decays to $0$ as the number of samples increases, while that of \cite{rioux2024limit} remains essentially~constant.

{\textbf{Implementing the test.}}
To implement the hypothesis test, it suffices to fix a significance level $\alpha\in (0,1)$, compute the test statistic $\sqrt n\mathsf{GW}_2^2(\mu_{0,n},\mu_{1,n})$, where $\mu_{i,n}=(1-\lambda)\hat \mu_{i,n}+\lambda\mathcal U$ for $i=0,1$, and use the conservative quantile estimate which entails solving random linear programs and scaling the optimal value by $1-\lambda$ (see \cref{alg:directEstimator} and \cref{thm:consistencyLimitLawsGraphs}). 

For a fixed $n\in\{10,50,100,500,1000,5000\}$ and $\lambda=0.01$, we generate $n$ i.i.d. samples from $\mu_0,\mu_1,$ and $\mu_2$, and compute $\mathsf{GW}_2(\mu_{0,n},\mu_{1,n})$ and $\mathsf{GW}_2(\mu_{0,n},\mu_{2,n})$ by applying \cref{algo:outer-loop-simple} with the subgradient derived in \cref{lem:derivativeObjective} for $\varepsilon = 0$ in place of the approximate gradient; this corresponds to a subgradient method for the unregularized variational form. We choose the $2$-GW as the test statistic, as this reduces the computational burden, see  \cref{sec:graphTestStatistic} for implementation details.

The conservative quantiles are approximated by generating $250$ samples from the empirical estimator via \cref{alg:directEstimator} using $\hat \Sigma_n$ and $\hat u_n$ from \cref{thm:consistencyLimitLawsGraphs}, scaling the values obtained by $1-\lambda$, and computing the resulting  sample quantiles, $q_{n,1-\alpha}$, for the significance levels $\alpha\in\{0.04\cdot k\}_{k=1}^{24}$. We then reject the null hypothesis if the test statistics $ \sqrt n\mathsf{GW}_2^2(\mu_{0,n},\mu_{1,n})$ or $ \sqrt n\mathsf{GW}_2^2(\mu_{0,n},\mu_{2,n})$ exceed the critical value of $q_{n,1-\alpha}$; as noted in \cref{thm:consistencyLimitLawsGraphs}, these tests are of asymptotic level $\alpha$. To estimate the probability of spuriously rejecting the null (when the samples are drawn from $\mu_0$ and $\mu_1$) or failing to reject the alternative (when the samples are drawn from $\mu_0$ and $\mu_2$), we repeat the above procedure $100$ times with fresh samples which we use to compute new values for the statistics and quantiles.

{\textbf{Results.}}
\cref{fig:type12error} compiles the results of applying the methodology described above for the population measures $\mu_0,\mu_1,$ and $\mu_2$ defined previously, see also \cref{fig:graphModels}. Although \cref{thm:consistencyLimitLawsGraphs} is asymptotic in nature, \cref{fig:type12error} illustrates that the proposed testing methodology yields good results even in the finite-sample regime. By contrast,  the approach from \cite{rioux2024limit} erroneously identifies $\mu_0$ and $\mu_2$ as being isomorphic graph distributions since (by construction) the marginal distributions of the edges are related by a permutation of the nodes and so the type $2$ error is effectively stable.

\begin{figure}[!htb]
    \centering
    \begin{subfigure}[b]{0.8\textwidth}
         \centering
         \includegraphics[width=\textwidth]{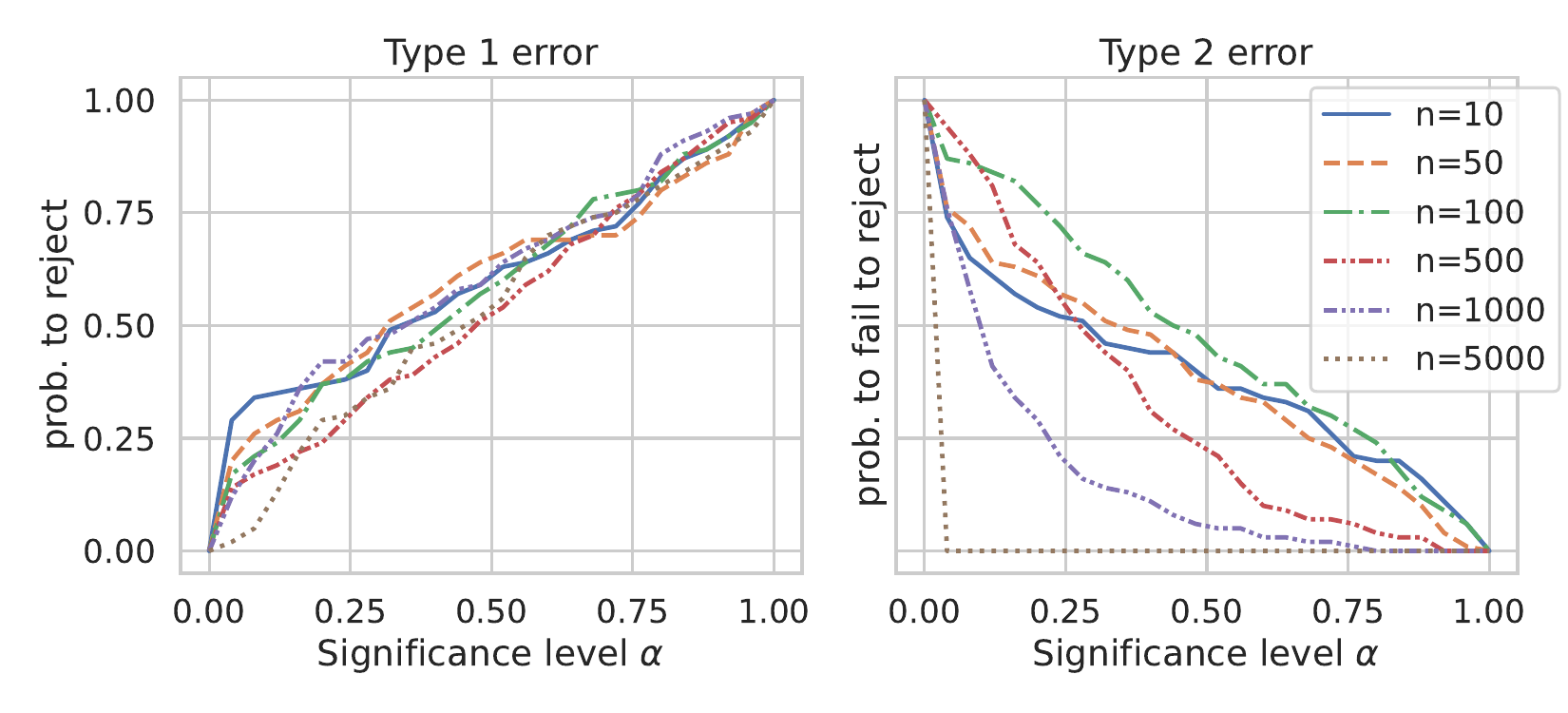}
         \caption{Our test}
         \label{fig:newtest}
     \end{subfigure}
     \ \ \ \ \begin{subfigure}[b]{0.8\textwidth}
         \centering
         \includegraphics[width=\textwidth]{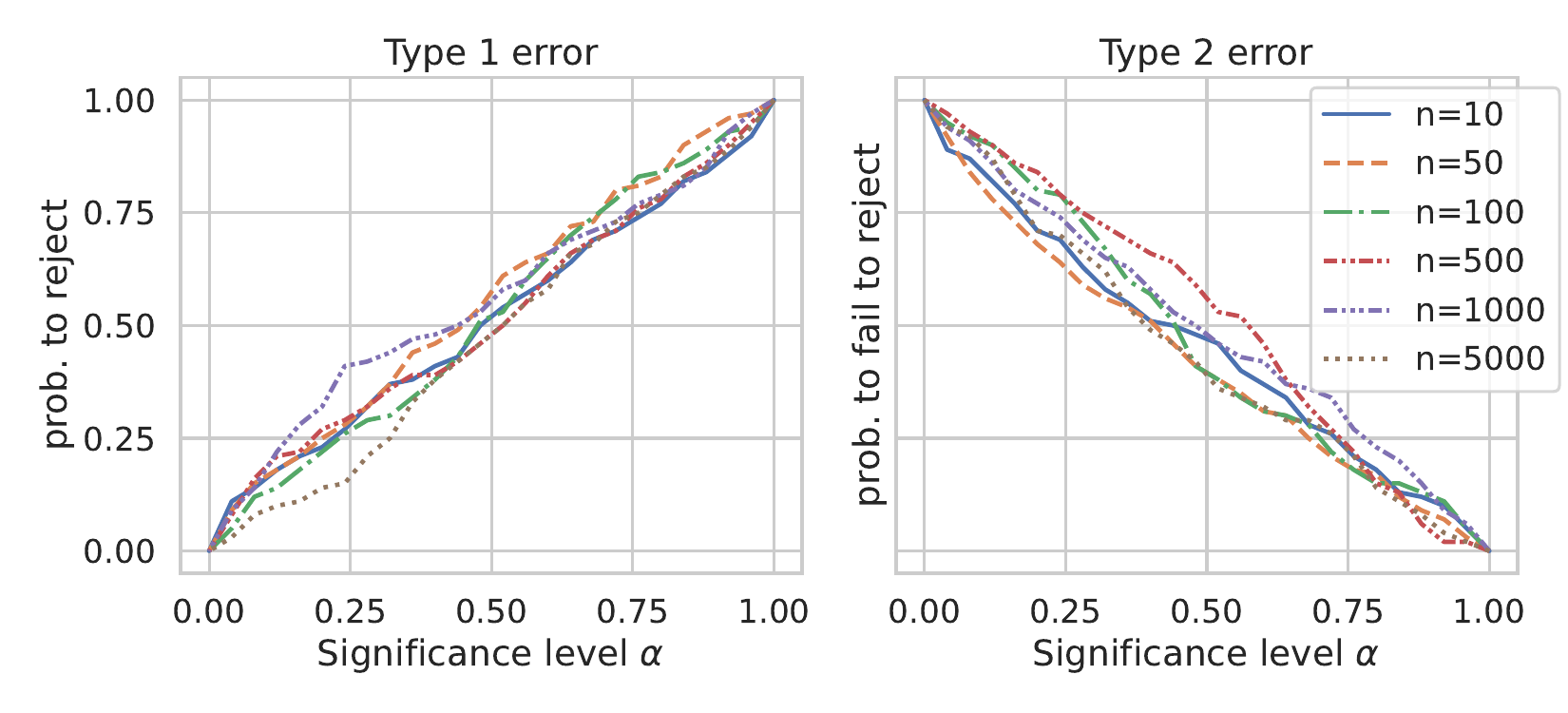}
         \caption{Test from \cite{rioux2024limit}}
         \label{fig:oldtest}
     \end{subfigure}
     \caption{Results of the experiment from \cref{sec:graphIsomorphismExperiments}.    \textbf{(a) left:} estimated probability that  $\sqrt n\mathsf{GW}_2(\mu_{0,n},\mu_{1,n})^2$ exceeds the estimated critical value as a function of the significance level, $\alpha$, based on $100$ repetitions with different sample sizes. \textbf{(a) right:} estimated probability that $\sqrt n\mathsf{GW}_2(\mu_{0,n},\mu_{2,n})^2$ is less than the estimated critical value. \textbf{(b)} is arranged similarly.     }
    \label{fig:type12error}
\end{figure}

\section{Proofs of Main Results}
\label{app:ProofsMain}
\subsection{Proof of \texorpdfstring{\cref{lem:potentialBounds}}{Proposition 1}}
\label{proof:lem:potentialBounds}
  Recall that $\OT_{c}(\mu_0,\mu_1)$ can be identified with a finite-dimensional linear program in the discrete setting.
By the complementary slackness conditions for linear programming, Theorem 4.5 in \cite{bertsimas1997introduction}, any pair of OT potentials $(\varphi_0,\varphi_1)$ for $\OT_{c}(\mu_0,\mu_1)$ satisfies $\varphi_0(x)+\varphi_1(y)=c(x,y)$ at each pair $(x,y)\in\mathcal X_0\times \mathcal X_1$ for which $\pi(\{(x,y)\})>0$ for some choice of OT plan $\pi$ for $\OT_{c}(\mu_0,\mu_1)$. As $\pi\in\Pi(\mu_0,\mu_1)$ and $\supp(\mu_i)=\mathcal X_i$ for $i=0,1$, for every $x\in\mathcal X_0$ there exists $y_x\in\mathcal X_1$ for which $\varphi_0(x)+\varphi_1(y_x)=c(x,y_x)$ and, similarly, for every $y\in\mathcal X_1$ there exists $x_y\in\mathcal X_0$ for which $\varphi_0(x_y)+\varphi_1(y)=c(x_y,y)$. Moreover, $y_x\in\argmin_{\mathcal X_1}\left\{c(x,\cdot)-\varphi_1\right\}$ and $x_y\in\argmin_{\mathcal X_0}\left\{c(\cdot,y)-\varphi_0\right\}$, as $\varphi_0\oplus\varphi_1\leq c$.  Whence,
\[
\begin{gathered}
-\|c\|_{\infty,\mathcal X_0\times \mathcal X_1}-\sup_{\mathcal X_1} \varphi_1
    \leq\varphi_0(x)=\inf_{\mathcal X_1}\left\{c(x,\cdot)-\varphi_1\right\}\leq \|c\|_{\infty,\mathcal X_0\times \mathcal X_1}-\sup_{\mathcal X_1} \varphi_1,
    \\
    -\|c\|_{\infty,\mathcal X_0\times \mathcal X_1}-\sup_{\mathcal X_0} \varphi_0
    \leq\varphi_1(y)=\inf_{\mathcal X_0}\left\{c(\cdot,y)-\varphi_0\right\}\leq \|c\|_{\infty,\mathcal X_0\times \mathcal X_1}-\sup_{\mathcal X_0} \varphi_0.
\end{gathered}
\]
Now, let $(\tilde \varphi_0,\tilde \varphi_1)= (\varphi_0+C, \varphi_1-C)$ for $C=-\sup_{\mathcal X_0}\varphi_0+\frac 12 \|c\|_{\infty,\mathcal X_0\times \mathcal X_1}$ so that $\sup_{\mathcal X_0}\tilde \varphi_0=\frac 12 \|c\|_{\infty,\mathcal X_0\times \mathcal X_1}$. From the display above, we have that $-\frac{3}{2}\|c\|_{\infty,\mathcal X_0\times\mathcal X_1}\leq \tilde \varphi_1(y) \leq\frac 12 \|c\|_{\infty,\mathcal X_0\times\mathcal X_1}$ and so $-\frac{3}{2}\|c\|_{\infty,\mathcal X_0\times\mathcal X_1}\leq \tilde \varphi_0(x) \leq\frac 12 \|c\|_{\infty,\mathcal X_0\times\mathcal X_1}$ for every $(x,y)\in\mathcal X_0\times \mathcal X_1$ as was desired. 
\qed

\subsection{Proof of \texorpdfstring{\cref{thm:variationalForm}}{Theorem 1}}
\label{proof:thm:variationalForm}
 First, by the spectral theorem, $\rM = \sum_{i=1}^r\lambda_i v_iv_i^{\intercal}$ where $r$ is the rank of $\rM$, $(\lambda_i)_{i=1}^r$ is the collection of nonzero eigenvalues of $\rM$ ordered from largest to smallest, and $(v_i)_{i=1}^r\subset \mathbb R^k$ is the collection of corresponding eigenvectors. $\rM$ can thus be expressed as $\rM=\rM_1-\rM_0$, where $\rM_0$ and $\rM_1$ are psd matrices of rank $r_0$ and $r_1$, respectively, where $r_0$ (resp. $r_1$) is  the number of negative (resp. positive) eigenvalues of $\rM$. Explicitly, $\rM_1=\rB_1^{\intercal}\rB_1$ where $\rB_1=\mathrm{diag}(\sqrt{\lambda_1},\dots, \sqrt{\lambda_{r_1}})\mathbf V_1$, where  $\mathbf V_1\in\mathbb R^{r_1\times k}$ has $i$-th row given by $v_i^{\intercal}$. Similarly, $\rM_0=\rB_0^{\intercal}\rB_0$ where $\rB_0\in\mathbb R^{r_0\times k}$ is defined analogously using the absolute value of the negative eigenvalues.

    We proceed by obtaining \eqref{eq:variationalForm} from \eqref{eq:entropicQP}. Setting $f_1(x)=\frac{1}{2}x^{\intercal}\rM_{1}x$ and $f_0(x)=\frac{1}{2}x^{\intercal}\rM_{0}x$, we write the objective in \eqref{eq:entropicQP} as $f_1-f_0-\varepsilon\mathsf H$ where $f_0$ and $f_1$ are, notably, convex functions.  Example 11.10 in \cite{rockafellar1998variational} asserts that the convex conjugate of $f_1$ is  
    \[
        f_1^{\star}:y\in\mathbb R^k \mapsto \sup_{z\in\mathbb R^k}\left\{  y^{\intercal} z-f_1(z)\right\}=\frac{1}{2}y^{\intercal}\rM^{\dagger}_{1}y+\mathcal I_{\mathrm{range}(\rM_{1})}(y),
    \]
    where, for a set $C$,
   \begin{equation}
   \label{eq:indicator}
        \mathcal I_{C}(y) = \begin{cases}
           0,&\text{if }y\in C,
           \\
           +\infty,&\text{otherwise},
        \end{cases}
   \end{equation}
   and
    $\rM_1^{\dagger}$ is the pseudoinverse of $\rM_1$  which satisfies the property that $\rM_1\rM_1^{\dagger}$ is the orthogonal projection onto $\mathrm{range}(\rM_1)\coloneqq \left\{\rM_1 x:x\in\mathbb R^{k}\right\}$ and, in particular,  $\rM_1\rM_1^{\dagger}\rM_1=\rM_1$.  
    Since $f_1$ is proper (i.e., $f_1$ is not identically $+\infty$ and does not take the value $-\infty$), lower semicontinuous, and convex, the Fenchel-Moreau theorem (cf. e.g. Theorem 11.1 in \cite{rockafellar1998variational}) yields that
    \[
        \begin{aligned}
        f_1(x)=f_1^{\star\star}(x)&=\sup_{z\in\mathbb R^k}\left\{  x^{\intercal} z-\frac{1}{2}z^{\intercal}\rM^{\dagger}_{1}z-\mathcal I_{\mathrm{range}(\rM_{1})}(z)\right\}
        \\
        &=\sup_{z'\in\mathbb R^k}\left\{  x^{\intercal} \rM_{1} z'-\frac{1}{2}(z')^{\intercal}\rM_{1}^{\intercal}\rM_{1}^{\dagger}\rM_{1} z'\right\}
        \\
        &=\sup_{z'\in\mathbb R^k}\left\{  x^{\intercal} \rM_{1} z'-\frac{1}{2}(z')^{\intercal}\rM_{1} z'\right\}
        \\
        &=\sup_{v\in\rB_1 \mathbb R^k}\left\{  x^{\intercal} \rB^{\intercal}_1 v-\frac{1}{2}\|v\|^2\right\},
        \end{aligned} 
    \] 
    where the second equality follows from the change of variables $z=\rM_{1} z'$, the third is due to the fact that $\rM_{1}\rM_{1}^{\dagger}\rM_{1}=\rM_{1}$, and the final equality is due to  the change of variables $v=\rB_1 z'$ using the fact that $\rM_{1}=\rB_1^{\intercal} \rB_1$. Note that, for any fixed $x\in\RR^k$, the function $v\in\mathbb R^{r_1}\mapsto x^{\intercal} \rB_1^{\intercal} v-\frac{1}{2}\|v\|^2$ achieves its global maximum at $v^{\star}=\rB_1 x$ so that  
    $
        f_1(x)=\sup_{v\in\mathbb R^{r_1}}\left\{  x^{\intercal} \rB_1^{\intercal} v-\frac{1}{2}\|v\|^2\right\}.
    $
    By the same logic, $f_0(x)=\sup_{u\in\mathbb R^{r_0}}\left\{  x^{\intercal} \rB_0^{\intercal} u-\frac{1}{2}\|u\|^2\right\}$. Altogether, we have that
    \[
    \begin{aligned} 
    \inf_{x\in\mathcal K} \left\{\frac 12x^{\intercal} \rM x-\varepsilon\sH(x)\right\}\mspace{-1mu}&=  \inf_{x\in\mathcal K} \left\{\sup_{v\in\mathbb R^{r_1}}\left\{  x^{\intercal} \rB^{\intercal}_1 v-\frac{1}{2}\|v\|^2\right\}-\sup_{u\in\mathbb R^{r_0}}\left\{ x^{\intercal}\rB_0^{\intercal} u-\frac12\|u\|^2 \right\}-\varepsilon\sH(x)\right\}
    \\
    &=  \inf_{x\in\mathcal K} \left\{\sup_{v\in\mathbb R^{r_1}}\left\{  x^{\intercal} \rB^{\intercal}_1 v-\frac{1}{2}\|v\|^2\right\}+\inf_{u\in\mathbb R^{r_0}}\left\{\frac12\|u\|^2 -x^{\intercal}\rB_0^{\intercal} u \right\}-\varepsilon\sH(x)\right\}
    \\
    &=\inf_{u\in\mathbb R^{r_0}}\inf_{x\in\mathcal K} \sup_{v\in\mathbb R^{r_1}}\left\{  x^{\intercal} \rB^{\intercal}_1 v-\frac{1}{2}\|v\|^2+\frac12\|u\|^2 -x^{\intercal}\rB_0^{\intercal} u -\varepsilon\sH(x)\right\}, 
    \end{aligned}
    \]
    where in the third equality we have interchanged the two infima. 
    As the objective in this final $\inf$-$\inf$-$\sup$ problem is (strongly) concave in $v$ for fixed $(x,u)\in\cK\times \RR^{r_0}$ and  convex in $x$ for fixed $(u,v)\in\RR^{r_0}\times \RR^{r_1}$ and $\mathcal K$ is compact, Sion's minimax theorem, Theorem 4.2' in \cite{sion1958minimax}, yields 
    \[
        \begin{aligned}
        &\inf_{u\in\mathbb R^{r_0}}\inf_{x\in\mathcal K} \sup_{v\in\mathbb R^{r_1}}\left\{  x^{\intercal} \rB^{\intercal}_1 v-\frac{1}{2}\|v\|^2+\frac12\|u\|^2 -x^{\intercal}\rB_0^{\intercal} u -\varepsilon\sH(x)\right\} 
        \\
        &\hspace{6em}=\inf_{u\in\mathbb R^{r_0}} \sup_{v\in\mathbb R^{r_1}}\left\{  \frac12\|u\|^2-\frac{1}{2}\|v\|^2 +\inf_{x\in\mathcal K}\left\{\left( \rB^{\intercal}_1 v-\rB_0^{\intercal} u\right)^{\intercal} x -\varepsilon\sH(x)\right\}\right\},
        \end{aligned} 
    \]
    Combining the two previous displayed equations, we obtain that 
    \begin{equation}
    \label{eq:equalityValues}
\inf_{x\in\mathcal K} \left\{\frac 12x^{\intercal} \rM x-\varepsilon\sH(x)\right\}= \inf_{u\in\mathbb R^{r_0}} \sup_{v\in\mathbb R^{r_1}}\left\{  \frac12\|u\|^2-\frac{1}{2}\|v\|^2 +\inf_{x\in\mathcal K}\left\{\left( \rB^{\intercal}_1 v-\rB_0^{\intercal} u\right)^{\intercal} x -\varepsilon\sH(x)\right\}\right\}.
    \end{equation}

    We now move to proving items (1) and (2) by leveraging the following technical lemmas whose proofs are deferred, respectively, to \cref{proof:lem:derivativeObjective,proof:lem:QPvsLPsoln}. 
    \begin{lemma}
        \label{lem:derivativeObjective} 
        For each $\varepsilon\geq0$, define $\ell_{\varepsilon}:u\in\RR^{r_0}\to\mathbb R$ via
        \[
            \ell_{\varepsilon}(u) = \sup_{v\in\mathbb R^{r_1}}\left\{  \frac12\|u\|^2-\frac{1}{2}\|v\|^2 +\inf_{x\in\mathcal K}\left\{\left( \rB^{\intercal}_1 v-\rB_0^{\intercal} u\right)^{\intercal} x -\varepsilon\sH(x)\right\}\right\}.
        \]
        Then, the Clarke subdifferential\footnote{The Clarke subdifferential \cite{clarke1990optimization} generalizes  the  gradient for functions which are locally Lipschitz continuous. Given a locally Lipschitz continuous function $f:X\subset \mathbb R^n\to \mathbb R$, its Clarke subdifferential at $x\in X$  is  
$
    \partial f(x)\coloneqq \left\{ \xi \in \mathbb R^n : \limsup_{\substack{y\to x,t\downarrow 0}}\frac{f(y+tv)-f(y)}{t}\geq \xi^{\intercal}v\text{ for all }v\in X\right\}.$ If $f$ is continuously differentiable at $x$, $\partial f(x)=\{\nabla f(x)\}$.} of $\ell_{\varepsilon}$ at $u\in\RR^{r_0}$ is given by 
        \begin{equation}
        \label{eq:lepsSubdiff}
            \partial \ell_{\varepsilon}(u) = u-\rB_0 \argmin_{x\in\mathcal K}\left\{\frac{1}{2}x^{\intercal}\rB^{\intercal}_1\rB_1 x- u^{\intercal}\rB_0 x-\varepsilon\sH(x)\right\}.
        \end{equation}
        Furthermore, for any fixed $u\in\mathbb R^{r_0}$, the Clarke subdifferential of the function 
        \[
            f_{\varepsilon,u}: v\in\RR^{r_1}\mapsto \frac12\|u\|^2-\frac{1}{2}\|v\|^2 +\inf_{x\in\mathcal K}\left\{\left( \rB^{\intercal}_1 v-\rB_0^{\intercal} u\right)^{\intercal} x -\varepsilon\sH(x)\right\}
        \]
        at any $v\in\RR^{r_1}$ is given by 
        \[
            \partial f_{\varepsilon,u}(v) = -v+\rB_1 \argmin_{x\in\mathcal K}\left\{\left( \rB^{\intercal}_1 v-\rB_0^{\intercal} u\right)^{\intercal} x -\varepsilon\sH(x)\right\}.
        \]
    \end{lemma}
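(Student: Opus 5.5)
The statement to prove is \cref{lem:derivativeObjective}. The plan is to compute $\partial\ell_\varepsilon$ by a Danskin-type argument applied to the \emph{inner} supremum over $v$, and not to the original inf-sup. The key observation is that the maximization over $v$ is already performed in closed form in the proof of \cref{thm:variationalForm}. For fixed $u$, Sion's theorem exchanges $\sup_v$ and $\inf_{x\in\mathcal K}$ in the definition of $\ell_\varepsilon$. The supremum over $v$ of $x^{\intercal}\rB_1^{\intercal}v-\frac12\|v\|^2$ equals $\frac12 x^{\intercal}\rB_1^{\intercal}\rB_1x$. Hence
\[
\ell_\varepsilon(u)=\frac12\|u\|^2+\min_{x\in\mathcal K}g(u,x),\qquad g(u,x)\coloneqq\frac12 x^{\intercal}\rB_1^{\intercal}\rB_1x-u^{\intercal}\rB_0x-\varepsilon\sH(x).
\]
Consequently $\ell_\varepsilon-\frac12\|\cdot\|^2$ is a pointwise minimum over the compact set $\mathcal K$ of functions that are affine in $u$. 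It is therefore concave and finite, hence locally Lipschitz, and $-(\ell_\varepsilon-\frac12\|\cdot\|^2)$ is a convex max-function.

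Next I apply Danskin's theorem, or equivalently the formula for the subdifferential of a pointwise maximum of affine functions indexed by a compact set (cf.\ \cite{rockafellar1998variational} or \cite{clarke1990optimization}), to $\phi(u)=\max_{x\in\mathcal K}\{u^{\intercal}\rB_0x-\frac12x^{\intercal}\rB_1^{\intercal}\rB_1x+\varepsilon\sH(x)\}$. The map $x\mapsto -g(u,x)$ is continuous on $\mathcal K$ jointly in $(u,x)$, since $\sH$ is continuous on the simplex. Its $u$-gradient is $\rB_0x$. Danskin therefore gives
\[
\partial\phi(u)=\mathrm{conv}\,\rB_0\argmin_{x\in\mathcal K}g(u,x).
\]
The Clarke subdifferential of $-\phi$ is $-\partial\phi$, because Clarke subdifferentials satisfy $\partial(-f)=-\partial f$, and adding the smooth term $\frac12\|u\|^2$ shifts the set by $u$.

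It remains to remove the convex hull. The argmin set $\argmin_{\mathcal K}g(u,\cdot)$ is convex because $g(u,\cdot)$ is convex (psd quadratic plus convex $-\varepsilon\sH$). Its image under the linear map $\rB_0$ is then also convex, so the convex hull is redundant and \eqref{eq:lepsSubdiff} follows.

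The second claim, about $f_{\varepsilon,u}$, is handled the same way with no exchange of optimizations needed. Write $f_{\varepsilon,u}(v)=\frac12\|u\|^2-\frac12\|v\|^2+\min_{x\in\mathcal K}h(v,x)$ with $h(v,x)=(\rB_1^{\intercal}v-\rB_0^{\intercal}u)^{\intercal}x-\varepsilon\sH(x)$. This function is affine in $v$ and convex in $x$, with a convex argmin set. Danskin applied to $-\min_x h$ gives the subdifferential $\rB_1\argmin_{x\in\mathcal K}h(v,x)$, and the smooth part contributes $-v$.

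The main obstacle is the bookkeeping of signs and convex hulls in the Clarke calculus. Danskin is naturally stated for max-functions and yields a convex hull, so I must check two things: that passing to the negative preserves the Clarke subdifferential up to sign, and that convexity of the argmin set (which needs $\rB_1^{\intercal}\rB_1\succeq0$) makes the hull unnecessary. A second point to check is the legitimacy of the Sion swap for fixed $u$. Here $\mathcal K$ is compact and convex and the objective is convex-concave and continuous in $x$, including the entropy at the boundary, so the swap is valid. When $\varepsilon>0$ the argmin is a singleton by strict convexity, which recovers the classical gradient \eqref{eq:gradientObjective}.
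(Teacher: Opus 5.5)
Your proposal is correct and follows essentially the same route as the paper. In both, Sion's theorem swaps $\sup_v$ and $\inf_{x\in\mathcal K}$, and the closed-form maximizer $v=\rB_1x$ gives $\ell_\varepsilon(u)=\frac12\|u\|^2+\min_{x\in\mathcal K}g(u,x)$. Danskin's theorem is then applied to the resulting max-function, followed by the Clarke rules $\partial(-f)=-\partial f$ and the sum rule with a smooth term, and the same steps handle $f_{\varepsilon,u}$. Your explicit remark that the argmin set is convex, so the convex hull in Danskin's formula can be dropped, fills in a detail the paper leaves implicit.
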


        \begin{lemma}
        \label{lem:QPvsLPsoln}
           For any $\varepsilon\geq 0$, if $\bar x$ solves  $\min_{x\in\mathcal K}\left\{\frac{1}{2}x^{\intercal}\rB^{\intercal}_1\rB_1 x- u^{\intercal}\rB_0 x-\varepsilon\sH(x)\right\}$, then it also solves $\min_{x\in\mathcal K}\left\{\bar x^{\intercal}\rB^{\intercal}_1\rB_1 x- u^{\intercal}\rB_0 x-\varepsilon\sH(x)\right\}$. 
        \end{lemma}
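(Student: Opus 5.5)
The statement to prove is \cref{lem:QPvsLPsoln}: a minimizer $\bar x$ of the regularized convex-quadratic program $g(x)\coloneqq\frac12 x^{\intercal}\rB_1^{\intercal}\rB_1x-u^{\intercal}\rB_0x-\varepsilon\sH(x)$ over $\mathcal K$ also minimizes the linearized objective $h(x)\coloneqq\bar x^{\intercal}\rB_1^{\intercal}\rB_1x-u^{\intercal}\rB_0x-\varepsilon\sH(x)$ over $\mathcal K$. The plan is a direct first-order optimality argument exploiting convexity. Both $g$ and $h$ are convex on $\mathcal K$: $\rB_1^{\intercal}\rB_1$ is psd, and $-\sH$ is convex. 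Moreover, $h$ is obtained from $g$ by replacing the quadratic term by its linearization at $\bar x$, up to a constant. Precisely, for $q(x)=\frac12 x^{\intercal}\rB_1^{\intercal}\rB_1x$ we have $q(x)\ge q(\bar x)+\bar x^{\intercal}\rB_1^{\intercal}\rB_1(x-\bar x)$. Hence
\[
h(x)-h(\bar x)=g(x)-g(\bar x)-\tfrac12\|\rB_1(x-\bar x)\|^2 .
\]
This identity alone gives the wrong inequality direction, so I will argue through directional derivatives instead.

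First, fix $x\in\mathcal K$ and set $x_t=\bar x+t(x-\bar x)\in\mathcal K$ for $t\in(0,1]$. Since $\bar x$ minimizes $g$, we have $0\le (g(x_t)-g(\bar x))/t$. Using the identity above along $x_t$,
\[
\frac{g(x_t)-g(\bar x)}{t}=\frac{h(x_t)-h(\bar x)}{t}+\frac t2\|\rB_1(x-\bar x)\|^2 .
\]
Second, by convexity of $h$, $\frac{h(x_t)-h(\bar x)}{t}\le h(x)-h(\bar x)$. Combining the two displays gives $0\le h(x)-h(\bar x)+\frac t2\|\rB_1(x-\bar x)\|^2$. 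Letting $t\downarrow0$ yields $h(x)\ge h(\bar x)$ for every $x\in\mathcal K$, which is the claim.

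This argument needs no differentiability of the entropy, so boundary points of $\mathcal K$, where $\sH$ has infinite slope, cause no trouble. It also covers $\varepsilon=0$ verbatim. The only step requiring care is confirming that $h$ is finite and convex on $\mathcal K$, with the convention $0\log 0=0$, so that the difference quotients are well defined. This is immediate because $\mathcal K$ is a compact subset of the nonnegative orthant and $-\sH$ is continuous and convex there. I therefore expect no real obstacle; the key idea is simply to avoid the wrong-direction global inequality and pass to the limit $t\downarrow0$, where the quadratic remainder vanishes.
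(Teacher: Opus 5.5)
Your proof is correct, but it takes a more elementary route than the paper, and it is also more general.

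\textbf{The paper's route.} The paper writes the objective as $f+g$, with $f(x)=\frac12 x^{\intercal}\rB_1^{\intercal}\rB_1x$ and $g(x)=-u^{\intercal}\rB_0x-\varepsilon\sH(x)+\mathcal I_{\mathcal K}(x)$. It applies the convex subdifferential sum rule, which is valid because $f$ is finite and continuous on all of $\mathbb R^k$. Together with Fermat's rule this gives $-\rB_1^{\intercal}\rB_1\bar x\in\partial g(\bar x)$, and the claim is then just the subgradient inequality for $g$.

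\textbf{Your route.} You derive the same variational inequality by hand. The exact identity $g(x_t)-g(\bar x)=h(x_t)-h(\bar x)+\frac{t^2}{2}\|\rB_1(x-\bar x)\|^2$ shows the quadratic remainder is $O(t^2)$. Combined with the chord inequality for the convex function $h$, that remainder vanishes after dividing by $t$ and letting $t\downarrow 0$.

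\textbf{What each buys.} The paper's version is shorter, given the cited machinery. Yours is self-contained and needs no qualification condition. It also never touches differentiability of $\sH$ at the boundary of $\mathcal K$.

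Your argument in fact proves more than the lemma asks. The sign of the remainder $\frac{t^2}{2}(x-\bar x)^{\intercal}\rB_1^{\intercal}\rB_1(x-\bar x)$ is never used, only that it is $o(t)$. So the argument works verbatim with $\rB_1^{\intercal}\rB_1$ replaced by any symmetric matrix, for instance $\rM$ itself. It also works when $\bar x$ is merely a local minimizer over $\mathcal K$. The paper's use of the sum rule, by contrast, relies on convexity of the quadratic part.

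One small point: you assert that $g$ is convex, but only the convexity of $h$ enters the argument.
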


        We proceed by proving item (1).
        Suppose that $x^{\star}$ solves \eqref{eq:entropicQP} and let $(\bar u,\bar v)=(\rB_0 x^{\star},\rB_1 x^{\star})$.  Then, we have that   
        \[
            \begin{aligned}
                 \ell_{\varepsilon}(\bar u)&=
\sup_{v\in\mathbb R^{r_1}}\left\{  \frac12(x^{\star})^{\intercal}\rB_0^{\intercal}\rB_0 x^{\star}-\frac{1}{2}\|v\|^2 +\inf_{x\in\mathcal K}\left\{\left( \rB^{\intercal}_1 v-\rB_0^{\intercal} \rB_0 x^{\star}\right)^{\intercal} x -\varepsilon\sH(x)\right\}\right\}
                \\
                &\leq \sup_{v\in\mathbb R^{r_1}}\left\{ -\frac12(x^{\star})^{\intercal}\rB_0^{\intercal}\rB_0 x^{\star}-\frac{1}{2}\|v\|^2 + v^{\intercal}\rB_1 x^{\star} -\varepsilon\sH(x^{\star})\right\}
                \\
                &= -\frac12(x^{\star})^{\intercal}\rB_0^{\intercal}\rB_0 x^{\star} +\frac 12  (x^{\star})^{\intercal}\rB_1^{\intercal}\rB_1 x^{\star} -\varepsilon\sH(x^{\star})
                \\
                &=\frac 12 \left( x^{\star}\right)^{\intercal}\rM x^{\star}-\varepsilon\sH(x^{\star})=\inf_{u\in\RR^{r_0}}\ell_{\varepsilon}(u),
            \end{aligned}
        \]
        where the inequality follows by taking $x=x^{\star}$ in the infimum over $\mathcal K$ and the subsequent equality follows by noting that the objective in the second line is strongly concave and is maximized at $\bar v = \rB_1 x^{\star}$. The final equality follows from  \eqref{eq:equalityValues}, which asserts that 
            $\inf_{x\in\mathcal K}\left\{\frac 12 x^{\intercal}\rM x -\varepsilon \mathsf H(x)\right\} = \inf_{u \in \mathbb R^{r_0}} \ell_{\varepsilon}(u).$ 
        Conclude that $\bar u=\rB_0 x^{\star}$ minimizes $\ell_{\varepsilon}$ and thus solves the outer infimum in \eqref{eq:variationalForm}.

        We now show that $\bar v=\rB_1 x^{\star}$ is optimal for the supremum in \eqref{eq:variationalForm}. Proposition 2.3. in \cite{clarke1990optimization} asserts that $\bar u$ satisfies the first-order optimality condition $0\in\partial \ell_{\varepsilon}(\bar u)$, which,   recalling \cref{lem:derivativeObjective}, means that $\bar u=\rB_0 \bar x$ for some $\bar x$ solving 
        $
        \min_{x\in\mathcal K}\left\{\frac 12 x^{\intercal} \rB_1^{\intercal}\rB_1 x - \bar u^{\intercal}\rB_0 x -\varepsilon\sH(x)\right\}.
        $
        Note that $x^{\star}$ also solves this problem. Indeed, if it does not,  
        \[
\frac 12 (\bar x)^{\intercal} \rB_1^{\intercal}\rB_1 \bar x - \frac 12 (\bar x)^{\intercal} \rB_0^{\intercal}\rB_0 \bar x -\varepsilon\sH(\bar x)<\frac 12 (x^{\star})^{\intercal} \rB_1^{\intercal}\rB_1 x^{\star}-\frac 12 (x^{\star})^{\intercal} \rB_0^{\intercal}\rB_0 x^{\star}  -\varepsilon\sH(x^{\star}),
        \]
        since $\bar u =\rB_0x^{\star}=\rB_0 \bar x$, but this contradicts   optimality of $x^{\star}$ for \eqref{eq:entropicQP}. Applying  
        \cref{lem:QPvsLPsoln}, it holds that   $x^{\star}\in \argmin_{x\in\mathcal K}\left\{ x^{\intercal} \rB^{\intercal}_1\rB_1 x^{\star}-\bar u^{\intercal}\rB_0x -\varepsilon \mathsf H(x)\right\}$ whereby
        \[
            \begin{aligned}
        \inf_{u\in \RR^{r_0}}\ell_{\varepsilon}(u)=\ell_{\varepsilon}(\bar u)&\geq \frac 12 \|\bar u\|^2-\frac 12 \|\bar v\|^2+\inf_{x\in\mathcal K}\left\{\left(\rB_1^{\intercal}\bar v-\rB_0^{\intercal} \bar u\right)^{\intercal} x-\varepsilon \sH(x)\right\}
                \\
               &= 
               \frac 12 \|\bar u\|^2-\frac 12 \|\bar v\|^2+\left(\rB_1^{\intercal}\bar v-\rB_0^{\intercal} \bar u\right)^{\intercal} x^{\star}-\varepsilon \sH(x^{\star})\\
                &=\frac 12 \left( x^{\star}\right)^{\intercal}\rM x^{\star}-\varepsilon\sH(x^{\star})=\inf_{u\in \RR^{r_0}}\ell_{\varepsilon}(u),
            \end{aligned} \]
            so that the lower bound, obtained by evaluating $f_{\varepsilon,\bar u}$ at $\bar v$, is in fact an equality, that is, $\bar v=\rB_1 x^{\star}\in\argmax_{\RR^{r_1}} f_{\varepsilon,\bar u}$. Altogether, this means that $(\bar u,\bar v)=(\rB_0 x^{\star},\rB_1 x^{\star})$ is optimal for the $\inf$-$\sup$ problem in \eqref{eq:variationalForm} and that $x^{\star}\in\argmin_{x\in\mathcal K}\left\{\left(\rB_1^{\intercal}\bar v-\rB_0^{\intercal}\bar u\right)^{\intercal} x -\varepsilon\sH(x)\right\}$ as  claimed.

        We now prove item (2). If $(u^{\star},v^{\star})$ solves the outer $\inf$-$\sup$ problem in \eqref{eq:variationalForm}, Proposition 2.3 in \cite{clarke1990optimization}  
        asserts that $u^{\star}$ satisfies the first-order optimality condition $0\in\partial \ell_{\varepsilon}(u^{\star})$. By  \cref{lem:derivativeObjective}, this means that there exists $\bar x\in \argmin_{x\in\mathcal K}\left\{\frac{1}{2}x^{\intercal}\rB^{\intercal}_1\rB_1 x - (u^{\star})^{\intercal}\rB_0 x-\varepsilon\sH(x)\right\}$ satisfying 
       $ u^{\star} =\rB_0 \bar x$. Thus, 
       \[
            \inf_{x\in\mathcal K}\left\{\frac 12 x^{\intercal}\rM x-\varepsilon\sH(x)\right\}=\inf_{u\in\RR^{r_0}}\ell_{\varepsilon}(u)=\frac 12 \bar x^{\intercal}\rB^{\intercal}_1\rB_1 \bar x-\frac 12 \bar x^{\intercal}\rB^{\intercal}_0\rB_0 \bar x-\varepsilon\sH(\bar x),
       \]
       where we have used the expression for $\ell_{\varepsilon}$ from \eqref{eq:QPDecomposition}. As such,  $\bar x$ solves \eqref{eq:entropicQP}. By \cref{lem:QPvsLPsoln}, $\bar x\in \argmin_{x\in\mathcal K}\left\{\bar x^{\intercal}\rB^{\intercal}_1\rB_1 x -(u^{\star})^{\intercal}\rB_0 x-\varepsilon\sH(x)\right\}$ so that  
       \[
            \inf_{x\in\mathcal K}\left\{\frac 12 x^{\intercal}\rM x-\varepsilon \sH(x)\right\}=\inf_{u\in\RR^{r_0}}\ell_{\varepsilon}(u)=\sup_{v\in\RR^{r_1}}f_{\varepsilon,u^{\star}}(v)\geq f_{\varepsilon,u^{\star}}(\rB_1 \bar x)=\frac 12 \bar x^{\intercal}\rM \bar x -\varepsilon\sH(\bar x).
       \]
       Conclude that the above inequality is tight so that $\rB_1 \bar x$ is the unique maximizer, $v^{\star}$, of $f_{\varepsilon,u^{\star}}$, $\bar x\in\argmin \left\{ \left(\rB^{\intercal}_1v^{\star}-\rB^{\intercal}_0 u^{\star}\right)^{\intercal} x -\varepsilon\sH(x)\right\}$, and $\bar x$ solves \eqref{eq:entropicQP}, proving  item (2). 

       The final assertion is a direct consequence of the first-order optimality condition for $f_{\varepsilon,u}(v)$. Namely, if $\bar v$ maximizes $f_{\varepsilon,u}$, then  
       \[
       0\in\partial f_{\varepsilon,u}(\bar v)=-\bar v+\rB_1 \argmin_{x\in\mathcal K}\left\{\left( \rB^{\intercal}_1 \bar v-\rB_0^{\intercal} u\right)^{\intercal} x -\varepsilon\sH(x)\right\},
       \]
       that is, $\bar v=\rB_1 x$ for some $x\in\mathcal K$. 
       \qed

\subsection{Proof of \texorpdfstring{\cref{prop:LipschitzStability}}{Proposition 3}}
\label{proof:prop:LipschitzStability}  

We separate the proof of \cref{prop:LipschitzStability} into two lemmas. The first result pertains to the entropically regularized case and the second treats the unregularized case. The proofs of these results are deferred to \cref{proof:lem:EOTLipschitz} and \cref{proof:lem:OTLipschitz}.
\begin{lemma}
        \label{lem:EOTLipschitz}
Suppose that $\varepsilon>0$. Then, for any choice of $(\nu_0,\nu_1),(\nu_0',\nu_1')\in\mathcal P(\mathcal X_0)\times \mathcal P(\mathcal X_1)$ and $(u,v)\in \rB_0\mathcal B\times \rB_1\mathcal B$, 
\[
    \left|\mathsf{EOT}^{\varepsilon}_{\rB_1^{\intercal}v-\rB_0^{\intercal}u}(\nu_0,\nu_1)-\mathsf{EOT}^{\varepsilon}_{\rB_1^{\intercal}v-\rB_0^{\intercal}u}(\nu_0',\nu_1')\right|\leq C_{\rB_0,\rB_1}\left(\|w_{\nu_0}-w_{\nu_0'}\|_1+\|w_{\nu_1}-w_{\nu_1'}\|_1\right),
\]
where $C_{\rB_0,\rB_1} =\frac3 2\sup_{\substack{\|x_0\|_1= 1\\\|x_1\|_1= 1}}\|\rB_1^{\intercal}\rB_1x_1-\rB_0^{\intercal}\rB_0x_0\|_{\infty}$. 
\end{lemma}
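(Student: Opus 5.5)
The plan is to use the dual formulation \eqref{eq:EOTDual} of the EOT problem together with the a priori bounds on the potentials from \cref{lem:EOTPotentialBounds}.

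First, fix $(u,v)\in\rB_0\mathcal B\times\rB_1\mathcal B$ and write $c=\rB_1^{\intercal}v-\rB_0^{\intercal}u$, viewed as a function on $\mathcal X_0\times\mathcal X_1$ with $k=N_0N_1$ entries. Since $u=\rB_0x_0$ and $v=\rB_1x_1$ for some $x_0,x_1$ with $\|x_i\|_1\le 1$, we have $c=\rB_1^{\intercal}\rB_1x_1-\rB_0^{\intercal}\rB_0x_0$. Rescaling to unit norm, and using that the supremum of a convex function over the $1$-ball is attained on the sphere (the zero vector only helps), this gives
\[
\|c\|_{\infty}\le \sup_{\|x_0\|_1=\|x_1\|_1=1}\|\rB_1^{\intercal}\rB_1x_1-\rB_0^{\intercal}\rB_0x_0\|_\infty=\tfrac23 C_{\rB_0,\rB_1}.
\]

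Second, take a pair of potentials $(\varphi_0,\varphi_1)$ for $\mathsf{EOT}^\varepsilon_c(\nu_0,\nu_1)$ as in \cref{lem:EOTPotentialBounds}. They satisfy the Schrödinger system \eqref{eq:SchrodingerSystem} on all of $\mathcal X_0\times\mathcal X_1$ and obey $\|\varphi_i\|_\infty\le\frac32\|c\|_\infty\le C_{\rB_0,\rB_1}$. This holds even if $\nu_i$ lacks full support, which is why that proposition, rather than an a.s.\ uniqueness statement, is the right tool.

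Third, plug $(\varphi_0,\varphi_1)$ into the dual objective for $(\nu_0',\nu_1')$, which lower bounds $\mathsf{EOT}^\varepsilon_c(\nu_0',\nu_1')$. Because the Schrödinger system holds pointwise everywhere, the exponential term simplifies:
\[
\int e^{(\varphi_0\oplus\varphi_1-c)/\varepsilon}\,d\nu_0'\otimes\nu_1'=\int\Big(\int e^{(\varphi_0(x)+\varphi_1-c(x,\cdot))/\varepsilon}\,d\nu_1\Big)\cdot(\text{correction})\ldots
\]
This is the delicate step: the inner integral equals $1$ only against $\nu_1$, not against $\nu_1'$. The way around it is to change one marginal at a time. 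Comparing $(\nu_0,\nu_1)$ with $(\nu_0',\nu_1)$, the potentials of the former satisfy $\int e^{(\varphi_0(x)+\varphi_1-c(x,\cdot))/\varepsilon}d\nu_1=1$ for every $x\in\mathcal X_0$. Integrating against $\nu_0'$ gives an exponential term equal to $1$, so the dual value at $(\nu_0',\nu_1)$ is at least $\int\varphi_0\,d\nu_0'+\int\varphi_1\,d\nu_1$. Meanwhile $\mathsf{EOT}^\varepsilon_c(\nu_0,\nu_1)=\int\varphi_0\,d\nu_0+\int\varphi_1\,d\nu_1$ by strong duality, since the exponential term is again $1$. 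Hence
\[
\mathsf{EOT}^\varepsilon_c(\nu_0,\nu_1)-\mathsf{EOT}^\varepsilon_c(\nu_0',\nu_1)\le\langle v_{\varphi_0},w_{\nu_0}-w_{\nu_0'}\rangle\le\|\varphi_0\|_\infty\|w_{\nu_0}-w_{\nu_0'}\|_1 .
\]
Swapping the roles of $\nu_0$ and $\nu_0'$, using the bounded potentials of $(\nu_0',\nu_1)$, gives the reverse inequality. The same argument applied to the second marginal, with the column Schrödinger equation, handles the change from $\nu_1$ to $\nu_1'$.

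Finally, the triangle inequality combined with $\|\varphi_i\|_\infty\le C_{\rB_0,\rB_1}$ yields the claim. The main obstacle is precisely the exponential term in the third step; the one-marginal-at-a-time decomposition together with pointwise validity of \eqref{eq:SchrodingerSystem} on all of $\mathcal X_0\times\mathcal X_1$ resolves it.
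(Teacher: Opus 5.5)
Your proposal is correct and follows the paper's proof essentially verbatim: change one marginal at a time via the triangle inequality, plug the potentials from \cref{lem:EOTPotentialBounds} into the dual at the perturbed marginal, and bound $\|\varphi_i\|_\infty\le\frac32\|c\|_\infty\le C_{\rB_0,\rB_1}$; since those potentials satisfy the Schr\"odinger system on all of $\mathcal X_0\times\mathcal X_1$, the exponential term integrates to one. You should delete the aborted display at the start of your third step, and your extreme-point argument for $\|c\|_\infty\le\frac23 C_{\rB_0,\rB_1}$ usefully justifies an inequality the paper only asserts.
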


\begin{lemma}
\label{lem:OTLipschitz}
For any choice of $(\nu_0,\nu_1),(\nu_0',\nu_1')\in\mathcal P(\mathcal X_0)\times \mathcal P(\mathcal X_1)$ and $(u,v)\in \rB_0\mathcal B\times \rB_1\mathcal B$, 
\[
    \left|\mathsf{OT}_{\rB_1^{\intercal}v-\rB_0^{\intercal}u}(\nu_0,\nu_1)-\mathsf{OT}_{\rB_1^{\intercal}v-\rB_0^{\intercal}u}(\nu_0',\nu_1')\right|\leq C_{\rB_0,\rB_1}\left(\|w_{\nu_0}-w_{\nu_0'}\|_1+\|w_{\nu_1}-w_{\nu_1'}\|_1\right),
\]
where $C_{\rB_0,\rB_1} =\frac3 2\sup_{\substack{\|x_0\|_1= 1\\\|x_1\|_1= 1}}\|\rB_1^{\intercal}\rB_1x_1-\rB_0^{\intercal}\rB_0x_0\|_{\infty}$. 
\end{lemma}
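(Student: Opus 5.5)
The plan is to prove \cref{lem:OTLipschitz} through the Kantorovich dual \eqref{eq:OTDuality} combined with the potential bounds of \cref{lem:potentialBounds}. Fix $(u,v)\in\rB_0\mathcal B\times\rB_1\mathcal B$ and write $c=\rB_1^{\intercal}v-\rB_0^{\intercal}u$, viewed as a function on $\mathcal X_0\times\mathcal X_1$ through the identification of $\mathbb R^k$ with functions on the product space.

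\emph{Bounding the cost.} Write $u=\rB_0x_0$ and $v=\rB_1x_1$ with $\|x_0\|_1,\|x_1\|_1\le 1$. Then $c=\rB_1^{\intercal}\rB_1x_1-\rB_0^{\intercal}\rB_0x_0$. The map $(x_0,x_1)\mapsto\|\rB_1^{\intercal}\rB_1x_1-\rB_0^{\intercal}\rB_0x_0\|_\infty$ is convex, so its supremum over the product of $\ell_1$ balls is attained at extreme points, which have unit norm. This gives $\|c\|_{\infty}\le \tfrac23 C_{\rB_0,\rB_1}$. (If $x_0=0$ or $x_1=0$, the bound can equally be obtained by scaling.)

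\emph{Lipschitz estimate via the dual.} Let $(\varphi_0,\varphi_1)$ be OT potentials for $\mathsf{OT}_c(\nu_0,\nu_1)$. Since this pair is feasible for the dual of $\mathsf{OT}_c(\nu_0',\nu_1')$,
\[
\mathsf{OT}_c(\nu_0,\nu_1)-\mathsf{OT}_c(\nu_0',\nu_1')\le \langle v_{\varphi_0},w_{\nu_0}-w_{\nu_0'}\rangle+\langle v_{\varphi_1},w_{\nu_1}-w_{\nu_1'}\rangle\le \max_i\|\varphi_i\|_\infty\bigl(\|w_{\nu_0}-w_{\nu_0'}\|_1+\|w_{\nu_1}-w_{\nu_1'}\|_1\bigr).
\]
Exchanging the roles of $(\nu_0,\nu_1)$ and $(\nu_0',\nu_1')$ gives the two-sided bound. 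It therefore suffices to choose potentials with $\|\varphi_i\|_\infty\le\tfrac32\|c\|_\infty$, which combined with the cost bound yields the constant $\tfrac32\|c\|_\infty\le C_{\rB_0,\rB_1}$.

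\emph{Main obstacle: full support.} \cref{lem:potentialBounds} assumes $\nu_0,\nu_1$ have full support, which is not given here. I would avoid that hypothesis by constructing bounded dual-optimal potentials directly. Starting from any optimal pair for $(\nu_0,\nu_1)$, restricted to the supports, I would extend it by $c$-transforms: set $\tilde\varphi_1(y)=\min_{x}\{c(x,y)-\varphi_0(x)\}$ over all of $\mathcal X_1$, then $\tilde\varphi_0(x)=\min_y\{c(x,y)-\tilde\varphi_1(y)\}$ over all of $\mathcal X_0$. Each $c$-transform keeps the pair feasible and can only increase the dual objective, so the new pair is still optimal. Every value now attains its constraint with equality somewhere, so the argument in the proof of \cref{lem:potentialBounds} applies verbatim after the same shift by a constant, giving $\|\tilde\varphi_i\|_\infty\le\tfrac32\|c\|_\infty$ on the whole space.

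Alternatively, one could approximate $\nu_i$ by full-support measures $(1-t)\nu_i+t\,\mathrm{unif}$ and let $t\downarrow0$, using continuity of the LP value in the marginals. I would fall back on this route if the $c$-transform bookkeeping turns out to be awkward.
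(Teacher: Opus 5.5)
Your proposal is correct and essentially matches the paper's proof. Both arguments use the potentials of one problem as feasible dual points for the other to get the two one-sided bounds. Both also handle the missing full-support hypothesis by passing to a $c$-concave optimal pair on all of $\mathcal X_0\times\mathcal X_1$ and rerunning the shift argument from \cref{lem:potentialBounds}: the paper quotes Santambrogio's Propositions 1.11 and 1.34 for the existence of such a pair, whereas you construct it directly by a double $c$-transform, implicitly using $\varphi^{ccc}=\varphi^{c}$. Your extreme-point argument for $\|c\|_\infty\le\frac23 C_{\rB_0,\rB_1}$ also justifies a step the paper only asserts.
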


\subsection{Proof of \texorpdfstring{\cref{thm:parametricRates}}{Theorem 2}}
\label{proof:thm:parametricRates}
Let $\hat{\mathcal K}_n$ denote the constraint set for the set of couplings with the empirical marginals, $\hat \mu_{0,n},\hat \mu_{1,n}$, and $\mathcal K$ be the analogous constraint set for $\mu,\nu$. Following \cref{thm:variationalForm} the solutions of the $\inf$-$\sup$ problems with the empirical or population marginals all lie in $\rB_0\mathcal B\times \rB_1\mathcal B$.  
Thus, from  \cref{eq:EGWtoEOT}, we have  
            \begin{equation}
            \label{eq:GWBoundEOT}
            \begin{aligned}
&\left|\mathsf{EGW}^{\varepsilon}_p(\hat \mu_{0,n},\hat \mu_{1,n})-\mathsf{EGW}^{\varepsilon}_p( \mu_{0}, \mu_{1})\right|
                \\
                & =\left|\inf_{u\in\rB_0\mathcal B}\sup_{v\in\rB_1\mathcal B}\left\{\frac 12 \|u\|^2-\frac 12 \|v\|^2+\mathsf{EOT}^{\varepsilon}_{\rB_1^{\intercal}v-\rB_0^{\intercal}u}(\hat \mu_{0,n},\hat \mu_{1,n})\right\}\right.
    \\
&\hspace{14em}\phantom{=}-
   \left. \inf_{u\in\rB_0\mathcal B}\sup_{v\in\rB_1\mathcal B}\left\{\frac 12 \|u\|^2-\frac 12 \|v\|^2+\mathsf{EOT}^{\varepsilon}_{\rB_1^{\intercal}v-\rB_0^{\intercal}u}(\mu_0,\mu_1)\right\}\right|
 \\
 &\leq \sup_{u\in\rB_0\mathcal B}\sup_{v\in\rB_1\mathcal B}
 \left|\mathsf{EOT}^{\varepsilon}_{\rB_1^{\intercal}v-\rB_0^{\intercal}u}(\hat \mu_{0,n},\hat \mu_{1,n})-   \mathsf{EOT}^{\varepsilon}_{\rB_1^{\intercal}v-\rB_0^{\intercal}u}(\mu_0,\mu_1)\right|,
                \end{aligned}
            \end{equation}
            where we have used the fact that if $f:A\to \mathbb R$ and $g:A\to\mathbb R$ are bounded functions on a set $A$, $|\inf_A f-\inf_A g|\leq \sup_A|f-g|$ and $|\sup_A f-\sup_A g|\leq \sup_A|f-g|$. Applying \cref{prop:LipschitzStability} we have, for any choice of $\varepsilon\geq 0$,  
            \[
            \begin{aligned}
            \sup_{u\in\rB_0\mathcal B}\sup_{v\in\rB_1\mathcal B}
 \left|\mathsf{EOT}^{\varepsilon}_{\rB_1^{\intercal}v-\rB_0^{\intercal}u}(\hat \mu_{0,n},\hat \mu_{1,n})-   \mathsf{EOT}^{\varepsilon}_{\rB_1^{\intercal}v-\rB_0^{\intercal}u}(\mu_0,\mu_1)\right|&
 \\
 &\hspace{-12em}\leq C_{\rB_0,\rB_1}\left(\|w_{\hat \mu_{0,n}}-w_{\mu_0}\|_1+\|w_{\hat \mu_{1,n}}-w_{\mu_1}\|_1\right). 
          \end{aligned} 
            \]
    We now bound the expectation of the final term so that the above display along with \eqref{eq:GWBoundEOT} yield the claimed result. Applying the Cauchy-Schwarz inequality and the fact that $\|\cdot\|_1\leq \sqrt{d}\|\cdot\|$ in $\mathbb R^d$, 
    \[
    \begin{aligned}
        \mathbb E\left[\|w_{\hat \mu_{0,n}}-w_{\mu_0}\|_1\right]\leq \sqrt{N_0}{\mathbb E[\|w_{\hat \mu_{0,n}}-w_{\mu_0}\|_2]}&\leq \sqrt{N_0}\sqrt{\mathbb E[\|w_{\hat \mu_{0,n}}-w_{\mu_0}\|_2^2]}.
        \end{aligned}
    \] 
    Note that $nw_{\hat \mu_{0,n}}$ can be identified with a sample from a multinomial distribution with $n$ trials and event probability vector $w_{\mu_0}$. As such, $\mathbb E\left[\left(n(w_{\hat \mu_{0,n}})_i-n(w_{\mu_0})_i\right)^2\right]= n(w_{\mu_0})_i(1-(w_{\mu_0})_i)$, yielding
    \[
    \begin{aligned}
        \mathbb E\left[\|w_{\hat \mu_{0,n}}-w_{\mu_0}\|_1\right]\leq \sqrt{\frac{N_0}{n}}\left(\sum_{i=1}^{N_0}(w_{\mu_0})_i(1-(w_{\mu_0})_i)\right)^{1/2}=\sqrt{\frac{N_0}{n}}\sqrt{\left(1-\|w_{\mu_0}\|^2\right)}.
        \end{aligned}
    \]
    Applying the same bound between the $1$- and $2$-norms, we get $\|w_{\mu_0}\|\geq N_0^{-1/2}\|w_{\mu_0}\|_1=N_0^{-1/2}$ so that $\mathbb E\left[\|w_{\hat\mu_{0,n}}-w_{\mu_0}\|_1\right]\leq \sqrt{\frac{N_0-1}{n}}$. A similar bound evidently holds for $\mathbb E\left[\|w_{\hat\mu_{1,n}}-w_{\mu_1}\|_1\right]$.  
\qed

\subsection{Proof of \texorpdfstring{\cref{thm:limitTheorems}}{Theorem 3}}
\label{proof:thm:limitTheorems}
As noted in the text, it suffices to verify the right
differentiability of \(\mathsf{EGW}_p^{\varepsilon}\) in the sense of condition (3) from \cref{prop:limitTheoremFramework}.

We start by computing the requisite directional derivative in the unregularized case,  $\varepsilon=0$. 

\begin{proposition}[GW directional derivative] \label{lem:GWDerivative}
      Let $(\nu_0,\nu_1)\in\mathcal P(\mathcal X_0)\times \mathcal P(\mathcal X_1)$ %
     be arbitrary and, for each $t\in(0,1)$, let $\mu_{i,t}\coloneqq \mu_i+t(\nu_i-\mu_i)$ with $i=0,1$. Then,  
   \[
    \begin{aligned}
        \lim_{t\downarrow 0}\frac{\mathsf{GW}_p(\mu_{0,t},\mu_{1,t})^p- \mathsf{GW}_p(\mu_{0},\mu_{1})^p}{t}\hspace{-3em}&\\    &=  \inf_{\pi^{\star}\in\Pi^{\star}(\mu_0,\mu_1)}\sup_{(\varphi_0,\varphi_1)\in\mathcal D_{\pi^{\star}}}\left\{\int \varphi_0d(\nu_0-\mu_0)+\int \varphi_1d(\nu_1-\mu_1)\right\}\\
     &=  \inf_{\pi^{\star}\in\Pi^{\star}(\mu_0,\mu_1)}\sup_{(\varphi_0,\varphi_1)\in\mathcal D_{\pi^{\star}}}\left\{v_{ \varphi_0}^{\intercal}(w_{\nu_0}-w_{\mu_0})+v_{ \varphi_1}^{\intercal}(w_{\nu_1}-w_{\mu_1})\right\}.
    \end{aligned} 
    \]
where $\Pi^{\star}(\mu_0,\mu_1)$ is the set of all optimal plans for $\mathsf{GW}_p(\mu_{0},\mu_{1})$ and $\mathcal D_{\pi^{\star}}$ is the set of  OT potentials for the OT problem $\mathsf{OT}_{c_{\pi^{\star}}}(\mu_0,\mu_1)$ with
$
c_{\pi^{\star}}(x,y)=2\int \left|\kappa_0(x,x')-\kappa_1(y,y')\right|^pd\pi^{\star}(x',y').
$
\end{proposition}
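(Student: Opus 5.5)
We prove the formula by matching an upper bound and a lower bound on the difference quotient. Throughout we write $F(\pi)\coloneqq\int\Delta\, d\pi\otimes\pi$ and use the symmetry $\Delta(x,y,x',y')=\Delta(x',y',x,y)$. For $\pi\in\Pi(\mu_0,\mu_1)$ and $\pi'\in\Pi(\nu_0,\nu_1)$ we have the expansion
\[
F(\pi+t(\pi'-\pi))=F(\pi)+t\int c_{\pi}\,d(\pi'-\pi)+t^2F(\pi'-\pi),
\]
where $c_{\pi}=2\int\Delta(\cdot,\cdot,x',y')\,d\pi(x',y')$. The second-order term is bounded by $t^2\|\Delta\|_\infty\cdot 4$.

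\emph{Upper bound.} Fix $\pi^\star\in\Pi^\star(\mu_0,\mu_1)$. The set $\{\pi^\star+t(\pi'-\pi^\star):\pi'\in\Pi(\nu_0,\nu_1)\}$ is contained in $\Pi(\mu_{0,t},\mu_{1,t})$, so
\[
\limsup_{t\downarrow0}\frac{\mathsf{GW}_p(\mu_{0,t},\mu_{1,t})^p-\mathsf{GW}_p(\mu_0,\mu_1)^p}{t}\le \inf_{\pi'\in\Pi(\nu_0,\nu_1)}\int c_{\pi^\star}d\pi'-\int c_{\pi^\star}d\pi^\star=\mathsf{OT}_{c_{\pi^\star}}(\nu_0,\nu_1)-\int c_{\pi^\star}d\pi^\star.
\]
First-order optimality of $\pi^\star$ for $F$ over $\Pi(\mu_0,\mu_1)$ gives $\int c_{\pi^\star}d(\pi-\pi^\star)\ge0$ for all couplings $\pi$. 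Hence $\pi^\star$ is an OT plan for $c_{\pi^\star}$, and $\int c_{\pi^\star}d\pi^\star=\mathsf{OT}_{c_{\pi^\star}}(\mu_0,\mu_1)=\int\varphi_0d\mu_0+\int\varphi_1d\mu_1$ for every $(\varphi_0,\varphi_1)\in\mathcal D_{\pi^\star}$.

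The remaining step is the directional derivative of the linear program value $\mathsf{OT}_{c}$ in the marginals along $\nu-\mu$:
\[
\mathsf{OT}_c(\nu_0,\nu_1)-\mathsf{OT}_c(\mu_0,\mu_1)\quad\text{versus}\quad \sup_{\mathcal D}\Big\{\int\varphi_0d(\nu_0-\mu_0)+\int\varphi_1d(\nu_1-\mu_1)\Big\}.
\]
The endpoint difference is not directly this supremum, so we work along the segment instead. Using the same containment at scale $t$ with a general intermediate plan, the LP value $s\mapsto\mathsf{OT}_{c}(\mu_{0,s},\mu_{1,s})$ is convex in $s$. Its right derivative at $0$ equals $\sup_{\mathcal D_{\pi^\star}}$ of the linear functional, by standard LP sensitivity (the dual feasible set does not depend on $s$, and the potentials are bounded by \cref{lem:potentialBounds}). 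To get the sharper bound, replace $\pi'$ by an optimal plan of $\mathsf{OT}_{c_{\pi^\star}}(\mu_{0,s},\mu_{1,s})$ written as $\pi^\star+s(\pi'_s-\pi^\star)$, which is valid for $s=t$. The quadratic remainder is $O(t^2)$, so
\[
\limsup_{t\downarrow0}\frac{\mathsf{GW}_p(\mu_{0,t},\mu_{1,t})^p-\mathsf{GW}_p(\mu_0,\mu_1)^p}{t}\le\frac{d}{dt}\Big|_{0^+}\mathsf{OT}_{c_{\pi^\star}}(\mu_{0,t},\mu_{1,t})=\sup_{\mathcal D_{\pi^\star}}\{\cdots\}.
\]
Here the decomposition is valid because $\mu_{i,t}$ dominates $(1-t)\mu_i$. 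Taking the infimum over $\pi^\star$ gives the upper bound.

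\emph{Lower bound.} This is the main obstacle. Take $t_n\downarrow0$ achieving the liminf and let $\pi_n$ be optimal for $\mathsf{GW}_p(\mu_{0,t_n},\mu_{1,t_n})$. By compactness, $\pi_n\to\pi^\star$ along a subsequence, and $\pi^\star\in\Pi^\star(\mu_0,\mu_1)$ by continuity of the values. By the bilinear expansion,
\[
\mathsf{GW}_p(\mu_0,\mu_1)^p\le F(\tilde\pi_n)\quad\text{for any }\tilde\pi_n\in\Pi(\mu_0,\mu_1),
\]
and $F(\pi_n)-F(\tilde\pi_n)=\int c_{\pi_n}d(\pi_n-\tilde\pi_n)-F(\pi_n-\tilde\pi_n)$. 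We need $\tilde\pi_n$ with $\|\pi_n-\tilde\pi_n\|_1=O(t_n)$, so that $F(\pi_n-\tilde\pi_n)=O(t_n^2)$. A Hoffman/Lipschitz-stability bound for the polytopes $\Pi(\cdot,\cdot)$ in the marginals (\cite{klatte1985Lipschitz}) provides this. Then
\[
\int c_{\pi_n}d(\pi_n-\tilde\pi_n)\ge \mathsf{OT}_{c_{\pi_n}}(\mu_{0,t_n},\mu_{1,t_n})-\mathsf{OT}_{c_{\pi_n}}(\mu_0,\mu_1)-o(t_n),
\]
once $\tilde\pi_n$ is additionally chosen nearly optimal for $c_{\pi_n}$ and $\pi_n$ nearly optimal at the perturbed marginals. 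The second of these follows from first-order optimality of $\pi_n$. For the first, project an OT plan for $c_{\pi_n}$ onto $\Pi(\mu_0,\mu_1)$ instead and absorb the error via the gap.

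By LP duality, the right-hand side is at least $t_n\big(\int\varphi^n_0d(\nu_0-\mu_0)+\int\varphi^n_1d(\nu_1-\mu_1)\big)$ for OT potentials $(\varphi^n_0,\varphi^n_1)$ of $c_{\pi_n}$ at the perturbed marginals. These are uniformly bounded by \cref{lem:potentialBounds}, so a further subsequence converges to some $(\varphi_0,\varphi_1)$. The limit is feasible for $c_{\pi^\star}$, and complementary slackness passes to the limit, so $(\varphi_0,\varphi_1)\in\mathcal D_{\pi^\star}$. Dividing by $t_n$ then gives the liminf at least $\int\varphi_0d(\nu_0-\mu_0)+\int\varphi_1 d(\nu_1-\mu_1)$. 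This is not yet the supremum over $\mathcal D_{\pi^\star}$, so we must upgrade the bound to the supremum. The route we would try first is to choose $(\varphi^n_0,\varphi^n_1)$ as the maximizer of the linear functional over perturbed potentials, using upper semicontinuity of the potential sets and the fact that the LP sensitivity at $\pi^\star$ equals the sup over the optimal dual face, as in \cite{bonnans2013perturbation}. This final matching of the sup and the handling of the $o(t_n)$ errors is where we expect the most care to be needed.
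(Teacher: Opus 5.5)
Your two-sided strategy (a $\limsup$ bound and a $\liminf$ bound from the quadratic expansion of $F$) is the paper's strategy. However, one step in each half fails as written, and the lower bound is missing the idea that makes the paper's proof work.

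\textbf{Upper bound.} The claim that the remainder is $O(t^2)$ when $\rho_t$ is an optimal plan of $\mathsf{OT}_{c_{\pi^\star}}(\mu_{0,t},\mu_{1,t})$ is not justified.
\begin{itemize}
\item Domination $\mu_{i,t}\ge(1-t)\mu_i$ of the marginals does not give $\rho_t\ge(1-t)\pi^\star$. So $\pi'_t=\pi^\star+(\rho_t-\pi^\star)/t$ need not be a coupling of $(\nu_0,\nu_1)$.
\item If it were a coupling, you would have $\int c_{\pi^\star}d\rho_t\ge(1-t)\,\mathsf{OT}_{c_{\pi^\star}}(\mu_0,\mu_1)+t\,\mathsf{OT}_{c_{\pi^\star}}(\nu_0,\nu_1)$. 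That puts you back at the endpoint bound, which by convexity is at least the right derivative and in general strictly larger.
\item If $\pi'_t$ is only a signed measure, $F(\rho_t-\pi^\star)$ is $O(t^2)$ only when $\|\rho_t-\pi^\star\|=O(t)$. An arbitrary optimal $\rho_t$ can stay a fixed distance from $\pi^\star$ when $\mathsf{OT}_{c_{\pi^\star}}(\mu_0,\mu_1)$ has several optimal plans, leaving an $O(1)$ remainder of indefinite sign.
\end{itemize}
The paper avoids this by using only perturbations $\pi^\star+t\gamma$, where $\gamma$ is a signed measure with marginals $(\nu_0-\mu_0,\nu_1-\mu_1)$ and $\gamma\ge0$ on the zero set of $\pi^\star$. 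Feasibility for small $t$ and the $t^2\int\Delta\,d\gamma\otimes\gamma$ remainder are then automatic. It then shows, by LP duality, complementary slackness and \cref{thm:variationalForm} ($\pi^\star$ is an OT plan for $c_{\pi^\star}$), that the dual of $\min_\gamma\int c_{\pi^\star}d\gamma$ is exactly the supremum over $\mathcal D_{\pi^\star}$. Alternatively, take $\rho_t$ to be the optimal plan nearest $\pi^\star$ and get the $O(t)$ distance from Hoffman's lemma together with Lipschitz continuity of the LP value. This half is repairable.

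\textbf{Lower bound.} Here the gap is genuine.
\begin{itemize}
\item The duality step points the wrong way. Potentials at the perturbed marginals give an \emph{upper} bound on $\mathsf{OT}_c(\mu_{0,t_n},\mu_{1,t_n})-\mathsf{OT}_c(\mu_0,\mu_1)$; a lower bound needs potentials at $(\mu_0,\mu_1)$.
\item Projecting $\pi_n$ onto $\Pi(\mu_0,\mu_1)$ makes $\tilde\pi_n$ only $O(t_n)$-suboptimal for $\mathsf{OT}_{c_{\pi_n}}(\mu_0,\mu_1)$. That error is not negligible after dividing by $t_n$.
\item Most importantly, linearizing at $\pi_n$ with cost $c_{\pi_n}$ cannot produce the supremum. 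Upper semicontinuity of the potential sets gives $\limsup_n\sup_{\mathcal D(c_{\pi_n})}\le\sup_{\mathcal D_{\pi^\star}}$, which is the wrong inequality. The dual optimal set for $c_{\pi_n}$ can be much smaller than the face $\mathcal D_{\pi^\star}$, so your limit potential need not be a maximizer.
\end{itemize}

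The missing idea is to linearize at an \emph{optimal plan of the unperturbed problem}. Theorem 3 of \cite{klatte1985Lipschitz} concerns Lipschitz upper stability of the \emph{solution set} of the quadratic program under right-hand-side perturbations, not merely of the feasible polytopes. It yields $\bar\pi_n\in\Pi^\star(\mu_0,\mu_1)$ with $\|\pi_n-\bar\pi_n\|=O(t_n)$. Then
\[
\mathsf{GW}_p(\mu_{0,t_n},\mu_{1,t_n})^p-\mathsf{GW}_p(\mu_0,\mu_1)^p=\int c_{\bar\pi_n}\,d(\pi_n-\bar\pi_n)+O(t_n^2).
\]
Since $\bar\pi_n$ is an OT plan for $c_{\bar\pi_n}$, every $(\varphi_0,\varphi_1)\in\mathcal D_{\bar\pi_n}$ satisfies $\varphi_0\oplus\varphi_1\le c_{\bar\pi_n}$, with equality on $\supp(\bar\pi_n)$. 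Hence $\int c_{\bar\pi_n}d(\pi_n-\bar\pi_n)\ge t_n\big(\int\varphi_0\,d(\nu_0-\mu_0)+\int\varphi_1\,d(\nu_1-\mu_1)\big)$. You can take the supremum over $\mathcal D_{\bar\pi_n}$ for each fixed $n$ and then bound below by the infimum over $\Pi^\star(\mu_0,\mu_1)$. No limit of potential sets is needed.
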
 

The proof of \cref{lem:GWDerivative} follows by establishing matching bounds on the $\limsup$ and $\liminf$ of the difference quotient as stated in \cref{lem:differenceQuotientUpperBound,lem:differenceQuotientLowerBound} which are proved in \cref{proof:lem:differenceQuotientUpperBound,proof:lem:differenceQuotientLowerBound}, respectively.

\begin{lemma}
\label{lem:differenceQuotientUpperBound}
In the setting of \cref{lem:GWDerivative},  
   \[
 \begin{aligned}
    \limsup_{t\downarrow 0}t^{-1}\left(\mathsf{GW}_p(\mu_{0,t},\mu_{1,t})^p-\mathsf{GW}_p(\mu_{0},\mu_{1})^p\right)& 
    \\
    &\hspace{-6.1em}\leq \inf_{\pi^{\star}\in \Pi^{\star}(\mu_0,\mu_1)}\sup_{(\varphi_0,\varphi_1)\in\mathcal D_{\pi^{\star}}} \left\{ \int \varphi_0d(\nu_0-\mu_0)+\int \varphi_1d(\nu_1-\mu_1)\right\}.
\end{aligned}
\] 
\end{lemma}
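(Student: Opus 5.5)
Fix an optimal plan $\pi^{\star}\in\Pi^{\star}(\mu_0,\mu_1)$. The plan is to bound $\mathsf{GW}_p(\mu_{0,t},\mu_{1,t})^p$ from above using a competitor built by perturbing $\pi^{\star}$, and then to linearize the quadratic GW objective around $\pi^{\star}$. Write $F(\pi)=\int\Delta\,d\pi\otimes\pi$. Using the symmetry of $\Delta$, for any signed measure $\gamma$ we have
\[
F(\pi^{\star}+\gamma)=F(\pi^{\star})+\int c_{\pi^{\star}}\,d\gamma+\int\Delta\,d\gamma\otimes\gamma .
\]
For $\pi_t\in\Pi(\mu_{0,t},\mu_{1,t})$ this gives $F(\pi_t)-F(\pi^{\star})=\int c_{\pi^{\star}}d(\pi_t-\pi^{\star})+O(\|\pi_t-\pi^{\star}\|_1^2)$. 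The first term is minimized over couplings of $(\mu_{0,t},\mu_{1,t})$ to give $\mathsf{OT}_{c_{\pi^{\star}}}(\mu_{0,t},\mu_{1,t})-\mathsf{OT}_{c_{\pi^{\star}}}(\mu_0,\mu_1)$. This identity is valid because $\pi^{\star}$ is an optimal coupling for $\mathsf{OT}_{c_{\pi^{\star}}}(\mu_0,\mu_1)$, which is the first-order optimality condition of the QP: the gradient of $F$ at $\pi^{\star}$ is $c_{\pi^{\star}}$, and minimality over the convex set $\Pi(\mu_0,\mu_1)$ forces $\int c_{\pi^{\star}}d(\pi-\pi^{\star})\geq 0$.

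The key step is to choose $\pi_t$ so that the linear term matches the OT value change at first order while $\|\pi_t-\pi^{\star}\|_1=O(t)$. I would do this with a convex-combination construction. Let $\rho$ solve the linear program $\min\{\int c_{\pi^{\star}}d\rho\}$ over signed $\rho$ with marginals $(\nu_0-\mu_0,\nu_1-\mu_1)$ and $\pi^{\star}+t\rho\geq 0$ for small $t$, that is, $\rho\geq 0$ off $\supp(\pi^{\star})$. Then set $\pi_t=\pi^{\star}+t\rho$. This is feasible for all small $t$ since $\pi^{\star}$ has positive mass on its support and $\rho$ is bounded. Consequently
\[
F(\pi_t)-F(\pi^{\star})\leq t\int c_{\pi^{\star}}d\rho+O(t^2),
\]
and the $\limsup$ of the difference quotient is at most the value of this LP. The LP is the tangent-cone problem for $\Pi$ at $\pi^{\star}$; existence of a minimizer holds because $\mu_0,\mu_1$ have full support, so the cone is nonempty, and boundedness follows from the duality step below.

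It then remains to identify the LP value with $\sup_{(\varphi_0,\varphi_1)\in\mathcal D_{\pi^{\star}}}\{\int\varphi_0d(\nu_0-\mu_0)+\int\varphi_1d(\nu_1-\mu_1)\}$, and this is where I expect the main care to be needed. By LP duality, with multipliers $\varphi_0,\varphi_1$ for the marginal constraints, the dual feasibility conditions are $\varphi_0\oplus\varphi_1\leq c_{\pi^{\star}}$ off $\supp(\pi^{\star})$ and $\varphi_0\oplus\varphi_1=c_{\pi^{\star}}$ on $\supp(\pi^{\star})$, since $\rho$ is free there. By complementary slackness these are exactly the OT potentials for $\mathsf{OT}_{c_{\pi^{\star}}}(\mu_0,\mu_1)$, because $\pi^{\star}$ is an OT plan for that problem. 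Hence the dual value is the supremum over $\mathcal D_{\pi^{\star}}$, and it is finite because $\mathcal D_{\pi^{\star}}$ is nonempty and the primal is feasible. Strong LP duality then equates the two values.

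Finally, since $\pi^{\star}$ was arbitrary, taking the infimum over $\Pi^{\star}(\mu_0,\mu_1)$ yields the stated bound. The second expression in the statement is simply the vector rewriting $\int f\,d\eta=\langle v_f,w_\eta\rangle$.
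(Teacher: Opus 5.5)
Your proposal is correct and follows essentially the same route as the paper: you perturb a fixed optimal $\pi^{\star}$ by $t\rho$ with $\rho$ nonnegative off $\supp(\pi^{\star})$, bound the difference quotient by the linear term $\int c_{\pi^{\star}}\,d\rho$, identify the value of the resulting tangent-cone LP with the supremum over $\mathcal D_{\pi^{\star}}$ via LP duality and complementary slackness, and then take the infimum over $\Pi^{\star}(\mu_0,\mu_1)$. The differences are minor. You get optimality of $\pi^{\star}$ for $\mathsf{OT}_{c_{\pi^{\star}}}(\mu_0,\mu_1)$ directly from the first-order condition, where the paper invokes \cref{thm:variationalForm}, and you get attainment from primal and dual feasibility rather than from the potential bounds of \cref{lem:potentialBounds}. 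One small correction: nonemptiness of the feasible set does not come from full support, since $\pi'-\pi^{\star}$ with $\pi'\in\Pi(\nu_0,\nu_1)$ is always feasible.
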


\begin{lemma}
\label{lem:differenceQuotientLowerBound}
In the setting of \cref{lem:GWDerivative},  
   \[
 \begin{aligned}
    \liminf_{t\downarrow 0}t^{-1}\left(\mathsf{GW}_p(\mu_{0,t},\mu_{1,t})^p-\mathsf{GW}_p(\mu_{0},\mu_{1})^p\right)& 
    \\
    &\hspace{-6em}\geq \inf_{\pi^{\star}\in \Pi^{\star}(\mu_0,\mu_1)}\sup_{(\varphi_0,\varphi_1)\in\mathcal D_{\pi^{\star}}} \left\{ \int \varphi_0d(\nu_0-\mu_0)+\int \varphi_1d(\nu_1-\mu_1)\right\}.
\end{aligned}
\]   
\end{lemma}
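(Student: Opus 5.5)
Fix a sequence $t_n\downarrow 0$ and, for each $n$, let $\pi_n$ be an optimal plan for $\mathsf{GW}_p(\mu_{0,t_n},\mu_{1,t_n})^p$. We choose $t_n$ so that the $\liminf$ is attained along it, and pass to a subsequence on which $\pi_n\to\bar\pi$. Because $\Pi(\mu_{0,t},\mu_{1,t})\to\Pi(\mu_0,\mu_1)$ in the Hausdorff sense and the objective $\pi\mapsto\int\Delta\,d\pi\otimes\pi$ is continuous, $\bar\pi\in\Pi^{\star}(\mu_0,\mu_1)$.

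The comparison plan is a coupling $\pi_n'\in\Pi(\mu_0,\mu_1)$ close to $\pi_n$. Since $\mu_0,\mu_1$ have full support, Hoffman's lemma (or a direct mass-transfer construction) for the transportation polytope produces $\pi_n'$ with $\|\pi_n'-\pi_n\|_1\le Ct_n$. Then
\[
\mathsf{GW}_p(\mu_{0,t_n},\mu_{1,t_n})^p-\mathsf{GW}_p(\mu_0,\mu_1)^p\ \ge\ \int\Delta\,d\pi_n\otimes\pi_n-\int\Delta\,d\pi_n'\otimes\pi_n'.
\]
By the symmetry of $\Delta$, this difference equals $\int c_{\pi_n'}\,d(\pi_n-\pi_n')+O(\|\pi_n-\pi_n'\|_1^2)$, where $c_{\pi}(x,y)=2\int\Delta(x,y,\cdot)\,d\pi$. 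The quadratic remainder is $O(t_n^2)$ and therefore negligible after division by $t_n$.

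Next take any OT potentials $(\varphi_0,\varphi_1)$ for $\mathsf{OT}_{c_{\pi_n'}}(\mu_0,\mu_1)$. Since $\pi_n\in\Pi(\mu_{0,t_n},\mu_{1,t_n})$ and $\varphi_0\oplus\varphi_1\le c_{\pi_n'}$,
\[
\int c_{\pi_n'}\,d\pi_n\ \ge\ \int\varphi_0\,d\mu_{0,t_n}+\int\varphi_1\,d\mu_{1,t_n}
=\mathsf{OT}_{c_{\pi_n'}}(\mu_0,\mu_1)+t_n\Bigl(\int\varphi_0\,d(\nu_0-\mu_0)+\int\varphi_1\,d(\nu_1-\mu_1)\Bigr).
\]
We also have $\int c_{\pi_n'}\,d\pi_n'\ge\mathsf{OT}_{c_{\pi_n'}}(\mu_0,\mu_1)$, and this inequality points the wrong way. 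This is the main obstacle: we need $\pi_n'$ to be $o(t_n)$-optimal for $\mathsf{OT}_{c_{\pi_n'}}$, or else must replace it. The fix is to compare with $\bar\pi$ instead of $\pi_n'$. A stationarity argument shows $\bar\pi$ minimizes $\int c_{\bar\pi}\,d\pi$ over $\Pi(\mu_0,\mu_1)$: if not, $\bar\pi+s(\pi-\bar\pi)$ would decrease the GW objective to first order. To make this usable along the sequence, the plan is to invoke the quadratic-program stability result of \cite{klatte1985Lipschitz}, as advertised in the text. It should give that optimal solutions of the perturbed QP satisfy $\mathrm{dist}(\pi_n,\Pi^{\star}(\mu_0,\mu_1))=O(t_n)$, so the approximant $\pi_n'$ can be chosen inside $\Pi^{\star}$, with $\|\pi_n'-\pi_n\|_1=O(t_n)$. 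Each such $\pi_n'$ is then optimal for $\mathsf{OT}_{c_{\pi_n'}}$, which closes the first-order inequality.

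Combining the two displays gives
\[
t_n^{-1}\bigl(\mathsf{GW}_p(\mu_{0,t_n},\mu_{1,t_n})^p-\mathsf{GW}_p(\mu_0,\mu_1)^p\bigr)\ \ge\ \sup_{(\varphi_0,\varphi_1)\in\mathcal D_{\pi_n'}}\Bigl\{\int\varphi_0\,d(\nu_0-\mu_0)+\int\varphi_1\,d(\nu_1-\mu_1)\Bigr\}+o(1),
\]
and the right-hand side is at least the infimum over $\pi^{\star}\in\Pi^{\star}(\mu_0,\mu_1)$ plus $o(1)$. Taking $n\to\infty$ yields the claim. \cref{lem:potentialBounds} keeps the potentials uniformly bounded, so the error terms are uniform in the choice of potentials.
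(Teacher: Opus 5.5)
Your final argument is correct and is essentially the paper's proof. Both proofs use Klatte's upper-Lipschitz bound to choose $\pi_n'\in\Pi^{\star}(\mu_0,\mu_1)$ with $\|\pi_n'-\pi_n\|=O(t_n)$, and both use that $\pi_n'$ is optimal for $\mathsf{OT}_{c_{\pi_n'}}(\mu_0,\mu_1)$; the paper gets this from \cref{thm:variationalForm}, you from first-order stationarity. That optimality bounds the linear term below by $t_n$ times the potential functional, and the $O(t_n^2)$ quadratic remainder is discarded. Your preliminary compactness step producing $\bar\pi$ and the Hoffman-lemma detour are not needed once Klatte's bound is used.
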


Together, Lemmas \ref{lem:differenceQuotientUpperBound} and \ref{lem:differenceQuotientLowerBound} yield \cref{lem:GWDerivative}.

We proceed by treating the case where $\varepsilon>0$ is arbitrary. 

\begin{proposition}[EGW directional derivative] 
\label{lem:differenceQuotientEOT}
       Let $(\nu_0,\nu_1)\in\mathcal P(\mathcal X_0)\times \mathcal P(\mathcal X_1)$ be arbitrary and let $\mu_{i,t}\coloneqq \mu_i+t(\nu_i-\mu_i)$ for $t\in(0,1)$ with $i=0,1$. Then,  
  \[
 \begin{aligned}
    \lim_{t\downarrow 0}t^{-1}\left(\mathsf{EGW}_p^{\varepsilon}(\mu_{0,t},\mu_{1,t})-\mathsf{EGW}_p^{\varepsilon}(\mu_{0},\mu_{1})\right)& 
    \\
    &\hspace{-5em}= \inf_{\pi^{\star}\in \Pi^{\star}(\mu_0,\mu_1)}\left\{ \int \varphi_0^{\pi^{\star}}d(\nu_0-\mu_0)+\int \varphi_1^{\pi^{\star}}d(\nu_1-\mu_1)\right\}
    \\
    &\hspace{-5em}= \inf_{\pi^{\star}\in \Pi^{\star}(\mu_0,\mu_1)}\left\{  v_{\varphi_0^{\pi^{\star}}}^{\intercal}\left(w_{\nu_0}-w_{\mu_0}\right)+v_{\varphi_1^{\pi^{\star}}}^{\intercal}\left(w_{\nu_1}-w_{\mu_1}\right)\right\}
\end{aligned}
\]   
where $\Pi^{\star}(\mu_0,\mu_1)$ is the set of all optimal couplings for $\mathsf{EGW}_p^{\varepsilon}(\mu_{0},\mu_{1})$ and $(\varphi_0^{\pi^{\star}},\varphi_1^{\pi^{\star}})$ is the unique  (up to additive constants) pair of  EOT potentials for the EOT problem $\mathsf{EOT}^{\varepsilon}_{c_{\pi^{\star}}}(\mu_0,\mu_1)$ with cost function 
$
c_{\pi^{\star}}(x,y)=2\int \left|\kappa_0(x,x')-\kappa_1(y,y')\right|^pd\pi^{\star}(x',y').
$  
\end{proposition}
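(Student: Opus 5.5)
We prove \cref{lem:differenceQuotientEOT} by matching upper and lower bounds on the difference quotient, following \cref{lem:differenceQuotientUpperBound,lem:differenceQuotientLowerBound}, with EOT duality in place of LP duality. Write $\Phi_t(\pi)=\int\Delta\, d\pi\otimes\pi+\varepsilon\mathsf{KL}(\pi\|\mu_{0,t}\otimes\mu_{1,t})$. The key algebraic fact is the exact expansion of the quadratic term. For $\pi,\pi'\in\mathcal P(\mathcal X_0\times\mathcal X_1)$ and symmetric $\Delta$,
\[
\int\Delta\, d\pi'\otimes\pi'=\int\Delta\, d\pi\otimes\pi+\int c_\pi\, d(\pi'-\pi)+\int\Delta\, d(\pi'-\pi)^{\otimes 2}.
\]
The last term is $O(\|\pi'-\pi\|_1^2)$.

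\textbf{Upper bound.} Fix $\pi^\star\in\Pi^\star(\mu_0,\mu_1)$. By \cref{lem:EOTPotentialBounds}, $\pi^\star$ has a density bounded below on $\mathcal X_0\times\mathcal X_1$, since $\mu_0,\mu_1$ have full support. We build a coupling $\pi_t\in\Pi(\mu_{0,t},\mu_{1,t})$ with $\pi_t-\pi^\star=O(t)$. A natural choice is $\pi_t=\frac{d\pi^\star}{d\mu_0\otimes\mu_1}\,(a_t\oplus b_t)\,\mu_{0,t}\otimes\mu_{1,t}$, with the multiplicative Schr\"odinger scalings $a_t,b_t$ obtained from the implicit function theorem. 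More simply, we take the EOT plan for cost $c_{\pi^\star}$ between $\mu_{0,t},\mu_{1,t}$; that this is $\pi^\star+O(t)$ follows from smooth dependence of Sinkhorn potentials on the marginals. Then
\[
\Phi_t(\pi_t)-\Phi_0(\pi^\star)\le \mathsf{EOT}^\varepsilon_{c_{\pi^\star}}(\mu_{0,t},\mu_{1,t})-\mathsf{EOT}^\varepsilon_{c_{\pi^\star}}(\mu_0,\mu_1)+O(t^2).
\]
This uses that $\pi^\star$ solves $\mathsf{EOT}^\varepsilon_{c_{\pi^\star}}(\mu_0,\mu_1)$, which is the first-order condition of \cref{lem:QPvsLPsoln} rephrased with KL. The EOT value is differentiable in the marginal weights with derivative $\int\varphi_0^{\pi^\star}d(\nu_0-\mu_0)+\int\varphi_1^{\pi^\star}d(\nu_1-\mu_1)$. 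This follows from the dual \eqref{eq:EOTDual}: the potentials are unique up to constants, the dual objective is linear in the marginals given the potentials, and the potentials stay bounded by \cref{lem:EOTPotentialBounds}. The exponential term has integral $1$ at optimum and contributes nothing to first order because $\mu_{0,t}\otimes\mu_{1,t}$ is quadratic in $t$. Taking the infimum over $\pi^\star$ gives the $\limsup$ bound.

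\textbf{Lower bound.} Take $t_n\downarrow0$ attaining the $\liminf$, and let $\pi_{t_n}$ be optimal for $\mathsf{EGW}^\varepsilon_p(\mu_{0,t_n},\mu_{1,t_n})$. By compactness, pass to a subsequence with $\pi_{t_n}\to\bar\pi$. Continuity of the objective (Lipschitz by \cref{prop:LipschitzStability}) gives $\bar\pi\in\Pi^\star(\mu_0,\mu_1)$. Apply the expansion with $\pi=\pi_{t_n}$ and $\pi'$ a coupling of $(\mu_0,\mu_1)$ chosen near $\pi_{t_n}$, namely the EOT plan for cost $c_{\pi_{t_n}}$ between $\mu_0,\mu_1$. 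This gives
\[
\mathsf{EGW}(\mu_0,\mu_1)\le \mathsf{EGW}(\mu_{0,t_n},\mu_{1,t_n})-\big[\mathsf{EOT}^\varepsilon_{c_{\pi_{t_n}}}(\mu_{0,t_n},\mu_{1,t_n})-\mathsf{EOT}^\varepsilon_{c_{\pi_{t_n}}}(\mu_0,\mu_1)\big]+O(\|\pi'-\pi_{t_n}\|_1^2).
\]
Here we again use that $\pi_{t_n}$ solves its own linearized EOT problem. The error $O(\|\pi'-\pi_{t_n}\|_1^2)$ is $O(t_n^2)$, provided the EOT plan is Lipschitz in the marginals uniformly over bounded costs, which holds by the uniform potential bounds and strict positivity. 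The bracket is bounded via duality by $t_n\big(\int\varphi_0^{(n)}d(\nu_0-\mu_0)+\int\varphi_1^{(n)}d(\nu_1-\mu_1)\big)+o(t_n)$, where $(\varphi_0^{(n)},\varphi_1^{(n)})$ are bounded potentials for cost $c_{\pi_{t_n}}$. Since $c_{\pi_{t_n}}\to c_{\bar\pi}$, the normalized potentials converge to those of $c_{\bar\pi}$ by uniqueness and the Schr\"odinger system \eqref{eq:SchrodingerSystem}. Dividing by $t_n$ yields a $\liminf$ of at least the value at $\bar\pi$, hence at least the infimum over $\Pi^\star$.

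\textbf{Main obstacle.} The delicate step is the uniform $O(t)$ control of $\pi'-\pi_{t_n}$, i.e. Lipschitz stability of EOT plans with respect to the marginals, uniformly over the costs $c_{\pi}$ with $\pi$ a coupling. I would prove this via the implicit function theorem on the Schr\"odinger system in log-potential variables. Its Jacobian is invertible modulo constants, uniformly, because the plan densities are bounded below by $e^{-4\|c\|_\infty/\varepsilon}$ (\cref{lem:EOTPotentialBounds}) and the full-support marginals make the relevant transport Laplacian nondegenerate on mean-zero functions. The final equality in vector notation is immediate from $\int f\,d\eta=\langle v_f,w_\eta\rangle$.
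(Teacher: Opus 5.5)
Your proof is correct, but it takes a different route from the paper's, most visibly in the lower bound.

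For the upper bound, the paper perturbs linearly. It uses $\pi^\star+t\gamma$ for any signed $\gamma$ with marginals $(\nu_0-\mu_0,\nu_1-\mu_1)$, which is admissible for small $t$ since $\pi^\star$ is bounded below by \cref{lem:EOTPotentialBounds}. It then expands $\mathsf H$ by the mean value theorem and evaluates the resulting linear functional of $\gamma$ by an LP. The dual constraints $u_i+v_j=\mathrm C_{ij}$ of that LP encode the Gibbs form of $\pi^\star$, i.e.\ the EOT potentials of $c_{\pi^\star}$. You instead follow the EOT plans of the frozen cost $c_{\pi^\star}$ and reduce to the marginal derivative of $\mathsf{EOT}$. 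This is more conceptual, but it requires $\|\pi_t-\pi^\star\|_1=o(\sqrt t)$, which the linear perturbation gets for free.

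For the lower bound, the paper never expands the quadratic. By \cref{thm:variationalForm}, take $u_t=\rB_0x_t$ optimal for the perturbed outer problem and $v_t$ the maximizer of the unperturbed inner problem at $u_t$. This gives $\mathsf{EGW}^{\varepsilon}_p(\mu_{0,t},\mu_{1,t})-\mathsf{EGW}^{\varepsilon}_p(\mu_0,\mu_1)\ge\mathsf{EOT}^{\varepsilon}_{c_t}(\mu_{0,t},\mu_{1,t})-\mathsf{EOT}^{\varepsilon}_{c_t}(\mu_0,\mu_1)$ exactly, with $c_t$ the cost of $\rB_1^{\intercal}v_t-\rB_0^{\intercal}u_t$. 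There is no remainder because the splitting $\rM=\rB_1^{\intercal}\rB_1-\rB_0^{\intercal}\rB_0$ places the concave and convex parts under an $\inf$ and a $\sup$ whose directions cooperate. Only qualitative convergence is then needed: $x_t\to x_0$, $v_t\to v_0=\rB_1x_0$ via epi-convergence, and convergence of the potentials.

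Your direct expansion around $\pi_{t_n}$ leaves the sign-indefinite remainder $\int\Delta\,d(\pi'-\pi_{t_n})^{\otimes 2}$. That is what forces the quantitative input you flag: Lipschitz dependence of EOT plans on the marginals, uniformly over the compact family of costs $c_\pi$. Your implicit-function sketch does supply this. The Jacobian of the Schr\"odinger system in log-potentials has row-stochastic positive off-diagonal blocks, so by Perron--Frobenius its kernel is only the constant shifts. Its inverse on the complement is uniformly bounded on the compact parameter set where the marginals stay away from zero.

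Your lower bound is thus the entropic analogue of the paper's unregularized \cref{lem:differenceQuotientLowerBound}, with EOT-plan stability in place of Klatte's QP stability. It is self-contained and does not use the variational form, at the cost of a stability lemma that the paper's min-max argument avoids.

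Two small corrections. First, the first-order part of the exponential term vanishes because the potentials satisfy \eqref{eq:SchrodingerSystem} at every point of $\mathcal X_0\times\mathcal X_1$. This makes $\int e^{(\varphi_0\oplus\varphi_1-c)/\varepsilon}\,d(\nu_0-\mu_0)\otimes\mu_1$ and its counterpart zero; it is not because $\mu_{0,t}\otimes\mu_{1,t}$ is quadratic in $t$. Second, in the lower bound you should state that the potentials plugged into the dual of the perturbed problem are those for the marginals $(\mu_0,\mu_1)$. This gives the bracket a lower bound with a uniform $O(t_n^2)$ error.
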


Again, the result follows by establishing matching bounds on the $\limsup$ and $\liminf$ of the difference quotient, see \cref{lem:differenceQuotientUpperBoundEOT,lem:differenceQuotientLowerBoundEOT} which are proved in \cref{proof:lem:differenceQuotientUpperBoundEOT,proof:lem:differenceQuotientLowerBoundEOT}.

\begin{lemma}
\label{lem:differenceQuotientUpperBoundEOT}
In the setting of \cref{lem:differenceQuotientEOT},  
   \[
 \begin{aligned}
    \limsup_{t\downarrow 0}t^{-1}\left(\mathsf{EGW}_p^{\varepsilon}(\mu_{0,t},\mu_{1,t})-\mathsf{EGW}_p^{\varepsilon}(\mu_{0},\mu_{1})\right)& 
    \\
    &\hspace{-5em}\leq \inf_{\pi^{\star}\in \Pi^{\star}(\mu_0,\mu_1)}\left\{ \int \varphi_0^{\pi^{\star}}d(\nu_0-\mu_0)+\int \varphi_1^{\pi^{\star}}d(\nu_1-\mu_1)\right\}.
\end{aligned}
\]   
\end{lemma}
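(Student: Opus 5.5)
Fix $\pi^{\star}\in\Pi^{\star}(\mu_0,\mu_1)$; since the right-hand side is an infimum over $\pi^{\star}$, it suffices to bound the $\limsup$ by the bracket for this one $\pi^{\star}$. Write $F(\pi)=\int\Delta\, d\pi\otimes\pi$ and $\mathsf{KL}_{t}(\pi)=\mathsf{KL}(\pi\|\mu_{0,t}\otimes\mu_{1,t})$. The idea is to build a feasible competitor $\pi_t\in\Pi(\mu_{0,t},\mu_{1,t})$ from the EOT structure of $\pi^{\star}$ and compare objectives.

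\textbf{Step 1 (linearization).} Since $\Delta$ is symmetric, $F(\pi)=F(\pi^{\star})+\int c_{\pi^{\star}}\,d(\pi-\pi^{\star})+F(\pi-\pi^{\star})$, where $c_{\pi^{\star}}=2\int\Delta(\cdot,\cdot,x',y')\,d\pi^{\star}(x',y')$ and the remainder satisfies $|F(\pi-\pi^{\star})|\le\|\Delta\|_\infty\|\pi-\pi^{\star}\|_{\mathrm{TV}}^2$. Hence
\[
\mathsf{EGW}^{\varepsilon}_p(\mu_{0,t},\mu_{1,t})-\mathsf{EGW}^{\varepsilon}_p(\mu_0,\mu_1)\le \Big[\textstyle\int c_{\pi^{\star}}d\pi_t+\varepsilon\mathsf{KL}_t(\pi_t)\Big]-\Big[\textstyle\int c_{\pi^{\star}}d\pi^{\star}+\varepsilon\mathsf{KL}_0(\pi^{\star})\Big]+O(\|\pi_t-\pi^{\star}\|_{\mathrm{TV}}^2).
\]

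\textbf{Step 2 (reduction to EOT).} Applying the first-order optimality of $\pi^{\star}$ (equivalently \cref{lem:QPvsLPsoln}), $\pi^{\star}$ is the EOT plan for $\mathsf{EOT}^{\varepsilon}_{c_{\pi^{\star}}}(\mu_0,\mu_1)$. So the second bracket equals $\mathsf{EOT}^{\varepsilon}_{c_{\pi^{\star}}}(\mu_0,\mu_1)$. Choose $\pi_t$ to be the EOT plan for $\mathsf{EOT}^{\varepsilon}_{c_{\pi^{\star}}}(\mu_{0,t},\mu_{1,t})$; then the first bracket is $\mathsf{EOT}^{\varepsilon}_{c_{\pi^{\star}}}(\mu_{0,t},\mu_{1,t})$. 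The difference quotient is therefore bounded by
\[
t^{-1}\big(\mathsf{EOT}^{\varepsilon}_{c_{\pi^{\star}}}(\mu_{0,t},\mu_{1,t})-\mathsf{EOT}^{\varepsilon}_{c_{\pi^{\star}}}(\mu_{0},\mu_{1})\big)+O(t^{-1}\|\pi_t-\pi^{\star}\|^2_{\mathrm{TV}}).
\]

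\textbf{Step 3 (EOT derivative and plan stability).} The first term converges to $\int\varphi_0^{\pi^{\star}}d(\nu_0-\mu_0)+\int\varphi_1^{\pi^{\star}}d(\nu_1-\mu_1)$. For the upper bound I would use the dual \eqref{eq:EOTDual} at $(\mu_{0,t},\mu_{1,t})$ with potentials $(\varphi_0^t,\varphi_1^t)$, which are bounded uniformly by \cref{lem:EOTPotentialBounds}. These potentials are suboptimal for the dual at $(\mu_0,\mu_1)$. Expanding the exponential term in $t$ bounds the difference by $\int\varphi_0^t d(\mu_{0,t}-\mu_0)+\int\varphi_1^t d(\mu_{1,t}-\mu_1)+O(t^2)$. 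The potentials $\varphi^t$ converge, after fixing the additive constant, to $\varphi^{\pi^{\star}}$ because the Schrödinger system \eqref{eq:SchrodingerSystem} has unique solutions and they lie in a compact set. The same continuity gives $\pi_t=e^{(\varphi^t_0\oplus\varphi^t_1-c)/\varepsilon}\mu_{0,t}\otimes\mu_{1,t}\to\pi^{\star}$.

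\textbf{Main obstacle.} The hard step is showing that the quadratic remainder is $o(t)$, which requires $\|\pi_t-\pi^{\star}\|_{\mathrm{TV}}=o(\sqrt t)$. I would first try to prove Lipschitz dependence of the EOT plan on the marginals, via the implicit function theorem applied to the Schrödinger system. On the finite support of full-support marginals, this system is nondegenerate modulo constants, which gives $\|\pi_t-\pi^{\star}\|=O(t)$ and hence a remainder of $O(t^2)$. Taking $\limsup$ and then the infimum over $\pi^{\star}$ concludes.
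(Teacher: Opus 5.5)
Your proposal is correct, but it takes a different route from the paper.

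The paper stays on a line through $\pi^{\star}$. It uses the competitor $\pi^{\star}+t\gamma$ for an arbitrary signed $\gamma$ with marginals $(\nu_0-\mu_0,\nu_1-\mu_1)$. This is feasible for small $t$ because \cref{lem:EOTPotentialBounds}, applied with cost $\rM w_{\pi^{\star}}$, gives an entrywise lower bound on $\pi^{\star}$. The quadratic remainder is then exactly $t^2\int\Delta\,d\gamma\otimes\gamma$. The paper expands the entropies by the mean value theorem and minimizes the resulting linear functional of $\gamma$ via an LP with only marginal equality constraints. Its dual constraint $u_i+v_j=\mathrm{C}_{ij}$ is precisely the Gibbs form of $\pi^{\star}$, so strong duality turns the optimal value into the potential integral, after a shift by $\varepsilon\log\mu_i$ to pass from $c^{\star}$ to $c_{\pi^{\star}}$.

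You instead freeze the cost at $c_{\pi^{\star}}$ and use the perturbed EOT plan as the competitor. The only EGW-specific input is then that $\pi^{\star}$ is the EOT plan for its own linearized cost. The first-order term comes from the dual-suboptimality bound for fixed-cost EOT, the same device the paper uses in \cref{lem:EOTLipschitz} and \cref{lem:differenceQuotientLowerBoundEOT}. This avoids both the LP duality step and the $\log\mu_i$ bookkeeping.

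The price is exactly the obstacle you flag: your competitor does not lie on a line through $\pi^{\star}$, so you need $\|\pi_t-\pi^{\star}\|=O(t)$. Your implicit-function argument does close this. Write $\pi^{\star}_{ij}\coloneqq\pi^{\star}(\{(x_0^{(i)},x_1^{(j)})\})$ and multiply the Schr\"odinger equations by the weights of $\mu_0$ and $\mu_1$. The Jacobian in the potentials at $t=0$ becomes $\varepsilon^{-1}\sum_{i,j}\pi^{\star}_{ij}(e_i,e_j)(e_i,e_j)^{\intercal}$, where $(e_i,e_j)\in\mathbb R^{N_0+N_1}$ stacks the two standard basis vectors. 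Its kernel is only $\mathrm{span}\{(\mathbf 1,-\mathbf 1)\}$, because every $\pi^{\star}_{ij}>0$. Fix one coordinate of $\varphi_1$ and discard the equation made redundant by mass balance; the resulting system is invertible and $t$ enters it linearly. Hence plans and potentials are $C^1$ in $t$, which also makes the compactness-and-uniqueness argument in your Step 3 unnecessary.

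Conversely, using the paper's competitor $\pi^{\star}+t\gamma$ would remove your obstacle entirely, at the cost of having to optimize over $\gamma$ yourself.
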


\begin{lemma}
\label{lem:differenceQuotientLowerBoundEOT}
In the setting of \cref{lem:differenceQuotientEOT},    
   \[
 \begin{aligned}
    \liminf_{t\downarrow 0}t^{-1}\left(\mathsf{EGW}_p^{\varepsilon}(\mu_{0,t},\mu_{1,t})-\mathsf{EGW}_p^{\varepsilon}(\mu_{0},\mu_{1})\right)& 
    \\
    &\hspace{-5em}\geq \inf_{\pi^{\star}\in \Pi^{\star}(\mu_0,\mu_1)}\left\{ \int \varphi_0^{\pi^{\star}}d(\nu_0-\mu_0)+\int \varphi_1^{\pi^{\star}}d(\nu_1-\mu_1)\right\},
\end{aligned}
\]   
\end{lemma}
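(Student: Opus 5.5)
The plan is to take a sequence $t_n\downarrow 0$ along which the $\liminf$ is attained, and to bound each difference quotient from below by comparing an optimal plan at the perturbed marginals with a well-chosen coupling of $(\mu_0,\mu_1)$. Write $Q(\pi)=\int\Delta\, d\pi\otimes\pi$ and $\mathsf{KL}_t(\pi)=\mathsf{KL}(\pi\|\mu_{0,t}\otimes\mu_{1,t})$. For each $t=t_n$ fix an optimal plan $\pi_t$ for $\mathsf{EGW}^{\varepsilon}_p(\mu_{0,t},\mu_{1,t})$. Set $c_{\pi_t}=2\int\Delta(\cdot,\cdot,x',y')\,d\pi_t(x',y')$. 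Using the symmetry of $\Delta$, we have the exact expansion
\[
Q(\pi)=Q(\pi_t)+\int c_{\pi_t}\,d(\pi-\pi_t)+\int\Delta\, d(\pi-\pi_t)\otimes(\pi-\pi_t)
\]
for any coupling $\pi$.

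Let $\pi$ be the EOT plan for $\mathsf{EOT}^{\varepsilon}_{c_{\pi_t}}(\mu_0,\mu_1)$, which is admissible for $\mathsf{EGW}^\varepsilon_p(\mu_0,\mu_1)$. Then
\[
\mathsf{EGW}^\varepsilon_p(\mu_{0,t},\mu_{1,t})-\mathsf{EGW}^\varepsilon_p(\mu_0,\mu_1)\ge \Big[\int c_{\pi_t}d\pi_t+\varepsilon\mathsf{KL}_t(\pi_t)\Big]-\mathsf{EOT}^\varepsilon_{c_{\pi_t}}(\mu_0,\mu_1)-\int\Delta\, d(\pi-\pi_t)^{\otimes 2}.
\]
The bracket is at least $\mathsf{EOT}^\varepsilon_{c_{\pi_t}}(\mu_{0,t},\mu_{1,t})$ because $\pi_t\in\Pi(\mu_{0,t},\mu_{1,t})$.

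To lower bound the EOT difference, take the bounded EOT potentials $(\varphi_0^t,\varphi_1^t)$ for $\mathsf{EOT}^\varepsilon_{c_{\pi_t}}(\mu_0,\mu_1)$ from \cref{lem:EOTPotentialBounds}. These satisfy the Schr\"odinger system \eqref{eq:SchrodingerSystem} on all of $\mathcal X_0\times\mathcal X_1$ and have sup-norm at most $\tfrac32\|c_{\pi_t}\|_\infty\le 3\|\Delta\|_\infty$. Plug them into the dual \eqref{eq:EOTDual} at $(\mu_{0,t},\mu_{1,t})$ and expand $\mu_{0,t}\otimes\mu_{1,t}$ in $t$. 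The first-order terms of the exponential integral vanish exactly by the Schr\"odinger system. Hence
\[
\mathsf{EOT}^\varepsilon_{c_{\pi_t}}(\mu_{0,t},\mu_{1,t})-\mathsf{EOT}^\varepsilon_{c_{\pi_t}}(\mu_0,\mu_1)\ge t\Big(\int\varphi_0^t d(\nu_0-\mu_0)+\int\varphi_1^t d(\nu_1-\mu_1)\Big)-Ct^2,
\]
with $C$ depending only on $\varepsilon$ and $\|\Delta\|_\infty$.

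The main obstacle is the quadratic remainder $\int\Delta\, d(\pi-\pi_t)^{\otimes2}$, which must be $o(t)$; it suffices to show $\|\pi-\pi_t\|=O(t)$. I will first check that $\pi_t$ is itself the EOT plan for the linearized cost $c_{\pi_t}$ at $(\mu_{0,t},\mu_{1,t})$. This is the entropic analogue of \cref{lem:QPvsLPsoln}: first-order optimality of the strictly convex EOT problem with frozen cost. It holds because $\pi_t$ minimizes the entropically regularized quadratic, and perturbing along $\pi'-\pi_t$ gives the linearized optimality condition. Then $\pi$ and $\pi_t$ are EOT plans with the same cost, bounded uniformly in $t$, and marginals differing by $O(t)$. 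Full support of $\mu_0,\mu_1$ (hence of $\mu_{i,t}$ for small $t$) and the uniform potential bounds of \cref{lem:EOTPotentialBounds} make the Schr\"odinger system nondegenerate. The implicit function theorem, with the Jacobian invertible modulo constants uniformly over the compact set of bounded costs, then yields Lipschitz dependence of the potentials, and hence of the plans, on the marginal weights. This gives the remainder bound $O(t^2)$.

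Dividing by $t$, the $\liminf$ is at least $\liminf_n\{\int\varphi_0^{t_n}d(\nu_0-\mu_0)+\int\varphi_1^{t_n}d(\nu_1-\mu_1)\}$. By compactness of couplings, pass to a subsequence with $\pi_{t_n}\to\bar\pi$. Continuity of $Q$ and of $\mathsf{KL}$ in the weights (finite spaces, full support), combined with \cref{prop:LipschitzStability} for the value, shows that $\bar\pi\in\Pi^\star(\mu_0,\mu_1)$. Then $c_{\pi_{t_n}}\to c_{\bar\pi}$. Normalize the potentials (e.g.\ $\varphi_0^{t}(x_0^{(1)})=0$); they are bounded, so subsequential limits exist and solve the Schr\"odinger system for $c_{\bar\pi}$. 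By uniqueness up to constants, and since $\nu_i-\mu_i$ has zero mass, the limit equals $\int\varphi_0^{\bar\pi}d(\nu_0-\mu_0)+\int\varphi_1^{\bar\pi}d(\nu_1-\mu_1)$. This is at least the infimum over $\Pi^\star(\mu_0,\mu_1)$, which proves the claim.
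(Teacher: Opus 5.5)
Your proof is correct, but it takes a different route from the paper's.

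\textbf{Your route.} You linearize the entire quadratic $Q$ at the perturbed optimum $\pi_t$ and compare with the EOT plan for the frozen cost $c_{\pi_t}$ at the base marginals. Since $\Delta$ induces an indefinite form, this leaves the remainder $\int\Delta\,d(\pi-\pi_t)^{\otimes 2}$, which has no fixed sign. Everything therefore hinges on the quantitative estimate $\|\pi-\pi_t\|_1=O(t)$ (in fact $o(\sqrt t)$ would suffice), uniformly over costs with $\|c\|_\infty\le 2\|\Delta\|_\infty$. That estimate is true. Your Step 1 is exactly item (1) of \cref{thm:variationalForm}. Lipschitz dependence of EOT plans on marginals bounded away from zero follows from the implicit function theorem applied to the Schr\"odinger system with one potential pinned, together with compactness of the set of costs and marginals. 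This is, however, the heaviest step of your argument, and you only sketch it.

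\textbf{The paper's route.} The paper avoids any remainder by working with the variational form.
\begin{itemize}
\item With $u_t=\rB_0x_t$ taken from the perturbed problem, it uses $\mathsf{EGW}^\varepsilon_p(\mu_0,\mu_1)\le\ell(u_t)$. Here the concave part $-\tfrac12\|\rB_0x\|^2$ is majorized by its linearization at any point.
\item It lower bounds the perturbed value by evaluating the inner supremum at the maximizer $v_t$ of the base-marginal problem. Here the convex part $\tfrac12\|\rB_1x\|^2$ is minorized by its linearization.
\end{itemize}
This reduces the difference exactly to $\mathsf{EOT}^\varepsilon_{c_t}(\mu_{0,t},\mu_{1,t})-\mathsf{EOT}^\varepsilon_{c_t}(\mu_0,\mu_1)$ with a common cost, which is then handled as in \cref{lem:EOTLipschitz}. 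The price is an epi-convergence argument showing $v_t\to v_0=\rB_1x_0$, so that the limiting cost can be identified with $c_{\pi^\star_0}$.

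\textbf{What each buys.} The paper needs only qualitative convergence of $(u_t,v_t)$ and of the potentials. Your route bypasses the $\rB_0,\rB_1$ machinery and gives a cleaner first-order expansion, using the base potentials with an explicit $O(t^2)$ error. The cost is a quantitative stability lemma for EOT plans in the marginals, uniform in the cost, which you would still need to write out in full.
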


With these results in hand, the proof of \cref{thm:limitTheorems} follows directly from \cref{lem:GWDerivative} and \cref{lem:differenceQuotientEOT} by applying the delta method as stated in \cref{prop:limitTheoremFramework}.

\subsection{Proof of \texorpdfstring{\cref{lem:vanishingGW}}{Proposition 4}}
\label{proof:lem:vanishingGW}
   Let $\kappa_0$ and $\kappa_1$ be as in the statement and suppose  that $\mathsf{GW}_p(\mu_0,\mu_1)=0$ so that there exists a coupling, $\pi$, satisfying $\int |\kappa_0(x,x')-\kappa_1(y,y')|^pd\pi\otimes \pi(x,y,x',y')=0$, that is, $\kappa_0(x,x')=\kappa_1(y,y')$ for $\pi\otimes \pi$-a.e. $(x,y,x',y')$. Fix $i\in[N_0]$ and  suppose that $\pi(\{(x_0^{(i)},x_1^{(j)})\})>0,$ $\pi(\{(x_0^{(i)},x_1^{(j')})\})>0$ for some $j\neq j'$. Then,  for each $x\in\mathcal X_0$ and $y_x\in\mathcal X_1$ for which $\pi(\{(x,y_x)\})>0$,
    \[
    \kappa_0(x_0^{(i)},x)= \kappa_1(x_1^{(j)},y_x)= \kappa_1(x_1^{(j')},y_x).
    \]
    However, since $\mu_1$ has full support on $\mathcal X_1$, the above display implies that $\kappa_1(x_1^{(j)},\cdot)= \kappa_1(x_1^{(j')},\cdot)$ contradicting the condition on $\kappa_1$. Conclude that $\pi(\{(x_0^{(i)},x_1^{(j)})\})$ is nonzero for precisely one choice of $j\in[N_1]$ and is zero otherwise. By the same logic, for any fixed $j\in[N_1]$, $\pi(\{(x_0^{(i)},x_1^{(j)})\})$ is zero for all but one choice of $i\in[N_0]$. Conclude that $\pi$ is supported on the graph of a bijective map $S$ and so $\pi\otimes \pi$ is supported on the set $\{(x,S(x),x',S(x')):x,x'\in\mathcal X_0\}$. It follows that $\kappa_0(x,x')=\kappa_1(S(x),S(x'))$ for each $x,x'\in\mathcal X_0$, i.e., $S\in\mathcal S$ and $S_{\sharp}\mu_0=\mu_1$. 

    On the other hand, if $\mu_1=S_{\sharp}\mu_0$ for some $S\in\mathcal S$, the coupling $\pi=(\mathrm{Id},S)_{\sharp}\mu_0$ is valid for $\mathsf{GW}_p(\mu_0,\mu_1)$ and $\int \Delta d\pi\otimes \pi=0$ since $\kappa_0(x,x')=\kappa_1(S(x),S(x'))$. Conclude that $\pi$ is optimal since $\mathsf{GW}_p(\mu_0,\mu_1)\geq 0$ by definition.
\qed

\subsection{Proof of \texorpdfstring{\cref{cor:simplifiedLimitLaw}}{Corollary 2}}
\label{proof:cor:simplifiedLimitLaw}
       Under the stated conditions,  \cref{lem:vanishingGW} yields that  every optimal coupling for $\mathsf{GW}_p(\mu_0,\mu_1)$ is of the form $(\Id,S)_{\sharp} \mu_0$ for some $S\in \mathcal S$ and, by \cref{thm:limitTheorems}, 
        \[
\sqrt n\left({\mathsf{GW}_p(\mu_{0,n},\mu_{1,n})^p- \mathsf{GW}_p(\mu_{0},\mu_{1})^p}\right)
     \stackrel{d}{\to}  \inf_{\pi^{\star}\in\Pi^{\star}(\mu_0,\mu_1)}\sup_{(\varphi_0,\varphi_1)\in\mathcal D_{\pi^{\star}}}\left\{v_{ \varphi_0}^{\intercal}G_{\mu_0}+v_{ \varphi_1}^{\intercal}G_{\mu_1}\right\}. 
        \]

        We proceed by studying the structure of the optimal potentials. 
        Fix an optimal coupling $\pi^{\star}=(\mathrm{Id},S)_{\sharp}\mu_0$. By the complementary slackness conditions for linear programming \cite[Theorem 4.5]{bertsimas1997introduction},   $(\varphi_0,\varphi_1)\in\mathcal D_{\pi^{\star}}$ 
        if and only if 
        \[
            \varphi_0(x)+\varphi_1(y)\leq 2\int \Delta(x,y,x',y') d\pi^{\star}(x',y')= 2\int \Delta(x,y,x',S(x')) d\mu_0(x')
        \]
        \text{ with equality }$\pi^{\star}\text{-a.e.}$
        In particular, for every $x\in\mathcal X_0$, equality holds at all pairs $(x,y)=(x,S(x))$ whereby 
        \[
        \begin{aligned}
        \varphi_0(x)+\varphi_1(S(x))&= 2\int \Delta(x,S(x),x',S(x')) d\mu_0(x')\\
        &= 2\int |\kappa_0(x,x')-\kappa_1(S(x),S(x'))|^p d\mu_0(x') =0
        \end{aligned}
        \]
        since $\kappa_0(x,x')=\kappa_1(S(x),S(x'))$ for every $x,x'\in\mathcal X_0$ by definition of $\mathcal S$. Since $S$ is a bijection, conclude that 
        $\varphi_1=-\varphi_0\circ S^{-1}$ so that $(\varphi_0,\varphi_1)\in\mathcal D_{\pi^{\star}}$ if and only if $(\varphi_0,\varphi_1)=(h,-h\circ S^{-1})$ for a function $h:\mathcal X_0\to\mathbb R$ satisfying 
        \[
            h(x)-h(S^{-1}(y))\leq 2\int \Delta(x,y,x',S(x'))d\mu_0(x'), 
        \]
        with equality when $y=S(x)$ as noted above. Since $y=S(x_y)$ for some $x_y\in\mathcal X_0$,   
        \[
        \begin{aligned}
            h(x)\mspace{-1mu}-\mspace{-1mu}h(x_y)\leq 2\int \Delta(x,S( x_y),x',S(x'))d\mu_0(x')&= 2\int |\kappa_0(x,x')\mspace{-1mu}-\mspace{-1mu}\kappa_1(S( x_y),S(x'))|^pd\mu_0(x')
            \\
            &= 2\int |\kappa_0(x,x')\mspace{-1mu}-\mspace{-1mu}\kappa_0( x_y,x')|^pd\mu_0(x'), 
       \end{aligned} 
        \]
        where we have again used the fact $S\in\mathcal S$. The claimed result follows as the above characterization holds for each optimal coupling; i.e., for every coupling of the form $(\Id,S)_{\sharp}\mu_0$  with $S\in\mathcal S$.
    \qed

\subsection{Proof of \texorpdfstring{\cref{prop:quantiles}}{Proposition 5}}
\label{proof:prop:quantiles}
       Recall from \cref{cor:simplifiedLimitLaw} that  
        \[
            L_* = \inf_{\substack{S\in\mathcal S\\\mu_1=S_{\sharp}\mu_0}}\sup_{h\in\mathcal H_{\mu_0}}\left\{ v_h^{\intercal}G_{\mu_0}+  v_{-h\circ S^{-1}}^{\intercal}G_{\mu_1} \right\}.     
        \]
       For each $\omega$ in the sample space $\Omega$, $L_*(\omega)\leq\sup_{h\in\mathcal H_{\mu_0}}\left\{ v_h^{\intercal}G_{\mu_0}(\omega)+  v_{-h\circ S^{-1}}^{\intercal}G_{\mu_1}(\omega) \right\}$ for any fixed $S\in\mathcal S$ satisfying $\mu_1=S_{\sharp}\mu_0$. We now show that the distribution of 
       \[\sup_{h\in\mathcal H_{\mu_0}}\left\{ v_h^{\intercal}G_{\mu_0}+  v_{-h\circ S^{-1}}^{\intercal}G_{\mu_1}\right\}\] coincides with that of $L$ under the current assumptions so that Theorem 1.A.1 in \cite{shaked2007stochastic} yields that $L$ dominates ${L_*}$ in the stochastic order and so $\mathbb P(L\leq t)\leq \mathbb P({L_*}\leq t)$ for every $t\in\mathbb R$, see equation (1.A.2) in \cite{shaked2007stochastic}. 

        To this end, remark that, for any $t\in\mathbb R$, 
        \[
        \begin{aligned}
            \mathbb P\left(\sup_{h\in\mathcal H_{\mu_0}}\left\{ v_h^{\intercal}G_{\mu_0}+  v_{-h\circ S^{-1}}^{\intercal}G_{\mu_1} \right\}\leq t \right)&
            \\
            \\
             &\hspace{-3em}=\mathbb P\left(\sup_{h\in\mathcal H_{\mu_0}}\left\{\sum_{i=1}^{N_0}  h(x_0^{(i)})\left((G_{\mu_0})_i-(T_S(G_{\mu_1}))_{i}\right) \right\}\leq t \right). 
        \end{aligned}
        \]
        Letting $R_t\coloneqq \left\{r\in\mathbb R^{N_0}:\sup_{h\in\mathcal H_{\mu_0}}v_h^{\intercal}r\leq t \right\}$ it follows that   
        \[
        \begin{aligned}
        \mathbb P\left(\sup_{h\in\mathcal H_{\mu_0}}\left\{ v_h^{\intercal}G_{\mu_0}+  v_{-h\circ S^{-1}}^{\intercal}G_{\mu_1} \right\}\leq t \right)&=\mathbb P\left( G_{\mu_0}-T_S(G_{\mu_1})\in R_t \right)
       \\ 
        &=\mathbb P\left( \sqrt{2}G_{\mu_0}\in R_t \right)
        \\
        &= \mathbb P\left(\sup_{h\in\mathcal H_{\mu_0}} \sqrt 2 v_h^{\intercal}G_{\mu_0}\leq t \right)
        \end{aligned} 
        \]
        where the penultimate equality follows since $ G_{\mu_0}-T_S(G_{\mu_1})\stackrel{d}{=}\sqrt 2 G_{\mu_0}$ under the assumptions, concluding the proof of the first part of the statement. 

   We now establish that  $\mathcal Q\coloneqq \{S\in\mathcal S:\mu_1=S_{\sharp}\mu_0\}$ is a singleton if and only if the identity is the unique map $T:\mathcal X_0\to\mathcal X_0$ for which $T_{\sharp}\mu_0=\mu_0$ and $\kappa_0(x,x')=\kappa_0(T(x),T(x'))$ for each $x,x'\in\mathcal X_0$.
   
       Suppose first that there exists some $T:\mathcal X_0\to\mathcal X_0$ with $\kappa_0(T(x),T(x'))=\kappa_0(x,x')$ for each $x,x'\in\mathcal X_0$, $\mu_0=T_{\sharp}\mu_0$, and $T\neq \Id$. Then, for any $S\in\mathcal Q$, $\mu_1=S_{\sharp}T_{\sharp}\mu_0 = (S\circ T)_{\sharp}\mu_0$ and $\kappa_0(x,x')=\kappa_0(T(x),T(x'))=\kappa_1(S\circ T(x),S\circ T(x'))$ for each $x,x'\in\mathcal X_0$ so that $S\circ T\in \mathcal Q$ and, since $T\neq \mathrm{Id}$, $S\circ T\neq S$. This shows that $\mathcal Q$ is not a singleton if such a $T$ exists. 

       Conversely, if there exists $S,S'\in\mathcal Q$ with $S\neq S'$, then $\mu_0 =S^{-1}_{\sharp} \mu_1= S^{-1}_{\sharp} S'_{\sharp}\mu_0 = (S^{-1}\circ S')_{\sharp}\mu_0$ and for each $x,x'\in\mathcal X_0$, $\kappa_0(x,x') =\kappa_1(S'(x),S'(x')) =\kappa_0(S^{-1}\circ S'(x),S^{-1}\circ S'(x'))$. Since $S\neq S'$, $(S^{-1}\circ S')\neq \mathrm{Id}$, proving the claim.
    \qed

\subsection{Proof of\texorpdfstring{\cref{thm:convergenceDirectEstimator}}{Proposition 6}}
\label{proof:thm:convergenceDirectEstimator} 

The proof of \cref{thm:convergenceDirectEstimator} will follow by establishing the following technical lemmas. 

 \begin{lemma}
    \label{lem:nonemptyInterior}
        Fix a measure $\eta\in\mathcal P(\mathcal X_0)$ and let $\supp(\eta)=(x^{(i)})_{i=1}^{N_{\eta}}$. Then, the set 
        \[
 \Gamma_{\eta} \coloneqq \left\{w \in \mathbb R^{N_{\eta}}:w_i-w_j\leq  2\int |\kappa_0(x^{(i)},x')-\kappa_0(x^{(j)},x')|^pd\eta(x'),\forall (i,j)\in [N_{\eta}]\times [N_{\eta}]\right\},
        \] 
         has nonempty interior if and only if, for each pair $(i,j)\in [N_{\eta}]\times [N_{\eta}]$ with $i\neq j$, there exists $x'\in\supp(\eta)$ satisfying     $\kappa_0(x^{(i)},x')\neq \kappa_0(x^{(j)},x')$.
    \end{lemma}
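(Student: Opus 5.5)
Write $c_{ij}\coloneqq 2\int |\kappa_0(x^{(i)},x')-\kappa_0(x^{(j)},x')|^p\,d\eta(x')$, so that $c_{ij}=c_{ji}\geq 0$ and $c_{ii}=0$. Then $\Gamma_\eta$ is the polyhedron cut out by the finitely many linear inequalities $w_i-w_j\leq c_{ij}$. The plan is to show that nonempty interior is equivalent to $c_{ij}>0$ for every $i\neq j$. Since $\eta$ has finite support and the integrand is nonnegative, $c_{ij}>0$ holds exactly when $\kappa_0(x^{(i)},x')\neq\kappa_0(x^{(j)},x')$ for some $x'\in\supp(\eta)$. That identification is immediate, because a finite sum of nonnegative terms with positive weights vanishes if and only if every term vanishes.

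\emph{Necessity.} Suppose $c_{ij}=0$ for some $i\neq j$. Then both $w_i-w_j\leq 0$ and $w_j-w_i\leq c_{ji}=0$ must hold, so $\Gamma_\eta$ is contained in the hyperplane $\{w_i=w_j\}$. A hyperplane has empty interior in $\mathbb R^{N_\eta}$, so $\Gamma_\eta$ does too.

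\emph{Sufficiency.} Suppose $c_{ij}>0$ for all $i\neq j$, and let $\delta\coloneqq\min_{i\neq j}c_{ij}>0$. The point $w=0$ satisfies every constraint strictly when $i\neq j$. The constraints with $i=j$ read $0\leq 0$ and hold for every $w$, so they can be ignored. For any $w'$ with $\|w'\|_\infty<\delta/2$ we have $w'_i-w'_j<\delta\leq c_{ij}$, so the open $\ell^\infty$-ball of radius $\delta/2$ around $0$ lies in $\Gamma_\eta$. Hence $\Gamma_\eta$ has nonempty interior.

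No step here is a real obstacle. The only points needing care are the trivial diagonal constraints and the symmetry $c_{ij}=c_{ji}$, which is what makes the necessity argument produce an equality $w_i=w_j$.
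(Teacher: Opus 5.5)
Your proof is correct and follows essentially the same route as the paper. Positivity of each $c_{ij}$ comes from the finitely many positive atoms of $\eta$; for sufficiency, a small box around the origin lies in $\Gamma_\eta$; and for necessity, the two opposite constraints force $w_i=w_j$. The only cosmetic difference is that you exhibit an open $\ell^\infty$-ball of radius $\delta/2$, whereas the paper exhibits the closed box $[-\underline\beta/2,\underline\beta/2]^{N_\eta}$.
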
 

     Note that the condition in \cref{lem:nonemptyInterior} is the same as that in \cref{lem:vanishingGW} and is hence automatically satisfied for $\mu_0$ by assumption. The proof of \cref{lem:nonemptyInterior} can be found in \cref{proof:lem:nonemptyInterior}.
     
     While $\Gamma_{\eta}$ is an unbounded set (since if $w\in\Gamma_{\eta}$ then so too is $w+a$ for any $a\in\mathbb R$), for any cost $c\in\mathbb R^{N_{\eta}}$ satisfying $c^{\intercal}\mathbf 1=0$, the linear program $\inf_{w\in\Gamma_{\eta}}c^{\intercal} w$ has the same optimal value as $\inf_{\substack{w\in\Gamma_{\eta}\\ w_1\in[-\delta,\delta]}}c^{\intercal} w$ for any choice of $\delta>0$ as explained in  \cref{rmk:implementationEstimator}. As the constraint set in the  latter problem is bounded, both linear programs have finite optimal value. This observation readily implies the first part of the following result. The second part is due to Lemma 11 in \cite{rioux2024limit}.  
    \begin{lemma}
    \label{lem:orthogonality}
        Under the conditions of \cref{assn:generalSettingLimitLaw}, $\hat L_n$ is almost surely finite for sufficiently large $n$. Moreover, if $\hat \Sigma_n = \Sigma_{\hat p_n}$ for some choice of simplex vector $\hat p_n$, and $Z\sim N(0,\hat \Sigma_n)$, then $\mathbb P(Z^{\intercal}\mathbf 1=0)=1$ so that item (4) of \cref{assn:generalSettingLimitLaw} is satisfied.  
    \end{lemma}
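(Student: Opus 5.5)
The statement has two parts, and I will treat them separately.

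\textbf{Finiteness of $\hat L_n$.} By item (3) of \cref{assn:generalSettingLimitLaw}, $\hat u_n\to u_{\mu_0}$. By \cref{lem:nonemptyInterior}, applied with $\eta=\mu_0$ (its hypothesis is exactly the condition of \cref{lem:vanishingGW}), the set $\Gamma_{\mu_0}=\{w:\mathrm R w\le u_{\mu_0}\}$ has nonempty interior. Note also that the diagonal constraints give $(u_{\mu_0})_l=0$ whenever $i=j$. Those constraints read $0\le(\hat u_n)_l$, and $(\hat u_n)_l$ is built from the same formula, so they are $0\le 0$ and impose nothing. For $l$ with $i\neq j$, either the inequality is harmless or $(\hat u_n)_l$ is close to the corresponding entry of $u_{\mu_0}$. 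It follows that $\hat{\mathcal H}_n$ is nonempty for large $n$; in fact $\hat{\mathcal H}_n\ni 0$ as soon as $\hat u_n\ge 0$, which holds since its entries are integrals of nonnegative quantities.

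Next take $Z\sim N(0,\hat\Sigma_n)$. By item (4), $Z^{\intercal}\mathbf 1=0$ almost surely for large $n$. The objective $Z^{\intercal}w$ is therefore invariant under $w\mapsto w+a\mathbf 1$, and so is the constraint $\mathrm R w\le \hat u_n$, because $\mathrm R\mathbf 1=0$. Hence we may restrict to $w_1=0$. On that slice every $w_i=w_i-w_1\le(\hat u_n)_{l(i,1)}$ and $-w_i=w_1-w_i\le(\hat u_n)_{l(1,i)}$, so the feasible region is a nonempty compact polytope. A linear program over a nonempty compact set has a finite optimum, so $\hat L_n<\infty$ almost surely. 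This is the only point requiring care: feasibility comes from $\hat u_n\ge 0$, and boundedness comes from the slice. If I wanted to avoid relying on the sign of $\hat u_n$, I would instead use the nonempty interior of $\Gamma_{\mu_0}$ and $\hat u_n\to u_{\mu_0}$ to obtain feasibility for large $n$.

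\textbf{The orthogonality claim.} Write $\hat\Sigma_n=\mathrm{diag}(\hat p_n)-\hat p_n\hat p_n^{\intercal}$ with $\hat p_n$ in the simplex. Then
\[
\mathrm{Var}(Z^{\intercal}\mathbf 1)=\mathbf 1^{\intercal}\hat\Sigma_n\mathbf 1=\sum_i(\hat p_n)_i-\Bigl(\sum_i(\hat p_n)_i\Bigr)^2=1-1=0 .
\]
Since $Z$ has mean zero, $Z^{\intercal}\mathbf 1=0$ almost surely, which is item (4). This part is routine.
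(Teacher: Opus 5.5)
Your proposal is correct and follows essentially the paper's route: item (4) makes the objective and the constraints invariant under $w\mapsto w+a\mathbf 1$, so one can pin $w_1$ to get a bounded feasible polytope (you take $w_1=0$, the paper adds $w_1\in[-\delta,\delta]$ as in \cref{rmk:implementationEstimator}), and the orthogonality claim is exactly the computation $\mathbf 1^{\intercal}\Sigma_{\hat p_n}\mathbf 1=1-1=0$, which the paper delegates to Lemma~11 of \cite{rioux2024limit}. One caveat: your primary justification for $\hat u_n\ge 0$, that ``its entries are integrals of nonnegative quantities,'' presumes a plug-in estimator and is not available under the generic \cref{assn:generalSettingLimitLaw}, so feasibility should instead rest on your fallback (the off-diagonal entries of $u_{\mu_0}$ are strictly positive and $\hat u_n\to u_{\mu_0}$) together with discarding the vacuous diagonal constraints, as the paper does in \cref{rmk:implementationEstimator}.
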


     We now consider the following, generic, linear program  
     \begin{equation}
     \label{eq:LPproofConsistencyDelta}
    \begin{aligned}
        \sup_{w \in \mathbb R^{N_{0}}}& c^{\intercal}w
        \\ 
        \text{subject to }&\tilde{\mathrm{R}}w\leq \tilde u,\; w_{1}\leq \delta,\;
        -w_{1}\leq \delta, 
    \end{aligned} 
     \end{equation}
     where $\delta>0$ is any fixed constant, $\tilde {\mathrm{R}}\in\mathbb R^{N_0(N_0-1)\times N_0}$ enforces the constraints on $w_i-w_j$ for each $(i,j)\in[N_0]\times[N_0]$ with $i\neq j$ and $c\in\mathbb R^{N_0}$, $\tilde u\in\mathbb R^{N_0(N_0-1)}$ are arbitrary. This is the same as the constraint set generated by $\mathrm{R}$ and $u$ as considered previously, but the redundant constraints on $w_i-w_i$ for $i\in[N_0]$ are all removed. 
      
     We denote the  optimal value of this linear program by $v_{\delta}(c,\tilde u)$ noting that, by the above deliberations, the feasible set is compact so that $v_{\delta}(c,\tilde u)<\infty$. Furthermore, if $c^{\intercal}\mathbf 1=0$, $v_{\delta}(c,\tilde u)$ coincides with $v(c,\tilde u)$, the optimal value of
     \[
    \begin{aligned}
        \sup_{w \in \mathbb R^{N_{0}}}& c^{\intercal}w
        \\ 
        \text{subject to }&\mathrm{\tilde R}w\leq \tilde u 
    \end{aligned} 
     \]
     provided that the resulting constraint set is nonempty.
     The following result shows that $v_{\delta}(c,u)$ is stable under perturbations of $c$ and $u$.  
           \begin{lemma}[Lemma S.1.15. in \cite{rioux2024limit}] \label{lem:convergenceOptimalValue}
                Fix $\delta>0$ and suppose that $\tilde u_n\to \tilde u$ for some $\tilde u$ satisfying $\min_{i=1}^{N_0(N_0-1)} \tilde u_i>0$. Then, $v_{\delta}(\cdot, \tilde u_n)\to v_{\delta}(\cdot, \tilde u) $ uniformly on each compact subset of $\mathbb R^{N_0}$.  
           \end{lemma}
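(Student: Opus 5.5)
The plan is to view $v_\delta(c,\tilde u)$ as a maximum over finitely many vertices and to show that these vertices move continuously with $\tilde u$. The feasible polytope is
\[
P(\tilde u)=\{w:\tilde{\mathrm R}w\le\tilde u,\ |w_1|\le\delta\}.
\]
For $\tilde u$ with $\min_i\tilde u_i>0$, and for all $\tilde u_n$ with $n$ large (which then also have positive entries), the point $w=0$ satisfies every constraint strictly. Hence $P(\tilde u)$ and $P(\tilde u_n)$ are nonempty and have nonempty interior, so Slater's condition holds uniformly near $\tilde u$.

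The first step is a uniform bound on the feasible sets. Any $w\in P(\tilde u')$ satisfies $|w_1|\le\delta$, and $w_i-w_1\le\tilde u'_{(i,1)}$, $w_1-w_i\le\tilde u'_{(1,i)}$. This gives $\|w\|_\infty\le\delta+\|\tilde u'\|_\infty$. Since $\tilde u_n\to\tilde u$, all the sets $P(\tilde u_n)$ lie in a common compact ball $\mathbb B$.

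The second step, which I expect to be the main point, is Hausdorff convergence $P(\tilde u_n)\to P(\tilde u)$. One direction is easy. Any limit point of points $w_n\in P(\tilde u_n)$ lies in $P(\tilde u)$, because the constraints are closed and depend continuously on $\tilde u$.

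For the other direction I use the interior point $0$ and a shrinking argument. Take $w\in P(\tilde u)$ and set $w_n=\lambda_n w$ for $\lambda_n\in(0,1]$. Then
\[
\tilde{\mathrm R}w_n\le\lambda_n\tilde u,\qquad |(w_n)_1|\le\lambda_n\delta\le\delta .
\]
Choose $\lambda_n=\min\{1,\min_i(\tilde u_n)_i/\tilde u_i\}$. Then $\lambda_n\tilde u\le\tilde u_n$ componentwise, so $w_n\in P(\tilde u_n)$. Moreover $\lambda_n\to1$, which gives
\[
\|w_n-w\|\le(1-\lambda_n)\sup_{P(\tilde u)}\|w\|\to0,
\]
uniformly in $w$. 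The same construction applied in reverse, scaling points of $P(\tilde u_n)$ into $P(\tilde u)$ with $\lambda_n'=\min\{1,\min_i\tilde u_i/(\tilde u_n)_i\}\to1$, gives the other one-sided inclusion, again uniformly. Together these show that the Hausdorff distance $d_H(P(\tilde u_n),P(\tilde u))$ is at most $(1-\min\{\lambda_n,\lambda_n'\})\cdot R\to0$, where $R$ is the radius of $\mathbb B$.

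The third step converts this into uniform convergence of the value functions. For $c$ in a compact set $C$, the support-function estimate gives
\[
|v_\delta(c,\tilde u_n)-v_\delta(c,\tilde u)|\le\|c\|\,d_H\bigl(P(\tilde u_n),P(\tilde u)\bigr).
\]
The right-hand side is at most $\sup_{c\in C}\|c\|\cdot d_H\to0$, which proves the uniform convergence claimed in \cref{lem:convergenceOptimalValue}.

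The only delicate point is ensuring the scaling factors tend to $1$. This is exactly where the hypothesis $\min_i\tilde u_i>0$ is used: without it, the ratios $(\tilde u_n)_i/\tilde u_i$ could be undefined, and the feasible polytope could collapse discontinuously.
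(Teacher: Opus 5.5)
Your argument is correct, and it takes a different route from the paper: the paper gives no proof here. It imports the lemma from \cite{rioux2024limit} and treats it as an instance of known stability results for optimal values of linear programs. The typical form of that route is that Slater's condition and compact feasible sets give continuity of $\tilde u\mapsto P(\tilde u)$, hence continuity of the value, which is then upgraded to local uniformity in $c$.

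You bypass that machinery with one structural observation. Every right-hand side (the entries of $\tilde u$ and the two copies of $\delta$) is strictly positive, so $\lambda P(\tilde u)\subset P(\tilde u')$ whenever $\lambda\tilde u\le\tilde u'$ componentwise. The ratio scalings $\lambda_n,\lambda_n'\to 1$ then do all the work.

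What this buys is a quantitative statement. Since $1-\lambda_n\le\|\tilde u_n-\tilde u\|_\infty/\min_i\tilde u_i$, and similarly for $\lambda_n'$, you actually get $|v_\delta(c,\tilde u_n)-v_\delta(c,\tilde u)|\lesssim\|c\|\,R\,\|\tilde u_n-\tilde u\|_\infty/\min_i\tilde u_i$. That is a local Lipschitz bound, not just uniform convergence. You could even skip the Hausdorff distance: feasibility of $\lambda_n w$ and $\lambda_n' w$ gives directly $\lambda_n v_\delta(c,\tilde u)\le v_\delta(c,\tilde u_n)\le v_\delta(c,\tilde u)/\lambda_n'$, with $0\le v_\delta(c,\tilde u)\le\|c\|R$.

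What the general LP-stability route buys is scope. After translating a Slater point to the origin, your scaling trick handles any right-hand-side perturbation under Slater's condition. It does not handle perturbations of the constraint matrix, which the general theory covers.

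One presentational point: your opening sentence announces a vertex argument (``vertices move continuously''), but nothing afterwards uses vertices. Replace that sentence with a description of the scaling/Hausdorff argument you actually give.
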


        Given the above results, we are now in the position to prove the desired result.
    
\begin{proof}[Proof of \cref{thm:convergenceDirectEstimator}] 
        By \cref{assn:generalSettingLimitLaw}, we have that, conditionally on the data, the estimators $\hat \Sigma_n$ and $\hat u_n$ converge deterministically to $\Sigma_q$ and $u_{\mu_0}$ given almost every realization of $Y_1,Y_2,\dots$. As noted in the proof of \cref{lem:nonemptyInterior}, the vector $\tilde u_{\mu_0}$ with $l=N_0(i-1) +j$-th entry for $(i,j)\in[N_0]\times [N_0]$ with $i\neq j$   given by 
        \[
            (\tilde u_{\mu_0})_l = 2\int \left|\kappa_0(x^{(i)},x')-\kappa_0(x^{(j)},x')\right|^pd\mu_0(x')
        \]
        admits a uniform lower bound
         $\underline\beta>0$. It follows that $\tilde {\hat u}_n$, defined by removing the same indices from $\hat u_n$ as were removed from $u_{\mu_0}$ to form $\tilde u_{\mu_0}$, converges to $\tilde u_{\mu_0}$ given a.e.~realization of the samples.\footnote{Note that even if $\hat u_n$ may not be defined on  $\mathbb R^{N_0^2}$ for small $n$, it will be for all $n$ sufficiently large as discussed in \cref{rmk:implementationEstimator}.}   
        By \cref{lem:convergenceOptimalValue}, for any $\delta>0$, $v_{\delta}(\cdot, \tilde{\hat u}_n)\to v_{\delta}(\cdot, \tilde u_{\mu_0})$ uniformly on each compact set of $\mathbb R^{N_0}$.

        Now, let $Z_n\sim N(0,\hat \Sigma_n)$ for each $n\in\mathbb N$. Conditionally on the data, $\hat \Sigma_n\to \Sigma_q$ deterministically given almost every realization of $Y_1,Y_2,\dots$, $Z_n\stackrel{d}{\to} Z\sim N(0,\Sigma_q)$. By applying the continuous mapping theorem (cf. e.g. Theorem 1.11.1 in \cite{van1996weak}) it follows that, conditionally on the data, $v_{\delta}(Z_n,\tilde{\hat u}_n)\stackrel{d}{\to}v_{\delta}(Z,\tilde{u}_{\mu_0})$ given almost every realization of $Y_1,Y_2,\dots$, where we note that $\sqrt 2 v_{\delta}(Z_n,\tilde{\hat u}_n)\stackrel{d}= \sqrt 2 v(Z_n,\tilde{\hat u}_n)\stackrel d =\hat L_n $ and $\sqrt 2 v_{\delta}(Z,\tilde{u}_{\mu_0})\stackrel d =\sqrt 2  v(Z,\tilde{u}_{\mu_0}) \stackrel d = L$ as follows from \cref{lem:orthogonality} and the preceding discussion. The first claim will then follow from Lemma 2.11 in \cite{van1998asymptotic} upon showing that the distribution function of $L$ is continuous.

        Recall from the proof of \cref{lem:nonemptyInterior} that
        $\Gamma_{\mu_0}$ contains an open ball centered at $0$ and is symmetric in the sense that if $w\in\Gamma_{\mu_0}$, then $-w\in\Gamma_{\mu_0}$. It follows that $z\in\bm 1^{\perp}\mapsto \sup_{w\in\Gamma_{\mu_0}} w^{\intercal}z$ defines a norm on $\bm 1^{\perp}$, the orthogonal subspace to the vector $\bm 1$, which we denote by $\|\cdot\|_{\Gamma_{\mu_0}}$. Thus, $L\stackrel{d}{=} \sqrt 2 \|Z\|_{\Gamma_{\mu_0}}$ and, since $Z$ is not a point mass at $0$, but contains $0$ in its support, Proposition 12.1 in \cite{davydov1998local}, asserts that the  distribution function of $L$ is absolutely continuous, proving the desired result.

        To see that the empirical measures satisfy all relevant conditions when taking the empirical versions of $\Sigma_{\mu_0}$ and $u_{\mu_0}$, note that items (1) and (2) of \cref{assn:generalSettingLimitLaw} have already been shown in the proof of \cref{cor:simplifiedLimitLaw}, item (3)  follows from the strong law of large numbers, and item (4) is a consequence of \cref{lem:orthogonality}. Furthermore, since $\mu_0$ is not a point mass, $\Sigma_{\mu_0}\neq 0$ so that all conditions are verified.
        \end{proof}
    
\subsection{Proof of  \texorpdfstring{\cref{thm:conservativeTesting}}{Theorem 4}}
\label{proof:thm:conservativeTesting}
The claims under the null hypothesis follow directly 
from \cref{prop:quantiles,thm:convergenceDirectEstimator}. Under the alternative, $\mathsf{GW}_p(\mu_0,\mu_1)>0$, \cref{thm:limitTheorems} yields that 
\[
    \sqrt n\left(\mathsf{GW}_p(\mu_{0,n},\mu_{1,n})^p- \mathsf{GW}_p(\mu_0,\mu_1)^p\right) \stackrel{d}{\to} \inf_{\pi^{\star}\in\Pi^{\star}(\mu_0,\mu_1)}\sup_{(\varphi_0,\varphi_1)\in\mathcal D_{\pi^{\star}}}\left\{v_{ \varphi_0}^{\intercal}G_{\mu_0}+v_{ \varphi_1}^{\intercal}G_{\mu_1}\right\}\coloneqq \Upsilon.
\]
By the continuous mapping theorem, $0=\bm 1^{\intercal}(\sqrt n (w_{\mu_{0,n}}-w_{\mu_0}))\stackrel{d}{\to} \bm 1^{\intercal} G_{\mu_0}$ so that $G_{\mu_0}$ is almost surely orthogonal to $\bm 1$; the same argument applied to $G_{\mu_1}$. For any $\omega\in\Omega$ where this orthogonality condition is met,  \[
\begin{aligned}
\inf_{\pi^{\star}\in\Pi^{\star}(\mu_0,\mu_1)} \sup_{(\varphi_0,\varphi_1)\in\mathcal D_{\pi^{\star}}}\left\{v_{ \varphi_0}^{\intercal}G_{\mu_0}(\omega)+v_{ \varphi_1}^{\intercal}G_{\mu_1}(\omega)\right\}
\\
&\hspace{-4em}\leq {3}\|\Delta\|_{\infty,\mathcal X_0\times \mathcal X_1\times \mathcal X_0\times \mathcal X_1}\|(\|G_{\mu_0}(\omega)\|_1+\|G_{\mu_0}(\omega)\|_1),
\end{aligned}
\]
as follows from applying the bounds from \cref{lem:potentialBounds}. Consequently, $\Upsilon$ is almost surely finite and so, for any $0<\epsilon<1$, there exists $s_{\epsilon}\in\mathbb R$ for which $\mathbb P(\Upsilon \leq s_{\epsilon})<\epsilon$. Now,     
\[
\begin{aligned}
    &\mathbb P\left( \sqrt n\mathsf{GW}_p(\mu_{0,n},\mu_{1,n})^p\leq t\right) 
    \\ 
    &= \mathbb P\left( \sqrt n\left(\mathsf{GW}_p(\mu_{0,n},\mu_{1,n})^p-\mathsf{GW}_p(\mu_{0},\mu_{1})^p\right)\leq t-\sqrt n\mathsf{GW}_p(\mu_{0},\mu_{1})^p\right) 
    \end{aligned}
\]
and, for $n$ sufficiently large, $t-\sqrt n\mathsf{GW}_p(\mu_{0},\mu_{1})^p\leq s_{\epsilon}$ so that 
\[
\begin{aligned}
    \limsup_{n\to \infty } \mathbb P\left( \sqrt n\mathsf{GW}_p(\mu_{0,n},\mu_{1,n})^p\leq t\right)&\\&\hspace{-12em}\leq \limsup_{n\to\infty }\mathbb P\left( \sqrt n\left(\mathsf{GW}_p(\mu_{0,n},\mu_{1,n})^p-\mathsf{GW}_p(\mu_{0},\mu_{1})^p\right)\leq s_{\epsilon}\right) 
    \leq \mathbb P(\Upsilon\leq s_{\epsilon})<\epsilon 
\end{aligned}
\]
by the Portmanteau theorem. As $\epsilon>0$ is arbitrary, $\lim_{n\to \infty } \mathbb P\left( \sqrt n\mathsf{GW}_p(\mu_{0,n},\mu_{1,n})^p\leq t\right) = 0 $ under the alternative for any $t\in\mathbb R$. 

Now, let $t_{\alpha}$ be the $\alpha$-quantile of $L$ for $\alpha\in(0,1)$. Since the distribution function of $L$ is continuous (recall the proof of \cref{thm:convergenceDirectEstimator}), for any $r>0$, $\mathbb P(L\leq t_{\alpha}+r)>\alpha$ so that \cref{prop:quantiles} implies that any conditional $\alpha$-quantile, $t_{n,\alpha}$,  of $\hat L_n$ satisfies $t_{n,\alpha}<t_{\alpha}+r$ with probability approaching $1$. As such, 
\[
\begin{aligned}
    \mathbb P(\sqrt n \mathsf{GW}_p(\mu_{0,n},\mu_{1,n}))^p \leq t_{n,\alpha})&\leq \mathbb P(\sqrt n \mathsf{GW}_p(\mu_{0,n},\mu_{1,n}))^p \leq t_{n,\alpha}\cap t_{n,\alpha}\geq M)
    \\
    &+\mathbb P(\sqrt n \mathsf{GW}_p(\mu_{0,n},\mu_{1,n}))^p \leq t_{n,\alpha}\cap t_{n,\alpha}< M)
\end{aligned}
\]
Evidently, $\mathbb P(\sqrt n \mathsf{GW}_p(\mu_{0,n},\mu_{1,n}))^p \leq t_{n,\alpha}\cap t_{n,\alpha}< M)\leq \mathbb P(\sqrt n \mathsf{GW}_p(\mu_{0,n},\mu_{1,n}))^p \leq M)$ whereas $\mathbb P(\sqrt n \mathsf{GW}_p(\mu_{0,n},\mu_{1,n}))^p \leq t_{n,\alpha}\cap t_{n,\alpha}\geq M)\leq\mathbb P(t_{n,\alpha}\geq M)$. Setting $M=t_{n,\alpha}+r$ as above for some $r>0$, we have from the previous deliberations that  the right hand side of the above display converges to $0$, i.e., $\lim_{n\to\infty} \mathbb P(\sqrt n \mathsf{GW}_p(\mu_{0,n},\mu_{1,n}))^p \leq t_{n,\alpha}) = 0$.
\qed

\subsection{Proof of \texorpdfstring{\cref{prop:LipschitzContinuityGradient}}{Proposition 7}}
\label{proof:prop:LipschitzContinuityGradient}

   For each $(u,v)\in \RR^{r_0}\times\RR^{r_1}$, let $c_{(u,v)}:\mathcal X_0\times \mathcal X_1\to \mathbb R$ be the cost function associated with the cost vector $\rB_1^{\intercal}v-\rB_0^{\intercal}u$. It follows from \cref{lem:EOTPotentialBounds} that the unique EOT coupling for $\mathsf{EOT}_{\rB_1^{\intercal}v-\rB_0^{\intercal}u}^{\varepsilon}(\mu_0,\mu_1)$, $\pi_{(u,v)}^{\star}$, satisfies
    \[
    \label{eq:couplingLowerBound}
    \frac{d\pi_{(u,v)}^{\star}}{d\mu_0\otimes \mu_1}\left(x,y\right)\geq e^{\frac{-4\|c_{(u,v)}\|_{\infty,\mathcal X_0\times \mathcal X_1}}\varepsilon}, \text{ for each }(x,y)\in\mathcal X_0\times \mathcal X_1.
    \]
    Now, let $K\subset \mathbb R^{r_0}$ be an open bounded set so that, for each $(u,v)\in K\times \rB_1\mathcal K$, it holds that 
   \[
\pi_{(u,v)}^{\star}\left(\left\{\left(x,y\right)\right\}\right)\geq e^{\frac{-4\sup_{(u,v)\in K\times \rB_1\mathcal K}\|c_{(u,v)}\|_{\infty,\mathcal X_0\times \mathcal X_1}}\varepsilon}l_{\mu_0} l_{\mu_1}\eqqcolon l_{\min} >0, \text{ for each }(x,y)\in\mathcal X_0\times \mathcal X_1,
   \]
   where $l_{\mu_0}=\min_{x\in\mathcal X_0}\mu_0(\{x\})>0$ and similarly for $ l_{\mu_1}$ noting that $\mu_0,\mu_1$ have full support. By \eqref{eq:gradientObjective}, $\nabla \ell_{\varepsilon}(u)=u-\rB_0 x^{\star}_{u}, \text{ where } \{x^{\star}_{u}\}=\argmin_{x\in\mathcal K}\left\{\frac 12 x^{\intercal}\rB_1^{\intercal}\rB_1x-u^{\intercal}\rB_0x -\varepsilon \mathsf{H}(x)\right\}$. From \cref{lem:QPvsLPsoln}, $x^{\star}_u$ is also the unique solution of $\inf_{x\in\mathcal K}\left\{ (x^{\star}_u)^{\intercal}\rB_1^{\intercal}\rB_1 x-u^{\intercal}\rB_0x -\varepsilon\mathsf H(x)\right\}$ and hence can be identified with a solution of the EOT problem $\mathsf{EOT}_{\rB_1^{\intercal}\rB_1x^{\star}_u-\rB_0^{\intercal}u}^{\varepsilon}(\mu_0,\mu_1)$ so that all entries of $x^{\star}_u$ are bounded below by $l_{\min}$ uniformly in the choice of $u\in K$ by the above deliberations. 
   
    As noted in the proof of \cref{lem:derivativeObjective}, the following expression for $\ell$ holds, see \eqref{eq:QPDecomposition},
   \[
        \begin{aligned}
        \ell(u)
                &= \frac 12 \|u\|^2+\inf_{x\in\mathcal K}\left\{\frac 12  x^{\intercal}\rB^{\intercal}_1\rB_1 x - u^{\intercal}\rB_0x-\varepsilon \mathsf H(x) \right\}.
        \end{aligned}
    \]
    The Lagrangian of the inner minimization problem is given by  
    \[
        \mathcal L: (x,\lambda)\in\mathbb R^{N_0N_1}\times \RR^{N_0+N_1}\mapsto \frac 12 x^{\intercal}\rB^{\intercal}_1\rB_1 x - u^{\intercal}\rB_0x-\varepsilon \mathsf H(x) + \lambda^{\intercal}\left( \rA x-b \right), 
    \]
    noting that the entropy term enforces nonnegativity of $x$. By Proposition 4.4.2 in \cite{bertsekas2016nonlinear}, $x^{\star}_u\in \argmin_{x\in\mathcal K}\left\{\frac 12 x^{\intercal}\rB^{\intercal}_1\rB_1 x - u^{\intercal}\rB_0x-\varepsilon \mathsf H(x) \right\}$ if and only if there exists $\lambda^{\star}$ for which 
    \begin{equation}
        \label{eq:lagrangePoint}
        x^{\star}_u\in \argmin_{x\in\RR^{N_0N_1}}\left\{\frac 12
x^{\intercal}\rB^{\intercal}_1\rB_1 x - u^{\intercal}\rB_0x-\varepsilon \mathsf H(x) + (\lambda^{\star})^{\intercal}\left( \rA x-b 
    \right)\right\}\text{ and }\rA x^{\star}_u=b.
\end{equation}
As noted above, for each $u\in K$, $x^{\star}_u\in\mathcal K_{l}\coloneqq \left\{ x\in \RR^{N_0N_1}:l<x_i<2, i \in [N_0N_1] \right\}$ for any choice of $0<l<l_{\min}$. Furthermore, since $b=\rA x'$ for any $x'\in \cK$, 
    \[
        \lambda^{\intercal}\left( \rA x-b \right)=\lambda^{\intercal}\left( \rA x-\rA x' \right)= (x-x')^{\intercal}\rA^{\intercal} \lambda = (x-x')^{\intercal}\rN^{\intercal} \zeta_{\lambda},
    \]
    where, letting $r=\mathrm{rank}(\rA)$ and $\rA = \rP\Sigma\rQ^{\intercal}$ be the compact SVD of $\rA$ (i.e. $\rP \in \mathbb R^{(N_0+N_1)\times r}$, $\Sigma\in\mathbb R^{r\times r}$, and $\rQ^{\intercal}\in\mathbb R^{r\times N_0N_1}$), $\rN=\Sigma \rQ^{\intercal}\in \mathbb R^{r\times N_0N_1}$ and $\zeta_{\lambda}=\rP^{\intercal}\lambda\in \RR^r$. By construction,  $\rP^{\intercal}$ has full row rank and $\rN^{\intercal}$ has full column rank hence $\rP^{\intercal}\mathbb R^{N_0+N_1}=\mathbb
    R^r$. With this, the condition \eqref{eq:lagrangePoint} can be recast as the existence of some $\zeta^{\star}$ for which  
 \begin{equation}
 \label{eq:LagrangianEquationFinal}
     x^{\star}_u\in \argmin_{x\in\RR^{N_0N_1}}\left\{\frac12 x^{\intercal}\rB^{\intercal}_1\rB_1 x - u^{\intercal}\rB_0x-\varepsilon \mathsf H(x) + (\zeta^{\star})^{\intercal}\rN (x-x')
        \right\}\text{ and }\rN(x^{\star}_u-x')=0,
\end{equation}
where we note that the condition $\rA x^{\star}_u=b$ is equivalent to $\rP\rN (x^{\star}_u-x')=0$ which, in turn, is equivalent to $\rN (x^{\star}_u-x')=0$ since $\rP$ has full column rank (i.e., it is injective).
We now characterize the solution of \eqref{eq:LagrangianEquationFinal} (which is known to lie in $\mathcal K_l$) by analyzing the critical points of the function 
\[
    r_{u,\zeta}:x\in \mathcal K_{l} \mapsto \frac 12 x^{\intercal}\rB^{\intercal}_1\rB_1 x - u^{\intercal}\rB_0x-\varepsilon \mathsf H(x) + \zeta^{\intercal}\rN (x-x'), 
\]
for fixed $u\in K$ and $\zeta \in \mathbb R^r$. Here, $r_{u,\zeta}$ 
is smooth and strongly convex on $\mathcal K_l$ with gradient 
\[
\begin{aligned}
   \nabla r_{u,\zeta}(x) &=   \rB^{\intercal}_1\rB_1 x - \rB_0^{\intercal}u+\varepsilon(\mathbf 1+\log(x)) +\rN^{\intercal} \zeta,
\end{aligned}
\]
where $\mathbf 1$ is the vector of length $N_0N_1$ consisting of all 1's and $\log(x)$ is the componentwise application of $\log$ to the vector $x$. With this, we define   $
g:(u,x,\zeta)\in K \times\cK_{l}\times \mathbb R^{r}\mapsto
   \left(
       \nabla r_{u,\zeta}(x),\rN(x-x')
   \right) \in \mathbb R^{N_0N_1}\times \mathbb R^{r}$
and observe that \eqref{eq:LagrangianEquationFinal} can be recast as solving the system $g(u,\cdot)= (0,0)$.

We now prove the claimed result by applying the implicit function theorem. It is easy to see that $g$ is smooth and its Jacobians with respect to $u$ and $(x,\zeta)$ are  
\[
    D_u g(u,x,\zeta)= \begin{pmatrix}-\rB_0^{\intercal}\\ 0\end{pmatrix},\quad  D_{x,\zeta}g(u,x,\zeta)=\begin{pmatrix} 
            \rB^{\intercal}_1\rB_1 +\varepsilon\mathrm{diag}(1/x)&\rN^{\intercal}\\ \rN&0
    \end{pmatrix}.
\]
We now show that $D_{x,\zeta} g(u,x,\zeta)$ is invertible. Suppose that $(y,\sigma)$ are such that 
\[           \rB^{\intercal}_1\rB_1 y+\varepsilon\mathrm{diag}(1/x) y+\rN^{\intercal}\sigma = 0 \text{ and } \rN y = 0. 
\]
Multiplying the first term above by $y$ and using the fact that $y^{\intercal}\rN^{\intercal}\sigma=0$, we must have that $y^{\intercal}\rB^{\intercal}_1\rB_1 y+\varepsilon y^{\intercal}\mathrm{diag}(1/x) y=0$ whereby $y=0$ since  the matrix $\rB^{\intercal}_1\rB_1 +\varepsilon \mathrm{diag}(1/x)$ is positive definite. It follows that $\rN^{\intercal} \sigma = 0$ so that $\sigma=0$, recalling that $\rN^{\intercal}$ has full column rank and is hence injective. Conclude that $D_{x,\zeta}g(u,x,\zeta)$ is an invertible matrix and, by the implicit function theorem, for each $u\in K$, there exists a neighborhood $N_u$ of $u$ and a unique function $Z_u:N_u\to \mathbb R^{N_0N_1}\times \mathbb R^r$ satisfying $g(s,Z_u(s))=(0,0)$ for every $s\in N_u$, i.e., $Z_u(s) = (x^{\star}_s,\zeta^{\star}_s)$, the unique pair solving \eqref{eq:LagrangianEquationFinal} for $u=s$. Furthermore, $Z_u$ is continuously differentiable with Jacobian
\[
    DZ_u(s) = - D_{x,\zeta}g(s,Z_u(s))^{-1}D_u g(s, Z_u(s)). 
\]
Letting $\rT_s=\rB^{\intercal}_1\rB_1 +\varepsilon\mathrm{diag}(1/x^{\star}(s))$, $\{x^{\star}(s)\}\coloneqq \argmin_{x\in\mathcal K}\left\{\frac 12 x^{\intercal}\rB^{\intercal}_1\rB_1 x-s^{\intercal}\rB_0x-\varepsilon\sH(x) \right\}$ for each  $s\in N_u $, and noting that $\rT_s$ is positive definite and $\rN\rT_s^{-1}\rN^{\intercal}$ is invertible, since $\rN^{\intercal}$ is injective, the inversion formula for block matrices, Exercise 5.16 in \cite{abadir2005matrix}, yields
\[
    D_{x,\zeta}g(s,Z_u(s))^{-1}= \begin{pmatrix}
        \rT_s^{-1}+\rT_s^{-1}\rN^{\intercal}\left(-\rN\rT_s^{-1}\rN^{\intercal}  \right)^{-1}\rN\rT_s^{-1} &-\rT_s^{-1}\rN^{\intercal}\left(-\rN\rT_s^{-1}\rN^{\intercal}  \right)^{-1}
        \\
        -\left(-\rN\rT_s^{-1}\rN^{\intercal}  \right)^{-1}\rN\rT_s^{-1}&\left(-\rN\rT_s^{-1}\rN^{\intercal}  \right)^{-1}
    \end{pmatrix}.
\]
Consequently, 
\[
   DZ_u(s) = \begin{pmatrix}
        \rT_s^{-1}\rB_0^{\intercal}+\rT_s^{-1}\rN^{\intercal}\left(-\rN\rT_s^{-1}\rN^{\intercal}  \right)^{-1}\rN\rT_s^{-1}\rB_0^{\intercal} 
        \\
        -\left(-\rN\rT_s^{-1}\rN^{\intercal}  \right)^{-1}\rN\rT_s^{-1}\rB_0^{\intercal}
    \end{pmatrix},
\]
where the top submatrix corresponds to the Jacobian of the map $s\in N_u\mapsto x^{\star}_s$.  As $\nabla \ell(u) = u - \rB_0x^{\star}_u$, conclude that 
\[
   D( \nabla \ell)(u)= D^2\ell(u) = \Id-\rB_0 \rT_u^{-1}\rB_0^{\intercal}+\rB_0\rT_u^{-1}\rN^{\intercal}\left(\rN\rT_u^{-1}\rN^{\intercal}  \right)^{-1}\rN\rT_u^{-1}\rB_0^{\intercal},
\]
is the Hessian of $\ell$ at $u$.
As $\ell(u)=\frac 12 \|u\|^2+\inf_{x\in\mathcal K}\left\{ \frac 12 x^{\intercal}\rB^{\intercal}_1\rB_1 x - u^{\intercal}\rB_0x-\varepsilon \mathsf H(x) \right\}$, we can identify the Hessian of the second term in the sum with 
$
-\rB_0 \rT_u^{-1}\rB_0^{\intercal}+\rB_0\rT_u^{-1}\rN^{\intercal}\left(\rN\rT_u^{-1}\rN^{\intercal}  \right)^{-1}\rN\rT_u^{-1}\rB_0^{\intercal}
$
and hence this matrix must be negative semidefinite, as 
\[
u\in\RR^{r_0}\mapsto \inf_{x\in\mathcal K}\left\{ \frac 12 x^{\intercal}\rB^{\intercal}_1\rB_1 x - u^{\intercal}\rB_0x-\varepsilon \mathsf H(x) \right\}
\]
is a concave function (cf. e.g., Proposition 2.9 in \cite{rockafellar1998variational}).
Thus, for any $w\in\mathbb R^{r_0}$ with $\|w\|=1$, 
\[
w^{\intercal}D^2 \ell(u)w=\|w\|^2- \left\| \rT_u^{-1/2}\rB_0^{\intercal}w\right\|^2+\left\|\left(\rN\rT_u^{-1}\rN^{\intercal}  \right)^{-1/2}\rN\rT_u^{-1}\rB_0^{\intercal}w\right\|^2\leq \|w\|^2,
\] 
so that  $\lambda_{\max}\left(D^2 \ell(u)\right)\leq 1$. We now bound the minimal eigenvalue of the Hessian. For any $q\in\mathbb R^{N_0N_1}$ with $\|q\|=1$,  
$
q^{\intercal} \rT_u q =  q^{\intercal}\rB_1^{\intercal}\rB_1 q+\varepsilon \sum_{i=1}^{N_0N_1} \frac{q_i^2}{(x^{\star}_u)_i}$, so that 
\[
\lambda_{\min}(\rT_u) 
\geq \lambda_{\min}(\rB_1^{\intercal}\rB_1)+ \frac{\varepsilon}{(x^{\star}_u)_{\max}}\geq \lambda_{\min}(\rB_1^{\intercal}\rB_1)+ \varepsilon, %
\]
noting that $\lambda_{\min}(\rB_1^{\intercal}\rB_1)\geq 0$ since 
 $\rB_1^{\intercal}\rB_1$ is positive semidefinite and that $(x^{\star}_u)_{\max}$, the maximum value of $x^{\star}_u$, is at most $1$. 
Conclude that 
\[
\left\| \rT_u^{-1/2}\rB_0^{\intercal}w\right\|^2\leq \lambda_{\max}\left(\rT_u^{-1/2}\right)^2 \lambda_{\max}(\rB_0^{\intercal}\rB_0)\leq \frac{\lambda_{\max}(\rB_0^{\intercal}\rB_0)}{\lambda_{\min}(\rB_1^{\intercal}\rB_1)+ \varepsilon}, 
\]
where we have 
used the fact that  $\lambda_{\max}\left( \rT_{u}^{-1/2}\right) =1/\sqrt{\lambda_{\min}\left( \rT_{u}\right)}$ and $\lambda_{\max}(\rB_0^{\intercal}\rB_0)=\lambda_{\max}(\rB_0\rB_0^{\intercal})$.  In sum, for any $w\in\mathbb R^{r_0}$ with $\|w\|=1$, 
\[
w^{\intercal}D^2 \ell(u)w\geq 1-\frac{\lambda_{\max}(\rB_0^{\intercal}\rB_0)}{\lambda_{\min}(\rB_1^{\intercal}\rB_1)+ \varepsilon}, \text{ so that } \lambda_{\min}\left(D^2\ell(u)\right)\geq 1-\frac{\lambda_{\max}(\rB_0^{\intercal}\rB_0)}{\lambda_{\min}(\rB_1^{\intercal}\rB_1)+ \varepsilon}.  
\]  
Since $\ell$ is twice continuously differentiable on $\mathbb R^{r_0}$, Theorem 4.5 in \cite{rockafellar1997convex} yields that $\ell$ is convex if and only if its Hessian is positive semidefinite at each point. This proviso is met if, for instance, $\varepsilon\geq \lambda_{\max}(\rB_0^{\intercal}\rB_0)-\lambda_{\min}(\rB_1^{\intercal}\rB_1)$.

We now show that $\nabla \ell$ is Lipschitz continuous with the claimed constant. By the mean value inequality (cf. e.g., Example 2 on p.356 of \cite{apostol1958mathematical}), for each $u,u'\in \mathbb R^{r_0}$, there exists $\bar u\in[u,u']$, the connecting line between $u$ and $u'$, satisfying 
\[
    \|\nabla \ell(u)-\nabla \ell(u')\|\leq \|D^2\ell(\bar u)(u-u')\|\leq \|D^2\ell(\bar u)\|_{\mathrm{op}}\|u-u'\|,
\]
and, by the previous deliberations, $\|D^2\ell(\bar u)\|_{\mathrm{op}}\leq \max\left\{\left|\lambda_{\min}\left(D^2\ell(\bar u)\right)\right|,\left|\lambda_{\max}\left(D^2\ell(\bar u)\right)\right|\right\}$ which can be bounded as $\max\left\{1, \frac{\lambda_{\max}(\rB_0^{\intercal}\rB_0)}{\lambda_{\min}(\rB_1^{\intercal}\rB_1)+ \varepsilon}-1\right\}$ which is independent of  $\bar u$ so that this Lipschitz constant is global.
\qed

\subsection{Proof of \texorpdfstring{\cref{lem:alternativeGradient}}{Propositon 8}}
\label{proof:lem:alternativeGradient}
   Let $v^{\star}(u)$ and $x^{\star}_{(u,v^{\star}(u))}$ be as defined in the statement for some fixed $u\in\mathbb R^{r_0}$. Then, by the first-order optimality conditions for  \eqref{eq:innerMax}, $v^{\star}(u)=\rB_1 x^{\star}_{(u,v^{\star}(u))}$ (recall \cref{lem:derivativeObjective}) so that 
   \[
        \sup_{v\in\mathbb R^{r_1}} f_u(v) =\frac 12   \left(x^{\star}_{(u,v^{\star}(u))}\right)^{\intercal}\rB_1^{\intercal}\rB_1 x^{\star}_{(u,v^{\star}(u))}-u^{\intercal}\rB_0x^{\star}_{(u,v^{\star}(u))} -\varepsilon \mathsf H(x^{\star}_{(u,v^{\star}(u))}).
   \]
   As noted in the proof of \cref{lem:derivativeObjective}, for each fixed $u\in\mathbb R^{r_0}$, 
   \[
\frac 12 \|u\|^2+\sup_{v\in\mathbb R^{r_1}} f_{u}(v)= \frac 12 \|u\|^2+\inf_{x\in\mathcal K}\left\{\frac 12 x^{\intercal}\rB_1^{\intercal}\rB_1x-u^{\intercal}\rB_0x -\varepsilon \mathsf{H}(x)\right\}.
   \]
   Comparing the previous two displayed equations, we discern that $x^{\star}_{(u,v^{\star}(u))}$ is the unique solution of the strictly convex problem  $\inf_{x\in\mathcal K}\left\{\frac 12 x^{\intercal}\rB_1^{\intercal}\rB_1x-u^{\intercal}\rB_0x -\varepsilon \mathsf{H}(x)\right\}$.
\qed

\subsection{Proof of \texorpdfstring{\cref{lemma:fu-proprties}}{Proposition 9}}
\label{proof:lemma:fu-proprties}
The $1$-strong concavity of the objective follows by noting that $f_u$ is the sum of the $1$-strongly concave function $-\frac 12 \|\cdot\|^2$ and the concave function $v\in\mathbb R^{r_1}\mapsto \min_{x\in\mathcal K}\left\{(\rB_1^{\intercal} v-\rB_0^{\intercal}u)^{\intercal}x -\varepsilon \mathsf H(x)\right\}$, see Proposition 2.9 in \cite{rockafellar1998variational}.

Fix a bounded open set $K'\subset \mathbb R^{r_1}$.
   Following the proof of \cref{prop:LipschitzContinuityGradient},  for each  $v\in K'$, the unique solution,  $x^{\star}_v$, of $\min_{x\in\mathcal K'} \left\{(\rB_1^{\intercal}v-\rB_0^{\intercal}u)^{\intercal}x-\varepsilon\sH(x)\right\}$ has entries bounded below by a constant $l_{\min}>0$ which is uniform in the choice of $v\in K'$. 
 The  Lagrangian for this problem is given by  
    \[
        \mathcal L: (x,\lambda)\in\mathbb R^{N_0N_1}\times \RR^{N_0+N_1}\mapsto v^{\intercal}\rB_1 x - u^{\intercal}\rB_0x-\varepsilon \mathsf H(x) + \lambda^{\intercal}\left( \rA x-b \right), 
    \]
    noting that the entropy term enforces nonnegativity of $x$. By Proposition 4.4.2 in \cite{bertsekas2016nonlinear}, $x^{\star}_v\in \argmin_{x\in\mathcal K'}\left\{v^{\intercal}\rB_1 x - u^{\intercal}\rB_0x-\varepsilon \mathsf H(x) \right\}$ if and only if there exists $\lambda^{\star}$ for which 
    \begin{equation}
        \label{eq:lagrangePointv}
        x^{\star}_v\in \argmin_{x\in\RR^{N_0N_1}}\left\{ v^{\intercal}\rB_1 x - u^{\intercal}\rB_0x-\varepsilon \mathsf H(x) + (\lambda^{\star})^{\intercal}\left( \rA x-b 
    \right)\right\}\text{ and }\rA x^{\star}_v=b.
\end{equation}
Letting $\mathcal K_{l}\coloneqq \left\{ x\in \RR^{N_0N_1}:l<x_i<2,i\in[N_0N_1] \right\}$ for some $0<l<l_{\min}$, we have 
   from the prior discussion that the unique solution of \eqref{eq:lagrangePointv} is contained in $\mathcal K_{l}$ for each $v\in K'$. Following the proof of \cref{prop:LipschitzContinuityGradient}, we write $\mathrm P\Sigma \mathrm Q^{\intercal}$ for the compact SVD of $\mathrm{A}$ and let $r=\mathrm{rank}(\mathrm A)$. With this, we recast \eqref{eq:lagrangePointv} as  
  
  the existence of some $\zeta^{\star}\in\mathbb R^r$ for which  
 \begin{equation}
 \label{eq:LagrangianEquationFinalv}
     x^{\star}_v\in \argmin_{x\in\RR^{N_0N_1}}\left\{v^{\intercal}\rB_1 x - u^{\intercal}\rB_0x-\varepsilon \mathsf H(x) + (\zeta^{\star})^{\intercal}\rN (x-x')
        \right\}\text{ and }\rN (x^{\star}_v-x')=0.
\end{equation}
For any fixed $v\in K'$ and $\zeta \in \mathbb R^r$, the function 
\[
    r_{v,\zeta}:x\in \mathcal K_{l} \mapsto v^{\intercal}\rB_1 x - u^{\intercal}\rB_0x-\varepsilon \mathsf H(x) + \zeta^{\intercal}\rN (x-x') 
\]
is smooth and strongly convex with gradient 
\[
\begin{aligned}
   \nabla r_{v,\zeta}(x) &=   \rB^{\intercal}_1v - \rB_0^{\intercal}u+\varepsilon(\mathbf 1+\log(x)) +\rN^{\intercal} \zeta,
\end{aligned}
\]
where $\mathbf 1$ is the vector of length $N_0N_1$ consisting of all 1's and $\log(x)$ is the componentwise application of $\log$ to the vector $x$. With this, we define   $
g:(v,x,\zeta)\in K' \times \cK_{l}\times \mathbb R^{r}\mapsto
   \left(
       \nabla r_{v,\zeta}(x),\rN( x-x')
   \right) \in \mathbb R^{N_0N_1+r}$
and observe that \eqref{eq:LagrangianEquationFinalv} can be recast as solving the system $g(v,\cdot)= (0,0)$.

With this representation in hand, we will apply the implicit function theorem. The Jacobians of $g$ with respect to $v$ and $(x,\zeta)$ are  
\[
    D_v g(v,x,\zeta)= \begin{pmatrix}\rB_1^{\intercal}\\ 0\end{pmatrix},\quad  D_{x,\zeta}g(v,x,\zeta)=\begin{pmatrix} 
            \varepsilon\mathrm{diag}(1/x)&\rN^{\intercal}\\ \rN&0
    \end{pmatrix},
\]
and invertibility of  $D_{x,\zeta} g(v,x,\zeta)$ follows by complete analogy with the proof of \cref{prop:LipschitzContinuityGradient}. By the implicit function theorem, for each $v\in K'$, there exists a neighborhood $N_v$ of $v$ and a unique function $Z:N_v\to \mathbb R^{N_0N_1}\times \mathbb R^r$ satisfying $g(s,Z(s))={(0,0)}$ for every $s\in N_v$, i.e. $Z(s) = (x^{\star}_s,\zeta^{\star}_s)$, the unique pair solving \eqref{eq:LagrangianEquationFinalv} for $v=s$. Furthermore, $Z$ is continuously differentiable with Jacobian
\[
    DZ(s) = - D_{x,\zeta}g(s,Z(s))^{-1}D_v g(s, Z(s)). 
\]
By the properties of block matrices used in \cref{prop:LipschitzContinuityGradient}, 
\[
    D_{x,\zeta}g(s,Z(s))^{-1}= \begin{pmatrix}
        \rU_s^{-1}+\rU_s^{-1}\rN^{\intercal}\left(-\rN\rU_s^{-1}\rN^{\intercal}  \right)^{-1}\rN\rU_s^{-1} &-\rU_s^{-1}\rN^{\intercal}\left(-\rN\rU_s^{-1}\rN^{\intercal}  \right)^{-1}
        \\
        -\left(-\rN\rU_s^{-1}\rN^{\intercal}  \right)^{-1}\rN\rU_s^{-1}&\left(-\rN\rU_s^{-1}\rN^{\intercal}  \right)^{-1}
    \end{pmatrix},
\]
where $\rU_s=\varepsilon\mathrm{diag}(1/x^{\star}_s)$ and $\{x^{\star}_s\}\coloneqq \argmin_{x\in\mathcal K}\left\{ s^{\intercal}\rB_1 x-u^{\intercal}\rB_0x-\varepsilon\sH(x) \right\}$ for each  $s\in N_v$. Consequently, 
\[
   DZ(s) = \begin{pmatrix}
        -\rU_s^{-1}\rB_1^{\intercal}-\rU_s^{-1}\rN^{\intercal}\left(-\rN\rU_s^{-1}\rN^{\intercal}  \right)^{-1}\rN\rU_s^{-1}\rB_1^{\intercal} 
        \\
        \left(-\rN\rU_s^{-1}\rN^{\intercal}  \right)^{-1}\rN\rU_s^{-1}\rB_1^{\intercal}
    \end{pmatrix},
\]
where the top submatrix corresponds to the Jacobian of the map $s\in N_v\mapsto x^{\star}_s$. 

Since $f_u$ is concave and twice continuously differentiable, its Hessian is negative semidefinite so that the Lipschitz constant for the gradient will follow by bounding the minimum eigenvalue of the Hessian from below at each point $v\in\mathbb R^{r_1}$, the claim will then follow from the mean value theorem as described in the proof of \cref{prop:LipschitzContinuityGradient}.  For any $v\in \mathbb R^{r_1}$ and 
 $w\in\mathbb R^{r_1}$ with $\|w\|=1$,
\[
w^{\intercal}\left(D^2 f_u(v)\right)w = -\|w\|^2+\|\left(\rN\rU_v^{-1}\rN^{\intercal}\right)^{-1/2}\rN\rU_v^{-1}\rB_1^{\intercal} w\|^2 -\|\rU_v^{-1/2}\rB_1^{\intercal} w\|^2,
\]
which satisfies the lower bound $w^{\intercal}\left(D^2 f_u(v)\right)w\geq -1-\|\rU_v^{-1/2}\rB_1^{\intercal} w\|^2$. Now, note that 
\[
    \|\rU_v^{-1/2}\rB_1^{\intercal} w\|\leq \|\rU_v^{-1/2}\|_{\mathrm{op}}\|\rB_1^{\intercal}\|_{\mathrm{op}}\leq  \sqrt{\frac{\max_{i=1}^{N_0N_1}\left(x^{\star}_v\right)_i}{\varepsilon}} \|\rB_1^{\intercal}\|_{\mathrm{op}}\leq \varepsilon^{-1/2}\|\rB_1^{\intercal}\|_{\mathrm{op}},
\]
so that $\lambda_{\min}\left( D^2f_u(v)\right)\geq -1-\varepsilon^{-1}\|\rB_1\|_{\mathrm{op}}^2$, proving the claim.
\qed

\subsection{Proof of \texorpdfstring{\cref{lem:convergenceRateAccMin}}{Theorem 5}}
\label{proof:lem:convergenceRateAccMin}
As noted previously, this result is essentially due to Theorem 11 in \cite{rioux2024entropic}. In effect, the analysis of their algorithm applies to minimizing an $L$-smooth  function $F=R+H$ over a closed ball of radius $M/2$  where $R$ is $L''$-smooth and convex, and $H$ is $L'$-smooth and nonconvex assuming that only approximate gradients for $F$, denoted $\tilde \nabla F$, satisfying 
\[
    \sup_{u,u',u''\in \mathbb B_{R/2}}\|(\tilde \nabla F(u)-\nabla F(u))^{\intercal}(u'-u'')\|\leq \delta' 
\]
are available. In the current setting, the gradient oracle satisfies
\[
    \sup_{u,u',u''\in \mathbb B_{R_0}}\|(\tilde \nabla F(u)-\nabla F(u))^{\intercal}(u'-u'')\|\leq \sup_{u,u',u''\in \mathbb B_{R_0}}\|\tilde \nabla F(u)-\nabla F(u)\|\|u'-u''\|\leq 2R_0\eta'  
\]
so that Theorem 11 in \cite{rioux2024entropic} directly yields the claimed result up to setting $M = 2R_0$ and $\delta' = 2R_0\eta'$ in that result. 
\qed

\subsection{Proof of \texorpdfstring{\cref{thm:inexactGradientOracle}}{Theorem 6}}
\label{proof:thm:inexactGradientOracle} The proof follows by noting that 
\begin{equation}
\label{eq:triangleIneq}
\|\tilde \nabla \ell(u)-\nabla \ell(u)\| = \|\rB_0(\tilde x_{(u,\tilde v(u))})-\rB_0( x^{\star}_{(u,v^{\star}(u))})\| \leq \|\rB_0\|_{\infty,2}\|\tilde x_{(u,\tilde v(u))}- x^{\star}_{(u,v^{\star}(u))}\|_{\infty},  
\end{equation}
so that it suffices to  control $\|\tilde x_{(u,\tilde v(u))}-x^{\star}_{(u,\tilde v(u))}\|_{\infty}+ \|x^{\star}_{(u,v^{\star}(u))}-x^{\star}_{(u,\tilde v(u))}\|_{\infty}$. By assumption, the first term is bounded as $e^{\delta}-1$ whereas the second term requires a more careful analysis.

   Following the proof of \cref{prop:LipschitzContinuityGradient}, for fixed $u\in\mathbb B_{R_0}$, the unique minimizer,  $x^{\star}_v$, of 
   \[
   g_{v}: x\in\mathbb R^{N_0N_1} \mapsto (\rB_1^{\intercal} v-\rB_0^{\intercal} u)^{\intercal} x - \varepsilon \sH(x) 
   \] on $\mathcal K$ has entries bounded below by $e^{\frac{-4\max_{i=1}^{N_0N_1}\left|\left(\rB_1^{\intercal}v-\rB_0^{\intercal}u\right)_i\right|}{\varepsilon}}l_{\mu_0}l_{\mu_1}\geq l_{\min}>0$ where  $l_{\min}$ is uniform over $v\in \mathbb B_{R_1}$. On the set $U_{m,M}\coloneqq \{x\in\mathbb R^{N_0N_1}:m<\min_{i=1}^{N_0N_1} x_i,\max_{i=1}^{N_0N_1} x_i<M\}$, $g_v$  is smooth with 
   \[
        \nabla g_v(x)= \rB_1^{\intercal} v-\rB_0^{\intercal} u +\varepsilon(\mathbf 1+\log(x)),\;D^2 g_v(x)= \varepsilon\mathrm{diag}(1/x).
   \] It follows that the eigenvalues of the Hessian are uniformly bounded below by $\frac{\varepsilon}{M}$ and above by $\frac{\varepsilon}{m}$ so that $g_v$ is strongly convex on $U_{m,M}$ with modulus $\frac{\varepsilon}{M}$. If $m<l_{\min}$, the unique minimizer, $x^{\star}(v)$, of $g_v$ over $\mathcal K$ is an element of $U_{m,1+s}$ for any $s>0$. Hence, for each $x\in\mathcal K\cap U_{m ,1+s}$ and $t\in[0,1]$, 
   \[
       g_v(x^{\star}(v))\leq g_v((1-t)x+tx^{\star}(v))\leq (1-t) g_v(x)+tg_v(x^{\star}(v))-\frac 12t(1-t)\frac{\varepsilon}{1+s}\|x-x^{\star}(v)\|^2,
   \]
   whereby $g_v(x^{\star}(v))+\frac{1}{2}\frac{t\varepsilon}{1+s}\|x-x^{\star}(v)\|^2\leq g_v(x)$ for each $t\in[0,1)$. As $s>0$ is arbitrary,
   \begin{equation}
   \label{eq:secondOrderGrowth}
        g_v(x^{\star}(v))+\frac{\varepsilon}{2}\|x-x^{\star}(v)\|^2\leq g_v(x).
   \end{equation}
   We also have that, for any $v,v'\in K'$ and $x,x'\in\mathcal K$, 
   \begin{equation}
   \label{eq:LipschitzDiff}
   \begin{aligned}
        \left|(g_v(x)-g_{v'}(x))-(g_v(x')-g_{v'}(x'))\right|&= \left|v^{\intercal}\rB_1x-(v')^{\intercal}\rB_1x-v^{\intercal}\rB_1x'+(v')^{\intercal}\rB_1x'\right|
        \\
        &=\left|(v-v')^{\intercal}\rB_1(x-x')\right|
        \\
        &\leq \|\rB_1\|_{\mathrm{op}}\|x-x'\|\|v-v'\|.
    \end{aligned} 
    \end{equation}
Together, \eqref{eq:secondOrderGrowth} and \eqref{eq:LipschitzDiff} imply that if $x^{\star}(v')\in\mathcal K$ minimizes $g_{v'}$ over $\mathcal K$ and similarly for $x^{\star}(v)$,
\[
    \|x^{\star}(v')-x^{\star}(v)\|\leq  2\varepsilon^{-1}\|\rB_1\|_{\mathrm{op}}\|v-v'\|, 
\]
as follows from Proposition 4.32 in \cite{bonnans2013perturbation}.

Combining this bound with \eqref{eq:triangleIneq} and using that $\|\tilde v(u)-v^{\star}(u)\|\leq \tau$, we have that 
\[
\|\tilde \nabla \ell(u)-\nabla \ell(u)\|  \leq \|\rB_0\|_{\infty,2}\left(e^{\delta}-1 +2\varepsilon^{-1}\|\rB_1\|_{\mathrm{op}} \tau \right),
\]
proving the claim.  
\qed
\subsection{Proof of \texorpdfstring{\cref{lem:convergenceRateMin}}{Proposition 10}} 
\label{proof:lem:convergenceRateMin} 
    We will apply the results of \cite{nabou2025proximal} which extend \cite{devolder2014first} in the case that the objective is of the form $h+r$ where $r$ is a closed convex function taking values in $\mathbb R\cup\{+\infty\}$ and $h$ is closed, but possibly nonconvex, bounded away from $-\infty$, and  an oracle for the gradient of $h$ at each $u\in\mathbb B_{R_0}$, $g_{\delta,L,q}(u)$ is available which satisfies 
    \[
        h(u)-\left(h(u')+ g_{\delta,L,q}(u)^{\intercal}(u-u')\right)\leq \frac{L}{2}\|u-u'\|^2+\delta\|u-u'\|^q \text{ for every }u'\in\mathbb B_{R_0},
    \]
    for some $q\in[0,2)$, $\delta\geq 0$, and $L\geq 0$. 
    In our case, $r=\frac 12 \|\cdot\|^2 +\mathcal I_{\mathbb B_{R_0}}$ and $h=\ell(u)-\frac 12\|u\|^2$. Remark that if we set  $\tilde \nabla h = \tilde \nabla \ell-u$, then  $\sup_{u\in \mathbb B_{R_0}}\|\tilde\nabla h(u)-\nabla h(u)\|=\sup_{u\in \mathbb B_{R_0}}\|\tilde\nabla \ell(u)-\nabla \ell(u)\|\leq \eta'$ by assumption. It follows  that the above inequality holds for $g_{\delta,L,q}(u)=\tilde \nabla h(u)$ with $q=0$, $\delta = 4R_0\eta'$,  and $L=L'$ by Remark 1 in \cite{nabou2025proximal}.  

    The claimed convergence rate then follows from item 1 of Corollary 1 in \cite{nabou2025proximal}, upon showing that  their Algorithm 1 corresponds to our \cref{algo:outer-loop-simple}.
    Their algorithm works by obtaining $g_{\delta,L,q}(u_k)$ and updating $u_{k+1}$ as
    \[ 
    \begin{aligned}
        u_{k+1}&=\mathrm{prox}_{\frac 1{L'} r}(u_k-(L')^{-1}g_{\delta,L,q}(u_k))
        \\
        &= \argmin_{u\in\mathbb B_{R_0}}\left\{\frac 12 \|u\|^2+\frac{L'}{2}\|(u_k-(L')^{-1}g_{\delta,L,q}(u_k))-u\|^2\right\}
        \\
        &= \argmin_{u\in\mathbb B_{R_0}}\left\{\frac 12\left(1+L'\right) \left\|u- L'\left(1+L'\right)^{-1}\left(u_k{-}(L')^{-1}g_{\delta,L,q}(u_k)\right)\right\|^2\right\}, 
    \end{aligned}
    \]
    where in the final line we have neglected terms which do not depend on $u$. From the above representation, we see that $u_{k+1}$ is obtained by projecting $L'\left(1+L'\right)^{-1}(u_k-(L')^{-1}g_{\delta,L,q}(u_k))$ onto $\mathbb B_{R_0}$. Note, moreover, that 
    \[
        \frac{L'}{1+L'}(u_k-(L')^{-1}g_{\delta,L,q}(u_k))= u_k-\frac{1}{1+L'}(u_k+ g_{\delta,L,q}(u_k))=u_k-\frac{1}{1+L'}g_k 
    \] 
    where $g_k$ is computed on Line 2 of \cref{algo:outer-loop-simple}. Conclude that the iterations in \cref{algo:outer-loop-simple} do indeed coincide with those of Algorithm 1 in \cite{nabou2025proximal} so that the claimed convergence rate result holds.
    
    The bound on $L'$ follows directly from the end of the proof of \cref{prop:LipschitzContinuityGradient}, noting that $h$ is concave and smooth so that the largest eigenvalue of its Hessian is nonpositive and its smallest eigenvalue is bounded below by  $-\frac{\lambda_{\max}(\rB_0^{\intercal}\rB_0)}{\lambda_{\min}(\rB_1^{\intercal}\rB_1)+\varepsilon}$. 
\qed

\subsection{Proof of \texorpdfstring{
\cref{lem:convergenceRateAccMax}}{Proposition 11}}
\label{proof:lem:convergenceRateAccMax}
  The proof is a direct consequence of Theorem 2.2 in \cite{aspremont2008smooth}, where we choose the prox-function in their setting as $\frac 12\|\cdot\|^2$ which is $1$-strongly convex. Of note is that since $\|\tilde \nabla f_u(v)-\nabla f_u(v)\|\leq \eta$, it holds that 
   \[
        \sup_{v,y,z\in \mathbb B_{R_1}}\left|\left\langle\tilde \nabla f_u(v)-\nabla f_u(v),y-z\right\rangle\right|\leq \sup_{v,y,z\in \mathbb B_{R_1}}\left\|\tilde \nabla f_u(v)-\nabla f_u(v)\right\|\left\|y-z\right\|\leq 2R_1\eta,
   \]
   so that Equation (2.3) in \cite{aspremont2008smooth} is satisfied by our notion of approximate gradient. As the gradient of $f_u$ is $(1+\varepsilon^{-1}\|\rB_1\|_{\mathrm{op}}^2)$-Lipschitz, Theorem 2.2 in \cite{aspremont2008smooth} yields that \[
        f_u(v^{\star})-f_u(v_j) \leq  \frac{(1+\varepsilon^{-1}\|\rB_1\|_{\mathrm{op}}^2)\frac 12 \|v^{\star}\|^2}{\frac 14(j+1)(j+2)}
        +6R_1\eta
   \]
   for this sequence of iterates and step size. Furthermore, we may lower bound $f_u(v^{\star})-f_u(v_j)$ by $\frac 12\|v_j-v^{\star}\|^2$ by using the $1$-strong convexity of $-f_u$ exactly as in the end of the proof of \cref{lem:convergenceRateMax}. 
   
   Following the same logic as in the proof of Theorem 9 in \cite{rioux2024entropic}, it is easily seen that the iterates of \cref{algo:inner-loop-accelerated} coincide with those studied in \cite{aspremont2008smooth}.

       It remains to show that $\frac 12\|v_j-v^{\star}\|^2\leq f_u(v^{\star})-f_u(v_j)$. As 
    $-f_u$ is $1$-strongly convex, 
    \[-f_u(v^{\star})\leq
        -f_u(tv_j+(1-t)v^{\star})\leq -tf_u(v_j)-(1-t)f_u(v^{\star})-\frac 12 t(1-t)\|v_j-v^{\star}\|^2 \text{ for all }t\in[0,1].
    \]
    Rearranging this equation yields $\frac 12 t(1-t)\|v_j-v^{\star}\|^2\leq t\left(f_u(v^{\star})-f_u(v_j)\right)$ so that $\frac 12 (1-t)\|v_j-v^{\star}\|^2\leq f_u(v^{\star})-f_u(v_j)$ for every $t\in(0,1]$. Taking the limit $t\downarrow 0$ yields the claimed lower bound.
\qed

\subsection{Proof of \texorpdfstring{\cref{lem:convergenceRateMax}}{Proposition 12}}
\label{proof:lem:convergenceRateMax}
   This result is a direct consequence of Theorem 2 and the discussion preceding equation (37) in \cite{devolder2014first} once it is shown that $-\tilde \nabla f_{u}(v_j)$ enables constructing a $(\delta,L)$-oracle for $-f_u$ at $v_j$ in the sense of Definition 1 in \cite{devolder2014first}. Precisely, a convex function $f$ on a convex set $Q\subset \mathbb R^d$ is endowed with a $(\delta,L)$-oracle if, for each $y\in Q$, a pair $(f_{\delta,L}(y),g_{\delta,L}(y))\in\mathbb R\times \mathbb R^d$ can be computed which satisfy
    \[
        0\leq f(x)-\left(f_{\delta,L}(y)+\left(g_{\delta,L}(y)\right)^{\intercal}(x-y)\right)\leq \frac L2\|x-y\|^2+\delta\text{ for each }x\in Q.
    \] 
    $f_{\delta,L}(y)$ and $g_{\delta,L}(y)$ can thus be thought of as approximations of $f$  and its gradient at the point $y$.
    We note that in \cref{algo:inner-loop-simple}, no function calls are made and, as such, we may assume that we have access to exact function evaluations  at each $y\in \mathbb B_{R_1}$, the closed ball of radius $R_1$ centered at $0$. 
   
    Following Section 2.3 b in \cite{devolder2014first}, a convex function $f$ with $M$-Lipschitz continuous gradient satisfies the property that if $\tilde \nabla f$ is an approximate gradient for $f$ satisfying $\sup_{y\in \mathbb B_{R_1}}\|\nabla f(y)-\tilde \nabla f(y)\|\leq \Delta$, then $\left(f(y)-2R_1\Delta ,\tilde\nabla f(y)\right)$ serves as a $(4R_1\Delta,M)$-oracle for $f$ on $\mathbb B_{R_1}$, noting that no function calls are used in the algorithm and hence they may be assumed to be exact. As $f_u$ has $M=1+\varepsilon^{-1}\|\rB_1\|_{\mathrm{op}}^2$-Lipschitz gradient by \cref{lemma:fu-proprties} and it is assumed that the accuracy of Sinkhorn's algorithm is such that $\sup_{v\in \mathbb B_{R_1}}\|\nabla f_u(v)-\tilde \nabla f_u(v)\|\leq \eta$, we see that $-\tilde \nabla f_u$ can be used to construct a $(4R_1\eta,(1+\varepsilon^{-1}\|\rB_1\|_{\mathrm{op}}^2))$-oracle for $-f_u$. It follows from Theorem 2 in \cite{devolder2014first} that 
    \[\min_{j=0}^{J-1}\left(f_u(v^{\star})-f_u(v_j)\right)\leq
     \frac{(\varepsilon^{-1}\|\rB_1\|_{\mathrm{op}}^2+1) \|v_0 - v^{\star} \|^2}{2J} + 4{R_1}\eta.
    \]
    Following the end of the  proof of \cref{lem:convergenceRateAccMax}, $\min_{j=0}^{J-1}\left(f_u(v^{\star})-f_u(v_j)\right)$ can be bounded below by $\min_{j=0}^{J-1}\frac 12\|v_j-v^{\star}\|^2$, proving the desired assertion. 
   \qed

\subsection{Proof of \texorpdfstring{\cref{prop:GWNullGraph}}{Proposition 13}}
\label{proof:prop:GWNullGraph}
We first show that if $\mu_1=(P_{\sigma})_{\sharp}\mu_0$, then $\mathsf{GW}_p(\mu_0,\mu_1)=0$ under this choice of kernel, see \cref{proof:lem:kappaVanishing} for the proof.

\begin{lemma}
\label{lem:kappaVanishing}
Suppose that $\kappa_0=\kappa_1=\kappa$ and that
    $\mu_1=(P_{\sigma})_{\sharp} \mu_0$ for some permutation $\sigma:[N]\to [N]$. Then, it holds that $\mathsf{GW}_p(\mu_0,\mu_1)=0$. 
\end{lemma}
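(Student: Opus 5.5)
The plan is to exhibit an explicit coupling whose GW cost vanishes. Let $\sigma$ be as in the statement. Since $\mu_1=(P_{\sigma})_{\sharp}\mu_0$, I will take $\pi=(\Id,P_{\sigma})_{\sharp}\mu_0$, which lies in $\Pi(\mu_0,\mu_1)$. Its GW cost is
\[
\int |\kappa(G,G')-\kappa(P_\sigma(G),P_\sigma(G'))|^p\,d\mu_0\otimes\mu_0(G,G').
\]
Because $\mathsf{GW}_p\geq 0$ by definition, the lemma follows once I show that $\kappa(P_\sigma(G),P_\sigma(G'))=\kappa(G,G')$ for all $G,G'\in\mathcal G_N$.

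To verify this invariance, introduce the permutation matrix $\mathrm{Q}_\sigma$, defined by $(\mathrm{Q}_\sigma)_{i\sigma(i)}=1$. With it, the relabeled adjacency matrix is $\mathrm A_{P_\sigma(G)}=\mathrm Q_\sigma \mathrm A_G\mathrm Q_\sigma^{\intercal}$, which can be checked entrywise: $(\mathrm Q_\sigma \mathrm A_G\mathrm Q_\sigma^{\intercal})_{ij}=(\mathrm A_G)_{\sigma(i)\sigma(j)}$.

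I then treat the two terms of $\kappa$ separately.

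\textbf{Degree term.} Since $\mathrm Q_\sigma^{\intercal}\mathbf 1=\mathbf 1$, we have $\mathrm A_{P_\sigma(G)}\mathbf 1=\mathrm Q_\sigma \mathrm A_G\mathbf 1$. Orthogonality of $\mathrm Q_\sigma$ then gives $\langle \mathrm Q_\sigma a,\mathrm Q_\sigma b\rangle=\langle a,b\rangle$, so the degree term is unchanged.

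\textbf{Frobenius term.} The Frobenius term $\langle \mathrm Q_\sigma \mathrm A_G\mathrm Q_\sigma^{\intercal},\mathrm Q_\sigma \mathrm A_{G'}\mathrm Q_\sigma^{\intercal}\rangle_{\mathrm F}$ equals $\mathrm{tr}(\mathrm Q_\sigma \mathrm A_G^{\intercal}\mathrm A_{G'}\mathrm Q_\sigma^{\intercal})=\langle \mathrm A_G,\mathrm A_{G'}\rangle_{\mathrm F}$ by cyclicity of the trace and $\mathrm Q_\sigma^{\intercal}\mathrm Q_\sigma=\Id$.

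Combining the two terms shows that $\kappa$ is invariant under simultaneous relabeling. Hence the integrand above vanishes identically, and $\pi$ is optimal with $\mathsf{GW}_p(\mu_0,\mu_1)=0$.

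No real obstacle is expected here. The only point needing care is getting the convention for $\mathrm Q_\sigma$ consistent with $(\mathrm A_{P_\sigma(G)})_{ij}=(\mathrm A_G)_{\sigma(i)\sigma(j)}$, and the invariance argument does not depend on which convention is chosen anyway.
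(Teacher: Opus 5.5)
Your proposal is correct and follows essentially the same route as the paper. Both use the coupling $(\Id,P_{\sigma})_{\sharp}\mu_0$ and reduce the claim to the invariance $\kappa(G,G')=\kappa(P_\sigma(G),P_\sigma(G'))$; the only difference is that you verify this invariance with permutation matrices, orthogonality and trace cyclicity, whereas the paper reindexes the degree and Frobenius sums directly, which is a purely notational difference.
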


For the opposite direction, recall from \cref{lem:vanishingGW} that $\mathsf{GW}_p(\mu_0,\mu_1)=0$ under the kernel $\kappa$ if and only if $\mu_1=B_{\sharp}\mu_0$ for a map satisfying $\kappa(G,G')=\kappa(B(G),B(G'))$ for every $G,G'\in\supp(\mu_0)$ provided that, for each $G,G'\in\supp(\mu_0)$ with $G\neq G'$, $\kappa(G,\cdot)\neq \kappa(G',\cdot)$ as functions on $\supp(\mu_0)$ and similarly for $\supp(\mu_1)$. To leverage this result, it will be convenient to first evaluate the kernel values for some simple graphs. 

Here and in the sequel, $G_{ij}$ denotes the graph with  a single edge connecting the $i$-th and $j$-th vertex for some $i,j\in[N]$.

\begin{lemma}
\label{lem:kappaFormulas}
Fix a graph $G\in\mathcal G_N$ with $m>0$ edges. Then, $\kappa(G,G)\geq m+4$ with equality if and only if $G=G_{ij}$ for some $i,j\in[N]$ with $i\neq j$. Furthermore, for each $i,j\in[N]$ with $i\neq j$,
 \begin{equation}
      \label{eq:kappaComp}
            \kappa(G,G_{ij}) = \begin{cases} 2(\mathrm{deg}(v_i(G))+\mathrm{deg}(v_j(G)))+1, & \text{if } (\mathrm{A}_G)_{ij}=1 ,
            \\
            2(\mathrm{deg}(v_i(G))+\mathrm{deg}(v_j(G))),&\text{otherwise},
            \end{cases}
      \end{equation}
        where $v_i(G)$ denotes the $i$-th vertex of $G$.
\end{lemma}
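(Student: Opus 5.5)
The plan is to compute directly from the definition $\kappa(G,G')=2\langle \mathrm A_G\mathbf 1,\mathrm A_{G'}\mathbf 1\rangle+\frac12\langle \mathrm A_G,\mathrm A_{G'}\rangle_{\mathrm F}$, writing $d_k(G)\coloneqq(\mathrm A_G\mathbf 1)_k=\deg(v_k(G))$. Two elementary facts will be used throughout. First, since $\mathrm A_G$ is symmetric with zero diagonal and $\{0,1\}$ entries, $\langle \mathrm A_G,\mathrm A_{G'}\rangle_{\mathrm F}$ equals twice the number of edges shared by $G$ and $G'$. Second, by the handshake lemma $\sum_k d_k(G)=2m$.

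For the formula \eqref{eq:kappaComp}, take $G'=G_{ij}$. Then $\mathrm A_{G_{ij}}\mathbf 1=e_i+e_j$, so the first term equals $2(d_i(G)+d_j(G))$. The Frobenius term equals $\frac12\cdot 2(\mathrm A_G)_{ij}=(\mathrm A_G)_{ij}$. Adding the two terms gives the two cases immediately.

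For the inequality $\kappa(G,G)\ge m+4$, write
\[
\kappa(G,G)=2\sum_k d_k(G)^2+\frac12\cdot 2m=2\sum_k d_k^2+m.
\]
It therefore suffices to show $\sum_k d_k^2\ge 2$, with equality only for a single edge. Each edge contributes degree to two vertices, so with $m\ge1$ at least two vertices have $d_k\ge1$, which gives $\sum_k d_k^2\ge 2$. For the equality case, equality forces exactly two vertices to have degree $1$ and all other vertices degree $0$. Then $\sum_k d_k=2$, so $m=1$, and $G=G_{ij}$ where $i,j$ are those two vertices. Conversely, $G_{ij}$ has $\sum_k d_k^2=2$ and $m=1$, so $\kappa(G_{ij},G_{ij})=5=m+4$.

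No step here is a real obstacle. The only care needed is to track the factor of $2$ coming from symmetry in the Frobenius inner product, and to phrase the equality case via the handshake lemma rather than by listing cases.
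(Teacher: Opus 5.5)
Your proposal is correct and follows essentially the same route as the paper: a direct computation using $\langle \mathrm A_G,\mathrm A_G\rangle_{\mathrm F}=2m$, the bound $\sum_k \deg(v_k(G))^2\geq 2$ with equality only for a single edge, and $\mathrm A_{G_{ij}}\mathbf 1=e_i+e_j$ together with $\langle \mathrm A_G,\mathrm A_{G_{ij}}\rangle_{\mathrm F}=2(\mathrm A_G)_{ij}$. Your handshake-lemma argument for the equality case spells out a step that the paper asserts without detail.
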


The proof of this result follows essentially by direct computation, see \cref{proof:lem:kappaFormulas}. 

With \cref{lem:kappaFormulas} in hand, the following result is a direct consequence of \cref{lem:vanishingGW}, see \cref{lem:pushforwardB} for complete details. 
\begin{lemma}
\label{lem:pushforwardB}
   In the setting of \cref{prop:GWNullGraph}, if  $\mathsf{GW}_p(\mu_0,\mu_1)=0$, then $\mu_1=B_{\sharp}\mu_0$ for a bijection $B:\supp(\mu_0)\to \supp(\mu_1)$ satisfying $\kappa(G,G')=\kappa(B(G),B(G'))$ for every $G,G'\in\supp(\mu_0)$. 
\end{lemma}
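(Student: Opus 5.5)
The plan is to apply \cref{lem:vanishingGW} with $\kappa_0=\kappa_1=\kappa$ on $\mathcal X_0=\supp(\mu_0)$ and $\mathcal X_1=\supp(\mu_1)$. That result needs the injectivity condition: for distinct $G,G'\in\supp(\mu_0)$ we must have $\kappa(G,\cdot)\neq\kappa(G',\cdot)$ as functions on $\supp(\mu_0)$, and likewise on $\supp(\mu_1)$. Once this is verified, $\mathsf{GW}_p(\mu_0,\mu_1)=0$ gives a bijection $B\in\mathcal S$ with $\mu_1=B_\sharp\mu_0$ and $\kappa(G,G')=\kappa(B(G),B(G'))$, which is exactly the claim. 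So the whole proof reduces to checking the separation condition. By hypothesis every single-edge graph $G_{ij}$ lies in both supports, and we will use these as test graphs.

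\textbf{Step 1.} Fix distinct $G,G'$ in the support and suppose $\kappa(G,H)=\kappa(G',H)$ for every $H$ in the support. Testing against every $G_{ij}$ and using \eqref{eq:kappaComp} from \cref{lem:kappaFormulas}, we get for all $i\neq j$
\[
2(d_i+d_j)+(\mathrm A_G)_{ij}=2(d_i'+d_j')+(\mathrm A_{G'})_{ij},
\]
where $d_i,d_i'$ denote the degrees of vertex $i$ in $G$ and $G'$. Reducing mod $2$ shows $(\mathrm A_G)_{ij}=(\mathrm A_{G'})_{ij}$ for all $i\neq j$. There are no self-loops, so the diagonals agree as well, hence $\mathrm A_G=\mathrm A_{G'}$ and $G=G'$, a contradiction.

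\textbf{Step 2.} It remains to handle the empty graph, which \eqref{eq:kappaComp} also covers since all its degrees vanish. If $G$ is empty then $\kappa(G,\cdot)\equiv 0$. For nonempty $G'$, pick an edge $ij$ of $G'$; then $\kappa(G',G_{ij})\geq 1\neq 0$. So the empty graph is separated from every other graph as well.

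With separation established on both supports, invoking \cref{lem:vanishingGW} finishes the proof. The only potential obstacle is making sure the parity argument is legitimate, i.e., that the degree terms enter with the even coefficient $2$; this is exactly what the choice of the factor $2$ in $\kappa$ guarantees. Nothing deeper should be needed. In particular, $N\neq4$ plays no role here; it matters only in the later step showing that $B$ is induced by a permutation.
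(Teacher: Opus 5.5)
Your proposal is correct and matches the paper's proof. Both arguments verify the separation hypothesis of \cref{lem:vanishingGW} on each support by testing against the single-edge graphs $G_{ij}$, use the parity of $\kappa(\cdot,G_{ij})$ from \eqref{eq:kappaComp}, and then invoke that proposition. Your separate Step 2 for the empty graph is harmless but redundant: as you note yourself, \eqref{eq:kappaComp} already holds for the empty graph, so Step 1 covers that case.
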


It remains to show that the map $B$ from \cref{lem:pushforwardB} is necessarily induced by a permutation of vertices. To this end, we first show that it corresponds to a permutation of nodes on the set of single edge graphs.

\begin{lemma}
\label{lem:permutationEdges}
    Let $B$ be as in \cref{lem:pushforwardB}. Then, for each $i,j\in[N]\times [N]$ with $i< j$, $B(G_{ij})=G_{\sigma(i)\sigma(j)}$ for some fixed permutation $\sigma:[N]\to [N]$. 
\end{lemma}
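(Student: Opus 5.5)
The plan has three stages: show that $B$ sends single-edge graphs to single-edge graphs, show that it respects whether two edges share a vertex, and then recover a vertex permutation from that incidence structure (this is where $N\neq 4$ enters).

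\textbf{Step 1: $B$ preserves single-edge graphs.} By \cref{lem:kappaFormulas}, a graph $G$ with $m>0$ edges satisfies $\kappa(G,G)=2\sum_i\deg(v_i(G))^2+m/2$. Single-edge graphs are characterized by $\kappa(G,G)=m+4$, i.e. $\kappa(G,G)=4.5$, and $B$ preserves $\kappa(G,G)$. I would check that the value $\kappa(G,G)$ together with the number of edges pins down single-edge graphs. The edge count should be recoverable from kernel values: we have $\kappa(\emptyset,\cdot)=0$, and for other graphs there are the identities $\sum_{i<j}\kappa(G,G_{ij})$. The cleaner route is minimality. Every graph with $m\ge1$ edges has $\sum_i\deg_i^2\ge 2m$ (indeed $\ge 2$), so $\kappa(G,G)\ge 4+m/2$. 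Since $\kappa(G,G)$ takes values in $\tfrac12\mathbb Z$, equality $\kappa=4.5$ forces $\sum_i\deg_i^2=2$ and $m=1$. Hence $B(\mathcal E)\subset\mathcal E$, and since $B$ is a bijection and both supports contain $\mathcal E$, the reverse inclusion also holds by a counting argument.

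\textbf{Step 2: $B$ preserves edge adjacency.} By \eqref{eq:kappaComp}, $\kappa(G_{ij},G_{kl})$ equals $4.5$ if the edges coincide, $2$ if they share exactly one vertex, and $0$ if they are disjoint. Therefore $B$ restricted to $\mathcal E$ is an automorphism of the line graph $L(K_N)$.

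\textbf{Step 3: Whitney's theorem.} By Whitney's line graph theorem, for $N\neq 4$ (also excluding the trivial small cases, where $N\le 3$ is handled directly; $N=2$ is trivial and $N=3$ gives a triangle whose automorphisms are all induced) every automorphism of $L(K_N)$ is induced by a permutation $\sigma$ of the vertices. The case $K_4$ is a genuine exception, since its line graph, the octahedron, has extra automorphisms.

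To keep the argument self-contained, I would also sketch the standard reasoning. For $N\ge5$, the maximal cliques of $L(K_N)$ are the stars, of size $N-1$, and the triangles, of size $3<N-1$. An adjacency-preserving bijection must map stars to stars, and this defines $\sigma(i)$ through star$(i)\mapsto$ star$(\sigma(i))$. Then $G_{ij}$, the unique common element of star$(i)$ and star$(j)$, maps to $G_{\sigma(i)\sigma(j)}$.

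The main obstacle is this clique argument. For $N=5$, stars have size $4>3$, so they are distinguishable from triangles, and the argument goes through. For $N=4$, stars and triangles both have size $3$, which is exactly why that case is excluded.
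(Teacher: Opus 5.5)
Your proposal is correct and follows essentially the paper's route. The diagonal value $\kappa(G,G)$ singles out one-edge graphs, and the off-diagonal values $0$ and $2$ record whether two edges meet. For $N\geq 5$, stars cannot be confused with triangles; the paper checks this by hand on the images of $G_{12},\dots,G_{15}$ rather than through maximal cliques or Whitney's theorem, and handles $N\le 3$ directly as you do.

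The only slip is arithmetic. Since $\mathrm{A}_G$ is symmetric, $\langle \mathrm{A}_G,\mathrm{A}_G\rangle_{\mathrm F}=2m$, so $\kappa(G,G)=2\sum_i\deg(v_i(G))^2+m\in\mathbb Z$. A single edge therefore gives $\kappa=5=m+4$, not $4.5$, and your minimality argument goes through unchanged with this value.
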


The proof of the previous result, see \cref{proof:lem:permutationEdges}, illustrates why the assumption $N\neq 4$ is made since that is the only case where there exist two distinct graphs with $N-1$ edges which satisfy the property that each pair of edges shares one vertex, but are not isomorphic, see \cref{fig:triangleClaw}.
\begin{figure}[!htb]
    \centering
\begin{tikzpicture}[every node/.style={circle, fill=black, inner sep=1.5pt}]

\begin{scope}[xshift=0cm]
  \node[label=above:$1$] (a) at (0,1) {};
  \node[label=below left:$2$] (b) at (-1,0) {};
  \node[label=below right:$3$] (c) at (1,0) {};
  \node[label=below:$4$] (x) at (0,-1) {};

  \draw (a) -- (b) -- (c) -- (a);
\end{scope}

\begin{scope}[xshift=6cm]
  \node[label=above:$1$] (u) at (0,1) {};
  \node[label=left:$2$] (v) at (-1,0) {};
  \node[label=right:$3$] (w) at (1,0) {};
  \node[label=below:$4$] (x) at (0,-1) {};

  \draw (x) -- (u);
  \draw (v) -- (u);
  \draw (w) -- (u);
\end{scope}
\end{tikzpicture}
\caption{Examples of graphs on $N=4$ nodes with $N-1$ edges for which each pair of edges shares a common vertex. The graph on the left is called a triangle graph and that on the right is a claw graph. Note that these graphs are not isomorphic; this can only occur in the case $N=4$.}
    \label{fig:triangleClaw}
\end{figure}

Given the above result, the remainder of the proof of  \cref{prop:GWNullGraph} is straightforward.

\begin{proof}[Proof of \cref{prop:GWNullGraph}]
    It remains to show that if $G\in\supp(\mu_0)$ is not a graph with a single edge, then $(\mathrm{A}_{G})_{\sigma(i)\sigma(j)} =(\mathrm{A}_{B(G)})_{ij}$. To this end, recall from \eqref{eq:kappaComp} that $\kappa(G,G_{ij})$ is odd if $(\mathrm{A}_G)_{ij}=1$ and is even otherwise. From \cref{lem:pushforwardB}, $\kappa(G,G_{ij})=\kappa(B(G),B(G_{ij}))=\kappa(B(G),G_{\varsigma(i)\varsigma(j)})$ where $\varsigma$ is the permutation from \cref{lem:permutationEdges}. It follows that $G$ contains an edge between the $i$-th and $j$-th vertex if and only if $B(G)$  contains an edge between the $\varsigma(i)$-th and $\varsigma(j)$-th vertex. Conclude that the desired equality holds with $\sigma=\varsigma^{-1}$.   
\end{proof}

\section{Concluding Remarks}
This work has furnished new variational forms for GW problems between arbitrary distributions supported on finite metric spaces. This result enables tying the GW problem with or without entropic regularization to a class of parameterized OT problems, allowing us to utilize powerful tools from  OT theory. %
This new representation was leveraged to obtain a novel and comprehensive statistical theory for finite GW, encompassing finite-sample convergence rates, limit laws, and a framework for testing if two distributions are isomorphic based on samples. On the computational side, new first-order algorithms were proposed for solving the regularized variational form and their nonasymptotic convergence properties were characterized even in the presence of inexactness in the gradient evaluations. This inexactness stems from the fact that computing gradients requires solving auxiliary optimization problems which do not generally admit closed-form solutions.

These developments were applied in the context of testing for isomorphisms of graph distributions. Namely, the aforementioned hypothesis testing framework was leveraged with particular choices of kernels to obtain consistent tests for the problem of verifying if two sets of sampled graphs come from the same distribution modulo a permutation of the labels. To compute the test statistic,  the algorithms developed for regularized problems were adapted and were seen to yield reasonable results even though the derived convergence guarantees do not apply without regularization.

Many natural research questions stem from this new representation of Gromov-Wasserstein distances. Chief among them is the problem of extending this result beyond the case of finite metric measure spaces. The main technical hurdle in doing so is that our proof uses the fact that a symmetric matrix $\rM$ can always be decomposed as the difference of two psd matrices. A natural analog in the continuous case would be to express $\Delta(x,y,x',y')=|\kappa_0(x,x')-\kappa_1(y,y')|^p$ as the difference of two psd kernels on $\mathcal X\times \mathcal Y$, but such a characterization does not appear to be straightforward to obtain in general. Related directions involve developing tools for  inference on optimal GW plans and designing provably convergent algorithms for solving finite GW problems without regularization.

\textbf{AI Statement.} ChatGPT 5.5 and 5.6 were used to aid with copyediting and assistance with coding for the simulations.

\bibliographystyle{amsplain}
\bibliography{ref}
\appendix

\pagebreak

\section{From Discrete GW to Quadratic Programming}
\label{sec:Ab} 

The constraint $\Pi(\mu_0,\mu_1)$ can be represented as $\{x\in\mathbb R^k:\rA x = b,x\geq 0\}$ where 

\begin{equation}
\label{eq:Ab}
\mathrm{A} = \begin{pmatrix}
   \mathbf{1}_{N_1}^{\intercal}&\mathbf 0_{N_1}^{\intercal}&\cdots&\mathbf 0_{N_1}^{\intercal}\\ 
   \mathbf 0_{N_1}^{\intercal}&\mathbf{1}_{N_1}^{\intercal}&\cdots&\mathbf 0_{N_1}^{\intercal}\\
   \vdots&\vdots &\ddots&\vdots\\
   \mathbf 0_{N_1}^{\intercal}&\mathbf 0_{N_1}^{\intercal}&\cdots&\mathbf{1}_{N_1}^{\intercal}\\
   \mathrm{Id}_{N_1}&\mathrm{Id}_{N_1}&\cdots&\mathrm{Id}_{N_1}
\end{pmatrix}\in \mathbb R^{(N_0+N_1)\times N_0N_1},\quad b = \begin{pmatrix}
   \mu_0\left(\left\{x^{(1)}_0\right\}\right)\\\vdots\\ \mu_0\left(\left\{x^{(N_0)}_0\right\}\right)\\ \mu_1\left(\left\{x^{(1)}_1\right\}\right)\\\vdots\\ \mu_1\left(\left\{x^{(N_1)}_1\right\}\right)
\end{pmatrix}\in\mathbb R^{N_0+N_1},
\end{equation}
with $\mathbf{1}_{N_1}$ and $\mathbf{0}_{N_1}$ as the vectors of all $1$'s and all $0$'s of length $N_1$, $\mathrm{Id}_{N_1}\in\mathbb R^{N_1\times N_1}$ the identity matrix, and $\{x_0^{(i)}\}_{i=1}^{N_0}$ is some ordering of the elements of $\mathcal X_0$ and similarly for $\{x_1^{(j)}\}_{j=1}^{N_1}$. Indeed,  if $x= \vec(\rP)$  where $\rP\in\mathbb R^{N_1\times N_0}$ has entries $\rP_{lm}=\pi\left(\left\{\left(x_0^{(m)},x_1^{(l)}\right)\right\}\right)$ and $\vec(\rP)\in\mathbb R^k$ is the vector obtained by stacking the columns of $\rP$, 
\[
(\mathrm{A} x)_l = \begin{cases}
\sum_{j=1}^{N_1} \pi\left(\left\{\left(x_0^{(l)},x_1^{(j)}\right)\right\}\right), &\text{if }l\in[N_0],
\\
\sum_{i=1}^{N_0} \pi\left(\left\{\left(x_0^{(i)},x_1^{(l-N_0)}\right)\right\}\right), &\text{if }l\in N_0+[N_1],
\end{cases}
\]
so that the constraints $\mathrm A x = b,x\geq 0$ imply that $x$ is the vectorization of a coupling of $(\mu_0,\mu_1)$.  

Under this convention, we have also used the fact that 
\[
\mathsf{KL}(\pi\|\mu_0\otimes \mu_1)=\sum_{\substack{i\in[N_0]\\j\in[N_1]}}\log\left(\frac{\pi{(\{(x_0^{(i)},x_1^{(j)})\})}}{\mu_0(\{x_0^{(i)}\})\mu_1(\{x_1^{(j)}\})}\right)\pi{(\{(x_0^{(i)},x_1^{(j)})\})}
\]
can be expressed in terms of the Shannon entropies of $\pi$, $\mu_0$, and $\mu_1$: 
\[
\begin{aligned}
\mathsf H_{\pi} = - \sum_{\substack{i\in[N_0]\\j\in[N_1]}}\pi(\{(x_0^{(i)},x_1^{(j)})\})\log\left(\pi(\{(x_0^{(i)},x_1^{(j)})\})\right),
\\
 \mathsf H_{\mu_i} = - \sum_{j=1}^{N_i}\mu_i(\{x_i^{(j)}\})\log\left(\mu_i(\{x_i^{(j)}\})\right)\text{ for }i\in\{0,1\},
 \end{aligned}
\] 
as $\mathsf{KL}(\pi\|\mu_0\otimes \mu_1)=-\mathsf H_{\pi}+\mathsf H_{\mu_0}+\mathsf H_{\mu_1}$.

\section{The Delta Method}
\label{sec:deltaMethod} 
Throughout, we
treat $\mathsf{EGW}_p^{\varepsilon}$ as a functional on $\mathcal P(\mathcal X_0)\times \mathcal P(\mathcal X_1)$ which can be identified with pairs of probability vectors. With these definitions in hand, we will apply the delta method-based approach provided in Proposition 1 and Remark 4 of \cite{goldfeld2024statistical} to derive limit laws for the empirical GW cost. A specific form of that framework amenable to the current setting is as follows.   

\begin{proposition}[Limit theorems, Proposition 1 in \cite{goldfeld2024statistical}] 
\label{prop:limitTheoremFramework}
Fix some $\varepsilon\geq 0$ and suppose that
    
\begin{enumerate}[leftmargin=*]
    \item the sequence $\sqrt n \big( (w_{\mu_{0,n}},w_{\mu_{1,n}})-(w_{ \mu_{0}},w_{ \mu_{1}})\big)$ converges in distribution to a pair of random vectors $(G_{\mu_0},G_{\mu_1})\in \mathbb R^{N_0}\times \mathbb R^{N_1}$,

    \item  the functional $(\nu_0,\nu_1)\in\mathcal P(\mathcal X_0)\times \mathcal P(\mathcal X_1)\mapsto\mathsf{EGW}_p^{\varepsilon}(\nu_0,\nu_1)$ is  Lipschitz continuous in the sense that $|\mathsf{EGW}_p^{\varepsilon}(\nu_0,\nu_1)-\mathsf{EGW}_p^{\varepsilon}(\nu_0'
    ,\nu_1')|\leq C\left(\|w_{\nu_0}-w_{\nu'_0}\|_1+\|w_{\nu_1}-w_{\nu_1'}\|_1\right)$ for  some constant $C<\infty$ which is independent of the choice of $(\nu_0,\nu_1),(\nu_0'
     ,\nu_1')\in\mathcal P(\mathcal X_0)\times \mathcal P(\mathcal X_1)$,

\item for every $(\nu_0,\nu_1)\in\mathcal P(\mathcal X_0)\times \mathcal P(\mathcal X_1)$, 
\[
   \lim_{t\downarrow 0} \frac{\mathsf{EGW}_p^{\varepsilon}(\mu_0+t(\nu_0-\mu_0),\mu_1+t(\nu_1-\mu_1))-\mathsf{EGW}_p^{\varepsilon}(\mu_0,\mu_1)}t = F_{(\mu_0,\mu_1)}(\nu_0-\mu_0,\nu_1-\mu_1),
\]
for some functional $F_{(\mu_0,\mu_1)}$ which may depend on $(\mu_0,\mu_1)$.
\end{enumerate}

\smallskip
 Under these assumptions, letting $H_{(\mu_0,\mu_1)}$ be such that $H_{(\mu_0,\mu_1)}(w_{\nu_0}-w_{\mu_0},w_{\nu_1}-w_{\mu_1})=F_{(\mu_0,\mu_1)}(\nu_0-\mu_0,\nu_1-\mu_1)$ for every $(\nu_0,\nu_1)\in\mathcal P(\mathcal X_0)\times \mathcal P(\mathcal X_1)$, it holds that:
 \begin{enumerate}[leftmargin=*]
     \item[(i)] $H_{(\mu_0,\mu_1)}$ extends to a continuous, positively homogeneous map on $$\mathcal T\coloneqq \overline{\left\{t\left( (w_{\eta_0},w_{\eta_1})-(w_{\mu_0},w_{\mu_1})\right):(\eta_0,\eta_1)\in\mathcal P(\mathcal X_0)\times \mathcal P(\mathcal X_1),t>0 \right\}},$$
     
     \item[(ii)] $(G_{\mu_0},G_{\mu_1})\in \mathcal T$ almost surely,

     \item[(iii)]  $\sqrt n \left(\mathsf{EGW}_p^{\varepsilon}(\mu_{0,n},\mu_{1,n})-\mathsf{EGW}_p^{\varepsilon}( \mu_{0},\mu_{1})\right)\stackrel{d}{\to} H_{(\mu_0,\mu_1)}(G_{\mu_0},G_{\mu_1})$.
 \end{enumerate} 
\end{proposition}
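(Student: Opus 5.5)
The plan is to view $\Phi(a,b)\coloneqq\mathsf{EGW}_p^{\varepsilon}$ as a function of weight vectors on the product of simplices $\Delta_{N_0}\times\Delta_{N_1}$, with $w\coloneqq(w_{\mu_0},w_{\mu_1})$. The proof has three steps: build $H_{(\mu_0,\mu_1)}$ on the cone of feasible directions, upgrade Gâteaux to Hadamard directional differentiability tangentially to $\mathcal T$ using the Lipschitz bound in (2), and then apply an almost-sure representation to the sequence in (1).

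\emph{Step 1: construction of $H$, proof of (i).} Let $\mathcal C\coloneqq\{t(w_\eta-w):\eta\in\mathcal P(\mathcal X_0)\times\mathcal P(\mathcal X_1),\,t>0\}$, so that $\mathcal T=\overline{\mathcal C}$.
\begin{enumerate}
\item For $d\in\mathcal C$, convexity of the simplices gives $w+sd\in\Delta_{N_0}\times\Delta_{N_1}$ for all small $s>0$. Define
\[
H(d)\coloneqq\lim_{s\downarrow0}\frac{\Phi(w+sd)-\Phi(w)}{s}.
\]
\item This limit exists. If $d=t(w_\eta-w)$, reparametrizing $s=\tau/t$ turns the quotient into $t$ times the quotient in (3) along $\eta-\mu$. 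So $H(d)=tF_{(\mu_0,\mu_1)}(\eta-\mu)$, and the value depends only on the vector $d$, not on the representation $(t,\eta)$, because the path $s\mapsto w+sd$ is determined by $d$.
\item Positive homogeneity, $H(\lambda d)=\lambda H(d)$, follows from the same reparametrization.
\item For $d,d'\in\mathcal C$, hypothesis (2) gives
\[
|\Phi(w+sd)-\Phi(w+sd')|\le Cs\|d-d'\|_1,
\]
hence $|H(d)-H(d')|\le C\|d-d'\|_1$ after dividing by $s$ and letting $s\downarrow0$.
\end{enumerate}
So $H$ is Lipschitz on $\mathcal C$, extends uniquely to a Lipschitz map on $\mathcal T$, and the extension stays positively homogeneous by continuity. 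This proves (i).

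\emph{Step 2: proof of (ii).} Set $X_n=\sqrt n\big((w_{\mu_{0,n}},w_{\mu_{1,n}})-w\big)$. Each $X_n$ lies in $\mathcal C\subset\mathcal T$, taking $t=\sqrt n$ and $\eta=(\mu_{0,n},\mu_{1,n})$. Since $\mathcal T$ is closed, the Portmanteau theorem gives $\mathbb P(G\in\mathcal T)\ge\limsup_n\mathbb P(X_n\in\mathcal T)=1$, where $G=(G_{\mu_0},G_{\mu_1})$.

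\emph{Step 3: Hadamard differentiability and proof of (iii).} This is the main step. I will show: if $t_n\downarrow0$, $h_n\to h\in\mathcal T$, and $w+t_nh_n\in\Delta_{N_0}\times\Delta_{N_1}$, then
\[
\frac{\Phi(w+t_nh_n)-\Phi(w)}{t_n}\to H(h).
\]
\begin{enumerate}
\item Write $h_n=(w+t_nh_n-w)/t_n$; this exhibits $h_n\in\mathcal C$.
\item Given $\epsilon>0$, choose $h'\in\mathcal C$ with $\|h'-h\|_1<\epsilon$. For $n$ large, $w+t_nh'$ is feasible.
\item Hypothesis (2) bounds the difference between the quotients along $h_n$ and along $h'$ by $C\|h_n-h'\|_1\le C(\|h_n-h\|_1+\epsilon)$.
\item The quotient along $h'$ converges to $H(h')$ by Step 1, and $|H(h')-H(h)|\le C\epsilon$.
\end{enumerate}
Letting $n\to\infty$ and then $\epsilon\downarrow0$ proves the claim.

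To conclude (iii), apply Skorokhod's representation to $X_n\stackrel d\to G$: on some probability space, $\tilde X_n\to\tilde G$ almost surely, with $\tilde X_n\stackrel d=X_n$ and $\tilde G\stackrel d=G$. Each $\tilde X_n$ can be taken in $\mathcal C$, and $w+n^{-1/2}\tilde X_n$ is feasible. With $t_n=n^{-1/2}$, Hadamard differentiability and (ii) give, almost surely,
\[
\sqrt n\big(\Phi(w+n^{-1/2}\tilde X_n)-\Phi(w)\big)\to H(\tilde G).
\]
Almost-sure convergence implies convergence in distribution, which yields (iii).

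The only delicate point is Step 3: $\Phi$ is defined only on the simplex, and the limit $G$ may lie on the boundary of $\mathcal T$ rather than in $\mathcal C$. Both issues are handled by approximating within the cone and using the global Lipschitz bound (2).
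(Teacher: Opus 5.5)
Your proposal is correct. It follows the same route as the result the paper cites without reproving it (Proposition 1 and Remark 4 in \cite{goldfeld2024statistical}): the global Lipschitz bound upgrades the G\^ateaux directional derivative to a Hadamard directional derivative tangentially to $\mathcal T$, and the extended delta method (cf.\ \cite{romisch2014delta}), which you reprove via Skorokhod's representation, then gives (iii).

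One minor simplification: in this finite-dimensional setting the cone $\{t((w_{\eta_0},w_{\eta_1})-(w_{\mu_0},w_{\mu_1})):t>0\}$ is generated by finitely many vectors, namely the vertices of the product of simplices minus $(w_{\mu_0},w_{\mu_1})$. It is therefore already closed and equal to $\mathcal T$, so your extension and boundary-approximation steps are automatic here, although you handle them correctly.
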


\section{Implementation details for \cref{alg:directEstimator}}
\label{rmk:implementationEstimator}

This section compiles some practical details for implementing \cref{alg:directEstimator}. 
\medskip

{\textbf{Storing $\mathrm{R}$.}} Given that the matrix $\mathrm{R}$ is of size $N_0^2\times N_0$, simply storing the full matrix can require a large memory overhead if $N_0$ is large. However, since the matrix $\mathrm R$ is sparse it can be represented in a memory-efficient manner by using sparse matrix libraries. Furthermore, when $\lfloor (l-1)/N_0\rfloor +1=(l-1)\bmod N_0+1$ in  \eqref{eq:constraintSetEstimator}, $(\mathrm{R}w)_l=0$ so that no constraint is enforced; such vacuous constraints can simply be removed.   
\medskip 

{\textbf{Size of $\hat \Sigma_n,\hat u_n$.}} Since $\hat \Sigma_n$ and $\hat u_n$ are estimated from data, they may not be of the same size as the population quantities $\Sigma_q$ and $u_{\mu_0}$. For instance, if the support of $\mu_0$ is unknown, then it is natural to estimate it as $\supp(\mu_{0,n})$, setting $\hat N_n$ to be the cardinality of this set, and taking $\hat \Sigma_n$ and $\hat u_n$ as elements of $\mathbb R^{\hat N_n\times \hat N_{n}}$ and $\mathbb R^{\hat N_n^2}$ respectively. For instance, if $\mu_{0,n}=\hat \mu_{0,n}$,  it is natural to set $\hat \Sigma_n=\Sigma_{w_{\hat \mu_{0,n}}}$ and $\hat u_n=u_{\hat \mu_{0,n}}$   (defined by analogy with the vector $u_{\mu_0}$ in item (3) of \cref{assn:generalSettingLimitLaw}). These matrices and vectors can also be thought of as elements of $\mathbb R^{N_0\times N_0}$ and $\mathbb R^{N_0^2}$ respectively by padding them in a suitable way. For the matrix, we may, without loss of generality, assume that $\supp(\mu_0)=(x^{(i)})_{i=1}^{N_0}$ is ordered so that the first $\hat N_n$ points coincide with the support of $\mu_{0,n}$ and extending $\hat \Sigma_n$ to $\tilde \Sigma_n\in \mathbb R^{N_0\times N_0}$ by adding blocks of zero matrices. With this, a sample $Y\sim N(0,\tilde \Sigma_n)$ is equal in distribution to $(Z,0)^{\intercal}$ where $Z\sim N(0,\hat \Sigma_n)$, that is, the last $N_0-\hat N_n$ entries of $Y$ are $0$. As such, treating the linear program in \eqref{eq:LnDistribution} as an optimization problem over $\tilde w\in\mathbb R^{N_0}$, the value of the last $N_0-\hat N_n$ entries of $\tilde w$ do not contribute to the objective value. Consequently, 
   \[
        \sup_{\substack{\tilde w\in\mathbb R^{N_0}\\{\mathrm R}\tilde w\leq \tilde u_n}} Y^{\intercal}\tilde w \stackrel{d}=  \sup_{\substack{\tilde w\in\mathbb R^{N_0}\\{\mathrm R}\tilde w\leq \tilde u_n}} (Z,0)^{\intercal}\tilde w\stackrel{d}= \sup_{\substack{ w\in\mathbb R^{\hat N_n}\\\hat{\mathrm R}_nw\leq \hat u_n}} Z^{\intercal}w, 
   \]
   where $\hat{\mathrm R}_n$ is defined by analogy with $\mathrm{R}$, but on $\hat N_n$ and $\tilde u_n$ enforces the constraints from $\hat u_n$ on the relevant variables, i.e., 
   \[
        \tilde w_{\lfloor (l-1)/\hat N_n\rfloor+1}-\tilde w_{(l-1)\bmod \hat N_n+1}\leq (\hat u_n)_l \text{ for $l\in[\hat N_n^2]$} 
   \]
   and enforces no constraints on $\tilde w_{i}$ for $i\geq \hat N_n$ (simply set the relevant entries of $\tilde u_n$ to be $+\infty$).   Evidently, $\hat N_n\to N_0$ with probability $1$ by definition of the support so that this digression is of a mostly practical interest.
\medskip

{\textbf{Bounding the Constraint Set.}} As the constraints in the linear program are enforced on differences of entries of $w$, the constraint set in \eqref{eq:constraintSetEstimator} is unbounded. In effect, if $w\in\hat{ \mathcal H}_n$, then $w+a\mathbf 1\in\hat{\mathcal H}_n$. Importantly, by item (4) of \cref{assn:generalSettingLimitLaw}, $\mathbb P(Z^{\intercal}\mathbf 1=0)=1$ when $Z\sim N(0,\hat \Sigma_n)$ (at least for sufficiently large $n$) so that $Z^{\intercal}(w+a\mathbf 1)=Z^{\intercal} w$ with probability one. As such, we may introduce an additional constraint $w_1\in[-\delta,\delta]$ for any $\delta>0$ in $\hat{\mathcal H}_n$ without changing the optimal value. This modification further implies that the remaining entries are bounded due to the box constraints on $w_1-w_i$ for each $i>1$.
   The resulting constraint set is bounded and the corresponding linear program admits a solution. It follows that the linear programs in \eqref{eq:LnDistribution} have finite optimal value with probability $1$. Moreover, when implementing \cref{alg:directEstimator}, we may add the constraint $w_1\in[-\delta,\delta]$ without loss of generality. This enables us to account for the fact that the sample $Z$ generated by computational methods may not be exactly orthogonal to $\mathbf 1$ due to numerical imprecision.

\section{Additional Experimental Details for \cref{sec:validation}}\label{sec:experimental-details}
~\\

\textbf{Convergence experiment.} We always initialize the gradient methods without acceleration using $u_0 = \rB_0 x_0, v_0 = \rB_1 x_0$ where  $x_0 = \vec(w_{\mu_0} w_{\mu_1}^{\intercal})$  from the independent coupling. For the accelerated gradient method, we initialize $s_0 = \rB_0 x_0, v_0 = \rB_1 x_0.$  In all cases, we  take a  maximum of $K=1000$ iterations when minimizing $\ell$. For all settings,  we maximize $f_u$ using the gradient method (\cref{algo:inner-loop-simple}) for the gradient oracle with  maximum number of iterations set to $J=100$. We terminate the algorithms early if the norm of the approximate gradient at the given iterate is less than the tolerance $\delta = 10^{-10}$ for both methods to minimize $\ell$ (\Cref{algo:outer-loop-accelerated} and \Cref{algo:outer-loop-simple}) and the gradient method used for the inexact gradient oracle (\Cref{algo:inner-loop-simple}); this condition was always met before the maximum number of steps is reached. Finally, the step size for \cref{algo:outer-loop-accelerated,algo:outer-loop-simple} is set using $L=1.5$ and for \cref{algo:inner-loop-simple} is set using $L=2$; even though these step sizes do not necessarily coincide with the theoretical step sizes, the results are seen to abide by the derived convergence rates. \\

\textbf{Runtime experiment.} For the runtime experiment, we use the same initialization strategy described above for our gradient method (\Cref{algo:outer-loop-simple}) and accelerated gradient method (\cref{algo:outer-loop-accelerated}). For the mirror descent algorithm, the initialization is also chosen to be the independent coupling $\pi_0=w_{\mu_0}w_{\mu_1}^{\intercal}$. For all settings we take a maximum of $K=5000$ iterations when minimizing $\ell$. Note that since, $\rB_1 = \textbf{0}$ we do not need to maximize $f_u$. We terminate the algorithms early if the norm of the approximate gradient at the given iterate is less than the tolerance $\delta = 10^{-6}$ for both methods to minimize $\ell$ (\Cref{algo:outer-loop-accelerated} and \Cref{algo:outer-loop-simple}); this condition is always met first before the maximum number of steps is met. As noted in the text, the value of $\varepsilon$ is set on a problem dependent basis and the values of $L$ used for the gradient and accelerated gradient methods were obtained via a line search. We compile the values of $L$  together with the corresponding  values of $\varepsilon$ taken for the objective to be convex in Table~\ref{tab:L-details}.

\begin{table}[!htb]
\centering
\tiny
\begin{tabular}{rrrrrrr}

\toprule
Rank & $N$
& \multicolumn{2}{c}{Nonconvex ($\varepsilon = 1$)}
& \multicolumn{3}{c}{Convex} \\
\cmidrule(lr){3-4}\cmidrule(lr){5-7}
& & $L_{\rm grad}$ & $L_{\rm acc}$
& $L_{\rm grad}$ & $L_{\rm acc}$ & $\varepsilon$ \\
\midrule
1 & 16 & 0.979781 & 1.46461 & 0.999674 & 1.24705 & 30.8068--135.152 \\
1 & 32 & 0.979781 & 1.47940 & 0.989677 & 1.06181 & 169.486--405.247 \\
1 & 64 & 0.969983 & 1.47940 & 0.950680 & 0.724744 & 667.353--1319.69 \\
1 & 128 & 0.969983 & 1.47940 & 0.801369 & 0.136655 & 3003.20--4743.51 \\
1 & 256 & 0.969983 & 1.47940 & 0.530732 & 0.094217 & $1.211{\times}10^4$--$1.729{\times}10^4$ \\
1 & 512 & 0.969983 & 1.47940 & 0.210530 & 0.0351867 & $5.159{\times}10^4$--$6.586{\times}10^4$ \\
1 & 1024 & 0.969983 & 1.47940 & 0.0605449 & 0.0132737 & $2.106{\times}10^5$--$2.639{\times}10^5$ \\
1 & 2048 & 0.969983 & 1.47940 & 0.0151264 & 0.0132737 & $8.813{\times}10^5$--$1.039{\times}10^6$ \\
1 & 4096 & 0.975350 & 1.47018 & 0.0132737 & 0.0132737 & $3.559{\times}10^6$--$3.912{\times}10^6$ \\
1 & 8192 & 0.975350 & 1.47018 & 0.0132737 & 0.0132737 & $1.458{\times}10^7$--$1.547{\times}10^7$ \\
1 & 16384 & 0.975350 & 1.47018 & 0.0132737 & 0.0132737 & $5.845{\times}10^7$--$6.074{\times}10^7$ \\
\midrule
4 & 16 & 0.979781 & 1.43546 & 0.989677 & 1.05119 & 73.9835--158.528 \\
4 & 32 & 0.979781 & 1.42111 & 0.922444 & 0.575167 & 281.318--477.826 \\
4 & 64 & 0.979781 & 1.42111 & 0.754473 & 0.127372 & 826.680--1499.73 \\
4 & 128 & 0.979781 & 1.39283 & 0.377114 & 0.0630285 & 3085.78--5848.21 \\
4 & 256 & 0.979781 & 1.39283 & 0.133936 & 0.0221614 & $1.286{\times}10^4$--$2.049{\times}10^4$ \\
4 & 512 & 0.979781 & 1.37890 & 0.030877 & 0.0132737 & $4.395{\times}10^4$--$7.204{\times}10^4$ \\
4 & 1024 & 0.979781 & 1.37890 & 0.0132737 & 0.0132737 & $1.750{\times}10^5$--$2.450{\times}10^5$ \\
4 & 2048 & 0.979781 & 1.37890 & 0.0132737 & 0.0132737 & $7.495{\times}10^5$--$9.192{\times}10^5$ \\
4 & 4096 & 0.975350 & 1.39667 & 0.0132737 & 0.0132737 & $2.742{\times}10^6$--$3.640{\times}10^6$ \\
4 & 8192 & 0.975350 & 1.39667 & 0.0132737 & 0.0132737 & $1.092{\times}10^7$--$1.326{\times}10^7$ \\
4 & 16384 &0.975350 & 1.39667 & 0.0132737 & 0.0132737 & $4.353{\times}10^7$--$5.238{\times}10^7$ \\
\midrule
16 & 16 & 0.999674 & 1.33794 & 0.877235 & 0.373343 & 103.366--193.908 \\
16 & 32 & 0.999674 & 1.35146 & 0.662066 & 0.111771 & 450.642--654.993 \\
16 & 64 & 0.999674 & 1.33794 & 0.299283 & 0.0500203 & 1628.78--2673.97 \\
16 & 128 & 0.999674 & 1.31132 & 0.0611565 & 0.0132737 & 6321.86--8588.77 \\
16 & 256 & 0.999674 & 1.28522 & 0.0139578 & 0.0132737 & $2.275{\times}10^4$--$3.423{\times}10^4$ \\
16 & 512 & 0.999674 & 1.27237 & 0.0132737 & 0.0132737 & $8.231{\times}10^4$--$1.233{\times}10^5$ \\
16 & 1024 & 0.999674 & 1.27237 & 0.0132737 & 0.0132737 & $3.425{\times}10^5$--$4.379{\times}10^5$ \\
16 & 2048 & 0.999674 & 1.25965 & 0.0132737 & 0.0132737 & $1.318{\times}10^6$--$1.649{\times}10^6$ \\
16 & 4096 & 0.975350 & 1.26050 & 0.0132737 & 0.0132737 & $4.966{\times}10^6$--$6.235{\times}10^6$ \\
16 & 8192 & 0.975350 & 1.26050 & 0.0132737 & 0.0132737 & $1.906{\times}10^7$--$2.454{\times}10^7$ \\
16 & 16384 & 0.975350 & 1.26050 & 0.0132737 & 0.0132737 & $7.500{\times}10^7$--$9.415{\times}10^7$ \\
\midrule
64 & 16 & 0.999674 & 1.24705 & 0.536093 & 0.0951687 & 179.246--234.532 \\
64 & 32 & 0.999674 & 1.21001 & 0.194265 & 0.0324683 & 669.445--872.077 \\
64 & 64 & 0.999674 & 1.17407 & 0.0456944 & 0.0132737 & 2492.33--3278.00 \\
64 & 128 & 0.989677 & 1.09431 & 0.0132737 & 0.0132737 & 9297.03--$1.253{\times}10^4$ \\
64 & 256 & 0.989677 & 1.05119 & 0.0132737 & 0.0132737 & $3.721{\times}10^4$--$4.885{\times}10^4$ \\
64 & 512 & 0.989677 & 1.03027 & 0.0132737 & 0.0132737 & $1.515{\times}10^5$--$1.804{\times}10^5$ \\
64 & 1024 & 0.979781 & 0.999674 & 0.0132737 & 0.0132737 & $5.645{\times}10^5$--$6.837{\times}10^5$ \\
64 & 2048 & 0.979781 & 0.960283 & 0.0132737 & 0.0132737 & $2.250{\times}10^6$--$2.648{\times}10^6$ \\
64 & 4096 & 0.975350 & 0.926582 & 0.0132737 & 0.0132737 & $8.721{\times}10^6$--$1.010{\times}10^7$ \\
64 & 8192 & 0.975350 & 0.926582 & 0.0132737 & 0.0132737 & $3.300{\times}10^7$--$4.071{\times}10^7$ \\
64 & 16384 & 0.975350 & 0.926582 & 0.0132737 & 0.0132737 & $1.324{\times}10^8$--$1.514{\times}10^8$ \\
\bottomrule
\end{tabular}
\caption{Values of $L$ and range of $\varepsilon$ used for the  runtime experiments.}
\label{tab:L-details}
\end{table}

\section{Implementing the Graph Isomorphism Test}
\label{sec:graphTestStatistic}

Given a collection of graphs $\{G_i\}_{i=1}^N\subset \mathcal G_N$, the matrix of pairwise kernel evaluations $K\in\mathbb R^{N\times N}$ with entries $K_{ij}= \kappa(G_i,G_j)$ for $i,j\in[N]$ can be computed as 
\[
    K =  \begin{pmatrix}
       \sqrt{2} (\mathrm{A}_{G_1} \bm 1)^{\intercal}
       \\
    \vdots
    \\
      \sqrt{2} (\mathrm{A}_{G_N} \bm 1)^{\intercal}
    \end{pmatrix} \begin{pmatrix}
       {\sqrt{2}}(\mathrm{A}_{G_1}\bm 1)^{\intercal}
       \\
    \vdots
    \\
      {\sqrt{2}} (\mathrm{A}_{G_N} \bm 1)^{\intercal}
    \end{pmatrix}^{\intercal}
   + 
    \begin{pmatrix}
       \frac{1}{\sqrt{2}}\vec(\mathrm{A}_{G_1})^{\intercal}
       \\
    \vdots
    \\
      \frac{1}{\sqrt{2}} \vec(\mathrm{A}_{G_N})^{\intercal}
    \end{pmatrix} \begin{pmatrix}
       \frac{1}{\sqrt{2}} \vec(\mathrm{A}_{G_1})^{\intercal}
       \\
    \vdots
    \\
      \frac{1}{\sqrt{2}} \vec(\mathrm{A}_{G_N})^{\intercal}.
    \end{pmatrix}^{\intercal}
\]
This representation demonstrates that $K$ is a psd matrix. Thus, if we choose $p=2$, we may take the GW cost matrix to be $\rM = -4\mathrm{K}_0\otimes \mathrm{K}_1$, where $\mathrm{K}_0$ and $\mathrm{K}_1$ are the pairwise cost matrices for the two collections of sampled graphs (recall \cref{subsec:complexity}). As the Kronecker product of psd matrices is psd, it follows that $\rM$ is nsd so that we may choose $\rB_1=0$ in our decomposition of the cost. 

With this, we solve the resulting $2$-GW problems using \cref{algo:outer-loop-simple}  with $\varepsilon=0$ and $\rB_1=0$. As such, the \texttt{inexactGradientOracle} method only requires an approximate solution of $\inf_{x\in\mathcal K}\left\{-u^{\intercal}\rB_0 x\right\}$ which is just an OT problem and can be solved using the OT solver from the POT package \cite{flamary2021pot}.

While the convergence result for \cref{algo:outer-loop-simple}, \cref{lem:convergenceRateMin}, does not apply to the case $\varepsilon=0$, we set $L'=19$, corresponding to a step size of $0.05$, and terminate the loop once a point with subgradient norm below $1\times 10^{-8}$ is met or $10000$ iterations have been performed; the former termination condition was always met first in this experiment.

For these tests to be statistically valid, the GW distances must be computed to a high accuracy. As the objective in the variational form is nonconvex and nonsmooth, we therefore implement a warm start procedure for the subgradient method. For the purposes of this experiment and to reduce the computational burden, this approach consists of initializing \cref{algo:outer-loop-simple} at $u_0=\rB_0 x^{\star}_{\mathrm{pop}}$, where $x^{\star}_{\mathrm{pop}}$ is the vectorization of the optimal coupling for the population level problem $\mathsf{GW}_2(\mu_0,\mu_1)$ which is induced by the true permutation $\sigma$. This initialization is justified by noting that if $(\mu_{0,n},\mu_{1,n})$ converges weakly to $(\mu_0,\mu_1)$, then each cluster point of solutions of the variational form for $\mathsf{GW}_2(\mu_{0,n},\mu_{1,n})$ is a solution for $\mathsf{GW}_2(\mu_{0},\mu_{1})$, see \cref{sec:convergenceMinimizers}. In general the null hypothesis posits that, at the population level, the optimal couplings are induced by permutations. Hence, even if the ground truth permutation $\sigma$ was not known, the same warm start procedure can be applied for each possible permutation in which case the reported test statistic is the minimum value attained over all initializations, though this approach scales poorly as $N$ increases.

\section{Convergence of Solutions for the Variational Form}
\label{sec:convergenceMinimizers}
In this section, we demonstrate the following stability property for solutions to the variational form under weak convergence of the marginals.

\begin{proposition}\label{prop:continuityVarSolns} Let $(\mu_{0,n})_{n\in\mathbb N}\subset \mathcal P(\mathcal X_0)$ and $(\mu_{1,n})_{n\in\mathbb N}\subset \mathcal P(\mathcal X_1)$ be such that  $\mu_{0,n}\stackrel{w}{\to} \mu_0\in\mathcal P(\mathcal X_0),$ $\mu_{1,n}\stackrel{w}{\to} \mu_1\in \mathcal P(\mathcal X_1)$.   
    For each $n\in\mathbb N$, let $(u_n,v_n)$ solve
    \[ 
    \inf_{u\in\mathbb R^{r_0}}\sup_{v\in\mathbb R^{r_1}}\left\{ \frac12\|u\|^2-\frac 12 \|v\|^2+\inf_{x\in\mathcal K_n}\left\{\left( \rB_1^{\intercal}v-\rB_0^{\intercal} u \right)^{\intercal}x  \right\} \right\}=\mathsf{GW}_p(\mu_{0,n},\mu_{1,n})^p,
    \]
    where $x\in\mathcal K_n$ is understood as the vector representation for $\pi\in\Pi(\mu_{0,n},\mu_{1,n})$. 
    Then, every cluster point of the sequence $(u_n,v_n)_{n\in\mathbb N}$ solves  
    \begin{equation}
    \label{eq:limitingProblem}
    \inf_{u\in\mathbb R^{r_0}}\sup_{v\in\mathbb R^{r_1}}\left\{ \frac12\|u\|^2-\frac 12 \|v\|^2+\inf_{x\in\mathcal K}\left\{\left( \rB_1^{\intercal}v-\rB_0^{\intercal} u \right)^{\intercal}x  \right\} \right\}.
    \end{equation}
\end{proposition}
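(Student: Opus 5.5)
Write $\Phi_n(u,v)\coloneqq \frac12\|u\|^2-\frac12\|v\|^2+\mathsf{OT}_{\rB_1^{\intercal}v-\rB_0^{\intercal}u}(\mu_{0,n},\mu_{1,n})$ and let $\Phi(u,v)$ be the analogous function with $(\mu_0,\mu_1)$. The plan is a stability argument driven by uniform convergence of $\Phi_n$ to $\Phi$, relying on \cref{prop:LipschitzStability} (with $\varepsilon=0$) and \cref{thm:variationalForm}.

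\textbf{Step 1: restrict to a compact set.} By the final assertion of \cref{thm:variationalForm}, applied with $\varepsilon=0$ and marginals $(\mu_{0,n},\mu_{1,n})$, every solution $(u_n,v_n)$ lies in $\rB_0\mathcal K_n\times\rB_1\mathcal K_n\subset \rB_0\mathcal B\times\rB_1\mathcal B$. The same holds for the limiting problem. In both problems the outer $\inf$ and $\sup$ may therefore be restricted to the compact set $C\coloneqq \rB_0\mathcal B\times \rB_1\mathcal B$. In particular cluster points exist and lie in $C$. Weak convergence on the finite spaces is just convergence $w_{\mu_{i,n}}\to w_{\mu_i}$ of the weight vectors. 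Since \cref{prop:LipschitzStability} requires no full-support assumption, it gives
\[
\sup_{(u,v)\in C}|\Phi_n(u,v)-\Phi(u,v)|\le \delta_n\coloneqq C_{\rB_0,\rB_1}\bigl(\|w_{\mu_{0,n}}-w_{\mu_0}\|_1+\|w_{\mu_{1,n}}-w_{\mu_1}\|_1\bigr)\to 0.
\]
Also, $\Phi$ is continuous, because $c\mapsto \mathsf{OT}_c$ is a minimum of linear functions and hence $1$-Lipschitz in $\|\cdot\|_\infty$.

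\textbf{Step 2: pass to the limit.} Set $g_n(u)\coloneqq\sup_{v\in\rB_1\mathcal B}\Phi_n(u,v)$ and $g(u)\coloneqq\sup_{v\in\rB_1\mathcal B}\Phi(u,v)$. Then $|g_n-g|\le\delta_n$ uniformly on $\rB_0\mathcal B$, and $g$ is continuous. Let $(u_{n_k},v_{n_k})\to(\bar u,\bar v)$ along a subsequence.

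For the outer problem, optimality of $u_n$ gives $g_n(u_n)\le g_n(u)$ for every $u\in\rB_0\mathcal B$. Hence $g(u_{n_k})\le g(u)+2\delta_{n_k}$, and letting $k\to\infty$ shows that $\bar u$ minimizes $g$.

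For the inner problem, $v_n$ maximizes $\Phi_n(u_n,\cdot)$. So for every $v\in\rB_1\mathcal B$,
\[
\Phi(u_{n_k},v_{n_k})\ge \Phi(u_{n_k},v)-2\delta_{n_k}.
\]
By joint continuity of $\Phi$, letting $k\to\infty$ yields $\Phi(\bar u,\bar v)\ge\Phi(\bar u,v)$, so $\bar v\in\argmax_v\Phi(\bar u,\cdot)$. By the final claim of \cref{thm:variationalForm}, maximizing over $\rB_1\mathcal B$ is the same as maximizing over $\mathbb R^{r_1}$. Thus $(\bar u,\bar v)$ solves \eqref{eq:limitingProblem}.

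\textbf{Main obstacle.} The delicate point is justifying the restriction to the compact set $C$ uniformly in $n$. \cref{thm:variationalForm} was stated for a fixed constraint set $\mathcal K$, and the bounds $\rB_i\mathcal K_n\subset\rB_i\mathcal B$ need to hold for every $n$ simultaneously. They do: $\mathcal K_n$ consists of probability vectors, so each $x\in\mathcal K_n$ has $\|x\|_1=1$. The result is also applied with $\varepsilon=0$ and with marginals that may lack full support. I would check that its proof, which uses Sion's theorem and Clarke subdifferentials, does not rely on full support; it does not appear to.
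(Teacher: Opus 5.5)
Your proof is correct. It rests on the same two ingredients as the paper's proof. First, \cref{thm:variationalForm} confines every solution to a fixed compact set. Second, the marginal-Lipschitz estimate of \cref{prop:LipschitzStability} (i.e.\ \cref{lem:OTLipschitz}, which indeed needs no full support) controls $\Phi_n-\Phi$.

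Where you differ is in how the limit is taken. The paper first proves \cref{lem:convergenceMinimizers} for $\ell_n$ alone. It shows $\ell_n$ is $2R_0$-Lipschitz on $\mathbb B_{R_0}$, obtains $\ell_n\to\ell$ from \cref{lem:OTLipschitz}, and upgrades this to uniform convergence via an equicontinuity result. It then invokes epi-convergence theory (Proposition~7.15 and Theorem~7.33 of Rockafellar--Wets) to conclude that cluster points of $(u_n)$ minimize $\ell$ and that $\ell_n(u_n)\to\inf\ell$. For the $v$-component, it uses continuity of the OT value under joint convergence of cost and marginals (Villani, Theorem~5.20). This gives the sandwich $\inf\ell=\Phi(\bar u,\bar v)\le\ell(\bar u)=\inf\ell$, which forces $\bar v$ to be the unique maximizer of $\Phi(\bar u,\cdot)$.

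You instead note that the bound $\delta_n$ is already uniform over all of $C=\rB_0\mathcal B\times\rB_1\mathcal B$, so no equicontinuity upgrade is needed. You then push both the minimality of $u_n$ and the maximality of $v_n$ through elementary $2\delta_n$-perturbation inequalities. The remaining ingredient is joint continuity of $\Phi$, which you justify from the $1$-Lipschitz dependence of $\mathsf{OT}_c$ on $c$.

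What your version buys:
\begin{itemize}
\item It avoids the epi-convergence and OT-stability citations and treats $u$ and $v$ symmetrically.
\item It works exactly on the set where \cref{prop:LipschitzStability} is stated. The paper applies the estimate on the larger Euclidean balls $\mathbb B_{R_0}\times\mathbb B_{R_1}$, which only changes the constant.
\item It gives the quantitative value bound $|\min g_n-\min g|\le\delta_n$ for free.
\end{itemize}
What the paper's version buys: it packages the $u$-part as a standalone lemma built from off-the-shelf stability results, and handles $v$ through a short argument on optimal values alone.

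One small precision: that \emph{every} minimizer $u_n$ lies in $\rB_0\mathcal K_n$ comes from item~(2) of \cref{thm:variationalForm}, not just from its final sentence.
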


We first establish that the minimizers of 
\[
\begin{aligned}
\ell_n(u) &\coloneqq \frac 12 \|u\|^2+\sup_{v\in\mathbb R^{r_1}}\left\{-\frac 12\|v\|^2 + \inf_{\mathcal K_n}\left\{(\rB_1^{\intercal}v-\rB_0^{\intercal}u)^{\intercal}x\right\}\right\}\\& =  \frac 12 \|u\|^2+\inf_{x\in\mathcal K_n}\left\{\frac 12 x^{\intercal}\rB_1^{\intercal}\rB_1x-u^{\intercal }\rB_0 x\right\},
\end{aligned}
\] 
converge to those of 
\[
\begin{aligned}\ell(u)&\coloneqq\frac 12 \|u\|^2+\sup_{v\in\mathbb R^{r_1}}\left\{-\frac 12\|v\|^2 + \inf_{\mathcal K}\left\{(\rB_1^{\intercal}v-\rB_0^{\intercal}u)^{\intercal}x\right\}\right\}\\
&=  \frac 12 \|u\|^2+\inf_{x\in\mathcal K}\left\{\frac 12 x^{\intercal}\rB_1^{\intercal}\rB_1x-u^{\intercal }\rB_0 x\right\},
\end{aligned}
\]
in a suitable sense
as $n\to \infty$, recalling the alternative representation of $\ell$ from the proof of \cref{lem:derivativeObjective}. Complete details can be found in \cref{proof:lem:convergenceMinimizers}.
\begin{lemma}
\label{lem:convergenceMinimizers}
   In the setting of \cref{prop:continuityVarSolns}, let $u_n$ minimize $\ell_n$ for each $n\in\mathbb N$. Then, every cluster point of the sequence $(u_n)_{n\in\mathbb N}$ minimizes $\ell$ and $\ell_n(u_n)\to \inf_{u\in\mathbb R^{r_0}}\ell(u)$.
\end{lemma}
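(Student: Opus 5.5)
The approach is to show that $\ell_n\to\ell$ uniformly on compact subsets of $\mathbb R^{r_0}$, and that the minimizers $u_n$ stay in a fixed bounded set. The conclusion then follows from a standard epi-convergence-type argument.

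\textbf{Step 1: boundedness.} By the final assertion of \cref{thm:variationalForm} (with $\varepsilon=0$), every minimizer $u_n$ of $\ell_n$ lies in $\rB_0\mathcal K_n$. Since $\mathcal K_n$ consists of probability vectors, this set is contained in $\rB_0\mathcal B\subset\mathbb B_{R_0}$. So $(u_n)$ is bounded, cluster points exist, and all of them lie in $\mathbb B_{R_0}$.

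\textbf{Step 2: uniform convergence.} Write $\ell_n(u)=\frac12\|u\|^2+\inf_{x\in\mathcal K_n}q_u(x)$ with $q_u(x)=\frac12 x^{\intercal}\rB_1^{\intercal}\rB_1x-u^{\intercal}\rB_0x$, and define $\ell$ analogously with $\mathcal K$. The map $q_u$ is Lipschitz on the simplex, with constant at most $C(1+\|u\|)$ in the $1$-norm, where $C$ depends only on $\rB_0,\rB_1$.

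The key input is a Hausdorff-type estimate: for every $x\in\mathcal K$ there is $x_n\in\mathcal K_n$ with
\[
\|x-x_n\|_1\le C'\bigl(\|w_{\mu_{0,n}}-w_{\mu_0}\|_1+\|w_{\mu_{1,n}}-w_{\mu_1}\|_1\bigr),
\]
and conversely with the roles of $\mathcal K$ and $\mathcal K_n$ swapped. I would prove this by the standard gluing/marginal-correction construction for couplings on finite sets. Given $\pi\in\Pi(\mu_0,\mu_1)$, first correct the first marginal by moving the mass difference $(\mu_{0,n}-\mu_0)$ along rows. Concretely, set $\pi'=\sum_i \frac{\min(\mu_0,\mu_{0,n})_i}{\mu_0(i)}\pi(i,\cdot)$ and then add a product-type remainder. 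Then correct the second marginal similarly. The total variation cost of these corrections is bounded by the marginal TV distances.

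Alternatively, and more cleanly, one can avoid the construction altogether by invoking \cref{prop:LipschitzStability} with $\varepsilon=0$. Since $\ell_n(u)=\inf_{u'}\sup_v\{\cdot\}$ and it equals $\frac12\|u\|^2+\sup_v\{-\frac12\|v\|^2+\mathsf{OT}_{\rB_1^{\intercal}v-\rB_0^{\intercal}u}(\mu_{0,n},\mu_{1,n})\}$, the sup over $v$ can be restricted to $\rB_1\mathcal B$. Using $|\sup f-\sup g|\le\sup|f-g|$ together with \cref{lem:OTLipschitz} (scaled by homogeneity in $u$ for $u$ outside $\rB_0\mathcal B$, or simply restricting to $u\in\mathbb B_{R_0}$, which suffices), we get
\[
\sup_{u\in \rB_0\mathcal B}|\ell_n(u)-\ell(u)|\le C_{\rB_0,\rB_1}\bigl(\|w_{\mu_{0,n}}-w_{\mu_0}\|_1+\|w_{\mu_{1,n}}-w_{\mu_1}\|_1\bigr)\to0.
\]
Weak convergence on a finite set means exactly that these weight differences vanish. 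I would use this second route, taking care that the minimizers of both $\ell_n$ and $\ell$ lie in $\rB_0\mathcal B$, which holds by \cref{thm:variationalForm}. The main obstacle is this step: one must make sure the Lipschitz bound applies uniformly over the relevant compact set of $u$, including possible cluster points on its boundary. Since $\rB_0\mathcal B$ is closed and convex, containing all the $u_n$ and the minimizers of $\ell$, this is fine.

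\textbf{Step 3: conclusion.} Let $u_{n_k}\to\bar u$, with $\bar u\in\rB_0\mathcal B$ by closedness, and let $u^\star$ minimize $\ell$ over $\rB_0\mathcal B$, equivalently globally. Then
\[
\ell(\bar u)=\lim_k\ell_{n_k}(u_{n_k})\le\lim_k\ell_{n_k}(u^\star)=\ell(u^\star).
\]
Here the first equality uses uniform convergence together with continuity of $\ell$ (it is a concave function plus a quadratic, hence continuous). So $\bar u$ minimizes $\ell$. The same sandwich, $|\ell_n(u_n)-\inf\ell|\le\sup_{\rB_0\mathcal B}|\ell_n-\ell|$, gives $\ell_n(u_n)\to\inf_{\mathbb R^{r_0}}\ell$ along the full sequence.
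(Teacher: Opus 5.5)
Your proof is correct and follows the paper's route: you confine all minimizers to a fixed compact set using the final assertion of \cref{thm:variationalForm}, obtain uniform convergence of $\ell_n$ to $\ell$ there from \cref{lem:OTLipschitz}, and conclude. The differences are minor and in your favour. You note that the OT bound is already uniform in $u$, so you skip the paper's step of combining pointwise convergence with equi-Lipschitz continuity. You replace the appeal to the epi-convergence results of Rockafellar--Wets with a direct sandwich argument. And working on $\rB_0\mathcal B\times\rB_1\mathcal B$ rather than $\mathbb B_{R_0}\times\mathbb B_{R_1}$ matches the hypotheses of \cref{lem:OTLipschitz} exactly.
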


With \cref{lem:convergenceMinimizers} in hand, we prove the main result. 

\begin{proof}[Proof of \cref{prop:continuityVarSolns}]
Let $(u,v)$ be a cluster point of $(u_n,v_n)$ and let $n'$ be a subsequence along which $(u_{n'},v_{n'})$ converges to $(u,v)$. By \cref{lem:convergenceMinimizers}, we have that 
     \[
            \frac 12 \|u_{n'}\|^2-\frac 12\|v_{n'}\|^2 + \inf_{x\in\mathcal K_{n'}}\left\{\left( \rB_1^{\intercal}v_{n'}-\rB_0^{\intercal} u_{n'} \right)^{\intercal}x  \right\}=\ell_{n'}(u_{n'})\to \inf_{u\in\mathbb R^{r_0}}\ell(u).
     \]
    Note that $\inf_{x\in\mathcal K_{n'}}\left\{\left( \rB_1^{\intercal}v_{n'}-\rB_0^{\intercal} u_{n'} \right)^{\intercal}x \right\}=\mathsf{OT}_{\rB_1^{\intercal}v_{n'}-\rB_0^{\intercal} u_{n'}}(\mu_{0,{n'}},\mu_{1,{n'}})\to \mathsf{OT}_{\rB_1^{\intercal}v-\rB_0^{\intercal} u}(\mu_{0},\mu_{1})$  by Theorem 5.20 in \cite{villani2008optimal}. Conclude that 
    \[
    \begin{aligned}
        \frac 12 \|u_{n'}\|^2-\frac 12\|v_{n'}\|^2 + \inf_{x\in\mathcal K_{n'}}\left\{\left( \rB_1^{\intercal}v_{n'}-\rB_0^{\intercal} u_{n'} \right)^{\intercal}x  \right\}&\\&\hspace{-5.5em}\to \frac 12 \|u\|^2-\frac 12\|v\|^2 + \inf_{x\in\mathcal K}\left\{\left( \rB_1^{\intercal}v-\rB_0^{\intercal} u \right)^{\intercal}x  \right\}=\inf_{u\in\mathbb R^{r_0}}\ell(u), 
    \end{aligned} 
    \] 
    where the final equality is due to the previous display. Conclude that  
    \[
        \begin{aligned}
        \inf_{u\in\mathbb R^{r_0}}\ell(u)&=\frac 12 \|u\|^2-\frac 12\|v\|^2 + \inf_{x\in\mathcal K}\left\{\left( \rB_1^{\intercal}v-\rB_0^{\intercal} u \right)^{\intercal}x  \right\}
        \\
        &\leq \sup_{v\in\mathbb R^{r_1}}\left\{\frac 12 \|u\|^2-\frac 12\|v\|^2 + \inf_{x\in\mathcal K}\left\{\left( \rB_1^{\intercal}v-\rB_0^{\intercal} u \right)^{\intercal}x  \right\}\right\} = \ell(u)=\inf_{u\in\mathbb R^{r_0}}\ell(u) 
        \end{aligned}
    \]
    noting that $u$ minimizes $\ell$ by \cref{lem:convergenceMinimizers}. Conclude that the pair $(u,v)$ solves \eqref{eq:limitingProblem}.
\end{proof}

\section{Proofs of Auxiliary Results}
\label{app:ProofsAux}
\subsection{Proof of \texorpdfstring{\cref{lem:derivativeObjective}}{Lemma 2}}
\label{proof:lem:derivativeObjective}
       By analogy with the first part of the proof of Theorem \ref{thm:variationalForm}, it follows from Sion's minimax principle that
        \[
               \ell_{\varepsilon}(u) =\inf_{x\in\mathcal K} 
               \sup_{v\in\mathbb R^{r_1}}\left\{  \frac12\|u\|^2-\frac{1}{2}\|v\|^2 +\left( \rB^{\intercal}_1 v-\rB_0^{\intercal} u\right)^{\intercal} x -\varepsilon\sH(x)\right\}.
        \]
        For any fixed $u\in\mathbb R^{r_0}$ and $x\in\mathcal K$, the objective in the above display is strongly concave in $v$ and its unique maximizer is given by $v^{\star}=\rB_1 x$. Conclude that  
        \begin{equation}
        \label{eq:QPDecomposition}
               \ell_{\varepsilon}(u) =\frac12\|u\|^2+ 
               \inf_{x\in\mathcal K} \left\{\frac{1}{2}x^{\intercal}\rB^{\intercal}_1\rB_1 x- u^{\intercal}\rB_0 x-\varepsilon\sH(x)\right\}\eqqcolon \frac 12 \|u\|^2-V_{\varepsilon}(u).
        \end{equation}
        As $V_{\varepsilon}(u)=\sup_{x\in\mathcal K}\left\{ -\frac{1}{2}x^{\intercal}\rB^{\intercal}_1\rB_1 x + u^{\intercal}\rB_0 x+\varepsilon\sH(x)\right\}$, Danskin's theorem, \cite[Proposition B.22]{bertsekas2016nonlinear}, asserts that $V_{\varepsilon}$ is convex and has  subdifferential (in the convex analytic sense) given by 
        \[
        \begin{aligned}
\partial V_{\varepsilon}(u)&=\rB_0\argmax_{x\in\mathcal K}\left\{  -\frac{1}{2}x^{\intercal}\rB^{\intercal}_1\rB_1 x + u^{\intercal}\rB_0 x+\varepsilon\sH(x)\right\}
\\
&=\rB_0\argmin_{x\in\mathcal K}\left\{  \frac{1}{2}x^{\intercal}\rB^{\intercal}_1\rB_1 x - u^{\intercal}\rB_0 x-\varepsilon\sH(x)\right\}.
        \end{aligned} 
        \]
       By  Propositions 2.2.7 and 2.3.1 in \cite{clarke1990optimization} the Clarke subdifferential of $-V_{\varepsilon}$ at $u\in\RR^{r_0}$ is $-\partial V_{\varepsilon}(u)$ so that the claimed formula for $\partial \ell_{\varepsilon}(u)$, \eqref{eq:lepsSubdiff}, follows from the subdifferential sum rule (see the Corollary on p.33 and Corollary 1 on p.39 in \cite{clarke1990optimization}).  

        The Clarke subdifferential of $f_{\varepsilon,u}$ for any $u\in\mathbb R^{r_0}$ can also be computed using Danskin's theorem and following the same steps as above. 
\qed

\subsection{Proof of \texorpdfstring{\cref{lem:QPvsLPsoln}}{Lemma 3}}
\label{proof:lem:QPvsLPsoln}
        Following the notation of \eqref{eq:indicator}, we express 
        \[\min_{x\in\mathcal K}\left\{\frac{1}{2}x^{\intercal}\rB^{\intercal}_1\rB_1 x- u^{\intercal}\rB_0 x-\varepsilon\sH(x)\right\}
        \]as 
        \[
            \min_{x\in\mathbb R^k}\left\{ f(x)+g(x)\right\},\text{ where }f(x)=\frac 12  x^{\intercal}\rB^{\intercal}_1\rB_1 x\text{ and }g(x)=- u^{\intercal}\rB_0 x-\varepsilon\sH(x)+\mathcal I_{\mathcal K}(x)
        \]
        As both $f$ and $g$ are proper, lower semicontinuous, and convex, Corollary 16.48 in \cite{bauschke2017convex} yields that $\partial(f+g)=\partial f +\partial g$. By Theorem 16.3 in \cite{bauschke2017convex}, a solution $\bar x$ of the problem $\min_{x\in\mathcal K}\left\{\frac{1}{2}x^{\intercal}\rB^{\intercal}_1\rB_1 x- u^{\intercal}\rB_0 x-\varepsilon\sH(x)\right\}$ satisfies 
        \[
            0\in \partial f(\bar x) +\partial g(\bar x), \text{ that is }, -\rB_1^{\intercal}\rB_1 \bar x \in \partial g(\bar x).
        \]
       By definition of the subdifferential, we have further that  
        \[
            g(x)-g(\bar x) \geq (-\rB_1^{\intercal}\rB_1\bar x)^{\intercal}(x-\bar x)\text{ for each }x\in\mathcal K. 
        \] Rearranging the above display, we see that 
        \[
            (\bar x)^{\intercal}\rB_1^{\intercal}\rB_1 x -u^{\intercal} \rB_0 x-\varepsilon \mathsf H(x)\geq 
            (\bar x)^{\intercal}\rB_1^{\intercal}\rB_1 \bar x -u^{\intercal} \rB_0 \bar x-\varepsilon \mathsf H(\bar x) 
        \]
        for each $x\in\mathcal K$, proving the claim.
\qed

\subsection{Proof of \texorpdfstring{\cref{lem:EOTLipschitz}}{Lemma 4}}
    \label{proof:lem:EOTLipschitz}
    Let $(\varphi_0^{(\nu_0,\nu_1)}, \varphi_1^{(\nu_0,\nu_1)})$ be a pair of EOT potentials for the problem $\mathsf{EOT}^{\varepsilon}_{\rB_1^{\intercal}v-\rB_0^{\intercal}u}(\nu_0,\nu_1)$ satisfying the properties of \cref{lem:EOTPotentialBounds} and define $(\varphi_0^{(\nu_0',\nu_1)}, \varphi_1^{(\nu_0',\nu_1)}),$ and $(\varphi_0^{(\nu_0',\nu_1')}, \varphi_1^{(\nu_0',\nu_1')})$ analogously for the different choices of marginals. Then, 
\[
\begin{aligned}
 \left|\mathsf{EOT}^{\varepsilon}_{\rB_1^{\intercal}v-\rB_0^{\intercal}u}(\nu_{0},\nu_{1})-   \mathsf{EOT}^{\varepsilon}_{\rB_1^{\intercal}v-\rB_0^{\intercal}u}(\nu_0',\nu_1')\right| &\leq \left|\mathsf{EOT}^{\varepsilon}_{\rB_1^{\intercal}v-\rB_0^{\intercal}u}(\nu_{0},\nu_{1})-   \mathsf{EOT}^{\varepsilon}_{\rB_1^{\intercal}v-\rB_0^{\intercal}u}(\nu_0',\nu_1)\right|
 \\&+
 \left|\mathsf{EOT}^{\varepsilon}_{\rB_1^{\intercal}v-\rB_0^{\intercal}u}(\nu_{0}',\nu_{1})-   \mathsf{EOT}^{\varepsilon}_{\rB_1^{\intercal}v-\rB_0^{\intercal}u}(\nu_0',\nu_1')\right|
\end{aligned}
\]
We bound the first term on the right hand side; an analogous bound will hold for the other term by symmetry. Letting $c_{u,v}$ be the cost function for the cost vector $\rB_1^{\intercal}v-\rB_0^{\intercal}u$,   
\[
\begin{aligned}
\mathsf{EOT}^{\varepsilon}_{\rB_1^{\intercal}v-\rB_0^{\intercal}u}(\nu_{0},\nu_{1})-   \mathsf{EOT}^{\varepsilon}_{\rB_1^{\intercal}v-\rB_0^{\intercal}u}(\nu_0',\nu_1)&
\\&\hspace{-14em}\leq \int \varphi_0^{(\nu_0,\nu_1)} d \nu_0+\int \varphi_1^{(\nu_0,\nu_1)} d \nu_1 - \varepsilon \int e^{\frac{\varphi_0^{(\nu_0,\nu_1)}\oplus\varphi_1^{(\nu_0,\nu_1)}-c_{u,v}}\varepsilon}d\nu_0\otimes \nu_1 + \varepsilon 
\\&\hspace{-14em}-\int \varphi_0^{(\nu_0,\nu_1)} d \nu_0'-\int \varphi_1^{(\nu_0,\nu_1)} d \nu_1 + \varepsilon \int e^{\frac{\varphi_0^{(\nu_0,\nu_1)}\oplus\varphi_1^{(\nu_0,\nu_1)}-c_{u,v}}\varepsilon}d\nu_0'\otimes \nu_1 - \varepsilon
\\
&\hspace{-14em}=\int \varphi_0^{(\nu_0,\nu_1)} d(\nu_0-\nu_0')- \varepsilon \int e^{\frac{\varphi_0^{(\nu_0,\nu_1)}\oplus\varphi_1^{(\nu_0,\nu_1)}-c_{u,v}}\varepsilon}d(\nu_0-\nu_0')\otimes \nu_1
\\&\hspace{-14em}
=\int \varphi_0^{(\nu_0,\nu_1)} d(\nu_0-\nu_0'),
\end{aligned}
\]
where the final equality follows by noting that $(\varphi_0^{(\nu_0,\nu_1)},\varphi_1^{(\nu_0,\nu_1)})$  satisfy the relevant Schr{\"o}dinger system \eqref{eq:SchrodingerSystem} on $\mathcal X_0\times \mathcal X_1$ so that 
\[
\int e^{\frac{\varphi_0^{(\nu_0,\nu_1)}\oplus\varphi_1^{(\nu_0,\nu_1)}-c_{u,v}}\varepsilon}d(\nu_0-\nu_0')\otimes \nu_1 = \int 1d(\nu_0-\nu_0')=0.
\]
Applying the same logic as above, we obtain the lower bound 
\[
\mathsf{EOT}^{\varepsilon}_{\rB_1^{\intercal}v-\rB_0^{\intercal}u}(\nu_{0},\nu_{1})-   \mathsf{EOT}^{\varepsilon}_{\rB_1^{\intercal}v-\rB_0^{\intercal}u}(\nu_0',\nu_1)\geq \int \varphi_0^{(\nu_0',\nu_1)} d(\nu_0-\nu_0'), 
\]
so that 
\[
\begin{aligned}
\left|\mathsf{EOT}^{\varepsilon}_{\rB_1^{\intercal}v-\rB_0^{\intercal}u}(\nu_{0},\nu_{1})-   \mathsf{EOT}^{\varepsilon}_{\rB_1^{\intercal}v-\rB_0^{\intercal}u}(\nu_0',\nu_1)\right|&\\&\hspace{-3em}\leq \max\left\{\left|\int \varphi_0^{(\nu_0,\nu_1)} d(\nu_0-\nu_0')\right|,\left|\int \varphi_0^{(\nu_0',\nu_1)} d(\nu_0-\nu_0')\right|\right\}. 
\end{aligned}
\]
It follows similarly that 
\[
\begin{aligned}
\left|\mathsf{EOT}^{\varepsilon}_{\rB_1^{\intercal}v-\rB_0^{\intercal}u}(\nu_{0}',\nu_{1}')-   \mathsf{EOT}^{\varepsilon}_{\rB_1^{\intercal}v-\rB_0^{\intercal}u}(\nu_0',\nu_1)\right|&
\\&\hspace{-3em}\leq \max\left\{\left|\int \varphi_1^{(\nu_0',\nu_1')} d(\nu_1-\nu_1')\right|,\left|\int \varphi_1^{(\nu_0',\nu_1)} d(\nu_1-\nu_1')\right|\right\}. 
\end{aligned}
\]
Applying the estimates on the potentials from \cref{lem:EOTPotentialBounds}, we obtain that 
\[
    \max\left\{\|\varphi_0^{(\nu_0,\nu_1)}\|_{\infty,\mathcal X_0},\|\varphi_0^{(\nu_0',\nu_1)}\|_{\infty,\mathcal X_0},\|\varphi_1^{(\nu_0',\nu_1')}\|_{\infty,\mathcal X_1},\|\varphi_1^{(\nu_0',\nu_1)}\|_{\infty,\mathcal X_1}\right\}\leq \frac{3}{2}\sup_{\substack{u\in\mathrm{B_0}\mathcal B\\v\in\mathrm{B_1}\mathcal B}}\|\rB_1^{\intercal}v-\rB_0^{\intercal}u\|_{\infty}. 
\]
Since $\sup_{\substack{u\in\mathrm{B_0}\mathcal B\\v\in\mathrm{B_1}\mathcal B}}\|\rB_1^{\intercal}v-\rB_0^{\intercal}u\|_{\infty}\leq\sup_{\substack{\|x_0\|_1=1\\\|x_1\|_1=1}}\|\rB_1^{\intercal}
\rB_1x_1-\rB_0^{\intercal}\rB_0x_0\|_{\infty}$, we have that
\[
\begin{aligned}
 \left|\mathsf{EOT}^{\varepsilon}_{\rB_1^{\intercal}v-\rB_0^{\intercal}u}(\nu_{0},\nu_{1})-   \mathsf{EOT}^{\varepsilon}_{\rB_1^{\intercal}v-\rB_0^{\intercal}u}(\nu_0',\nu_1')\right|&\\ &\hspace{-3em}\leq C_{\rB_0,\rB_1}\left(\|w_{\nu_0}-w_{\nu_0'}\|_1+\|w_{\nu_1}-w_{\nu_1'}\|_1\right) ,
\end{aligned}
\]
proving the claimed result.
\qed

\subsection{Proof of \texorpdfstring{\cref{lem:OTLipschitz}}{Lemma 5}}
\label{proof:lem:OTLipschitz}
    Remark that the marginals are not assumed to have full support so that   \cref{lem:potentialBounds} does not directly apply. Nevertheless, Propositions 1.11 and 1.34 in  \cite{santambrogio2015optimal} assert that there exists a pair of OT potentials, $(\varphi_0, \varphi_1)$, for $\mathsf{OT}_{\rB_1^{\intercal}v-\rB_0^{\intercal}u}(\nu_0,\nu_1)$ satisfying $\varphi_1(y) = \inf_{x\in\mathcal X_0}\left\{ c_{u,v}(x,y)-\varphi_0(x)\right\}$ and  
$\varphi_0(x) = \inf_{y\in\mathcal X_1}\left\{c_{u,v}(x,y)-\varphi_1(y)\right\}$ for every $(x,y)\in\mathcal X_0\times \mathcal X_1$ where $c_{u,v}$ is the cost function corresponding to $\rB_1^{\intercal}v-\rB_0^{\intercal} u$.

Letting $(\tilde \varphi_0,\tilde \varphi_1)= ( \varphi_0-\sup_{\mathcal X_0} \varphi_0 +\frac 12 \|c_{u,v}\|_{\infty,\mathcal X_0\times\mathcal X_1},\varphi_1+\sup_{\mathcal X_0} \varphi_0 -\frac 12 \|c_{u,v}\|_{\infty,\mathcal X_0\times\mathcal X_1})$,  
\[
    \tilde \varphi_1(y) = \inf_{x\in\mathcal X_0}\{ c_{u,v}(x,y)-\varphi_0(x)\} +\sup_{\mathcal X_0} \varphi_0 -\frac 12\|c_{u,v}\|_{\infty,\mathcal X_0\times\mathcal X_1} = \inf_{x\in\mathcal X_0}\{ c_{u,v}(x,y)-\tilde \varphi_0(x)\} 
\]
and, similarly, $\tilde \varphi_0(x)=\inf_{y\in\mathcal X_1}\{c_{u,v}(x,y)-\tilde \varphi_1(y)\}$ for each $(x,y)\in\mathcal X_0\times \mathcal X_1$. It follows that $\sup_{\mathcal X_0}\tilde \varphi_0=\frac 12 \|c_{u,v}\|_{\infty,\mathcal X_0\times \mathcal X_1}$ and, following the proof of \cref{lem:potentialBounds}, 
we obtain that
    $-\frac 32\|c_{u,v}\|_{\infty,\mathcal X_0\times \mathcal X_1} \leq  \tilde \varphi_1(y) \leq\frac 12 \|c_{u,v}\|_{\infty,\mathcal X_0\times \mathcal X_1}$ and, as such, $-\frac 32\|c_{u,v}\|_{\infty,\mathcal X_0\times \mathcal X_1} \leq  \tilde \varphi_0(x) \leq\frac 12 \|c_{u,v}\|_{\infty,\mathcal X_0\times \mathcal X_1} $ for each $(x,y)\in\mathcal X_0\times \mathcal X_1$.

It follows that there exists a choice of OT potentials, $(\varphi_0^{(\nu_0,\nu_1)}, \varphi_1^{(\nu_0,\nu_1)})$, for $\mathsf{OT}_{\rB_1^{\intercal}v-\rB_0^{\intercal}u}(\nu_0,\nu_1)$ admitting the above estimates;
let  $(\varphi_0^{(\nu_0',\nu_1')}, \varphi_1^{(\nu_0',\nu_1')})$ be OT potentials for $\mathsf{OT}_{\rB_1^{\intercal}v-\rB_0^{\intercal}u}(\nu_0',\nu_1')$ satisfying these same bounds. Then, 
\[
\begin{aligned}
\mathsf{OT}_{\rB_1^{\intercal}v-\rB_0^{\intercal}u}(\nu_{0},\nu_{1})-   \mathsf{OT}^{}_{\rB_1^{\intercal}v-\rB_0^{\intercal}u}(\nu_0',\nu_1')&\\&\hspace{-4em}\leq \int \varphi_0^{(\nu_0,\nu_1)} d\nu_0+ \int \varphi_1^{(\nu_0,\nu_1)} d\nu_1 - \int \varphi_0^{(\nu_0,\nu_1)} d\nu_0'-\int \varphi_1^{(\nu_0,\nu_1)} d\nu_1',
\\
\mathsf{OT}_{\rB_1^{\intercal}v-\rB_0^{\intercal}u}(\nu_{0},\nu_{1})-   \mathsf{OT}^{}_{\rB_1^{\intercal}v-\rB_0^{\intercal}u}(\nu_0',\nu_1')&\\&\hspace{-4em}\geq \int \varphi_0^{(\nu_0',\nu_1')} d\nu_0+ \int \varphi_1^{(\nu_0',\nu_1')} d\nu_1 - \int \varphi_0^{(\nu_0',\nu_1')} d\nu_0'-\int \varphi_1^{(\nu_0',\nu_1')} d\nu_1',
\end{aligned}
\]
so that 
\[
\begin{aligned}
\left|\mathsf{OT}_{\rB_1^{\intercal}v-\rB_0^{\intercal}u}(\nu_{0},\nu_{1})-   \mathsf{OT}^{}_{\rB_1^{\intercal}v-\rB_0^{\intercal}u}(\nu_0',\nu_1')\right| &\\&\hspace{-17em}\leq \max\left\{ \left|\int \varphi_0^{(\nu_0,\nu_1)} d(\nu_0-\nu_0')+ \int \varphi_1^{(\nu_0,\nu_1)} d(\nu_1-\nu_1')\right|,\right.&\\&\hspace{-4em}\left.\left|\int \varphi_0^{(\nu_0',\nu_1')} d(\nu_0-\nu_0')+ \int \varphi_1^{(\nu_0',\nu_1')} d(\nu_1-\nu_1')\right|\right\}.
\end{aligned}
\]
Applying the bounds on the potentials and the fact that $(u,v)\in\rB_0\mathcal B\times \rB_1\mathcal B$, we obtain
\[
\left|\mathsf{OT}_{\rB_1^{\intercal}v-\rB_0^{\intercal}u}(\nu_{0},\nu_{1})-   \mathsf{OT}^{}_{\rB_1^{\intercal}v-\rB_0^{\intercal}u}(\nu_0',\nu_1')\right| \leq C_{\rB_0,\rB_1}\left(\|w_{\nu_0}-w_{\nu_0'}\|_1+\|w_{\nu_1}-w_{\nu_1'}\|_1\right),
\]
by complete analogy with the end of the proof of \cref{lem:EOTLipschitz}.
\qed

\subsection{Proof of \texorpdfstring{\cref{lem:differenceQuotientUpperBound}}{Lemma 6}}
\label{proof:lem:differenceQuotientUpperBound}
Fix any $\pi^{\star}\in \Pi(\mu_0,\mu_1)$ solving $\mathsf{GW}_p(\mu_{0},\mu_{1})$ and let 
\[
\mathcal R_{\pi^{\star}}\coloneqq \left\{ (i,j)\in[N_0]\times [N_1]: \pi^{\star}\left(\left\{\left(x^{(i)}_0,x_1^{(j)}\right)\right\}\right)=0\right\}.
\]
Fix an arbitrary finite signed measure, $\gamma$, on $\supp(\mu_0)\times \supp(\mu_1)$ with marginals $\nu_0-\mu_0$ and $\nu_1-\mu_1$ which satisfies the property that $\gamma\left(\left\{\left(x^{(i)}_0,x_1^{(j)}\right)\right\}\right)\geq 0$  for every $(i,j)\in\mathcal R_{\pi^{\star}}$ (such a measure clearly exists, e.g., take any coupling of $(\nu_0,\nu_1)$ and take its difference with $\pi^{\star}$), whereby $\pi^{\star}+t\gamma \in \Pi(\mu_{0,t},\mu_{1,t})$ for every $t\in(0,1)$ sufficiently small.
Indeed, $\pi^{\star}+t\gamma$ always has the proper marginals, and proper normalization, but may fail to be a nonnegative measure unless $t$ is sufficiently small.   
It follows that 
\[
\begin{aligned}
       \mathsf{GW}_p(\mu_{0,t},\mu_{1,t})^p-\mathsf{GW}_p(\mu_{0},\mu_{1})^p &\leq \int \Delta d(\pi^{\star}+t\gamma)\otimes  (\pi^{\star}+t\gamma)-\int \Delta d\pi^{\star}\otimes \pi^{\star}
        \\
        &=2t\int \Delta d\pi^{\star}\otimes \gamma+t^2\int \Delta d\gamma \otimes \gamma
\end{aligned}
\]
for all sufficiently small $t$ and so 
\begin{equation}
\label{eq:expansionGWDifference}
    \limsup_{t\downarrow 0}t^{-1}\left(\mathsf{GW}_p(\mu_{0,t},\mu_{1,t})^p-\mathsf{GW}_p(\mu_{0},\mu_{1})^p\right) \leq 2\int \Delta d\pi^{\star}\otimes \gamma
\end{equation}
for any $\gamma$ as above. 
We now minimize the term on the right hand side over valid choices of $\gamma$. To this end, let $\mathrm{C}\in\mathbb R^{N_0\times N_1}$ satisfy  $\mathrm{C}_{ij}=2\int \Delta\left( x_0^{(i)},x_1^{(j)},x,y \right)d\pi^{\star}(x,y)$ for $(i,j)\in [N_0]\times [N_1]$ and consider the linear program  
\begin{equation}
    \label{eq:discreteLP}
    \begin{aligned}
    \inf_{\mathrm{G}\in \mathbb R^{N_0\times N_1}} \quad \langle \mathrm{C},\mathrm{G}\rangle_{\mathrm F}& \\
    \text{subject to} \hspace{0.95em} \quad
    \mathrm{G}\mathbf 1_{N_1} &= w_{\nu_0}-w_{\mu_0}, \\
    \mathrm{G}^{\intercal}\mathbf 1_{N_0} &= w_{\nu_1}-w_{\mu_1}, \\
    \mathrm{G}_{ij}&\geq 0, \;\forall\;(i,j)\in\mathcal R_{\pi^{\star}},
\end{aligned}
\end{equation}
where $\mathbf 1_{N_i}$ is the vector of all $1$'s of length $N_i$ for $i=0,1$. Note that the matrices $\mathrm{G}$ satisfying the constraint set in \eqref{eq:discreteLP} correspond to weight vectors of the finite signed measures $\gamma$ discussed above up to arranging them in matrix form. We now analyze the  dual problem to \eqref{eq:discreteLP}, which will enable us to derive the desired upper bound on the $\limsup$. The dual problem is given by
\begin{equation}
    \label{eq:discreteLPDual}
\begin{aligned}
    \sup_{\substack{r\in\mathbb R^{N_0},s\in\mathbb R^{N_1}\\ (S_{ij})_{ (i,j)\in \mathcal R_{\pi^{\star}}}}} \quad r^{\intercal}(w_{\nu_0}-w_{\mu_0})&+ s^{\intercal}(w_{\nu_1}-w_{\mu_1}) \\
    \text{subject to} \hspace{2.65em} \quad
    r_i+s_j&=\mathrm{C}_{ij},\;\forall\;(i,j)\not\in\mathcal R_{\pi^{\star}}, \\
    r_i+s_j+S_{ij}&=\mathrm{C}_{ij},\;\forall\;(i,j)\in\mathcal R_{\pi^{\star}}, \\
    S_{ij}&\geq 0.
\end{aligned}
\end{equation}
Remark that any feasible pair $(r,s)$ in \eqref{eq:discreteLPDual} satisfies $r_i+s_j\leq \mathrm{C}_{ij}$ with equality for all indices $(i,j)$ in the complement of $\mathcal R_{\pi^{\star}}$, i.e., for which $\pi^{\star}\left(\left\{\left(x^{(i)}_0,x_1^{(j)}\right)\right\}\right)>0$. We now show that any feasible $(r,s)$ can be identified with a pair of OT potentials for a particular OT problem. 

Since $\pi^{\star}$ solves $\mathsf{GW}_p(\mu_0,\mu_1)$, $x^{\star}=w_{\pi^{\star}}$ solves $\inf_{x\in\mathcal K}\frac 12 x^{\intercal}\rM x$. \cref{thm:variationalForm} further yields that $x^{\star}\in\argmin_{x\in\mathcal K}\left\{ (\rB_1^{\intercal}\rB_1x^{\star}-\rB_0^{\intercal}\rB_0x^{\star})^{\intercal}x \right\}$ where, by definition, for any $\pi\in\Pi(\mu_0,\mu_1)$,  
\[
(\rB_1^{\intercal}\rB_1w_{\pi^{\star}}-\rB_0^{\intercal}\rB_0w_{\pi^{\star}})^{\intercal}w_{\pi} = (w_{\pi^{\star}})^{\intercal}\rM w_{\pi} = 2\int \Delta d\pi^{\star}\otimes \pi.
\]
That is, $\rB_1^{\intercal}\rB_1w_{\pi^{\star}}-\rB_0^{\intercal}\rB_0w_{\pi^{\star}}$ corresponds to a vectorized version of the matrix $\mathrm C$ defined previously. It follows from the complementary slackness conditions for linear programming (cf. e.g., Theorem 4.5 in \cite{bertsimas1997introduction}) that $(\varphi_0,\varphi_1)$ is a pair of OT potentials for $\mathsf{OT}_{2\int \Delta d\pi^{\star}}(\mu_0,\mu_1)$ if and only if 
\[
    \varphi_0(x_0^{(i)})+\varphi_1(x_1^{(j)})\leq 2\int\Delta(x_0^{(i)},x_1^{(j)},x,y)d\pi^{\star}(x,y)=\mathrm{C}_{ij}\text{ for all }(i,j)\in[N_0]\times[N_1], 
\]
with equality $\pi^{\star}$-a.e. (since $\pi^{\star}$ is an OT plan for $\mathsf{OT}_{2\int \Delta d\pi^{\star}}(\mu_0,\mu_1)$), i.e., for every $(i,j)\not\in \mathcal R_{\pi^{\star}}$. It follows that every feasible pair $(r,s)$ in \eqref{eq:discreteLPDual} corresponds to the vector of values of some pair of OT potentials for $\mathsf{OT}_{2\int \Delta d\pi^{\star}}(\mu_0,\mu_1)$. 

Applying \cref{lem:potentialBounds}, for each feasible pair $(r,s)$ for \eqref{eq:discreteLPDual}, there is a constant $a\in\mathbb R$ depending on $(r,s)$ for which $(\tilde r,\tilde s)=(r+a,s-a)$ satisfies 
\[
    \tilde r^{\intercal} (w_{\nu_0}-w_{\mu_0}) + \tilde s^{\intercal} (w_{\nu_1}-w_{\mu_1}) =  r^{\intercal} (w_{\nu_0}-w_{\mu_0}) + s^{\intercal} (w_{\nu_1}-w_{\mu_1}),   
\]
as $\bm 1_{N_0}^{\intercal}(w_{\nu_0}-w_{\mu_0})=0$, $\bm 1_{N_1}^{\intercal}(w_{\nu_1}-w_{\mu_1})=0$, and $\max\{\|\tilde r\|_{\infty},\|\tilde s\|_{\infty}\}\leq \frac{3}{2} \max_{(i,j)\in[N_0]\times [N_1]}|\mathrm{C}_{ij}|$.  It follows that \eqref{eq:discreteLPDual} admits a solution, since it can be solved in the auxiliary variables $(\tilde r,\tilde s)$ defined above which are bounded. Conclude that \eqref{eq:discreteLP} also admits a solution, $\mathrm G^{\star}$, and that the optimal value of that problem coincides with that of \eqref{eq:discreteLPDual} by strong duality, see Theorem 4.4 in \cite{bertsimas1997introduction}. As noted above, $\mathrm{G}^{\star}$ corresponds to the matrix version of the weight vector of some finite signed measure $\gamma^{\star}$ with marginals $(\nu_0-\mu_0,\nu_1-\mu_1)$ and satisfying $\gamma^{\star}(\{(x_0^{(i)},x_1^{(j)})\})\geq 0$ if $(i,j)\in\mathcal R_{\pi^{\star}}$. 

Returning to \eqref{eq:expansionGWDifference}, we   insert this choice of $\gamma^{\star}$ to obtain that 
\[
    \limsup_{t\downarrow 0}t^{-1}\left(
       \mathsf{GW}_p(\mu_{0,t},\mu_{1,t})^p-\mathsf{GW}_p(\mu_{0},\mu_{1})^p\right) \leq 2\int \Delta d\pi^{\star}\otimes \gamma^{\star},
\]
where the right hand side is the optimal value in \eqref{eq:discreteLP} which we have shown coincides with the optimal value in \eqref{eq:discreteLPDual} and whose feasible set corresponds to the set of all vectors $(v_{\varphi_0},v_{\varphi_1})$ where $(\varphi_0,\varphi_1)$ is any choice of OT potentials for $\mathsf{OT}_{c_{\pi^{\star}}}$ where $c_{\pi^{\star}}(x,y) = 2\int \Delta(x,y,x',y')d\pi^{\star}(x',y')$. In the notation of  \cref{lem:differenceQuotientUpperBound} therefore, 
\[
\begin{aligned}
    \limsup_{t\downarrow 0}t^{-1}\left(
       \mathsf{GW}_p(\mu_{0,t},\mu_{1,t})^p-\mathsf{GW}_p(\mu_{0},\mu_{1})^p\right)&\\&\hspace{-2em} \leq \sup_{(\varphi_0,\varphi_1)\in\mathcal D_{\pi^{\star}}}\left\{\int \varphi_0d(\nu_0-\mu_0)+\int \varphi_1d(\nu_1-\mu_1)\right\}.
\end{aligned}
\]
As this argument does not depend on the choice of $\pi^{\star}$, the claimed bound follows by tightening the upper bound by taking the infimum over all optimal plans for $\mathsf{GW}_p(\mu_0,\mu_1)$. 
\qed

\subsection{Proof of \texorpdfstring{\cref{lem:differenceQuotientLowerBound}}{Lemma 7}}
\label{proof:lem:differenceQuotientLowerBound}
    Fix a sequence $t_n\downarrow 0$ and let $\pi^{\star}_{t_n}$ be any optimal coupling for $\mathsf{GW}_p(\mu_{0,t_n},\mu_{1,t_n})$. %
    As 
    \[
        \mathsf{GW}_p(\mu_{0,t_n},\mu_{1,t_n})^p = \inf_{\substack{x\in\mathcal K(\mu_{0,t_n},\mu_{1,t_n})}} \frac 12 x^{\intercal} \rM x,  
    \]
    where $\mathcal K(\mu_{0,t_n},\mu_{1,t_n})\coloneqq \left\{ x\in \mathbb R^{N_0N_1}:\mathrm{A}x=\begin{pmatrix}
       w_{\mu_{0,t_n}}\\w_{\mu_{1,t_n}} 
    \end{pmatrix},x\geq 0\right\}$ and $\rM$ and $\rA$ are fixed, Theorem 3 in \cite{klatte1985Lipschitz} implies that, for all $n$ sufficiently large,  
    \begin{equation}
    \label{eq:LipschitzPropertyGap}
        \argmin_{\substack{x\in\mathcal K(\mu_{0,t_n},\mu_{1,t_n})}} \frac 12 x^{\intercal} \rM x\subset \argmin_{\substack{x\in\mathcal K(\mu_{0},\mu_{1})}} \frac 12 x^{\intercal} \rM x+ Ct_n\left(\|w_{\nu_0}\mspace{-1mu}-\mspace{-1mu}w_{\mu_0}\|\mspace{-1mu}+\mspace{-1mu}\|w_{\nu_1}\mspace{-1mu}-\mspace{-1mu}w_{\mu_1}\|\right)\mathbb B
    \end{equation}
    where $C>0$ is a fixed constant, $\mathbb B$ is the closed ball of radius $1$ centered at $0$, and the sum is understood in the sense of Minkowski. %
    It follows that, for each $x^{\star}_{t_n} = w_{\pi^{\star}_{t_n}}$, there exists some choice of $\bar x_{t_n} \in \argmin_{x\in\mathcal K(\mu_0,\mu_1)} \frac 12 x^{\intercal}\rM x$ for which $\|x^{\star}_{t_n}-\bar x_{t_n}\|=O(t_n)$ as $t_n\downarrow 0$. 
    
    Applying \cref{thm:variationalForm}, we see that $\bar x_{t_n}$ also solves the linear program $\inf_{x\in\mathcal K}\bar x_{t_n}^{\intercal}\rM x$, i.e., it is a solution of $\mathsf{OT}_{c_n}(\mu_0,\mu_1)$ where $c_n(x,y)=2\int \Delta(x,y,x',y')d\bar \pi_{t_n}(x',y')$ is the cost function induced by $\rM\bar x_{t_n}$ and $\bar \pi_{t_n}\in\Pi(\mu_0,\mu_1)$ is the coupling associated with $\bar x_{t_n}$. By the complementary slackness conditions for linear programming, Theorem 4.5 in \cite{bertsimas1997introduction}, any pair of OT potentials $(\varphi_{0,n},\varphi_{1,n})$ for $\mathsf{OT}_{c_n}(\mu_0,\mu_1)$ must satisfy 
    $
       \varphi_{0,n}(x_0^{(i)})+ \varphi_{1,n}(x_1^{(j)})\leq c_n(x_0^{(i)},x_1^{(j)}) \text{ for all $(i,j)\in[N_0]\times [N_1]$ with equality if } \bar \pi_{t_n}(\{(x_0^{(i)},x_1^{(j)})\})>0.   
    $
    We apply this bound to lower bound the following expression,
    \begin{equation}
    \label{eq:GWLowerBoundIntermediate}
    \begin{aligned}
        \mathsf{GW}_p(\mu_{0,t_n},\mu_{1,t_n})^p-\mathsf{GW}_p(\mu_0,\mu_{1})^p&\\&\hspace{-2em}= \int \Delta d\pi^{\star}_{t_n}\otimes \pi^{\star}_{t_n} - \int \Delta d\bar \pi_{t_n}\otimes \bar \pi_{t_n} 
        \\
        &\hspace{-2em}=\int \Delta d(\pi^{\star}_{t_n}-\bar\pi_{t_n})\otimes (\pi^{\star}_{t_n}-\bar\pi_{t_n}) + 2\int \Delta d (\pi^{\star}_{t_n}-\bar \pi_{t_n})\otimes \bar \pi_{t_n}.
   \end{aligned} 
    \end{equation}
    Notably, the previous discussion implies that 
    \[
    \begin{aligned}
2\int \Delta d (\pi^{\star}_{t_n}-\bar \pi_{t_n})\otimes \bar \pi_{t_n}&=-\int \varphi_{0,n}(x)+\varphi_{1,n}(y)d\bar \pi_{t_n}(x,y) + \int c_n(x,y) d \pi^{\star}_{t_n}(x,y)
\\
&\geq \int \varphi_{0,n}(x)+\varphi_{1,n}(y)d(\pi^{\star}_{t_n}-\bar \pi_{t_n})(x,y)
\\
&=t_n \int \varphi_{0,n}d(\nu_0-\mu_0)+t_n\int \varphi_{1,n}d(\nu_1-\mu_1).
    \end{aligned} 
    \]
Furthermore, we have the bound 
\[
    \left|\int \Delta d(\pi^{\star}_{t_n}-\bar\pi_{t_n})\otimes (\pi^{\star}_{t_n}-\bar\pi_{t_n}) \right|\leq \|\rM\|_{\mathrm{op}}\|x^{\star}_{t_n}-\bar x_{t_n}\|^2=o(t_n)\text{ as }t_n\downarrow 0,  
    \]
    so that 
    \[  t_n^{-1}\left(\mathsf{GW}_p(\mu_{0,t_n},\mu_{1,t_n})^p-\mathsf{GW}_p(\mu_{0},\mu_{1})^p\right)
       \geq \int \varphi_{0,n}d(\nu_0-\mu_0)+\int \varphi_{1,n}d(\nu_1-\mu_1) +o(1),
    \] 
    where we recall that  $(\varphi_{0,n},\varphi_{1,n})$ is any pair of potentials for $\mathsf{OT}_{c_n}(\mu_0,\mu_1)$. We may thus tighten the lower bound by taking the supremum over all such potentials,
      \[
    \begin{aligned}t_n^{-1}\left(\mathsf{GW}_p(\mu_{0,t_n},\mu_{1,t_n})^p-\mathsf{GW}_p(\mu_{0},\mu_{1})^p\right)\hspace{-4em}& \\ 
       &\geq  \sup_{(\varphi_0,\varphi_1)\in\mathcal D^{\star}_{\bar \pi_{t_n}}}\left\{\int \varphi_{0}d(\nu_0-\mu_0)+\int \varphi_{1}d(\nu_1-\mu_1)\right\} +o(1).
      \end{aligned} 
       \]
       We conclude by observing that $\bar \pi_{t_n}$ was  a solution of $\mathsf{GW}_p(\mu_0,\mu_1)$ so that, taking the infimum over all such solutions, 
       \[
       \begin{aligned}
       t_n^{-1}\left(\mathsf{GW}_p(\mu_{0,t_n},\mu_{1,t_n})^p-\mathsf{GW}_p(\mu_{0},\mu_{1})^p\right)\hspace{-4em}& \\ 
       &\geq\inf_{\pi\in\Pi^{\star}}  \sup_{(\varphi_0,\varphi_1)\in\mathcal D^{\star}_{\pi}}\left\{\int \varphi_{0}d(\nu_0-\mu_0)+\int \varphi_{1}d(\nu_1-\mu_1)\right\} +o(1).
    \end{aligned} 
    \]
    Conclude that 
 \[
      \begin{aligned}\liminf_{t\downarrow 0}t^{-1}\left(\mathsf{GW}_p(\mu_{0,t},\mu_{1,t})^p-\mathsf{GW}_p(\mu_{0},\mu_{1})^p\right)&\\&\hspace{-3em}\geq\inf_{\pi\in\Pi^{\star}}  \sup_{(\varphi_0,\varphi_1)\in\mathcal D^{\star}_{\pi}}\left\{\int \varphi_{0}d(\nu_0-\mu_0)+\int \varphi_{1}d(\nu_1-\mu_1)\right\},
    \end{aligned} 
    \]
    proving the claimed result.

\subsection{Proof of \texorpdfstring{\cref{lem:differenceQuotientUpperBoundEOT}}{Lemma 8}}
\label{proof:lem:differenceQuotientUpperBoundEOT}

Fix any $\pi^{\star}\in \Pi(\mu_0,\mu_1)$ solving $\mathsf{EGW}_p^{\varepsilon}(\mu_{0},\mu_{1})$. By \cref{thm:variationalForm}, we have that $w_{\pi^{\star}}$ solves the discrete EOT problem with cost vector $c = \rB_1^{\intercal}\rB_1 w_{\pi^{\star}}-\rB_0^{\intercal}\rB_0 w_{\pi^{\star}}=\rM w_{\pi^{\star}}$ and marginals $(\mu_0,\mu_1)$. Now, $\|c\|_{\infty}\leq \|\rM\|_{\mathrm{op}}\|w_{\pi^{\star}}\|_2\leq \|\rM\|_{\mathrm{op}}$, using the fact that $\|\cdot\|_{\infty}\leq \|\cdot\|_2\leq \|\cdot\|_1$. It follows  from \cref{lem:EOTPotentialBounds} that 
\begin{equation}
\label{eq:lowerBoundPlans}
                {\pi^{\star}}\left(\{(x,y)\}\right)\geq  e^{\frac{-4\|\rM\|_{\mathrm{op}}}{\varepsilon}}\mu_0(\{x\}) \mu_1(\{y\})\geq l_{\min}>0\text{  for each $(x,y)\in\mathcal X_0\times \mathcal X_1$,}           
\end{equation}
where we set $l_{\min}\coloneqq e^{\frac{-4\|\rM\|_{\mathrm{op}}}{\varepsilon}}\inf_{(x,y)\in\mathcal X_0\times \mathcal X_1}\mu_0(\{x\}) \mu_1(\{y\})$ which is nonzero since $\mu_0,\mu_1$ assign positive mass to each point in $\mathcal X_0,\mathcal X_1$ which have finite cardinality.

Now, let $\gamma$ be any finite signed measure, on $\supp(\mu_0)\times \supp(\mu_1)$ with marginals $\nu_0-\mu_0$ and $\nu_1-\mu_1$. It follows from the above deliberations that  $\pi^{\star}+t\gamma \in \Pi(\mu_{0,t},\mu_{1,t})$ and assigns positive measure to every pair $(x,y)\in\mathcal X_0\times \mathcal X_1$ for every $t\in[0,1)$ sufficiently small. Letting $\Delta(x,y,x',y')\coloneqq \left|\kappa_0(x,x')-\kappa_1(y,y')\right|^p$, we have that
\begin{equation}
\label{eq:differenceExpansion}
\begin{aligned}
       &\mathsf{EGW}_p^{\varepsilon}(\mu_{0,t},\mu_{1,t})-\mathsf{EGW}_p^{\varepsilon}(\mu_{0},\mu_{1}) \\
       &\leq \int \Delta d(\pi^{\star}+t\gamma)\otimes  (\pi^{\star}+t\gamma)-\int \Delta d\pi^{\star}\otimes \pi^{\star}
       \\
       &-\varepsilon\left(\mathsf H_{\pi^{\star}+t\gamma} -\mathsf H_{\mu_{0,t}}-\mathsf H_{\mu_{1,t}}\right) + \varepsilon\left(\mathsf H_{\pi^{\star}} -\mathsf H_{\mu_{0}}-\mathsf H_{\mu_{1}}\right)
\end{aligned}
\end{equation}
provided that $t$ is sufficiently small  that all terms are well-defined.

Note that, for any probability measure $\eta$ with full support on $\mathcal X_0$ and any finite signed measure $\chi$ with total mass zero  on $\mathcal X_0$, $\mathsf H_{\eta+t\chi}$ is well-defined for all $t\in[0,1)$ sufficiently small and, by the mean value theorem, for all such $t$,  
\[
\begin{aligned}
-\mathsf H_{\eta+t\chi}+\mathsf H_{\eta} &= t\sum_{x\in\mathcal X_0}\left( \chi(\{x\})\log\left((\eta+s_t\chi)(\{x\})\right) + \chi(\{x\})\right)
\\
&= t\sum_{x\in\mathcal X_0}\chi(\{x\})\log\left((\eta+s_t\chi)(\{x\})\right)
\end{aligned}
\]
for some $s_t\in [0,t]$ since $\chi$ sums to $0$. An analogous expansion evidently holds if the base measure is defined on $\mathcal X_1$ or $\mathcal X_0\times \mathcal X_1$. It follows from \eqref{eq:differenceExpansion} and the continuity of the $\log$ away from $0$ that  
\begin{equation}
\label{eq:limsupEGW}
\begin{aligned}
&\limsup_{t\downarrow 0}t^{-1}\left(\mathsf{EGW}_p^{\varepsilon}(\mu_{0,t},\mu_{1,t})-\mathsf{EGW}_p^{\varepsilon}(\mu_{0},\mu_{1})\right) 
       \\&\leq 2\int \Delta d\gamma\otimes  \pi^{\star}
       +\varepsilon \sum_{(x,y)\in\mathcal X_0\times \mathcal X_1} \gamma(\{(x,y)\})\log\left(\pi^{\star}(\{(x,y)\})\right)
       \\
       &-\varepsilon \sum_{x\in\mathcal X_0}(\nu_0-\mu_0)(\{x\})\log\left(\mu_0(\{x\})\right)-\varepsilon \sum_{y\in\mathcal X_1}(\nu_1-\mu_1)(\{y\})\log\left(\mu_1(\{y\})\right).
\end{aligned}
\end{equation}
We now derive the choice of $\gamma$ which minimizes the right hand side subject to the marginal constraints. To this end, let $\mathrm{C}\in\mathbb R^{N_0\times N_1}$ be such that  $\mathrm{C}_{ij}=2\int \Delta\left( x_0^{(i)},x_1^{(j)},x,y \right)d\pi^{\star}(x,y)+\varepsilon \log\left(\pi^{\star}(\{(x_0^{(i)},x_1^{(j)})\})\right)$ for $(i,j)\in [N_0]\times [N_1]$ and consider the linear program  
\begin{equation}
    \label{eq:discreteLPEOT}
    \begin{aligned}
    \inf_{\mathrm{G}\in \mathbb R^{N_0\times N_1}} \quad \langle \mathrm{C},\mathrm{G}\rangle_{\mathrm F}& \\
    \text{subject to} \hspace{0.95em} \quad
    \mathrm{G}\bm 1_{N_1} &= w_{\nu_0}-w_{\mu_0}, \\
    \mathrm{G}^{\intercal}\bm 1_{N_0} &= w_{\nu_1}-w_{\mu_1}.
\end{aligned}
\end{equation}
The dual problem to \eqref{eq:discreteLPEOT} is given by
\begin{equation}
    \label{eq:discreteLPDualEOT}
\begin{aligned}
    \sup_{\substack{u\in\mathbb R^{N_0},v\in\mathbb R^{N_1}}} \quad u^{\intercal}(w_{\nu_0}-w_{\mu_0})&+ v^{\intercal}(w_{\nu_1}-w_{\mu_1}) \\
    \text{subject to} \hspace{2.65em} \quad
    u_i+v_j&=\mathrm{C}_{ij},\;\forall\;(i,j)\in[N_0]\times[N_1].
\end{aligned}
\end{equation}
The constraint in \eqref{eq:discreteLPDualEOT} can be written explicitly as 
\[ \pi^{\star}(\{(x_0^{(i)},x_1^{(j)})\})=
    e^{\frac{u_i+v_j- 2\int \Delta\left( x_0^{(i)},x_1^{(j)},x,y \right)d\pi^{\star}(x,y)}{\varepsilon}}, 
\]
so that 
\begin{equation}
\label{eq:optEGWDensity}
\frac{d\pi^{\star}}{d\mu_0\otimes \mu_1}(x_0^{(i)},x_1^{(j)})=
    e^{\frac{u_i+v_j- 2\int \Delta\left( x_0^{(i)},x_1^{(j)},x,y \right)d\pi^{\star}(x,y)-\varepsilon\log\left((w_{\mu_0})_i\right)-\varepsilon\log\left((w_{\mu_1})_j\right)}{\varepsilon}}. 
\end{equation}
Since $\pi^{\star}$ solves $\mathsf{EGW}_p^{\varepsilon}(\mu_0,\mu_1)$ by assumption, it follows from  \cref{thm:variationalForm} that 
\begin{equation}
\label{eq:solnVarFormProofEGW}
   {w_{\pi^{\star}}}\in \argmin_{x\in\mathcal K}\left\{w_{\pi^{\star}}^{\intercal}\rM x-\varepsilon \mathsf H(x)\right\}.  
\end{equation}
For any coupling $\pi\in \Pi(\mu_0,\mu_1)$, let $z_{\mu_0}\in\mathbb R^{N_0N_1},z_{\mu_1}\in\mathbb R^{N_0N_1}$ be such that 
\[
\begin{aligned}
z_{\mu_0}^{\intercal} w_{\pi}\coloneqq
    \sum_{(x_0,x_1)\in\mathcal X_0\times \mathcal X_1} \pi(\{(x_0,x_1)\})\log(\mu_0(\{x_0\})) &= \sum_{x_0\in\mathcal X_0} \mu_0(\{x_0\})\log(\mu_0(\{x_0\})) = -\mathsf H_{\mu_0},
    \\
 z_{\mu_1}^{\intercal} w_{\pi}\coloneqq  \sum_{(x_0,x_1)\in\mathcal X_0\times \mathcal X_1} \pi(\{(x_0,x_1)\})\log(\mu_1(\{x_1\})) &= \sum_{x_1\in\mathcal X_1} \mu_1(\{x_1\})\log(\mu_1(\{x_1\}))= -\mathsf H_{\mu_1}.
\end{aligned}
\]
Inserting these terms into \eqref{eq:solnVarFormProofEGW}, we obtain that 
\[ {w_{\pi^{\star}}}\in \argmin_{\pi\in\Pi(\mu_0,\mu_1)}\left\{w_{\pi^{\star}}^{\intercal}\rM w_{\pi}+\varepsilon z_{\mu_0}^{\intercal}w_{\pi}+\varepsilon z_{\mu_1}^{\intercal}w_{\pi}+\varepsilon \mathsf{KL}(\pi\|\mu_0\otimes \mu_1)\right\},  
\]
which is a discrete EOT problem with cost function 
\[
c^{\star}(x_0^{(i)},x_1^{(j)})= 2\int \Delta\left( x_0^{(i)},x_1^{(j)},x,y \right)d\pi^{\star}(x,y)+\varepsilon\log(\mu_0(\{x_0^{(i)}\}))+\varepsilon\log(\mu_1(\{x_1^{(j)}\}))
\]
so that \eqref{eq:optEGWDensity} and the characterization of EOT couplings in terms of EOT potentials, see \cref{sec:EOT}, imply that if $\varphi_0(x_0^{(i)})=u_i$ and $\varphi_1(x_1^{(j)})=v_j$, $(\varphi_0,\varphi_1)$ is a pair of EOT potentials for $\mathsf{EOT}_{c^{\star}}^{\varepsilon}(\mu_0,\mu_1)$. 
 
It follows that every feasible pair $(u,v)$ in \eqref{eq:discreteLPDualEOT} corresponds to a pair $(\varphi_0^{\pi^{\star}},\varphi_1^{\pi^{\star}})$ of EOT potentials for $\mathsf{EOT}_{c^{\star}}^{\varepsilon}(\mu_0,\mu_1)$ (which is unique up to additive constants) and hence all achieve the same value in the objective of \eqref{eq:discreteLPDualEOT} since $w_{\nu_0}-w_{\mu_0}$ and $w_{\nu_1}-w_{\mu_1}$ are vectors whose entries sum to $0$. Recalling the bounds on the potentials in  \cref{lem:EOTPotentialBounds}, we see that the dual problem admits a solution and hence strong duality, Theorem 4.4 in \cite{bertsimas1997introduction}, applies. That is, the optimal value in \eqref{eq:discreteLPEOT} coincides with that in \eqref{eq:discreteLPDualEOT}. Note, also, that if $(\varphi_0^{\pi^{\star}},\varphi_1^{\pi^{\star}})$ is a pair of EOT potentials for $\mathsf{EOT}^{\varepsilon}_{c^{\star}}(\mu_0,\mu_1)$, then the pair $(\tilde \varphi_0^{\pi^{\star}},\tilde \varphi_1^{\pi^{\star}})$ defined by  $\tilde \varphi_i^{\pi^{\star}}(x_i)=\varphi_i^{\pi^{\star}}(x_i)-\varepsilon \log(\mu_i(\{x_i\}))  $ for $x_i\in\mathcal X_i$ ($i\in\{0,1\}$), is a pair of EOT potentials for $\mathsf{EOT}^{\varepsilon}_{c_{\pi^{\star}}}(\mu_0,\mu_1)$ as defined in the statement of the lemma. 
Applying this equivalence in  \eqref{eq:limsupEGW} yields that 
\[\begin{aligned}
    \limsup_{t\downarrow 0}t^{-1}\left(\mathsf{EGW}_p^{\varepsilon}(\mu_{0,t},\mu_{1,t})-\mathsf{EGW}_p^{\varepsilon}(\mu_{0},\mu_{1})\right)
   &\leq \inf_{\substack{\mathrm{G}\in \mathbb R^{N_0\times N_1}\\ \mathrm{G}\mathbf 1_{N_1} = w_{\nu_0}-w_{\mu_0}\\ \mathrm{G}^{\intercal}\mathbf 1_{N_0} = w_{\nu_1}-w_{\mu_1}}}  \langle \mathrm{C},\mathrm{G}\rangle_{\mathrm F}
    \\
    &= \int \tilde \varphi_0^{\pi^{\star}} d(\nu_0-\mu_0) +\int \tilde\varphi_1^{\pi^{\star}} d(\nu_1-\mu_1).
\end{aligned}
\]
As the initial choice of $\pi^{\star}$ was arbitrary, the above bound can be tightened by taking the infimum over all optimal couplings for $\mathsf{EGW}_p^{\varepsilon}(\mu_0,\mu_1)$, proving the claimed result.
\qed

\subsection{Proof of \texorpdfstring{\cref{lem:differenceQuotientLowerBoundEOT}}{Lemma 9}}
\label{proof:lem:differenceQuotientLowerBoundEOT}
   For any sequence $t_n\downarrow 0$, let $(x_{t_n})_{n\in\mathbb N}$ satisfy \[x_{t_n}\in\argmin_{x\in\mathcal K_{t_n}}\left\{ \frac 12 x^{\intercal}\rM x-\varepsilon \mathsf H(x)\right\}
   \]
    where 
    \[
    \mathcal K_{t_n} \coloneqq  \left\{ x\in\mathbb R^{N_0N_1}:\mathrm Ax=\begin{pmatrix}
       w_{\mu_0}+t_n\left(w_{\nu_0}-w_{\mu_0}\right)
       \\
       w_{\mu_1}+t_n\left(w_{\nu_1}-w_{\mu_1}\right)
       \end{pmatrix},x\geq 0 
       \right\}.
    \]
    Note first that, for any choice of $\bar x$ solving $\argmin_{x\in\mathcal K}\left\{ \frac 12 x^{\intercal}\rM x-\varepsilon \mathsf H(x)\right\}$ and vector $g$ for which $\mathrm Ag=\begin{pmatrix}
       w_{\nu_0}-w_{\mu_0}
       \\
       w_{\nu_1}-w_{\mu_1}\end{pmatrix}$, we have that $\bar x+t_n g\in \mathcal K_{t_n}$ and is positive for every $n$ sufficiently large (recalling the proof of \cref{lem:differenceQuotientUpperBoundEOT}). It follows that 
       \begin{equation}
       \label{eq:upperBoundEGWPlan}
       \frac 12 x^{\intercal}_{t_n}\rM x_{t_n}-\varepsilon \mathsf H(x_{t_n})
            \leq
            \frac 12 \left(\bar x+t_n g\right)^{\intercal}\rM \left(\bar x+t_n g\right)-\varepsilon \mathsf H\left(\bar x+t_n g\right)\to \min_{x\in\mathcal K} \left\{\frac 12 x^{\intercal}\rM x -\varepsilon \mathsf H(x)\right\}.   
       \end{equation}
       Now, fix a subsequence $t_{n'}$ and note that, since $x_{t_{n'}}$ is an element of the probability simplex on $\mathbb R^{N_0N_1}$ it admits a convergent subsequence $x_{t_{n''}}\to x_{0}$ by the Bolzano-Weierstrass theorem. Note that the relation $x_{t_{n''}}\in\mathcal K_{t_{n''}}$ passes to the limit so that $x_0\in\mathcal K$. From \eqref{eq:upperBoundEGWPlan} we have, moreover, that $x_0\in\argmin_{x\in\mathcal K} \left\{\frac 12 x^{\intercal}\rM x -\varepsilon \mathsf H(x)\right\}$.

       Now, leveraging the variational form from \cref{thm:variationalForm}, we have that 
        \[
        \begin{aligned}
        \mathsf{EGW}_p^{\varepsilon}(\mu_{0,t_{n''}},\mu_{1,t_{n''}})-\mathsf{EGW}_p^{\varepsilon}(\mu_{0},\mu_{1})\hspace{-3em}
        &\\&\geq \sup_{v\in\mathbb R^{r_1}}\left\{-\frac 12 \|v\|^2+\inf_{x\in\mathcal K_{t_{n''}}}\left\{\left( \rB_1^{\intercal}v-\rB_0^{\intercal} u_{t_{n''}} \right)^{\intercal}x-\varepsilon\sH(x)  \right\} \right\}
        \\
        &%
        -\sup_{v\in\mathbb R^{r_1}}\left\{-\frac 12 \|v\|^2+\inf_{x\in\mathcal K}\left\{\left( \rB_1^{\intercal}v-\rB_0^{\intercal} u_{t_{n''}} \right)^{\intercal}x-\varepsilon\sH(x)  \right\} \right\}
        \\&%
        +\varepsilon\left(\mathsf H_{\mu_{0,t_{n''}}}+\mathsf H_{\mu_{1,t_{n''}}}-\mathsf H_{\mu_{0}}-\mathsf H_{\mu_{1}}\right)
        \end{aligned}
        \]
        for $u_{t_{n''}}=\rB_0x_{t_{n''}}$ which solves the outer minimization in the variational formulation of $\mathsf{EGW}_p^{\varepsilon}(\mu_{0,t_{n''}},\mu_{1,t_{n''}})$. Further, $\rB_0x_{t_{n''}}\to \rB_0x_0\eqqcolon u_0$ which solves the outer minimization in the variational form of $\mathsf{EGW}_p^{\varepsilon}(\mu_{0},\mu_{1})$. Now, let $v_{t_{n''}}$ be the unique solution of \[
        \sup_{v\in\mathbb R^{r_1}}\left\{-\frac 12 \|v\|^2+\inf_{x\in\mathcal K}\left\{\left( \rB_1^{\intercal}v-\rB_0^{\intercal} u_{t_{n''}} \right)^{\intercal}x-\varepsilon\sH(x)  \right\} \right\},
        \]which is strongly concave. Then,
        \begin{equation}
        \label{eq:EGWDiffLower}
        \begin{aligned}
        \mathsf{EGW}_p^{\varepsilon}(\mu_{0,t_{n''}},\mu_{1,t_{n''}})-\mathsf{EGW}_p^{\varepsilon}(\mu_{0},\mu_{1})
       &\\ 
        &\hspace{-12em}\geq \inf_{x\in\mathcal K_{t_{n''}}}\left\{\left( \rB_1^{\intercal}v_{t_{n''}}-\rB_0^{\intercal} u_{t_{n''}} \right)^{\intercal}x-\varepsilon\sH(x)\right\}-\inf_{x\in\mathcal K}\left\{\left( \rB_1^{\intercal}v_{t_{n''}}-\rB_0^{\intercal} u_{t_{n''}} \right)^{\intercal}x-\varepsilon\sH(x) \right\}
        \\&\hspace{-12em}
        +\varepsilon\left(\mathsf H_{\mu_{0,t_{n''}}}+\mathsf H_{\mu_{1,t_{n''}}}-\mathsf H_{\mu_{0}}-\mathsf H_{\mu_{1}}\right)
        \\&\hspace{-12em}=\mathsf{EOT}_{\rB_1^{\intercal}v_{t_{n''}}-\rB_0^{\intercal} u_{t_{n''}}}^{\varepsilon}(\mu_{0,t_{n''}},\mu_{1,t_{n''}})-\mathsf{EOT}_{\rB_1^{\intercal}v_{t_{n''}}-\rB_0^{\intercal} u_{t_{n''}}}^{\varepsilon}(\mu_{0},\mu_{1}).
        \end{aligned}
        \end{equation}
        Applying the same logic as in the proof of \cref{lem:EOTLipschitz}, 
       \begin{equation}
       \label{eq:EOTDiffLower}
       \begin{aligned}
            \mathsf{EOT}_{\rB_1^{\intercal}v_{t_{n''}}-\rB_0^{\intercal} u_{t_{n''}}}^{\varepsilon}(\mu_{0,t_{n''}},\mu_{1,t_{n''}})-\mathsf{EOT}_{\rB_1^{\intercal}v_{t_{n''}}-\rB_0^{\intercal} u_{t_{n''}}}^{\varepsilon}(\mu_{0},\mu_{1})&
            \\
            &\hspace{-12em}\geq \int \varphi_{0,t_{n''}}^{(\mu_0,\mu_{1,t_{n''}})} d(\mu_{0,t_{n''}}-\mu_0) + \int \varphi_{1,t_{n''}}^{(\mu_{0},\mu_{1})} d(\mu_{1,t_{n''}}-\mu_1) 
            \\
            &\hspace{-12em} =t_{n''}\int \varphi_{0,t_{n''}}^{(\mu_0,\mu_{1,t_{n''}})} d(\nu_{0}-\mu_0) + t_{n''}\int \varphi_{1,t_{n''}}^{(\mu_{0},\mu_{1})} d(\nu_{1}-\mu_1), 
        \end{aligned} 
       \end{equation}
       where $(\varphi_{0,t_{n''}}^{(\mu_0,\mu_{1,t_{n''}})},\varphi_{1,t_{n''}}^{(\mu_0,\mu_{1,t_{n''}})})$ is any pair of $\mathsf{EOT}$ potentials for $\mathsf{EOT}_{\rB_1^{\intercal}v_{t_{n''}}-\rB_0^{\intercal} u_{t_{n''}}}^{\varepsilon}(\mu_{0},\mu_{1,t_{n''}})$ satisfying the conditions of \cref{lem:EOTPotentialBounds}  and $(\varphi_{0,t_{n''}}^{(\mu_0,\mu_1)},\varphi_{1,t_{n''}}^{(\mu_0,\mu_1)})$ is defined similarly. Recall that $u_{t_{n''}}=\rB_0 x_{t_{n''}}$ which converges to $u_0=\rB_0x_0$ so that $\|u_{t_{n''}}\|\leq B$ for some fixed $B\in\mathbb R$ and all $n''\in\mathbb N$ whereas $v_{t_{n''}}$ is the unique maximizer of the smooth strongly concave function 
      \[
            f_{t_{n''}}: v\in\mathbb R^{r_1}\mapsto -\frac 12 \|v\|^2 +\inf_{x\in\mathcal K}\left\{(\rB_1^{\intercal}v-\rB_0^{\intercal}u_{t_{n''}})^{\intercal}x-\varepsilon \mathsf H(x)\right\}.
      \]
       As $|f_{t_{n''}}(v)-f_0(v)|\leq \sup_{x\in\mathcal K}\left|u_{t_{n''}}^{\intercal}\rB_0x-u_{0}^{\intercal}\rB_0x \right|\leq \sup_{x\in\mathcal K}\|\rB_0x\| \|u_{t_{n''}}-u_0\|$ for each $v\in\mathbb R^{r_1}$, we see that 
         $f_{t_{n''}}$ converges uniformly to $f_0$. Since $-f_0$ is strongly convex, $\lim_{\|v\|\to \infty} -f_0(v)=\infty$ by Corollary 11.17 in \cite{bauschke2017convex}, so that Theorem 7.33 in \cite{rockafellar1998variational} can be applied (appealing to Proposition 7.15 and Exercise 7.32 from the same reference) to obtain that $v_{t_{n''}}$ converges to $v_0$, the unique solution of 
       $\sup_{v\in\mathbb R^{r_1}}\left\{-\frac 12 \|v\|^2+\inf_{x\in\mathcal K}\left\{\left( \rB_1^{\intercal}v-\rB_0^{\intercal} u_{0} \right)^{\intercal}x-\varepsilon\sH(x)  \right\} \right\}$. Note that since $u_0=\rB_0x_0$ where $x_0$ solves $\min_{x\in\mathcal K}\left\{\frac 12 x^{\intercal}\rM x -\varepsilon\mathsf H(x)\right\}$, \cref{thm:variationalForm} asserts that $(u,v)=(\rB_0x_0,\rB_1x_0)$ is a solution of the $\inf$-$\sup$ problem in the variational form and so $v_0=\rB_1 x_0$ by uniqueness.

 With this, $u_{t_{n''}}$ and $v_{t_{n''}}$ converge to $u_0$ and $v_0$ and $\|u_{t_{n''}}\|$ and $\|v_{t_{n''}}\|$ are uniformly bounded so that the various potentials are also uniformly bounded by \cref{lem:EOTPotentialBounds}. By the Bolzano-Weierstrass theorem, there exists a further subsequence $n'''$ and pairs $(\varphi_0,\varphi_1)$, $(\psi_0,\psi_1)$ for which $(\varphi_{0,t_{n'''}}^{(\mu_0,\mu_{1,t_{n'''}})},\varphi_{1,t_{n'''}}^{(\mu_0,\mu_{1,t_{n'''}})})\to (\varphi_0,\varphi_1)$ and $(\varphi_{0,t_{n'''}}^{(\mu_0,\mu_1)},\varphi_{1,t_{n'''}}^{(\mu_0,\mu_1)})\to (\psi_0,\psi_1)$ on $\mathcal X_0\times \mathcal X_1$. Since the potentials are characterized as solutions to the Schr{\"o}dinger system \eqref{eq:SchrodingerSystem} on the entire space,  
       \[
       \begin{gathered}
            1=\int  e^{\frac{\varphi_{0,t_{n'''}}^{(\mu_0,\mu_{1,t_{n'''}})}(x)+\varphi_{1,t_{n'''}}^{(\mu_0,\mu_{1,t_{n'''}})}(y)-c_{t_{n'''}}(x,y)}{\varepsilon}} d \mu_0(x)\to \int  e^{\frac{\varphi_{0}(x)+\varphi_{1}(y)-c_{0}(x,y)}{\varepsilon}} d \mu_0(x), 
            \\
            1=\int  e^{\frac{\varphi_{0,t_{n'''}}^{(\mu_0,\mu_{1,t_{n'''}})}(x)+\varphi_{1,t_{n'''}}^{(\mu_0,\mu_{1,t_{n'''}})}(y)-c_{t_{n'''}}(x,y)}{\varepsilon}} d \mu_{1,{t_{n'''}}}(y)\to \int  e^{\frac{\varphi_{0}(x)+\varphi_{1}(y)-c_{0}(x,y)}{\varepsilon}} d \mu_{1}(y),
        \end{gathered} 
       \]
       for every $(x,y)\in\mathcal X_0\times \mathcal X_1$ 
       where $c_{t_{n'''}}$ is the cost function associated with the cost vector $\rB_1^{\intercal}v_{t_{n'''}}-\rB_0^{\intercal}u_{t_{n'''}}$ which converges to $c_0$ on $\mathcal X_0\times \mathcal X_1$. Conclude that $(\varphi_0,\varphi_1)$ satisfies the Schr{\"o}dinger system for the marginals $\mu_0,\mu_1$ and cost function $c_0$ so that the pair corresponds to EOT potentials for $\mathsf{EOT}_{c_0}^{\varepsilon}(\mu_0,\mu_1)$. Similarly, $(\psi_0,\psi_1)$ is also a pair of EOT potentials  for $\mathsf{EOT}_{c_0}^{\varepsilon}(\mu_0,\mu_1)$. By uniqueness of the EOT potentials (\cref{lem:EOTPotentialBounds}) $(\varphi_0,\varphi_1)$ and $(\psi_0,\psi_1)$ differ only by additive constants.

        Combining \eqref{eq:EGWDiffLower} and \eqref{eq:EOTDiffLower}, it follows that   
        \[
    \begin{aligned}\liminf_{t_{n'''}\downarrow 0}t_{n'''}^{-1}\left(\mathsf{EGW}_p^{\varepsilon}(\mu_{0,t_{n'''}},\mu_{1,t_{n'''}})-\mathsf{EGW}_p^{\varepsilon}(\mu_{0},\mu_{1}) \right)&
    \\
    &\hspace{-4em}\geq \int \varphi_0 d(\nu_0-\mu_0) + \int \varphi_1 d(\nu_1-\mu_1)
    \\
    &\hspace{-4em}\geq \inf_{\pi^{\star}\in\Pi^{\star}(\mu_0,\mu_1)}\int \varphi_0^{\pi^{\star}} d(\nu_0-\mu_0) + \int \varphi_1^{\pi^{\star}} d(\nu_1-\mu_1).
    \end{aligned}
        \]
        The final lower bound follows by noting that, since $c_0$ is the cost  associated with the vector $\rB_1^{\intercal}v_0 -\rB_0^{\intercal}u_0$ for $u_0=\rB_0x_0$ and $v_0=\rB_1 x_0$, $\mathsf{EOT}^{\varepsilon}_{c_0}(\mu_0,\mu_1)=\mathsf{EOT}^{\varepsilon}_{c_{\pi^{\star}_0}}(\mu_0,\mu_1)$, where $\pi^{\star}_0$ is the optimal coupling for $\mathsf{EGW}_p^{\varepsilon}(\mu_0,\mu_1)$ associated with the vector $x_0$. As such, we may take the lower bound over all optimal couplings for the corresponding EGW problem, yielding a lower bound which is independent of the choice of original sequence and subsequence. Conclude that 
        \[  \begin{aligned}\liminf_{t\downarrow 0}t^{-1}\left(\mathsf{EGW}_p^{\varepsilon}(\mu_{0,t},\mu_{1,t})-\mathsf{EGW}_p^{\varepsilon}(\mu_{0},\mu_{1}) \right)&
    \\
    &\hspace{-3em}\geq \inf_{\pi^{\star}\in\Pi^{\star}(\mu_0,\mu_1)}\int \varphi_0^{\pi^{\star}} d(\nu_0-\mu_0) + \int \varphi_1^{\pi^{\star}} d(\nu_1-\mu_1).
    \end{aligned}
        \]
       \qed

\subsection{Proof of \texorpdfstring{\cref{lem:nonemptyInterior}}{Lemma 10}}
\label{proof:lem:nonemptyInterior}
       We begin with the reverse direction. If, for each pair, $(i,j)\in [N_{\eta}]\times [N_{\eta}]$ with $i\neq j$, there exists $x'\in\supp(\eta)$ satisfying     $\kappa_0(x^{(i)},x')\neq \kappa_0(x^{(j)},x')$, we have that 
        \[
        \begin{aligned}
        \int \left|\kappa_0(x^{(i)},x')-\kappa_0(x^{(j)},x')\right|^pd\eta(x')&= \sum_{k=1}^{N_{\eta}}\eta(\{x^{(k)}\}) \left|\kappa_0(x^{(i)},x^{(k)})-\kappa_0(x^{(j)},x^{(k)})\right|^p
        \\
        &\geq \eta(\{x'\}) \left|\kappa_0(x^{(i)},x')-\kappa_0(x^{(j)},x')\right|^p>0.
        \end{aligned} 
        \] 
        It follows that $2\min_{\substack{i,j=1\\i\neq j}}^{N_{\eta}} \int \left|\kappa_0(x^{(i)},x')-\kappa_0(x^{(j)},x')\right|^pd\eta(x')\eqqcolon \underline \beta >0$. From this, we see that if $w\in\RR^{N_{\eta}}$ satisfies $w_i-w_j\leq \underline \beta$ for every $(i,j)\in [N_{\eta}]\times [N_{\eta}]$, then $w\in\Gamma_{\eta}$. In particular, if $w_i\in[-\underline \beta/2,\underline \beta/2]$ for each $i\in[N_{\eta}]$, $w \in \Gamma_{\eta}$, proving that $\Gamma_{\eta}$ has nonempty interior.

        For the opposite direction, suppose that there exists a pair $(i,j)\in[N_{\eta}]\times [N_{\eta}]$  with $i\neq j$ for which $\kappa_0(x^{(i)},x')=\kappa_0(x^{(j)},x')$ for every $x'\in\supp(\eta)$. In this case, 
        \[
            \int \left|\kappa_0(x^{(i)},x')-\kappa_0(x^{(j)},x')\right|^pd\eta(x') = 0, 
        \]
        so  $w_i-w_j\leq 0$ and $w_j-w_i\leq 0$ i.e., $w_i=w_j$ for each  $w\in\Gamma_{\eta}$ and  $\Gamma_{\eta}$ has empty interior.
\qed

\subsection{Proof of \texorpdfstring{\cref{lem:kappaVanishing}}{Lemma 13}}
\label{proof:lem:kappaVanishing}
  We start by showing that $\kappa(G,G')=\kappa(P_{\sigma}(G),P_{\sigma}(G'))$ for every $G,G'\in\mathcal G_N$. To this end, note that 
    \[
    \begin{aligned}
        \langle \mathrm{A}_G\mathbf 1, \mathrm{A}_{G'}\mathbf 1\rangle &= \sum_{i=1}^{N}\left( \sum_{j=1}^{N} (\mathrm{A}_G)_{ij}\sum_{j=1}^{N} (\mathrm{A}_{G'})_{ij}\right) 
        \\
        &= \sum_{i=1}^{N}\left( \sum_{j=1}^{N} (\mathrm{A}_G)_{\sigma(i)\sigma(j)}\sum_{j=1}^{N} (\mathrm{A}_{G'})_{\sigma(i)\sigma(j)}\right)= \langle \mathrm{A}_{P_{\sigma}(G)}\mathbf 1, \mathrm{A}_{P_{\sigma}(G')}\mathbf 1\rangle        
    \end{aligned} 
    \]
   and that 
   \[
        \langle \mathrm{A}_G, \mathrm{A}_{G'} \rangle_{\mathrm{F}} = \sum_{i,j=1}^{N} \left(\mathrm{A}_G\right)_{ij} \left(\mathrm{A}_{G'}\right)_{ij} = \sum_{i,j=1}^{N} \left(\mathrm{A}_G\right)_{\sigma(i)\sigma(j)} \left(\mathrm{A}_{G'}\right)_{\sigma(i)\sigma(j)} = \langle \mathrm{A}_{P_{\sigma}(G)}, \mathrm{A}_{P_{\sigma}(G')} \rangle_{\mathrm{F}},   
   \]  
   whereby $\kappa(G,G')=\kappa(P_{\sigma}(G),P_{\sigma}(G'))$ for each choice of $G,G'\in \mathcal G_N$ and so the coupling $\pi=(\Id,P_{\sigma})_{\sharp}\mu_0$ yields a  cost of $0$ for the GW problem, proving the claim.
\qed

\subsection{Proof of \texorpdfstring{\cref{lem:kappaFormulas}}{Lemma 14}}
\label{proof:lem:kappaFormulas}
  We begin with the first assertion. Since $G$ has $m$ edges, $\langle \mathrm{A}_G,\mathrm{A}_{G}\rangle_{\mathrm{F}} = 2m$, whereas  
    \[
\langle \mathrm{A}_G\mathbf 1,\mathrm{A}_{G}\mathbf 1\rangle = \sum_{k=1}^N \mathrm{deg}(v_k(G))^2\geq 2,
    \]
    where the inequality is saturated
    if and only if $G$ only has one edge in which case two vertices have degree $1$. Conclude that $\kappa(G,G)\geq m +4$ with equality if and only if $G$ has a single edge.

    As for the formula \eqref{eq:kappaComp}, $\langle \mathrm{A}_G,\mathrm{A}_{G_{ij}}\rangle_{\mathrm{F}} = 2$ if $G$ has an edge between the $i$-th and $j$-th vertex and $\langle \mathrm{A}_G,\mathrm{A}_{G_{ij}}\rangle_{\mathrm{F}} =0$ otherwise. Furthermore, 
          \[
\langle \mathrm{A}_G\mathbf 1,\mathrm{A}_{G_{ij}}\mathbf 1\rangle = \sum_{k=1}^N \mathrm{deg}(v_k(G)) \mathrm{deg}(v_k(G_{ij}))= \mathrm{deg}(v_i(G))+\mathrm{deg}(v_j(G)),
    \]
    proving the claimed formula.
\qed

\subsection{Proof of \texorpdfstring{\cref{lem:pushforwardB}}{Lemma 15}}
\label{proof:lem:pushforwardB}
Fix $G,G' \in \supp(\mu_0)$ for which $G\neq G'$. Then, there exists some $(i,j)\in[N]\times [N]$ for which  $(\mathrm{A}_G)_{ij}=1$ and $(\mathrm{A}_{G'})_{ij}=0$ (up to interchanging $G$ and $G'$). It follows from \eqref{eq:kappaComp} that $\kappa(G,G_{ij})$ is odd whereas $\kappa(G',G_{ij})$ is even so that $\kappa(G,G_{ij})\neq\kappa(G',G_{ij})$. Since it is assumed that $\supp(\mu_0)$ contains all graphs with a single edge, $\kappa(G,\cdot)\neq \kappa(G',\cdot)$ as functions on $\supp(\mu_0)$. By the same argument, this condition is met with $\mu_1$ in place of $\mu_0$ so that the claimed result follows from \cref{lem:vanishingGW}. 
\qed

\subsection{Proof of \texorpdfstring{\cref{lem:permutationEdges}}{Lemma 16}}
\label{proof:lem:permutationEdges}
   We first establish that $B$ must map graphs with a single edge to graphs of a single edge. For any $i,j\in[N]\times [N]$ with $i< j$, we have that $\kappa(G_{ij},G_{ij})=\kappa(B(G_{ij}),B(G_{ij}))$ by definition of $B$, noting that both $\mu_0$ and $\mu_1$ assign positive probability to all graphs with a single edge. Recall from \cref{lem:kappaFormulas} that $\kappa(G_{ij},G_{ij})=5$ and that graphs with a single edge are the only ones with this particular value, completing this step. Importantly, it also follows that $B$ is a bijection on the set of all graphs with one edge. 

    We now show that two single edge graphs share a common vertex  if and only if their image through $B$ also satisfy this property. This is a direct consequence of   \cref{lem:kappaFormulas}, which asserts that 
    \[
        \kappa(G_{ij},G_{kl}) =\begin{cases}
           0,&\text{if } \{i,j\}\cap\{k,l\}=\emptyset,
           \\
           2,&\text{otherwise,}
        \end{cases}
    \]
    provided that $\{i,j\}\neq \{k,l\}$ along with the fact that $\kappa(G_{ij},G_{kl})=\kappa(B(G_{ij}),B(G_{kl}))$ and that $B(G_{ij}),B(G_{kl})$ are graphs with a single edge. 

Finally, we show the desired result;  $B(G_{ij})=G_{\sigma(i)\sigma(j)}$ for a fixed permutation $\sigma:[N]\to[N]$. We first prove the result in the case that $N\geq 5$.  Consider the images of $(G_{1j})_{j=2}^N$ under $B$. Since $G_{12}$ and $G_{13}$ share the vertex $1$, $B(G_{12})=G_{kl}$  and $B(G_{13})=G_{mn}$ also share a vertex by the previous step so that, say, $k=m$. Similarly, the image of $G_{14}$, $B(G_{14})=G_{op}$ must share a vertex with $G_{kl}$ and $G_{kn}$. There are only two possibilities (i) $o=k$ or $p=k$, or (ii) $o=l$ and $p=n$. Now consider the image of $G_{15}$, $B(G_{15})=G_{qr}$. In case (i), we must have that $q=k$ or $r=k$ so that the four graphs considered to this point share the common vertex $k$. In case (ii), we see that $G_{qr}$ cannot simultaneously share a vertex with $G_{kl},G_{kn},G_{ln}$, as that collection of three edges forms a triangle. Conclude that only setting (i) is consistent in the case $N\geq 5$. Following this logic, the image of the $N-1$ graphs $(G_{1j})_{j=2}^{N}$ under $B$ is given by $(G_{kj})_{\substack{j=1\\j\neq k}}^{N}$ and we can identify $\sigma(1)=k$.  Applying the same argument for the remaining vertices and using the fact that $B$ is a bijection yields that there exists a permutation $\sigma:[N]\to [N]$ for which $B(G_{ij})= G_{\sigma(i)\sigma(j)}$.

Note that the case $N=2$ is trivial. For $N=3$, we have as above that $B(G_{12})=G_{kl}$  and $B(G_{13})=G_{mn}$ share a vertex so that, say, $k=m$ so that we may identify $\sigma(1)=k$. Now,  $B(G_{12})=G_{kl}$  and $B(G_{23})=G_{op}$ and, in fact, $G_{op}=G_{ln}$ since $B$ is a bijection and there are only $3$ possible edges. Conclude that $\sigma(1)=k,\sigma(2)=l$, and $\sigma(3)=n$.  
\qed

\subsection{Proof of \texorpdfstring{\cref{lem:convergenceMinimizers}}{Lemma 17}}
\label{proof:lem:convergenceMinimizers}
Recall that the minimizers of $\ell_n$ are contained in $\rB_0\mathcal K_n$ whereas those of $\ell$ are contained in $\rB_0\mathcal K$ as follows from the final remark in  \cref{thm:variationalForm}. It follows that we can optimize $\ell_n,\ell$ over the closed ball $\mathbb B_{R_0}$ of radius $R_0\coloneqq \|\rB_0\|_{1,2}$ since every element of $\mathcal K_n$ and $\mathcal K$ has $1$-norm equal to $1$. 

For any $u,u'\in\mathbb B_{R_0}$, we have that 
\[
\begin{aligned}
    &\left|\ell_n(u)-\ell_n(u')\right|\\
    &=\left|\frac 12 \|u\|^2+\inf_{x\in\mathcal K_n}\left\{\frac 12 x^{\intercal}\rB_1^{\intercal}\rB_1x-u^{\intercal }\rB_0 x\right\}-\frac 12 \|u'\|^2-\inf_{x\in\mathcal K_n}\left\{\frac 12 x^{\intercal}\rB_1^{\intercal}\rB_1x-(u')^{\intercal }\rB_0 x\right\}\right|  
    \\
    &\leq\frac {1}{2}\left|\|u\|-\|u'\| \right|\left(\|u\|+\|u'\| \right)+\sup_{x\in\mathcal K_n}\left| x^{\intercal}\rB_0^{\intercal}(u-u')\right|\leq 2R_0\|u-u'\|,
\end{aligned}
\]
so that $\ell_n$ is $2R_0$-Lipschitz continuous on $\mathbb B_{R_0}$. It follows similarly that $\ell$ has the same Lipschitz modulus. Moreover, for any $u\in\mathbb B_{R_0}$,  
\[
\begin{aligned}
    \left|\ell_n(u)-\ell(u)\right|&\leq \sup_{v\in\mathbb B_{R_1}}\left| \mathsf{OT}_{\rB_1^{\intercal}v-\rB_0^{\intercal}u}(\mu_{0,n},\mu_{1,n})-\mathsf{OT}_{\rB_1^{\intercal}v-\rB_0^{\intercal}u}(\mu_{0},\mu_{1})\right|\\&\leq C_{\rB_0,\rB_1}\left(\|w_{\mu_{0,n}}-w_{\mu_{0}}\|_1+\|w_{\mu_{1,n}}-w_{\mu_{1}}\|_1\right),  
\end{aligned}
\]
recalling \cref{lem:OTLipschitz}. Conclude that
t$\ell_n\to \ell$ pointwise on $\mathbb B_{R_0}$, noting that the right hand side of the above display converges to $0$ as $\mu_{0,n},\mu_{1,n}$ converge weakly to $\mu_0,\mu_1$. It follows from Corollary 1.8.6 in \cite{bogachev2020real} that $\ell_n$ converges uniformly to $\ell$ on $\mathbb B_{R_0}$. Setting $\ell'_n=\ell_n+\mathcal I_{\mathbb B_{R_0}}$ and  $\ell'=\ell+\mathcal I_{\mathbb B_{R_0}}$ (recall that $\mathcal I_{\mathbb B_{R_0}}$ is the indicator function of $\mathbb B_{R_0}$ as defined in \eqref{eq:indicator}),  Proposition 7.15 (b) and Theorem 7.33 in \cite{rockafellar1998variational} assert that any cluster point of minimizers of $\ell'_n$ is a minimizer of $\ell'$ and that the optimal values converge; the same implications hold, therefore, for $\ell_n$ and $\ell$ since their respective minimizers all lie in $\mathbb B_{R_0}$.  
\qed

\end{document}